\newtheorem{theorem}{Theorem}
\newtheorem{proposition}[theorem]{Proposition}
\newtheorem{corollary}[theorem]{Corollary}
\newtheorem{lemma}[theorem]{Lemma}
\newtheorem{fact}[theorem]{Fact}
\theoremstyle{definition}
\newtheorem{definition}[theorem]{Definition}
\newtheorem{problem}[theorem]{Problem}
\newtheorem{conjecture}[theorem]{Conjecture}
\newtheorem{remark}[theorem]{Remark}
\newtheorem*{notation}{Notation}
\newtheorem{obs}[theorem]{Observation}
\numberwithin{theorem}{section}
\DeclareMathOperator{\Aut}{Aut}
\DeclareMathOperator{\Iso}{Iso}
\DeclareMathOperator{\Sym}{Sym}
\DeclareMathOperator{\dist}{dist}
\DeclareMathOperator{\supp}{supp}
\DeclareMathOperator{\mindeg}{mindeg}
\DeclareMathOperator{\motion}{motion}
\newcommand{\eee}{\mathrm{e}}
\newcommand{\footremember}[2]{%
    \footnote{#2}
    \newcounter{#1}
    \setcounter{#1}{\value{footnote}}%
}
\title{On the automorphism groups of distance-regular graphs and
 rank-4 primitive coherent configurations}
\author{Bohdan Kivva\footremember{alley}{University of Chicago, e-mail: bkivva@uchicago.edu}}
\begin{document}
\maketitle

\vspace{-0.8cm}
\begin{abstract}
The minimal degree of a permutation group $G$ is the minimum number of points not fixed by non-identity elements of $G$. Lower bounds on the minimal degree have strong structural consequences on $G$. In 2014 Babai proved that the automorphism group of a strongly regular graph with $n$ vertices has minimal degree $\geq c n$, with known exceptions. Strongly regular graphs correspond to primitive coherent configurations of rank 3. We extend Babai's result to primitive coherent configurations of rank 4. We also show that the result extends to non-geometric primitive distance-regular graphs of bounded diameter. The proofs combine structural and spectral methods.

\end{abstract}

\tableofcontents

\section{Introduction}

Let $\sigma$ be a permutation of a set $\Omega$. The number of points not fixed by $\sigma$ is called the \textit{degree} of the permutation $\sigma$. Let $G$ be a permutation group on the set $\Omega$. The minimum of the degrees of non-identity elements in $G$ is called the \textit{minimal degree} of $G$. One of the classical problems in the theory of permutation groups is to classify the primitive permutation groups whose minimal degree is small (see \cite{Wielandt-classical}). The study of minimal degree goes back to 19th century, in particular Bochert in 1892 \cite{Bochert} proved that doubly transitive permutation group of degree $n$ has minimal degree $\geq n/4-1$ with obvious exceptions. More recently, Liebeck \cite{Liebeck} characterized primitive permutation groups with small minimal degree (see below).

The \textit{thickness} of a group $G$ is the greatest $t$ for which the alternating group $A_t$ is involved as a quotient group of a subgroup of $G$ \cite{Babai-str-reg}. Lower bounds on the minimal degree have strong structural consequences on $G$. In 1934 Wielandt \cite{Wielandt} showed that a linear lower bound on the minimal degree implies a logarithmic upper bound on the thickness of the group. Later, in \cite{thickness-primitive} Babai, Cameron and P\'alfy showed that primitive permutation groups with bounded thickness have polynomially bounded order. (See Section \ref{sec-thickness} for details.)

Switching from symmetry assumptions to regularity, in 2014 Babai \cite{Babai-str-reg} proved that the minimal degree of the automorphism group of a strongly regular graph is linear in the number of vertices, with known exceptions. 

\begin{definition}\label{def-motion}
For combinatorial structure $\mathcal{X}$ we will use term \textit{motion} to denote the minimal degree of the automorphism group $\Aut(\mathcal{X})$ of $\mathcal{X}$; we use notation 
\begin{equation}
\motion(\mathcal{X}) = \mindeg(\Aut(\mathcal{X})).
\end{equation}
\end{definition}

\begin{theorem}[Babai \cite{Babai-str-reg}]\label{babai-str-reg-thm}
Let $X$ be a strongly regular graph on $n\geq 29$ vertices. Then one of the following is true.
\begin{enumerate}
\item We have $\motion(X)\geq n/8$.
\item Graph $X$ or its complement is a triangular graph $T(s)$, a lattice graph $L_{2}(s)$ or a union of cliques.
\end{enumerate} 
\end{theorem}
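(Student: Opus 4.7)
I would argue the contrapositive: assume $X$ is a strongly regular graph on $n\geq 29$ vertices admitting $\sigma\in\Aut(X)\setminus\{1\}$ with $d:=|\supp(\sigma)|<n/8$, and deduce that $X$ or $\bar X$ is a triangular graph $T(s)$, a lattice graph $L_2(s)$, or a union of cliques. If $X$ is disconnected then every connected component is $k$-regular of diameter $\leq 1$, hence a clique, so $X$ is a union of cliques; passing to the complement disposes of the case where $\bar X$ is disconnected. I may therefore assume both $X$ and $\bar X$ are connected and write the spectrum of $X$ as $\{k^{(1)},r^{(f)},s^{(g)}\}$ with $k>r\geq 0>s$.

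\textbf{Spectral step.} The permutation matrix $P_\sigma$ commutes with the adjacency matrix $A$, so $P_\sigma$ preserves every eigenspace. Writing $\chi_i(\sigma)$ for the trace of $P_\sigma$ on the $i$-eigenspace,
\begin{equation*}
|\mathrm{Fix}(\sigma)|=1+\chi_r(\sigma)+\chi_s(\sigma), \qquad |\chi_r(\sigma)|\leq f, \quad |\chi_s(\sigma)|\leq g.
\end{equation*}
Refining these trivial bounds with standard character estimates for permutation representations on association schemes — each non-trivial $\sigma$-orbit contributes at most $1$ in absolute value to $|\chi_i|$ relative to the full dimension — should yield an inequality of the form $d \geq \alpha(k,r,s)\cdot n$. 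When $r,s$ are well-separated (equivalently, when $s$ is bounded away from $-\sqrt k$), the coefficient $\alpha$ is bounded below by an absolute positive constant, which already gives $\motion(X) \geq cn$ and settles the bulk of the parameter space.

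\textbf{Geometric step.} The residual regime is $s$ close to $-\sqrt k$, where $X$ looks like an incidence graph. By Neumaier's claw bound and Metsch's clique-detection criterion, an SRG with smallest eigenvalue $s = -m$ and $k$ large relative to $m$ is \emph{geometric}: there is a canonical family $\mathcal L$ of Delsarte cliques of size $-s+1$ partitioning the edges, and the lines through a vertex form a constant linear space. Any $\sigma \in \Aut(X)$ permutes $\mathcal L$; since $d$ is small, most lines are $\sigma$-invariant, and a pencil-rigidity argument applied to the incidence structure at any fixed vertex forces $\sigma = 1$ provided $m \geq 3$. In the residual case $m = 2$, Seidel's classification of SRGs with smallest eigenvalue $-2$ leaves precisely $T(s)$, $L_2(s)$, and their complements, which is exactly the exceptional list.

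\textbf{Main obstacle.} The technically hardest range is the intermediate one — graphs with moderately small $|s|$ whose parameter $k$ lies just below Neumaier's threshold, so neither the crude spectral bound nor the clean geometric classification closes the argument. Here I would need a direct combinatorial extension argument: each fixed vertex has at most $d$ non-fixed neighbors, and applying the $\lambda$- and $\mu$-intersection identities to pairs of fixed vertices shows that $\sigma$ acts trivially on most common-neighbor sets of sizes $\lambda$ and $\mu$, progressively enlarging $\mathrm{Fix}(\sigma)$ until either $\sigma = 1$ or the parameters $(n,k,\lambda,\mu)$ fall into a short sporadic list excluded by the hypothesis $n \geq 29$ (Petersen, Clebsch, Shrikhande, Schläfli, the Chang graphs). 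Controlling the boundary between $\mathrm{Fix}(\sigma)$ and $\supp(\sigma)$ throughout this iteration, and ensuring no accidental coincidences in the intersection numbers let $\sigma$ survive, is the core technical difficulty.
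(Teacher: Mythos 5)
Your broad dichotomy — spectral bound for well-separated eigenvalues, Neumaier/Metsch clique geometry for the regime $s$ near $-\sqrt k$, Seidel's classification at smallest eigenvalue $-2$ for the exceptional list, and the disconnected/complement reduction for unions of cliques — is exactly the skeleton of Babai's proof. But the way the two main regimes are argued contains genuine gaps.

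In the spectral step, $|\chi_r(\sigma)|\leq f$ and $|\chi_s(\sigma)|\leq g$ are vacuous since $f+g=n-1$, and the claimed refinement that ``each non-trivial $\sigma$-orbit contributes at most $1$ in absolute value to $|\chi_i|$'' is not a statement I recognize and does not look correct: the number of $\sigma$-orbits controls $\dim\ker(P_\sigma - I)$, not the trace of $P_\sigma$ restricted to an eigenspace of $A$. To extract a bound on $|\mathrm{Fix}(\sigma)|$ you would need to pair $\mathrm{tr}(P_\sigma)$ with $\mathrm{tr}(P_\sigma A) = \#\{x:x\sim\sigma(x)\}$, solve the linear system for $\chi_r,\chi_s$, and then feed in combinatorial information about how $\sigma$ can move neighborhoods. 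Babai packages exactly this as the Expander Mixing Lemma (Lemma \ref{mixing-lemma-tool}): every non-identity automorphism has at most $n(q+\xi)/k$ fixed points, where $q = \max(\lambda,\mu)$ and $\xi$ is the zero-weight spectral radius. For this to bite one first needs $\lambda,\mu$ small relative to $k$, and that constraint is delivered by the combinatorial distinguishing tool (Observation \ref{obs1}: if every pair of vertices is distinguished by $\geq m$ points then $\motion\geq m$, which for an SRG gives $\motion \geq 2(k-\max(\lambda,\mu))$). You relegate this to the ``main obstacle'' as an iterative patch, but in Babai's proof it is the opening move and carries a third of the parameter space; without it, the spectral step has no quantitative anchor and your ``$r,s$ well-separated'' claim cannot be made uniform.

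In the geometric step, ``pencil-rigidity forces $\sigma=1$ provided $m\geq 3$'' is too strong: geometric SRGs with smallest eigenvalue $-m$, $m\geq 3$ (collinearity graphs of generalized quadrangles, block graphs of Steiner systems, etc.) have plenty of non-identity automorphisms, and the theorem claims only $\motion(X)\geq n/8$ there, not triviality of $\Aut(X)$. The actual argument in this regime is again a counting/distinguishing argument on the clique geometry, not a rigidity statement. Your Seidel identification at $m=2$ is correct in substance, with the reminder that the complements enter because Seidel's theorem is applied to whichever of $X,\bar X$ has smallest eigenvalue $-2$, the sporadic $n\leq 28$ cases are excluded by hypothesis, and the union-of-cliques case is the disconnected one.
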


We are interested in proving lower bounds of the form $\motion(\mathcal{X})\geq cn$ for some constant $c>0$. Here and throughout this paper, $n$ will stand for the number of vertices.

\subsection{Coherent configurations: history and conjectures}

Strongly regular graphs can be seen as a special case of a more general class of highly regular combinatorial structures called \textit{coherent configurations}, combinatorial generalizations of the orbital structure of permutation groups; the case of orbitals is called ``Schurian coherent configurations" (see Section~\ref{sec-prelim} for definitions). More specifically, strongly regular graphs are essentially the coherent configurations of rank 3. From this  point of view, Theorem~\ref{babai-str-reg-thm} is a step toward a characterization of the coherent configurations with automorphism groups of small minimal degree.

The history of coherent configurations goes back to Schur's paper \cite{Schur} in 1933. Schur used coherent configurations to study permutation groups through their orbital configurations. Later Bose and Shimamoto \cite{Bose-Shimamoto} studied a special class of coherent configurations, called \textit{association schemes}. Coherent configurations in their full generality were independently introduced by Weisfeiler and Leman \cite{Weisfeiler}, \cite{Weisfeiler-Leman} and D. Higman \cite{Higman} around 1968. Higman developed the representation theory of coherent configurations and applied it to the permutation groups. At the same time, a related algebraic theory of coherent configurations, called ``cellular algebras,'' was introduced by Weisfeiler and Leman. Special classes of association schemes such as strongly regular graphs and, more generally, distance-regular graphs have been the subject of intensive study in algebraic combinatorics. A combinatorial study of coherent configurations in their full generality was initiated by Babai in \cite{Babai-annals}. Coherent configurations play an important role in the study of the Graph Isomorphism problem, adding combinatorial divide-and-conquer tools to the problem. This approach was used by Babai in his recent breakthrough paper~\cite{Babai-GI} to prove that the Graph Isomorphism problem can be solved in quasi-polynomial time. Recently, the representation theory of coherent configurations found unexpected applications in theory of computing, specifically to the complexity of matrix multiplication \cite{Cohn-Umans}.

Primitive permutation groups of large order were classified by Cameron \cite{Cameron} in 1981 using the Classification of the Finite Simple Groups. In particular Cameron proved that all primitive groups of order greater than $n^{\log_2(n)+1}$ are what are now called \textit{Cameron groups} (see Def. \ref{def-Cameron-groups}). \textit{Cameron schemes} are orbital configurations of Cameron groups.

A few years after Cameron's classification of large primitive groups, Liebeck \cite{Liebeck} obtained a similar classification of primitive groups with small minimal degree. He proved that all primitive groups of degree $n$ with minimal degree less than $\frac{n}{9\log_2(n)}$ are  Cameron groups. In 1991 \cite{Liebeck-Saxl} Liebeck and Saxl impoved lower bound on minimal degree of primitive non-Cameron groups to $n/3$. 

``Primitive coherent configurations'' are combinatorial generalizations of the orbital configurations of primitive permutation groups (see Section \ref{sec-prelim}). The following conjecture, due to Babai \cite[Conjecture 24]{Babai-str-reg}, says that all primitive coherent configurations with many automorphisms are  Cameron schemes.

\begin{conjecture}[Babai \cite{Babai-str-reg}]\label{conj-1}
For every $\varepsilon>0$, there is some $N_{\varepsilon}$ such that if $\mathfrak{X}$ is a primitive coherent configuration on $n> N_\varepsilon$ vertices and $|\Aut(\mathfrak{X})|\geq\exp(n^{\varepsilon})$, then $\mathfrak{X}$ is a Cameron scheme.
\end{conjecture}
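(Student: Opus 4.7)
The plan is to reduce Conjecture~\ref{conj-1} to a motion lower bound and then prove the motion bound by induction on the rank of the primitive coherent configuration $\mathfrak{X}$. Concretely, suppose $|\Aut(\mathfrak{X})|\geq \exp(n^{\varepsilon})$. By the Babai--Cameron--P\'alfy bound on primitive groups of bounded thickness, the thickness of $\Aut(\mathfrak{X})$ must grow at least like $n^{\varepsilon}/\log n$, and Wielandt's inequality then forces $\motion(\mathfrak{X})=o(n)$. Thus it suffices to prove the stronger combinatorial statement: every primitive coherent configuration on $n$ vertices that is not a Cameron scheme satisfies $\motion(\mathfrak{X})\geq c\,n$ for an absolute constant $c>0$ and $n$ sufficiently large. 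This reformulation sharpens Theorem~\ref{babai-str-reg-thm} from rank $3$ to arbitrary rank and is what I would actually attack.

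For the motion lower bound I would proceed by induction on the rank $r$ of $\mathfrak{X}$, with $r=3$ handled by Theorem~\ref{babai-str-reg-thm}. For the inductive step, given a non-identity $\sigma\in\Aut(\mathfrak{X})$, I would study $\supp(\sigma)$ through the constituent digraphs $X_1,\dots,X_{r-1}$. Two complementary tools would drive the bound. The first is spectral: whenever a constituent $X_i$ has a large eigenvalue gap, an expander-mixing / Hoffman-type inequality forces $\supp(\sigma)$ to be linear in $n$. The second is combinatorial: when the spectral gaps degenerate, the intersection numbers $p^{k}_{ij}$ encode enough local structure that a short support produces many forced coincidences, which either propagate $\sigma$ to the identity or expose a geometric origin of $\mathfrak{X}$ (Johnson, Hamming, or other schemes on Cameron's list). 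At each step one would also pass to natural induced or quotient coherent configurations obtained from color classes and apply the inductive hypothesis to the resulting pieces.

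After the generic bound, the delicate step is classifying the exceptions, that is, verifying that the primitive coherent configurations with $\motion=o(n)$ are exactly the Cameron schemes of Definition~\ref{def-Cameron-groups}. For the schemes on Cameron's list one checks directly that they carry automorphisms of sublinear support, so only one direction is truly nontrivial. The other direction requires ruling out ``near-geometric'' primitive coherent configurations, which share combinatorial parameters with Cameron schemes but are not orbital configurations of Cameron groups. The rank-$4$ analysis and the non-geometric distance-regular case treated in this paper are instances of this difficulty.

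The main obstacle, as in all previous work on this program, is the inductive step at large rank. Controlling what happens when many intersection numbers vanish, when several spectral gaps shrink simultaneously, or when the diameter of an underlying distance-regular constituent grows, currently seems to require structural input beyond a uniform induction. I would expect the bulk of the work, and probably a new idea beyond the techniques available here, to go into a uniform treatment of these degenerations, extending the rank-$4$ and bounded-diameter arguments of the present paper to all ranks at once.
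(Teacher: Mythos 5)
This is a conjecture, not a theorem: the paper states Conjecture~\ref{conj-1} as an open problem attributed to Babai and does not prove it. What the paper actually establishes is the rank-$4$ case (Theorem~\ref{main-coherent}) and the bounded-diameter distance-regular case (Theorem~\ref{main-general-case}), both of which confirm the \emph{motion} version (Conjecture~\ref{conj-3}) in special cases. Your proposal is not a proof either; you say as much yourself when you concede that the inductive step at large rank "currently seems to require structural input beyond a uniform induction" and "probably a new idea." So there is no proof to compare against, and what you have written is a strategy sketch with the decisive step explicitly missing.

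Within the sketch there are two points worth flagging. First, the reduction to a motion bound is indeed the standard reformulation (it is Conjecture~\ref{conj-3} in the paper), but your justification for it is not quite right: you invoke Babai--Cameron--P\'alfy and Wielandt to pass from $|\Aut(\mathfrak{X})|\geq\exp(n^{\varepsilon})$ to $\motion(\mathfrak{X})=o(n)$, yet both of those results are about \emph{primitive permutation groups}, and $\Aut(\mathfrak{X})$ of a primitive coherent configuration need not be transitive, let alone primitive. The clean route to the order bound from a linear motion / distinguishing-number bound is Lemma~\ref{Babai-disting-order-group}, which applies directly to configurations and gives $|\Aut(\mathfrak{X})|\leq n^{1+2n\log n/m}$ when every pair is distinguished by $\geq m$ points; the Wielandt route gives thickness bounds but does not by itself control the order of an arbitrary automorphism group. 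Second, the proposed induction on rank has no precedent in the known partial results: Babai's rank-$3$ theorem, the rank-$4$ theorem in this paper, and the bounded-diameter theorem are each proved by a direct case analysis (spectral gap vs.\ distinguishing number vs.\ geometricity via Metsch), not by reducing rank $r$ to rank $r-1$. The main technical obstacles the paper identifies --- controlling eigenvalues of constituents when many intersection numbers are small, and recognizing the geometric/Cameron exceptions --- do not obviously decompose along color classes, so the inductive scaffolding you propose would need substantial new machinery even to get off the ground.
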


If the conjecture is confirmed, it will give a combinatorial extension of Cameron's classification. The first progress on this conjecture was made by Babai in 1981 \cite{Babai-annals} where the conjecture was confirmed for all $\varepsilon>\frac{1}{2}$. As a byproduct he solved a then 100-year-old problem on primitive, but not doubly transitive groups, giving a nearly tight bound on their order. Recently (2015), Sun and Wilmes in \cite{Sun-Wilmes} made a first significant progress since that time, confirming the conjecture for all $\varepsilon>\frac{1}{3}$. In the case of rank 3, Chen et al. \cite{Chen-Sun-Teng} confirm the conjecture for $\varepsilon>\frac{9}{37}$.

A conjecture of the same flavor can be stated in terms of the  minimal degree of a permutation group.

\begin{conjecture}[Babai]\label{conj-3}
There exists $\gamma>0$ such that if $\mathfrak{X}$ is a primitive coherent configuration on $n$ vertices and $\motion(\mathfrak{X})<\gamma n$, then $\mathfrak{X}$ is a Cameron scheme.
\end{conjecture}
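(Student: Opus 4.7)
Since Conjecture \ref{conj-3} is a deep open problem, the plan is not to resolve it in full, but to attack it by extending Babai's rank-3 argument (Theorem \ref{babai-str-reg-thm}) to the next accessible structural classes, namely rank-4 primitive coherent configurations and primitive distance-regular graphs of bounded diameter. The overarching strategy mirrors Babai's: assume a non-trivial automorphism $\sigma$ with $\deg(\sigma)<\gamma n$, let $F = \mathrm{Fix}(\sigma)$, and derive such strong regularity on the constituent relations that $\mathfrak{X}$ must be a Cameron scheme.

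The first step is to exploit the fixed set. Since $|F| > (1-\gamma) n$, for a typical vertex $v \in F$ and each constituent relation $R_i$, almost all $R_i$-neighbors of $v$ lie in $F$, so $\sigma$ is forced to permute a tiny fraction of each neighborhood. Combining this with orbit-counting on edges across $F$ and $\Omega\setminus F$, together with spectral bounds (Hoffman-type inequalities applied to each constituent graph $X_i$ of $\mathfrak{X}$), one converts the motion hypothesis into either a bound on the clique structure of $X_i$ or a bound on its spectral gap. Then I would invoke Metsch-style clique-geometry criteria: if a constituent $X_i$ has enough large cliques of the right size, $\mathfrak{X}$ is forced to be the line graph of a Steiner-like design, hence a Cameron scheme; otherwise the combinatorial parameters are bounded away from the geometric regime.

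The second step is a case analysis driven by the intersection numbers. For rank 4, denote the three non-trivial constituents by degrees $k_1 \le k_2 \le k_3$. Split into (a) one constituent dominates ($k_3 \gg k_1+k_2$), which reduces to a distance-regular-graph-like argument on the dominant relation; and (b) the degrees are comparable, where a direct eigenvalue analysis of the Bose--Mesner algebra together with the clique-geometry dichotomy above pins down the configuration. For primitive distance-regular graphs of bounded diameter $d$, I would perform a Terwilliger-style local analysis: the intersection array $(b_0,\ldots,b_{d-1}; c_1,\ldots,c_d)$ has only finitely many shapes up to the motion hypothesis, and for each non-geometric shape, a spectral argument using interlacing on distance graphs $X_i$ yields the linear motion bound.

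The main obstacle will be the comparable-degree regime at rank 4: the classical clique-geometry theorems require a clean separation between the parameters of the ``line'' relation and the rest, so when $k_1,k_2,k_3$ are of the same order one has to invent ad hoc combinatorial arguments or sharpen the spectral input. A secondary difficulty, which I would flag but not attempt, is that distance-regular graphs of \emph{unbounded} diameter escape this approach entirely --- the number of intersection-array shapes explodes, and without a diameter bound even the base induction cannot be started.
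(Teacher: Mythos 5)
This statement is a conjecture, not a theorem the paper proves; the paper establishes only two special cases (rank $4$ and primitive distance-regular graphs of bounded diameter), so your proposal should be judged as a research plan for those same cases. At that level of detail your outline has the right general shape, but several of its key claims are wrong or hide exactly the difficulties the paper has to confront.

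First, you assert that once a constituent is shown to carry a clique geometry, ``$\mathfrak{X}$ is forced to be the line graph of a Steiner-like design, hence a Cameron scheme.'' This is false as stated and is precisely the gap the paper cannot close in general. Metsch's criterion only forces the graph to be \emph{geometric} with smallest eigenvalue $\geq -m$ for some bounded $m$; geometric distance-regular graphs form a vastly larger class than Cameron schemes. The paper (Theorem~\ref{main-general-case}) stops at this reduction and explicitly notes that no classification of geometric distance-regular graphs with smallest eigenvalue $-m$ is known for $m\geq 4$ (only the conjectural Bang--Koolen statement, Conjecture~\ref{conj-bang-k}). For diameter $3$ the paper can finish because the $m=3$ classification of Bang and Bang--Koolen (Theorems~\ref{bang-koolen} and~\ref{bang}) is available; your plan implicitly assumes such a classification always comes for free. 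Similarly, in the rank-$4$ case the relevant input is Seidel's / Brouwer--Cohen--Neumaier's classification of edge- and co-edge-regular graphs with smallest eigenvalue $-2$ (Theorem~\ref{geom-eig-2}), not a generic ``Steiner design $\Rightarrow$ Cameron'' principle, and even then the outcome includes line graphs of triangle-free regular graphs, which require a separate argument (Proposition~\ref{assoc-line-triangle}).

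Second, the claim that for bounded diameter ``the intersection array has only finitely many shapes up to the motion hypothesis'' is not right: even with $d$ fixed, the array $(b_0,\ldots,b_{d-1};c_1,\ldots,c_d)$ ranges over an unbounded parameter space. The paper instead proves a quantitative tradeoff inequality for the intersection numbers (Lemma~\ref{b-c-ineq}): if $b_j$ is large and $c_{j+1}$ small, then $b_{j+1}$ and $c_{j+2}$ cannot both be small. This, propagated inductively (Proposition~\ref{bound-on-b}), is what keeps all the $b_i$ large and all the $c_i$ small, producing either a spectral-gap bound or the geometric conclusion. Your Terwilliger/interlacing sketch doesn't supply a substitute for this lemma, which is the paper's main new ingredient for the distance-regular case.

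Third, your ``fixed-set orbit counting'' first step is vaguer than, and different from, the tools the paper actually uses: the distinguishing-number observation (Observation~\ref{obs1} together with Lemma~\ref{babai-dist}) and the expander-mixing-lemma bound on fixed points (Lemma~\ref{mixing-lemma-tool}). These are cleaner, do not require isolating a ``typical'' fixed vertex, and give the linear motion bound directly once either a distinguishing estimate or a spectral gap is in hand. Finally, in the rank-$4$ case the hard regime is not merely ``comparable degrees'' but specifically the case where the smallest-degree constituent $X_1$ is strongly regular with smallest eigenvalue $-2$; there the paper needs a Metsch-style claw argument to bound $k_2/k_1$ (Lemma~\ref{assoc-claw-proof-bound}) and a Sun--Wilmes-style argument on cliques of $T(s)$ (Proposition~\ref{sun-wilmes-tool}), neither of which appears in your plan.
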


A weaker version of this conjecture asks to prove existence of such $\gamma$ for all primitive coherent configurations of fixed rank $r\geq 2$.

 Note that a homogeneous coherent configuration of rank 3 is either a strongly regular graph or a strongly regular tournament. It was proven in \cite{Babai-annals} that a strongly regular tournament has at most $n^{O(\log(n))}$ automorphisms, and the motion is at least $n/4$. Thus, Conjecture~\ref{conj-3} in the case of rank $3$ follows from the Theorem~\ref{babai-str-reg-thm}. One of our main results extends this to rank 4, see Theorem \ref{main-coherent}.

In fact, it is believed that a much stronger version of Conjecture \ref{conj-1} is true. 
 
 \begin{conjecture}[Babai \cite{Babai-ICM}, Conjecture 12.1]
There exists a polynomial $p$ such that the following hold.
Let $\mathfrak{X}$ be a non-Cameron primitive coherent configuration with $n$ vertices. Then
\vspace{-0.15cm}
\begin{enumerate}[(a)]
\setlength{\itemsep}{-3pt}
 \item  $\theta(\Aut(\mathfrak{X}))\le p(\log n)$ (where $\theta$ denotes the
        thickness, see Def.~\ref{def-thickness}) 
 \item  $|\Aut(\mathfrak{X})|\le \exp(p(\log n))$
\end{enumerate}
\end{conjecture}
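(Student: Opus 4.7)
The plan is to reduce the conjecture to Conjecture~\ref{conj-3} via the classical chain of implications highlighted in the introduction. Assume Conjecture~\ref{conj-3} and let $\mathfrak{X}$ be a non-Cameron primitive coherent configuration on $n$ vertices. By the assumed conjecture, $\motion(\mathfrak{X}) \geq \gamma n$ for an absolute constant $\gamma > 0$. Wielandt's theorem \cite{Wielandt} then gives $\theta(\Aut(\mathfrak{X})) \leq C\log n$ for an absolute constant $C = C(\gamma)$, which yields part~(a) with $p$ of degree one. Next, the Babai--Cameron--P\'alfy theorem \cite{thickness-primitive} asserts that a primitive permutation group of degree $n$ and thickness $t$ has order at most $n^{g(t)}$ for some explicit polynomial $g$. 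Substituting $t \leq C\log n$ gives $|\Aut(\mathfrak{X})| \leq n^{g(C\log n)} = \exp(g(C\log n)\cdot \log n)$, which has the form $\exp(p(\log n))$ for a suitable polynomial $p$, establishing (b).

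The main obstacle is therefore Conjecture~\ref{conj-3} itself. The present paper resolves this conjecture only for primitive coherent configurations of rank at most $4$ and for non-geometric primitive distance-regular graphs of bounded diameter; extending it to arbitrary rank appears to require two substantial new ingredients. First, a spectral gap estimate for primitive coherent configurations of unbounded rank, replacing the rank-specific eigenvalue analysis used here and in \cite{Babai-str-reg}. Second, a structural mechanism for detecting the geometric, Cameron-like configurations (for instance Johnson, Hamming and Grassmann schemes) that form the exceptional family in the conclusion. Without such a detection, any purely spectral argument must fail on these cases, so a structural dichotomy seems unavoidable.

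As a partial attack on (b) that sidesteps Conjecture~\ref{conj-3}, one could try to bootstrap the Sun--Wilmes bound $|\Aut(\mathfrak{X})| \leq \exp(n^{1/3+o(1)})$ for non-Cameron PCCs \cite{Sun-Wilmes} using the improved motion estimates of this paper within the individualize-and-refine framework of \cite{Babai-annals}. The hard step would be to reduce the required individualization base from polynomial in $n$ down to polylogarithmic: every previous reduction of this base has lost at least a polynomial factor at the inductive step, and closing this gap seems to demand a combinatorial invariant that is genuinely sensitive to the Cameron versus non-Cameron distinction, which is precisely the bottleneck also encountered in Conjecture~\ref{conj-3}.
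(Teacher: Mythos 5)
The statement you were given is a conjecture (Babai's Conjecture 12.1 from his ICM survey), not a theorem; the paper records it as an open problem and does not prove it — the paper's own contribution is to verify the weaker motion conjecture (Conjecture~\ref{conj-3}) in the rank-$4$ case (Theorem~\ref{main-coherent}) and for non-geometric primitive distance-regular graphs of bounded diameter (Theorem~\ref{main-general-case}). You correctly recognize this, and your conditional reduction is sound: Conjecture~\ref{conj-3} plus Wielandt's theorem (Theorem~\ref{Wielandt-thm}, via Corollary~\ref{Wielandt-cor}) gives part~(a), and the Babai--Cameron--P\'alfy theorem (Theorem~\ref{BCP-thm}, or Pyber's $n^{O(t)}$ refinement) then gives part~(b). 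One small remark: the paper observes the easier implication, that (a) follows from (b) --- since if $A_t$ is involved in $G$ then $|G| \geq t!/2$, a quasipolynomial order bound forces polylogarithmic thickness --- whereas your chain runs in the harder direction (a)~$\Rightarrow$~(b) through BCP; both are legitimate, and the two parts of the conjecture are essentially equivalent modulo BCP. Your identification of the two obstacles to proving Conjecture~\ref{conj-3} in full generality — a spectral gap estimate robust to unbounded rank, and a structural criterion that detects the Cameron exceptions — is accurate and consistent with the paper's Section~\ref{sec-summary}, in particular with Problem~5.3 there, which poses exactly the question of deriving a spectral gap from a bound on the minimal distinguishing number. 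In short: there is no gap in your understanding, but there is also nothing to prove; the result remains open and your write-up correctly treats it as such.
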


Clearly, (a) follows from (b). As discussed above, Babai proved (a) for rank-3 coherent configurations and our Theorem \ref{main-coherent} extends this to rank 4.

\subsection{Our results}

In this paper we study the motion of primitive coherent configurations of rank 4 and of primitive distance-regular graphs. Distance-regular graphs induce an important family of coherent configurations, called \textit{metric schemes}. They are the coherent configurations, where colors of the pairs of vertices are defined by the distance metric in the graph. 

For primitive coherent configurations of rank 4 we confirm Conjecture \ref{conj-3}. We note that the only Cameron schemes of rank $\leq 4$ are Hamming and Johnson schemes, so our result is the following.
 
{
\renewcommand{\thetheorem}{\ref{main-coherent}}
\begin{theorem}
Let $\mathfrak{X}$ be a primitive coherent configuration of rank $4$ on $n$ vertices . Then one of the following is true.
\begin{enumerate}
\item We have $\motion(\mathfrak{X})\geq \gamma n$, where $\gamma>0$ is an absolute constant.
\item The configuration $\mathfrak{X}$ is a Hamming scheme or a Johnson scheme.
\end{enumerate}
\end{theorem}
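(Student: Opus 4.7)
My plan is to set up $\mathfrak{X} = (V, \{R_0, R_1, R_2, R_3\})$ with $R_0$ the diagonal, constituent graphs $G_i = (V, R_i)$ of valency $k_i$, and intersection numbers $p^\ell_{ij}$, and then apply a Babai-style distinguishing-number inequality as the workhorse throughout. The principle I would use is that if there exist colors $c,i,j$ and a small $\alpha > 0$ such that for all but an $\alpha$-fraction of vertices $v$ the $R_c$-neighborhood of $v$ is well separated by $(R_i,R_j)$-signatures, then every non-identity $\sigma \in \Aut(\mathfrak{X})$ must displace $(1-O(\alpha))n$ points. The task therefore reduces to showing that \emph{failure} of any such distinguishing bound forces $\mathfrak{X}$ to be a Hamming or a Johnson scheme.

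First I would reduce to the symmetric (3-class association scheme) case: a rank-$4$ PCC with a non-self-paired relation has at most one symmetric constituent, and a spectral argument on the antisymmetric pair — essentially the rank-$4$ analogue of Babai's tournament bound in the rank-$3$ proof of Theorem~\ref{babai-str-reg-thm} — yields linear motion. In the symmetric case I would split on whether $\mathfrak{X}$ is the metric scheme of a distance-regular graph of diameter $3$. If not, some $G_i$ has diameter $2$, so one can merge two colors to obtain a rank-$3$ coarsening and apply Theorem~\ref{babai-str-reg-thm}; the exceptions (triangular, lattice, disjoint cliques) either produce a large distinguishing number on the refining relation within a fiber, or are themselves realized by a Hamming/Johnson scheme on top. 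In the metric case, I would work directly with the intersection array $\{b_0,b_1,b_2;\,c_1,c_2,c_3\}$, apply Hoffman-type / expander-mixing estimates on $G_1$ controlled by classical inequalities for intersection arrays, and narrow the arrays compatible with failure of distinguishing down to a short list, which known parametric characterizations of $H(3,q)$ and $J(v,3)$ should collapse onto exactly those schemes.

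The main obstacle I anticipate is the distance-regular subcase when the intersection numbers lie close to but not exactly on a Hamming or Johnson array. There the spectral gap of $G_1$ is too small for the global distinguishing argument to succeed directly, and one must resort to finer local-structure analysis — the induced subgraph on an $R_1$-neighborhood, counts of geodesics through prescribed color patterns, and refined intersection identities — to rule out the near-miss feasible arrays and force exact equality. A secondary technical difficulty is the regime of extreme valencies (some $k_i$ very small or with $k_1 + k_2$ close to $n-1$), where I would expect to need case-specific spectral estimates, invoking primitivity of $\mathfrak{X}$ to guarantee a macroscopic spectral gap on every constituent before the distinguishing argument can be activated.
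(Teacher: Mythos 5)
Your overall architecture (oriented case; symmetric case split by diameter; distinguishing‐number plus spectral tools) matches the paper's at a coarse level, but the central step you rely on in the diameter-$2$ symmetric case is invalid, and this is precisely where essentially all the work in the paper is concentrated.

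You write: ``If not, some $G_i$ has diameter $2$, so one can merge two colors to obtain a rank-3 coarsening and apply Theorem~\ref{babai-str-reg-thm}.'' This does not work. Merging two nondiagonal relations of a rank-$4$ association scheme produces a rank-$3$ \emph{configuration}, but not in general a rank-$3$ \emph{coherent} configuration. Concretely, take $I=\{1,2\}$ and consider the graph $X_I$ with nonedge relation $R_3$. Its complement $X_3$ is edge-regular but not strongly regular unless $p^1_{3,3}=p^2_{3,3}$, and dually $X_I$ is co-edge-regular but not edge-regular unless $p^1_{1,1}+2p^1_{1,2}+p^1_{2,2}=p^2_{1,1}+2p^2_{1,2}+p^2_{2,2}$. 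Neither identity holds for a generic rank-$4$ scheme of diameter $2$, so Theorem~\ref{babai-str-reg-thm} simply does not apply to the merged graph. (Running Weisfeiler--Leman on the merged coloring just re-refines to the original rank-$4$ scheme and gains nothing.) What the paper actually does is the opposite of assuming strong regularity: it proves strong regularity (or line-graph structure) of one of $X_1$, $X_2$, $X_{1,2}$ only \emph{after} a long chain of reductions. It first shows (Lemma~\ref{assoc-k2-large}) that unless one color dominates, every pair is well distinguished; then, using the dominance of $k_3$, it approximates the nontrivial eigenvalues of $X_1$, $X_2$, $X_{1,2}$ by roots of explicit cubics (Lemma~\ref{rank-4-roots}, Propositions~\ref{assoc-spectral-radius}, \ref{assoc-x12-approx}) and combines these with Metsch's clique-geometry criterion (Theorem~\ref{Metsch}), Remark~\ref{m2-line}, and the Seidel/Brouwer--Cohen--Neumaier classification (Theorem~\ref{geom-eig-2}). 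This yields Proposition~\ref{assoc-diam2}, which says that either the spectral tool (Lemma~\ref{mixing-lemma-tool}) wins, or one of $X_1$, $X_2$, $X_{1,2}$ is strongly regular with smallest eigenvalue $-2$ or is the line graph of a triangle-free regular graph; each of these outcomes then needs a separate and fairly delicate argument (Propositions~\ref{assoc-x1-strongly-reg}, \ref{assoc-x2-strongly-reg}, \ref{assoc-x3-strongly-reg}, \ref{assoc-line-triangle}), including a Sun--Wilmes-style ``well-distinguished cliques'' lemma (Proposition~\ref{sun-wilmes-tool}). None of this is reachable by the coarsening you propose.

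A secondary, less fundamental gap: in the metric (diameter-$3$) subcase you hope to ``narrow the arrays compatible with failure of distinguishing down to a short list'' and then invoke ``known parametric characterizations of $H(3,q)$ and $J(v,3)$.'' Intersection arrays do not determine the graph, and the intermediate object the paper produces is not an array but a \emph{geometric} distance-regular graph with smallest eigenvalue $-3$; closing the argument requires the Bang and Bang--Koolen classification of such graphs (Theorems~\ref{bang-koolen}, \ref{bang}), together with Metsch's criterion to get geometricity in the first place. Your ``Hoffman-type / expander-mixing estimates'' are the right spirit for eliminating the non-geometric case, but the endgame needs these external classification theorems, not generic parametric identities. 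Finally, your description of the oriented case as ``a rank-$4$ analogue of Babai's tournament bound'' is off target: the paper instead observes the undirected constituent is itself strongly regular, applies Theorem~\ref{babai-str-reg-thm} to it directly, and then uses the Sun--Wilmes clique lemma to dispose of the $T(s)$/$L_2(s)$/complement exceptions.
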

\addtocounter{theorem}{-1}
}

In our main result for distance-regular graphs we show that a primitive distance-regular graph $X$ of diameter $d$ is  geometric (see Definition \ref{def-geom}), or  the motion of $X$ is linear in the number of vertices.

{
\renewcommand{\thetheorem}{\ref{main-general-case}}
\begin{theorem}
For any $d\geq 3$ there exist constants $\gamma_d>0$ and $m_d\in \mathbb{N}$, such that for any primitive distance-regular graph $X$ with $n$ vertices and diameter $d$ one of the following is true. 
\begin{enumerate}
\item We have $\motion(X)\geq \gamma_d n$.
\item $X$ is a geometric distance-regular graph with smallest eigenvalue $-m$, where $m\leq m_d$. 
\end{enumerate}
Furthermore, one can set $m_d = \lceil 2(d-1)(d-2)^{\log_2(d-2)} \rceil$.
\end{theorem}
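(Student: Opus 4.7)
The plan is to combine the rank-$4$ result (Theorem \ref{main-coherent}) with an induction on the diameter $d$. The base case is $d=3$: a primitive distance-regular graph of diameter $3$ is a primitive coherent configuration of rank $4$, so Theorem \ref{main-coherent} forces $X$ to be a Hamming or Johnson scheme whenever $\motion(X) < \gamma n$. Both of these are geometric distance-regular graphs, and their smallest eigenvalue has absolute value consistent with the bound $m_3$ from the stated formula, establishing the base.

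For the inductive step, assume $d\geq 4$ and suppose for contradiction that $X$ is primitive distance-regular of diameter $d$ with $\motion(X) < \gamma_d n$, yet $X$ is not geometric with smallest eigenvalue at least $-m_d$. Let $\sigma\in\Aut(X)\setminus\{1\}$ witness the small motion. The first step is a propagation principle: if $\sigma$ fixes a vertex $v$ but moves a neighbor $u$, then for each $i\leq d$ the sphere $X_i(v)$ contains a positive proportion (determined by the intersection numbers $b_i,c_i$) of vertices moved by $\sigma$. An upper bound on $|\supp(\sigma)|$ then forces a strong local rigidity: essentially every fixed $v$ must have most of its neighborhood fixed, and the fixed set $\mathrm{Fix}(\sigma)$ must carry combinatorial structure close to that of $X$ itself.

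The next step is to extract from this rigidity either a system of Delsarte cliques witnessing that $X$ is geometric with the desired smallest eigenvalue, or a local/quotient structure of smaller diameter to which the inductive hypothesis applies. On the spectral side, one uses the Hoffman--Delsarte inequality $\omega(X)\leq 1+k/m$ to translate clique data into eigenvalue bounds; on the combinatorial side, one uses the Metsch-type conditions (large common neighborhoods inside the neighborhood graph) to verify that the cliques found actually form a Delsarte clique system. The primitivity of $X$ ensures that any quotient or local configuration to which we pass remains primitive and distance-regular of diameter $d-1$, so the induction is legitimate.

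The main obstacle, and the source of the quasi-polynomial shape of $m_d = \lceil 2(d-1)(d-2)^{\log_2(d-2)} \rceil$, is the tracking of parameters under the inductive reduction. Each descent from diameter $d$ to $d-1$ loses a factor of roughly $2(d-1)$ in the bound on the smallest eigenvalue, and these factors compound across the $\log_2(d-2)$ levels of the recursion to produce the stated dependence. The delicate technical point is to carry the geometric conclusion back up the induction: given that the local/quotient structure is geometric with smallest eigenvalue $-m_{d-1}$, one must verify that the Delsarte cliques of the local structure lift to Delsarte cliques of $X$, matching Definition \ref{def-geom}, rather than merely giving a weaker form of regularity.
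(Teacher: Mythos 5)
Your proposal takes a genuinely different route from the paper, and unfortunately the central mechanism is missing. The paper does not argue by induction on the diameter at all. Instead it iterates directly over the index $j = 0, 1, \ldots, d-2$ of the intersection parameters: at each step, Proposition \ref{bound-on-b} (built on the new tradeoff inequality of Lemma \ref{b-c-ineq} and on Proposition \ref{primitive-distinguish}) shows that either the motion is already $\Omega_d(n)$, or the zero-weight spectral radius is bounded away from $k$ (after which Lemma \ref{mixing-lemma-tool} and Metsch finish), or the conclusion $b_{j+1}\geq \alpha_{j+1}k$, $c_{j+2}\leq\varepsilon k$ propagates forward. The quasi-polynomial factor $(d-2)^{\log_2(d-2)}$ comes from the recursion $\alpha_{j+1}\gtrsim \alpha_{\lceil j/2\rceil}/(j+2)$ in the $FE(\delta)$ sequence, not from any descent in diameter.

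The gap in your argument is the inductive step itself. You propose to extract ``a local/quotient structure of smaller diameter to which the inductive hypothesis applies,'' and to verify that primitivity and distance-regularity descend and that Delsarte cliques lift back. None of this is constructed. There is no canonical operation that takes a primitive distance-regular graph of diameter $d$ and produces a primitive distance-regular graph of diameter $d-1$; distance-regularity is a very brittle global property and is not inherited by induced subgraphs, quotients, or local graphs in general. Likewise, the ``propagation principle'' for $\sigma\in\Aut(X)$ fixing $v$ but moving a neighbor is the heuristic behind Observation \ref{obs1} and Lemma \ref{mixing-lemma-tool}, but it does not by itself produce the structured fixed-point set you describe, and you do not spell out any of the claimed rigidity. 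The paper avoids all of this by staying at diameter $d$ throughout and doing the recursion in the index $j$ of the intersection array, where monotonicity of $(b_i)$ and $(c_i)$ makes the iteration legal; you would need to supply an entirely new reduction mechanism, and as written the proposal does not contain one.

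A smaller point: for the base case $d=3$ you invoke Theorem \ref{main-coherent}, but the clean statement to cite is Theorem \ref{main-thm} (diameter-3 distance-regular, no primitivity hypothesis); Theorem \ref{main-coherent} is a downstream corollary of it within the paper. In the primitive case the cocktail-party exception disappears and the conclusion matches, so the base case is essentially right, though it relies on a heavier theorem than needed.
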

\addtocounter{theorem}{-1}
} 

The key ingredient of the proof is the following new inequality  for the intersection numbers of a distance-regular graph (restated in a slightly modified form later as Lemma \ref{b-c-ineq}). Essentially it says that if $b_j$ is large and $c_{j+1}$ is small, then both $c_{j+2}$ and $b_{j+1}$ cannot be small simultaneously.

\begin{lemma}
Let $X$ be a primitive distance-regular graph of diameter $d\geq 3$. Assume $C = b_j/{c_{j+1}}>1$ for some $1 \leq j\leq d-2$. Then for any $1\leq s\leq j+1$
\[
b_{j+1}\left(\sum\limits_{t=1}^{s}\frac{1}{b_{t-1}}+\sum\limits_{t=1}^{j+2-s}\frac{1}{b_{t-1}}\right)+c_{j+2}\sum\limits_{t=1}^{j+1}\frac{1}{b_{t-1}}\geq 1-\frac{4}{C-1}.
\]

\end{lemma}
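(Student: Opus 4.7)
The plan is to interpret the left-hand side as the weighted count of a suitable combinatorial structure in $X$, built from a fixed base vertex $x$, and then lower-bound that count by $1$ minus an error coming from detours. Write $S_k=\sum_{t=1}^k 1/b_{t-1}$, let $D_i=D_i(x)$, $k_i=|D_i|$, and recall the basic identities $k_{t-1}b_{t-1}=k_tc_t$ and $k_{j+1}/k_j=b_j/c_{j+1}=C$. The hypothesis $C>1$ is equivalent to $k_{j+1}>k_j$, so layer $D_{j+1}$ is strictly larger than $D_j$; this asymmetry is the driver of the lemma, and the conclusion should be read as a constraint forcing the next pair $(b_{j+1},c_{j+2})$ to jointly carry enough weight to support the expansion.

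First I would unfold the Bose-Mesner recursion
\[
A_1A_i=b_{i-1}A_{i-1}+a_iA_i+c_{i+1}A_{i+1}
\]
applied at levels $i=j,j+1$, tracking weighted walks from $x$ into the layer $D_{j+2}$. Each ``ascending'' step between layers $t-1$ and $t$ should be normalized so that, after applying $k_{t-1}b_{t-1}=k_tc_t$, it contributes the weight $1/b_{t-1}$; the total weight of an ascending fragment of length $s$ is then exactly $S_s$. Splitting each walk into an initial fragment of $s$ steps and a terminal fragment of $j+2-s$ steps (this is where the parameter $s$ enters), and separating the ``bridging'' edge between layers $j+1$ and $j+2$ by whether it crosses at the $b_{j+1}$-interface (the walk ends with an ascending edge to $D_{j+2}$) or at the $c_{j+2}$-interface (the walk first reaches $D_{j+1}$ in $j+1$ steps and then jumps via a $c_{j+2}$-edge), one recovers the combination $b_{j+1}(S_s+S_{j+2-s})+c_{j+2}S_{j+1}$ as the total weight; note in particular that the $c_{j+2}$ term carries the shorter weight $S_{j+1}$, consistent with its bridge requiring one fewer ascending step.

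For the lower bound $1-4/(C-1)$, the idealized contribution from walks that follow a strict geodesic on each side and bridge at the correct interface contributes $1$ after normalization. The corrections come from walks that double back at some intermediate level, and each double-back near layer $j$ costs a factor of at most $c_{j+1}/b_j=1/C$ to ``undo''. Summing a geometric-type tail $\sum_{i\ge 1}(1/C)^i=1/(C-1)$ for each of the at most four distinct correction types (two sides of the split, and two possible interfaces among levels $j, j+1, j+2$) produces the stated error of $4/(C-1)$.

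The hardest part will be the bookkeeping: the parameter $s$ must remain arbitrary throughout, so the split into $s$-ascent and $(j+2-s)$-ascent fragments has to be encoded in the identity from the outset rather than optimized \emph{a posteriori}, and the geometric-series tails from the two sides must combine cleanly into a single bound with the constant $4$ rather than a larger absolute constant. I would expect the precise constant $4$ to emerge once the excursion types are enumerated and the hypothesis $C>1$ is applied uniformly; the main care is verifying that no boundary effect (e.g.\ at $t=1$ where the walk leaves $x$ itself, or at the $j+1\leftrightarrow j+2$ interface) introduces an extra correction that would worsen the prefactor.
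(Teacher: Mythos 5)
Your proposal captures some of the right intuition — the ratios $1/b_{t-1}$, the split into $s$-step and $(j+2-s)$-step pieces, and the $O(1/(C-1))$ error budget all reappear in the paper — but it has a genuine structural gap: you never articulate what mechanism produces the inequality. ``Interpret the LHS as a weighted walk count and lower-bound it by $1$ minus detour errors'' is not a mechanism; the LHS is not a probability, and a walk-counting reinterpretation alone does not give a $\geq 1 - \text{error}$ bound. The paper's proof hinges on an explicit inequality step that you do not mention: form the auxiliary graph $Y$ on $V(X)$ in which $u\sim v$ iff $\dist_X(u,v)\le j+1$, and apply the elementary inclusion $N_Y(v)\setminus N_Y(w)\subseteq\bigl(N_Y(v)\setminus N_Y(u)\bigr)\cup\bigl(N_Y(u)\setminus N_Y(w)\bigr)$ to a triple $u,v,w$ with $\dist(v,w)=j+2$, $\dist(u,v)=s$, $\dist(w,u)=j+2-s$. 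This yields $\lambda^Y_s+\lambda^Y_{j+2-s}\le k^Y+\mu^Y_{j+2}$, and it is this triangle-type inequality that the parameter $s$ enters through (it parametrizes the position of the pivot $u$ between $v$ and $w$), not a breakpoint inside a single walk. The path-counting you gesture at does appear in the paper, but only as a tool for bounding $\sum_{r\ge j+2}p^i_{j+1,r}$ and $p^{j+2}_{j+1,j+1}$ from above in order to estimate $\lambda^Y_i$ and $\mu^Y_{j+2}$; without the triangle step there is nothing for those estimates to feed into.

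Your accounting of the constant $4$ also does not survive scrutiny. In the paper the loss decomposes as: $1/(C-1)$ from $k^Y\le k_{j+1}\tfrac{C}{C-1}$ (geometric sum of $k_i$, $i\le j$), $2/(C-1)$ from the off-diagonal part of $\mu^Y_{j+2}\le 2\sum_{i\le j}k_i+p^{j+2}_{j+1,j+1}$, and a final $1/k_{j+1}<1/(C-1)$ from the $p^i_{j+1,0}$ term in $\lambda^Y_i$. Your proposed four correction types (``two sides of the split $\times$ two interfaces'') neither match this breakdown nor clearly correspond to actual terms, and the $1/k_{j+1}$ contribution is not of geometric-series origin at all. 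As written, the proposal is a heuristic sketch rather than a proof; to repair it you would need to introduce the auxiliary graph $Y$ and the symmetric-difference inclusion explicitly, which is essentially the paper's argument.
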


In the case of distance-regular graphs of diameter $3$ (metric coherent configuration of rank $4$) we are able to give a complete  classification of the geometric case. 

{
\renewcommand{\thetheorem}{\ref{main-thm}}
\begin{theorem}
Let $X$ be a distance-regular graph of diameter $3$ on $n$ vertices. Then one of the following is true.
\begin{enumerate}
\item We have $\motion(X)\geq \gamma n$ for some absolute constant $\gamma>0$.
\item $X$ is the Johnson graph $J(s,3)$ for $s\geq 7$, or the Hamming graph $H(3,s)$ for $s\geq 3$. 
\item $X$ is the cocktail-party graph.

\end{enumerate}
\end{theorem}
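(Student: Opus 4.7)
The plan is to specialize Theorem~\ref{main-general-case} to $d=3$ and then enumerate the geometric exceptions that survive, after first disposing of the imprimitive case. A diameter-$3$ DRG is imprimitive precisely when it is bipartite or antipodal. If $X$ is bipartite, then its halved graphs are strongly regular, and a non-identity $\sigma\in\Aut(X)$ either preserves the bipartition (so its action on each half is an automorphism of a strongly regular graph, and Theorem~\ref{babai-str-reg-thm} applies) or swaps the two parts (in which case almost all vertices are moved). If $X$ is antipodal, the antipodal classes form an $\Aut(X)$-invariant partition with quotient $K_s$; analyzing the induced action on $K_s$ together with the permutation on the fibres yields linear motion except in small-parameter cases. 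The cocktail-party graph emerges from this analysis as the only imprimitive exception with sublinear motion.

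For primitive $X$, Theorem~\ref{main-general-case} with $d=3$ gives $m_3=\lceil 2\cdot 2\cdot 1^{\log_2 1}\rceil=4$, so either $\motion(X)\geq\gamma_3 n$ and we are done, or $X$ is a geometric DRG with smallest eigenvalue $-m$ for some $m\in\{2,3,4\}$. Such an $X$ is the collinearity graph of a partial linear space whose Delsarte cliques have size $m$. For each fixed $m\leq 4$, the Delsarte, Krein, and absolute bounds, together with the integrality conditions on the intersection array $\{b_0,b_1,b_2;c_1,c_2,c_3\}$, cut the feasible arrays down to finitely many infinite families plus a short sporadic list. The Johnson graph $J(s,3)$ (for $s\geq 7$) and the Hamming graph $H(3,s)$ (for $s\geq 3$) appear as the only infinite families with sublinear motion; indeed a transposition in $S_s\leq\Aut(J(s,3))$ moves only $(s-2)(s-3)$ of the $\binom{s}{3}=\Theta(s^3)$ triples, and an analogous one-coordinate transposition in $\Aut(H(3,s))$ moves $O(s^2)$ of the $s^3$ vertices.

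The principal obstacle is the geometric classification in the last step: recognizing when a feasible intersection array with $m\leq 4$ can only correspond to $J(s,3)$ or $H(3,s)$, and individually handling each of the finitely many remaining sporadic arrays. Here I would apply Metsch-type local characterizations of Johnson and Hamming graphs, which recover them from $(c_2,\lambda,\mu,m)$ once mild numerical conditions hold, to cover the generic slice of the parameter space. For each remaining sporadic array, one must either prove non-existence via a spectral computation (typically by exhibiting a non-integral multiplicity or a violation of the Krein bound), or else show directly that $|\Aut(X)|=O(n)$ (or smaller), so that $\motion(X)\geq\gamma n$ holds trivially. It is this finite but delicate case analysis, rather than any single structural insight, that constitutes the bulk of the proof.
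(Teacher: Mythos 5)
Your proposal deviates from the paper's proof in two substantive places, and both deviations contain genuine gaps.

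First, specializing Theorem~\ref{main-general-case} to $d=3$ gives only $m\le m_3=4$, and no classification of geometric distance-regular graphs of diameter $3$ with smallest eigenvalue $-4$ is available. The paper says this explicitly (``No analogous classification for the geometric distance-regular graphs with smallest eigenvalue $-m$ appears to be known for $m\ge 4$''), and this is precisely why Section~3 gives a \emph{separate} proof for diameter $3$ rather than quoting Theorem~\ref{main-general-case}: the diameter-$3$ argument (Propositions~\ref{Prop-a-b-big},~\ref{Prop-a-b-small}, Corollaries~\ref{lem-3-lines},~\ref{lambda-half}) is tuned so that the geometric case lands at $m=3$ exactly, where the Bang and Bang--Koolen results (Theorems~\ref{bang-koolen},~\ref{bang}) can be invoked. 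Your proposal's ``Metsch-type local characterizations'' plus ``finitely many sporadic arrays'' for $m=4$ is not a finite list anyone has produced; this is not a delicate case analysis waiting to be done but an open classification (cf.\ Conjecture~\ref{conj-bang-k}), and it is circumvented in the paper only because the bound is sharpened to $m=3$.

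Second, the bipartite reduction via halved graphs does not work for diameter $3$. If $X$ is a bipartite distance-regular graph of diameter $3$ with parts $U,W$, then any two distinct vertices of $U$ lie at distance $2$ in $X$, so the halved graph on $U$ is the \emph{complete} graph $K_{|U|}$. You get no information from Theorem~\ref{babai-str-reg-thm} here. The paper's Theorem~\ref{main-bipartite} instead applies the bipartite Expander Mixing Lemma (Lemma~\ref{bip-mixing-lemma-tool}) directly, using the explicit spectrum $\pm\sqrt{k-\mu}$ of a bipartite diameter-$3$ DRG (Fact~\ref{fact1}), and the cocktail-party graph emerges as the unique exception via a disconnected distance-$3$ graph argument. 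Your antipodal discussion is also not carried out; in the paper it is unnecessary, because antipodal diameter-$3$ DRGs (e.g.\ Taylor graphs) have $k=\Theta(n)$ and are caught by Proposition~\ref{prop-k-big}, or otherwise are routed into the same geometric/bipartite dichotomy.

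In short, the skeleton ``primitive $\Rightarrow$ large motion or geometric with bounded $-m$, then classify'' is the right high-level shape, but your route via Theorem~\ref{main-general-case} imports the loss $m\le 4$ that the paper's direct diameter-$3$ argument is specifically constructed to avoid, and the imprimitive side of your proposal rests on a false premise about halved graphs.
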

\addtocounter{theorem}{-1}
}

\noindent Note that here we do not assume primitivity of the graph.

\subsection{The main tools}\label{sec-tools} 

We follow Babai's approach \cite{Babai-str-reg}. In that paper, Babai used a combination of an old combinatorial tool \cite{Babai-annals} and introduced  spectral tool.  We will use the same tools to get bounds on the motion of distance-regular graphs and coherent configurations.

\begin{definition}
 A \textit{configuration} $\mathcal{X}$ of rank $r$ on the set $V$ is a pair $(V, c)$, where $c$ is a map $c:V\times V \rightarrow \{0, 1,..., r-1\}$ such that 
\begin{enumerate}[(i)]
\item $c(v,v) \neq c(u,w)$ for any $v,u,w\in V$ with $u\neq w$,
\item for any $i<r$ there is $i^{*}<r$ such that $c(u,v) = i$ implies $c(v,u) = i^{*}$ for all $u,v \in V$.
\end{enumerate}
\end{definition} 

\begin{definition}\label{def-dist-conf}
For a graph $X = (V, E)$ the configuration $\mathcal{X} = (V, c)$, where colors $c(u, v) = \dist(u,v)$ are defined by distance metric in $X$, is called the \textit{distance configuration} of $X$.
\end{definition}

\subsubsection{Combinatorial tool}

\begin{definition}[Babai \cite{Babai-annals}]\label{def-disting}
A pair of vertices $u$ and $v$ is \textit{distinguished} by vertex $x$ in a configuration $\mathcal{X}$ if the colors $c(x,u)$ and $c(x,v)$ are distinct.
\end{definition}

For graphs we will apply this definition to corresponding distance configuration. 

A set $S$ of vertices of a configuration $\mathcal{X}$ is \textit{distinguishing} if every pair of distinct vertices in $\mathcal{X}$ is distinguished by at least one element of $S$. Note that the pointwise stabilizer of a distinguishing set in $\Aut(\mathcal{X})$ is trivial. 

 Then the first tool is the following observation.
\begin{obs}\label{obs1}
Let $\mathcal{X}$ be  a configuration with $n$ vertices. If each pair of distinct vertices $u,v$ of $\mathcal{X}$ is distinguished by at least $m$ vertices, then $\motion(\mathcal{X})\geq m$.
\end{obs}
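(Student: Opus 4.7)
The plan is very short: suppose $\sigma\in\Aut(\mathcal{X})$ is a non-identity automorphism, and show that the set of points moved by $\sigma$ contains every vertex that distinguishes any pair of the form $(u,\sigma(u))$ with $u\neq \sigma(u)$. Since, by hypothesis, at least one such pair exists and has $\geq m$ distinguishers, we will conclude that $\sigma$ moves at least $m$ points.

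More precisely, I would first pick any $u$ with $v:=\sigma(u)\neq u$, which exists because $\sigma\neq\mathrm{id}$. Then I would apply the hypothesis to the pair $\{u,v\}$: it is distinguished by some set $S$ of at least $m$ vertices, i.e. for every $x\in S$ one has $c(x,u)\neq c(x,v)$. The key step is the following observation: no $x\in S$ can be a fixed point of $\sigma$. Indeed, if $\sigma(x)=x$, then since $\sigma$ preserves colors and $\sigma(u)=v$,
\[
c(x,u)=c(\sigma(x),\sigma(u))=c(x,v),
\]
contradicting the choice of $x$. Hence $S$ is contained in the support of $\sigma$, so $|\supp(\sigma)|\geq |S|\geq m$. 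Taking the minimum over all non-identity $\sigma$ gives $\motion(\mathcal{X})\geq m$.

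There is essentially no obstacle here; the statement is a one-line consequence of the definition of ``distinguished pair'' together with the fact that automorphisms preserve the coloring $c$. The only thing to be careful about is that the hypothesis is stated for \emph{every} pair of distinct vertices, which makes the existence of a suitable distinguishing set $S$ automatic once $\sigma$ is shown to move at least one vertex.
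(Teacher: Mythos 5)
Your proof is correct and is essentially identical to the paper's: both fix a non-identity $\sigma$, take $u$ with $\sigma(u)\neq u$, and observe that any fixed point of $\sigma$ cannot distinguish $u$ from $\sigma(u)$, so all $\geq m$ distinguishers of this pair lie in $\supp(\sigma)$. You merely spell out the one-line justification (color preservation) that the paper leaves implicit.
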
 
\begin{proof} Indeed, let $\sigma\in \Aut(\mathcal{X})$ be any non-trivial automorphism of $\mathcal{X}$. Let $u$ be a vertex not fixed by $\sigma$. No fixed point of $\sigma$ distinguishes $u$ and $\sigma(u)$, so the degree of $\sigma$ is $\geq m$.
\end{proof}

 In \cite{Babai-annals} Babai showed that bounds on the numbers of vertices, which distinguish a pair, can be effectively transformed to the bounds on the order of automorphism group of a graph.  He used the lemma below to make the dramatic improvement of the bound on the order of an uniprimitive group in terms of its degree. In fact, it is the main tool, currently available, which was used to attack Conjecture \ref{conj-1}.
 
 \begin{lemma}[Babai {\cite[Lemma~5.4]{Babai-annals}} ]\label{Babai-disting-order-group} Let $\mathcal{X}$ be a configuration and suppose that any two distinct vertices are distinguished by at least $m$ vertices. Then there is a distinguishing  set of size at most $2n\log(n)/m+1$. Therefore, in particular, the order of the automorphism group satisfies $|\Aut(\mathfrak{X})| \leq n^{1+2n\log(n)/m}$. 
\end{lemma}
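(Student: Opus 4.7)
The plan is to construct a distinguishing set greedily. Initialize $S_0 = \emptyset$ and let $P_k \subseteq \binom{V}{2}$ denote the set of unordered pairs of distinct vertices not yet distinguished by any element of $S_k$, so $|P_0| = \binom{n}{2} < n^2/2$. The key observation is that every pair in $P_k$ has all $\geq m$ of its distinguishers lying in $V \setminus S_k$: indeed, if any distinguisher of the pair were already in $S_k$, the pair would not still be in $P_k$.

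Now double-count incidences $(x,p)$ with $x \in V\setminus S_k$, $p \in P_k$, and $x$ distinguishing $p$. The count is at least $m|P_k|$ by the previous observation, so by averaging some $x \in V \setminus S_k$ distinguishes at least $m|P_k|/(n-|S_k|) \geq m|P_k|/n$ pairs of $P_k$. Setting $S_{k+1} := S_k \cup \{x\}$ yields $|P_{k+1}| \leq (1 - m/n)|P_k|$, and iterating gives $|P_k| \leq (n^2/2)\exp(-km/n)$. This quantity drops strictly below $1$ (and hence, $|P_k|$ being an integer, equals $0$) as soon as $k > (n/m)\log(n^2/2)$, which holds for some $k \leq \lceil 2n\log(n)/m \rceil \leq 2n\log(n)/m + 1$. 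The resulting $S := S_k$ is the desired distinguishing set.

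For the bound on $|\Aut(\mathcal{X})|$, observe that if $S$ is distinguishing and $\sigma, \tau \in \Aut(\mathcal{X})$ agree on $S$, then $\phi := \tau^{-1}\sigma$ fixes $S$ pointwise, so for every $v \in V$ and every $s \in S$ we have $c(s, \phi(v)) = c(\phi^{-1}(s), \phi^{-1}\phi(v)) = c(s,v)$; since $S$ distinguishes $v$ from every other vertex, $\phi(v) = v$. Hence the restriction map $\sigma \mapsto \sigma|_S$ embeds $\Aut(\mathcal{X})$ into $V^S$, giving $|\Aut(\mathcal{X})| \leq n^{|S|} \leq n^{1 + 2n\log(n)/m}$. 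The argument presents no real obstacle; the only point deserving care is the observation that distinguishers of an undistinguished pair necessarily lie outside the current $S_k$, which is what drives the geometric decay.
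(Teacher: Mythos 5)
Your proof is correct. The paper does not prove this lemma — it cites it from Babai's 1981 Annals paper — and your greedy double-counting argument (together with the standard injectivity-of-restriction argument for the group-order bound) is essentially the argument in that reference, so there is nothing to flag.
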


\subsubsection{Spectral tool} 

For a $k$-regular graph $X$ let $k = \xi_1\geq \xi_2\geq...\geq \xi_n$ denote the eigenvalues of the adjacency matrix of $X$. We call quantity $\xi = \xi(X) = \max\{ |\xi_i|: 2\leq i\leq n\}$ the \textit{zero-weight spectral radius} of $X$. The second tool follows from the Expander Mixing Lemma.

\begin{lemma}[Babai, {\cite[Proposition~12]{Babai-str-reg}}]\label{mixing-lemma-tool}
Let $X$ be a regular graph of degree $k$ with zero-weight spectral radius of $\xi$. Suppose every pair of vertices in $X$ has at most $q$ common neighbors. Then every non-identity automorphism of $X$ has at most $n\cdot\frac{q+\xi}{k}$ fixed points.
\end{lemma}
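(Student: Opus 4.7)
The plan is to combine a simple combinatorial observation stemming from adjacency-preservation with the Expander Mixing Lemma.

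First, I would fix a non-identity automorphism $\sigma\in\Aut(X)$, let $F\subseteq V$ denote its set of fixed points, $f:=|F|$, and $U:=V\setminus F$, noting $|U|=n-f\geq 1$. The key combinatorial observation is: for any $v\in U$ and any $x\in F\cap N(v)$, the fact that $\sigma(x)=x$ and $\sigma$ preserves adjacency gives $x\sim\sigma(v)$, so $x$ is a common neighbor of $v$ and $\sigma(v)$. Since $v\neq\sigma(v)$, the hypothesis $|N(v)\cap N(\sigma(v))|\leq q$ forces $|F\cap N(v)|\leq q$. Summing over $v\in U$ yields the combinatorial upper bound
\[
e(F, U) \;=\; \sum_{v\in U}|F\cap N(v)| \;\leq\; q(n-f),
\]
where $e(F,U)$ denotes the number of edges between $F$ and $U$.

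For the matching lower bound I would invoke the sharp form of the Expander Mixing Lemma: for any $A,B\subseteq V$,
\[
\left|\, e(A,B)-\frac{k|A||B|}{n}\,\right| \;\leq\; \xi\,\sqrt{|A||B|\,(1-|A|/n)(1-|B|/n)}.
\]
This is proved by writing $e(A,B)=\mathbf{1}_A^{\top} M\, \mathbf{1}_B$ for the adjacency matrix $M$, decomposing $\mathbf{1}_A$ and $\mathbf{1}_B$ into their components along the $k$-eigenvector $\mathbf{1}$ and its orthogonal complement (on which $M$ has operator norm $\leq\xi$), and applying Cauchy-Schwarz. Setting $A=F$, $B=U$, the square-root factor collapses to $f(n-f)/n$, giving
\[
e(F,U) \;\geq\; \frac{(k-\xi)\,f\,(n-f)}{n}.
\]

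Combining the two estimates and dividing by $n-f>0$ gives $(k-\xi)f/n\leq q$, hence $f\leq nq/(k-\xi)$. To reach the stated form $f\leq n(q+\xi)/k$, I would argue by cases: if $q+\xi\geq k$ the claim is vacuous since its right-hand side is $\geq n>f$; otherwise the elementary identity $(q+\xi)(k-\xi)-qk=\xi(k-q-\xi)\geq 0$ yields $nq/(k-\xi)\leq n(q+\xi)/k$. The only delicate point is using the sharp form of the Expander Mixing Lemma with the $(1-|A|/n)(1-|B|/n)$ factor: the cruder bound $\xi\sqrt{|A||B|}$ is not strong enough to control the case $f>n/2$ in one shot.
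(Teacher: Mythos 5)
Your proof is correct. The paper does not actually prove this lemma (it is quoted from Babai's paper), but it does prove the bipartite analogue (Lemma \ref{bip-mixing-lemma-tool}) by exactly the same two ingredients you use: the Expander Mixing Lemma plus the observation that every fixed neighbor of a moved vertex $v$ is a common neighbor of $v$ and $\sigma(v)$. The only real difference is in execution: Babai's argument applies the one-sided mixing inequality to the support of $\sigma$ and extracts a single vertex with few neighbors inside the support, directly yielding the bound $n(q+\xi)/k$, whereas you count $e(F,U)$ globally, which forces you to use the sharp form of the mixing lemma with the $(1-|A|/n)(1-|B|/n)$ correction (as you correctly note) and lands you first on the slightly stronger bound $qn/(k-\xi)$, from which the stated bound follows by the elementary case analysis you give; that final algebraic step, including the vacuous case $q+\xi\geq k$ which also absorbs the degenerate case $\xi=k$, checks out.
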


\subsubsection{Structural tool}

Together with the two tools mentioned, an important ingredient of many of our proofs is Metsch's geometricity criteria (see Theorem~\ref{Metsch}). Metsch's criteria gives sufficient conditions on the parameters of a graph to have a clique geometry.

\subsection{General outline of the proofs}

The proofs of Theorem~\ref{main-general-case} (a partial solution for distance-regular graphs of bounded diameter)  and Theorem~\ref{main-thm} (a complete solution in the diameter-3 case) are quite similar. For a distance-regular graph $X$, using Observation \ref{obs1}, we show that either the motion (see Def. \ref{def-motion}) of $X$ is linear in the number of vertices, or strong restrictions on the parameters of $X$ hold. It turns out that these restrictions suffice to yield good estimates of the eigenvalues of $X$ in the case of primitive distance-regular graphs and in the case of diameter 3. Thus, when  the number $\lambda$ of common neighbors for any pair of adjacent vertices  is small enough, we use Lemma \ref{mixing-lemma-tool} to get a lower linear bound on the motion. In the remaining case, we use Metsch's criteria for geometricity (see Theorem~\ref{Metsch}) to show that $X$ is a geometric distance-regular graph with smallest eigenvalue $-m$, where $m$ is bounded above by a function of the diameter. In the case of diameter $3$ we obtain a tight bound on $m$, reducing the problem to geometric distance-regular graphs with smallest eigenvalue $-3$ and we use the almost complete classification of such graphs obtained by Bang~\cite{Bang-diam-3} and Bang, Koolen~\cite{Bang-Koolen-diam-3} (see Theorem \ref{bang-koolen} in this paper). No analogous classification for the geometric distance-regular graphs with smallest eigenvalue $-m$ appears to be known for $m\geq 4$. We mention the following conjecture.

\begin{conjecture}[Bang, Koolen  {\cite[Conjecture 7.4]{Bang-Koolen-conj}}]\label{conj-bang-k}
For a fixed integer $m\geq 2$, any geometric distance-regular graph with smallest eigenvalue $-m$, diameter $d\geq 3$ and $\mu\geq 2$ (where $\mu$ stands for the number of common neighbors of any pair of vertices at distance~2) is either a Johnson graph, or a Hamming graph, or a Grassmann graph,  or a bilinear forms graph, or the number of vertices is bounded above by a function of $m$.
\end{conjecture}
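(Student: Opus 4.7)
Since this conjecture is open for $m\geq 4$, my proposal can only outline a plausible line of attack rather than a complete proof; I will describe how I would approach it and flag the decisive obstacle.

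First, I would work systematically with the clique partition provided by the geometry. By the definition of ``geometric with smallest eigenvalue $-m$'', each vertex of $X$ lies in exactly $m$ Delsarte cliques and each edge lies in a unique such clique. This yields a partial linear space $(\mathcal{P},\mathcal{L})$ whose points are the vertices of $X$ and whose lines are the Delsarte cliques; lines have size $k/m+1$. The plan is to promote $(\mathcal{P},\mathcal{L})$ to something closer to a classical incidence geometry by exploiting distance-regularity. The hypothesis $\mu\geq 2$ is crucial here: if two points at graph-distance $2$ share several common neighbors, one can try to force Pasch-like intersection axioms on pairs of lines, which in turn push the geometry toward either a Grassmann-type, a bilinear-forms-type, or a Hamming/Johnson-type structure.

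Next, I would analyze the local graph $X(v)$ induced on the neighbors of a vertex $v$. In a geometric DRG with smallest eigenvalue $-m$, $X(v)$ is a disjoint union of $m$ cliques of size $k/m$ together with extra edges encoding $\lambda$; for each of the four target families this extra structure is well understood (e.g.\ a grid for Johnson, a disjoint union of cliques for Hamming, a collinearity graph of a classical point-line geometry for Grassmann and bilinear forms). I would try to show that, once $|V(X)|$ is sufficiently large in terms of $m$, the local graph is forced to be of one of these specific shapes by combining: (i) a Metsch-type geometricity argument applied one level down, inside $X(v)$; (ii) Terwilliger-style inequalities on the intersection array that couple $\lambda$, $\mu$ and the spectrum; and (iii) Bose--Laskar-style characterizations of grid- and Grassmann-type local graphs. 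In parallel, one would cut down the admissible intersection arrays using the $b_j/c_{j+1}$ inequality stated earlier in the excerpt, reducing $X$ to a narrow combinatorial window.

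The decisive obstacle, as I see it, is precisely the step that separates the four classical families from sporadic examples: one needs a uniform local-to-global theorem asserting that any distance-regular graph whose local structure resembles that of one of the four families \emph{is} in that family, once the diameter and parameters exceed explicit thresholds depending on $m$. For $m=2$ and $m=3$ such characterizations exist and are what Bang--Koolen exploit, but the number of candidate local structures grows rapidly with $m$ and no uniform treatment is known. A realistic first step would be to settle $m=4$ by extending the diameter-$3$ analysis of Bang--Koolen, and then to try to extract from that proof a template for induction on $m$; I would expect that without a fundamentally new idea this inductive step cannot be made uniform in $m$, which is essentially why the conjecture remains open.
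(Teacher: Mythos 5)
This statement is an open conjecture of Bang and Koolen (their Conjecture 7.4), which the paper merely quotes as motivation; the paper contains no proof of it, and indeed explicitly notes that no classification of geometric distance-regular graphs with smallest eigenvalue $-m$ is known for $m\geq 4$. You correctly recognized this, and your outline is a reasonable survey of the standard attack (Delsarte-clique partial linear space, local graph analysis, Metsch/Terwilliger-type parameter constraints) together with an accurate identification of the genuine obstruction, namely the absence of a uniform local-to-global characterization theorem for the four classical families as $m$ grows. Since there is no proof in the paper to compare against, no further assessment is possible; just be aware that the known results for $m=2$ (Seidel, Brouwer--Cohen--Neumaier) and $m=3$ with $d=3$ (Bang, Bang--Koolen), which the paper does use, are the current frontier.
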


We note that  for a Grassmann graphs and for bilinear forms graphs the motion is $\geq cn$ for some absolute constant $c>0$. So in Theorem \ref{main-general-case}, only the status of the case $\mu = 1$ remains open under the Bang-Koolen conjecture. 

Despite the fact that the proofs of Theorems \ref{main-thm} and \ref{main-general-case} follow similar strategies, and the fact that the problem for diameter $3$ appears to be just a special case of the problem for fixed diameter, treated in Theorem \ref{main-general-case}, we still keep separate proofs for them. This is done for two reasons. First, in the case of diameter $3$ we do not assume primitivity of the graph $X$. The second reason is that the eigenvalue bound given in Theorem \ref{main-general-case} is not tight enough. It is still possible to get desired bound from Theorem \ref{main-general-case} using additional trick, but essentially this argument is what is done in the proof we provide. 

Our strategy of proving Theorem \ref{main-coherent} (coherent configurations of rank 4) is again similar to the strategy for distance-regular graphs. All primitive coherent configurations of rank 4 naturally split into three families: distance-regular graphs of diameter 3, association schemes where each of three constituents is of diameter 2, and coherent configurations with two oriented colors. Since the result for the distance-regular case is proven in Theorem \ref{main-thm} and the case of oriented colors is easy, most of our effort goes into the case of association schemes with three constituents of diameter 2. First, we apply Observation \ref{obs1} to get that either the motion is linear, or the parameters of the configuration satisfy certain inequalities. This allows us to approximate the eigenvalues of the constituent graphs by relatively simple expressions. Thus our goal becomes to show that for at least  one of the constituents Lemma~\ref{mixing-lemma-tool} can be applied effectively. In the cases when we are not able to do this, we show that one of the constituents is a line graph. This allows us to use the classification of edge- and co-edge-regular graphs with smallest eigenvalue $-2$ by Seidel \cite{Siedel} (see Theorem \ref{geom-eig-2}). Now we are in a better position to apply  Lemma~\ref{mixing-lemma-tool} to one of the constituents. The reduction to Lemma~\ref{mixing-lemma-tool} in the case when one of the constituents is a line graph requires several new ideas, as well as analogs of the arguments used by Sun and Wilmes in the proof of Lemma 3.5 in \cite{Sun-Wilmes} and arguments used by Metsch in the proof of the main Theorem in {\cite{Metsch}}. See the introduction to Section \ref{sec-coherent} for a more detailed outline.

\subsection{Organization of the paper}

The paper is organized as follows. In Section \ref{sec-prelim} we introduce definitions and concepts that will be used throughout the paper. In Section \ref{sec-dist-reg} we discuss the proofs of the main results for distance regular graphs of diameter 3, and for primitive distance regular graphs of bounded diameter. 

In Sections \ref{sec-large-deg} and \ref{sec-spectral} we show that a non-geometric non-bipartite distance-regular graph of diameter 3 satisfies statement of Theorem \ref{main-thm}. More specifically, in Section \ref{sec-large-deg} we consider the case of vertex degree $k>n\gamma$ linear in the number of vertices; and in Section \ref{sec-spectral} we reduce the remaining case $k\leq \gamma n$ to the case of geometric graphs with smallest eigenvalue $-3$, or to the case of bipartite graphs. The case of bipartite distance-regular graphs of diameter~3 is considered in Section \ref{sec-bipartite}. In Section \ref{sec-geom} we present required background on geometric distance-regular graphs. We finish the proof of Theorem \ref{main-thm} in Section \ref{sec-diam-3-proof}. 

 The tradeoff inequality for the intersection numbers of a distance regular is proved in Section \ref{sec-tradeoff}.  In Sections \ref{sec-dist-number-gen-drg} and \ref{sec-drg-approx} we estimate the minimal distinguishing number and the spectral gap of a distance-regular graph in terms of its intersection numbers. In Section~\ref{sec-general} we finally prove Theorem \ref{main-general-case}, i.e, show that non-geometric primitive distance-regular graphs have motion linear in the number of vertices.

In Section \ref{sec-coherent} we treat the primitive coherent configurations of rank 4. Section \ref{sec-coh-approx} provides bounds on the eigenvalues of the constituents of a coherent configuration of rank~$4$ with certain properties.  In Section \ref{sec-coh-reduction} we reduce the problem to coherent configurations with a constituent, which possesses a rather restrictive structure. More specifically, one of the constituents in the remaining cases is a line graph (and is strongly regular in most cases). This cases are treated in  Section \ref{sec-subsec-str-reg}. Finally, in Section \ref{sec-coherent-thm-subsec} we combine obtained results into the proof of Theorem \ref{main-coherent}. 

Section \ref{sec-summary} summarizes the results and discusses approaches to the general problem (Conjecture \ref{conj-3}).

\section*{Acknowledgments}

The author is grateful to Professor L\'aszl\'o Babai for introducing him to the problems discussed in the paper and suggesting possible ways of approaching them, his invaluable assistance in framing the results, and constant support and encouragement.

\section{Preliminaries}\label{sec-prelim}
In this section we introduce coherent configurations, distance regular graphs and other related definitions and concepts that will be used throughout the paper. Distance-regular graphs are the subject of the monograph \cite{BCN} and survey  article \cite{Koolen-survey}. For more about coherent configurations we refer to \cite{Babai-GI}.

\subsection{Basic concepts and notation for graphs and groups}

Let $X$ be a graph. We will always denote by $n$ the number of vertices of $X$ and if $X$ is regular we denote by $k$ its degree. Denote by $\lambda = \lambda(X)$ the minimum number of common neighbors for pairs of adjacent vertices in $X$. Denote by $\mu = \mu(X)$ the maximum number of common neighbors for pairs of vertices at distance 2. We will denote the diameter of $X$ by $d$. If the graph is disconnected, then its diameter is defined to be $\infty$. Denote by $q(X)$ the maximum number of common neighbors of two distinct vertices in $X$.

Let $A$ be the adjacency matrix of $X$. Suppose that $X$ is $k$-regular. Then the all-ones vector is an eigenvector of $A$ with eigenvalue $k$. We will call them the \textit{trivial eigenvector} and the \textit{trivial eigenvalue}. All other eigenvalues of $A$ have absolute value not greater than $k$. We call them \textit{non-trivial} eigenvalues.

\begin{definition}
For a graph $X = (V(X), E(X))$ the \textit{line graph} $L(X)$ is defined as the graph on the vertex set $E(X)$, for which $e_1, e_2\in E(X)$ are adjacent in $L(X)$ if and only if they are incident to a common vertex in $X$.  
\end{definition}

Let $N(v)$ be the set of neighbors of vertex $v$ in $X$ and $N_i(v) = \{ w\in X| \dist(v,w) = i\}$ be the set of vertices at distance $i$ from $v$ in the graph $X$. 

Let $G$ be a transitive permutation group on set $\Omega$. A $G$-invariant partition $\Omega = B_1 \sqcup B_2 \sqcup ... \sqcup B_t$ is called a system of imprimitivity for $G$.

\begin{definition}
A transitive permutation group is called \textit{primitive} if it does not admit any non-trivial system of imprimitivity.
\end{definition}

\subsection{Coherent configurations}
Our terminology follows \cite{Babai-GI}. Denote $[m] =\{1,2,..., m\}$.

\noindent Let $V$ be a finite set, elements of which will be referred to as the vertices of a configuration.
\begin{definition}\label{def-conf} A \textit{configuration} $\mathfrak{X}$ of rank $r$ on the set $V$ is a pair $(V, c)$, where $c$ is a map $c:V\times V \rightarrow \{0, 1,..., r-1\}$ such that 
\begin{enumerate}[(i)]
\item $c(v,v) \neq c(u,w)$ for any $v,u,w\in V$ with $u\neq w$,
\item for any $i<r$ there is $i^{*}<r$ such that $c(u,v) = i$ implies $c(v,u) = i^{*}$ for all $u,v \in V$.
\end{enumerate}
\end{definition}
The value $c(u,v)$ is called the \textit{color} of a pair $(u,v)$. The color $c(u,v)$ is a \textit{vertex color} if $u = v$, and is an \textit{edge color} if $u\neq v$. Then condition (i) says that edge colors are different from vertex colors, and condition (ii) says that the color of a pair $(u,v)$ determines the color of $(v,u)$. 

For every $i<r$ consider the set $R_{i} = \{(u,v): c(u,v) = i\}$ of pairs of color $i$ and consider a digraph $X_i = (V, R_i)$. We will refer to both $R_i$ and $X_i$ as  the color-$i$ \textit{constituent} of $\mathfrak{X}$.
Clearly, there are two possibilities: if $i = i^*$, then color $i$ and the corresponding constituent $X_i$ are called \textit{undirected}; if $i\neq i^*$, then $(i^*)^* = i$ and color $i$ together with the corresponding constituent $X_i$ are called \textit{oriented}. Clearly, $\{R_i\}_{i< r}$ form a partition of $V\times V$. 

 We denote the adjacency matrix of the digraph $X_i$ by $A_i$. The adjacency matrices of the constituents satisfy
\begin{equation}\label{eq-constit-sum}
\sum\limits_{i = 0}^{r-1} A_i = J_{|V|} = J,
\end{equation}  
where $J$ denotes the all-ones matrix.

Note that conditions (1) and (2) of Definition \ref{def-conf} in matrix language mean the following. There exists a set $\mathcal{D}$ of colors, such that the identity matrix can be represented as a sum $\sum\limits_{i\in \mathcal{D}}A_i = I$. For any color $i$ we have $A_i^T = A_{i^*}$.

For the set of colors $\mathcal{I}$ we use notation $X_{\mathcal{I}}$ to denote the digraph on the set of vertices $V$, where arc $(x,y)$ is in $X_{\mathcal{I}}$ if and only if $c(x,y) \in \mathcal{I}$. For small sets we omit braces, for example, $X_{1,2}$ will be written in place of $X_{\{1,2\}}$.
\begin{definition}
A configuration $\mathfrak{X}$ is \textit{coherent} if 
\begin{enumerate}[(i)]
\item[(iii)] for all $i,j,t<r$ there is an \textit{intersection number} $p_{i,j}^{t}$ such that for all $u,v\in V$ if $c(u,v) = t$, then there exist exactly $p_{i,j}^{t}$ vertices $w\in V$ with $c(u,w) = i$ and $c(w,v) = j$.
\end{enumerate}
\end{definition}

The definition of a coherent configuration has several simple, but important, consequences. Let $\mathfrak{X}$ be a coherent configuration. Every edge color is aware of the colors of its tail and head. That is, for every edge color $i$ there exist vertex colors $i_{-}$ and $i_{+}$ such that if $c(u,v) = i$, then $c(u,u) = i_{-}$ and $c(v,v) = i_{+}$. Indeed, they are the only colors for which $p_{i,i_{+}}^{i}$ and $p_{i_{-}, i}^{i}$ are non-zero. Moreover, for every color $i$ its in-degree and out-degree are well-defined, as $k_{i}^{-} = p_{i^{*}, i}^{i_{+}}$ and $k_{i}^{+} = p_{i, i^{*}}^{i_{-}}$, respectively.

Observe that existence of the intersection numbers is equivalent to the following conditions on the adjacency matrices of the constituent digraphs.
\begin{equation}\label{eq-int-num}
A_iA_j = \sum\limits_{t = 0}^{r-1}p_{i,j}^{t}A_{t} \quad \text{ for all } i,j<r. 
 \end{equation}
 
Hence, $\{A_i: 0\leq i\leq r-1\}$ form a basis of an $r$-dimensional algebra with structure constants $p_{i,j}^t$. In particular, every $A_i$ has minimal polynomial of degree at most $r$.

\begin{definition}
A configuration $\mathfrak{X}$ is \textit{homogeneous} if $c(u,u) = c(v,v)$ for every $u,v\in V$.
\end{definition}

Unless specified otherwise, we will always assume that $0$ is the vertex color of a homogeneous configuration. The constituent which corresponds to the vertex color is also referred as the diagonal constituent. 

In a homogeneous coherent configuration we have $k_{i}^{+} = k_{i}^{-}$ for every color $i$. We denote this common value by $k_i$.

The intersection numbers of a homogeneous coherent configuration satisfy the following relations.

\begin{equation}\label{eq-sum-param}
 \sum\limits_{j = 0}^{r} p_{ij}^{t} = k_i \quad \text{and} \quad p_{i,j}^s k_s = p_{s,j}^{i} k_i.
 \end{equation}

Let $i,j<r$ be colors. Take $u, v\in V$ with $c(u,v) = j$. Define $\dist_i(u,v)$ to be the length $\ell$ of a shortest path $u_0 = u, u_1, ..., u_{\ell} = v$ such that $c(u_{t-1}, u_t) = i$ for $t\in [\ell]$. We claim that $\dist_{i}(j) = \dist_{i}(u,v)$ is well defined, i.e., does not depend on the choice of $u,v$, but only on the colors $j$ and $i$. 

Indeed, let $c(u,v) = c(u', v') = j$ and suppose there exist a path $u_0 = u, u_1, ..., u_{\ell} = v$ of length $\ell$, such that $c(u_{t-1}, u_t) = i$. Denote by $e_{t} = c(u_{t}, v)$. Then we know that $p_{i, e_{t}}^{e_{t-1}}\neq 0$ for $t\in [\ell-1]$. Let $u_0' = u'$. Then, as $p_{i, e_{t}}^{e_{t-1}}\neq 0$, by induction, there exists a $u_{t}'$ such that $c(u_{t-1}', u_{t}') = i$ and $c(u_{t}, v) = e_{t}$ for all $t\in [\ell-1]$. Hence, $\dist(u',v')\leq \dist(u,v)$ and similarly $\dist(u,v)\leq \dist(u', v')$. Therefore, $\dist_i(j)$ is well-defined.

\begin{obs}\label{obs-color-dist}
If $\dist_{i}(j)$ is finite, then $\dist_{i}(j)\leq r-1$.
\end{obs}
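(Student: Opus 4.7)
The plan is to fix a shortest $i$-colored path realizing $\dist_i(j)$ and read off the colors from its vertices to the endpoint; the claim will be that those colors are pairwise distinct, giving the bound $\dist_i(j)+1 \le r$ immediately from the fact that there are only $r$ colors available. Concretely, let $\ell = \dist_i(j)$, choose $u,v\in V$ with $c(u,v)=j$, and pick a shortest $i$-path $u=u_0,u_1,\ldots,u_\ell=v$ with $c(u_{t-1},u_t)=i$ for $t\in[\ell]$. Define $e_t = c(u_t,v)$ for $t=0,1,\ldots,\ell$, so $e_0 = j$ and $e_\ell$ is the diagonal vertex color. There are only $r$ possible values of $e_t$, so once I show the $e_t$ are pairwise distinct I conclude $\ell+1\le r$.

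For distinctness, suppose toward a contradiction that $e_s = e_t$ for some $0\le s<t\le\ell$. The paragraph preceding the observation establishes that $\dist_i(x,y)$ depends only on the color $c(x,y)$; in particular $\dist_i(u_s,v) = \dist_i(u_t,v)$. But the suffix $u_t,u_{t+1},\ldots,u_\ell=v$ is an $i$-path witnessing $\dist_i(u_t,v)\le \ell - t$, hence $\dist_i(u_s,v)\le \ell-t$ as well. Concatenating the prefix $u_0,u_1,\ldots,u_s$ of length $s$ with a shortest $i$-path from $u_s$ to $v$ yields an $i$-walk from $u$ to $v$ of length at most $s+(\ell-t)<\ell$, and therefore an $i$-path of length strictly less than $\ell$ between vertices of color $j$, contradicting the minimality of $\ell$. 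This contradiction forces the $e_t$ to be distinct, completing the proof.

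The only real content is the color-only dependence of $\dist_i$, which is given for free by the preceding discussion, so I expect no serious obstacle — the argument is essentially a pumping-style shortcutting along the shortest path. The one minor point to keep in mind is that ``path'' here should be read as ``walk'' (or one should pass to a subpath of the concatenation), but the strict inequality $s+(\ell-t)<\ell$ makes either reading produce the required contradiction.
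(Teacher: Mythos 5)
Your proof is correct and follows the same route as the paper: fix a shortest $i$-colored path, record the colors $e_t = c(u_t,v)$, and observe that a repeated color would let you shortcut the path (via the color-only dependence of $\dist_i$), so the $e_t$ are distinct and a counting argument gives the bound. The paper states the shortening step without detail; you have simply filled it in.
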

\begin{proof}
 Suppose that $\dist_{i}(j)$ is finite, then for $c(u,v) = j$ there exists a shortest path $u_0 = u, u_1, ..., u_{\ell} = v$ with $c(u_{t-1}, u_t) = i$. Denote by $e_{t} = c(u_{t}, v)$ for $0\leq t\leq \ell-1$. Then, all $e_{t}$ are different edge colors, or the path can be shortened. Thus $\ell \leq r-1$.
\end{proof}

\begin{definition}
A coherent configuration is called an \textit{association scheme} if $c(u,v) = c(v,u)$ for any $u,v\in V$. 
\end{definition}

\begin{corollary}
Any association scheme is a homogeneous configuration.
\end{corollary}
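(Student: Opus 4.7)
The plan is to derive homogeneity directly from the symmetry hypothesis together with the tail/head property of coherent configurations that was noted just before the corollary.

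First I would observe that in an association scheme every color is self-paired, i.e., $i = i^{*}$ for every color $i$. Indeed, property (ii) of a configuration gives $c(v,u) = c(u,v)^{*}$, while the defining symmetry of an association scheme gives $c(v,u) = c(u,v)$; combining the two forces $i^{*} = i$.

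Next I would invoke the observation made just after Definition of a coherent configuration: for each edge color $i$ there exist vertex colors $i_{-}, i_{+}$ such that whenever $c(u,v) = i$ one has $c(u,u) = i_{-}$ and $c(v,v) = i_{+}$. Pick any two distinct vertices $u, v \in V$ and set $i := c(u,v)$. Then $c(u,u) = i_{-}$ and $c(v,v) = i_{+}$. Applying the same statement to the pair $(v,u)$, whose color equals $i^{*} = i$ by the first step, yields $c(v,v) = i_{-}$ and $c(u,u) = i_{+}$. Comparing the two identifications gives $c(u,u) = c(v,v)$, as required. Since $u, v$ were arbitrary distinct vertices of $V$, the configuration is homogeneous.

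There is no genuine obstacle here: the argument is a two-line consequence of already stated facts. The only thing to watch is that the definition of $i_{-}, i_{+}$ is stated for edge colors rather than vertex colors, so one must apply it to the (necessarily edge) color $c(u,v)$ with $u \ne v$; the trivial case $|V| = 1$ is homogeneous by vacuous truth.
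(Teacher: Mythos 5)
Your proof is correct and takes essentially the same route as the paper: both arguments combine the tail/head awareness property of coherent configurations with the symmetry $c(u,v)=c(v,u)$ to conclude $i_-=i_+$ and hence $c(u,u)=c(v,v)$ for every pair $u\neq v$. The paper's proof is terser (one sentence); yours spells out the intermediate step $i=i^*$ and the comparison of $(u,v)$ with $(v,u)$, which is exactly what the paper leaves implicit.
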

\begin{proof}
Since in a coherent configuration color of every edge is aware of the colors of its head and tail vertices, these vertices have the same color for any edge. 
\end{proof}

Note, for an association scheme every constituent digraph is a graph. Thus, for an association scheme and $i\neq 0$ the $i$-th constituent $X_i$  is a $k_i$-regular graph with $\lambda(X_i) = p_{i,i}^{i}$.
Moreover, it is clear that $p_{i,j}^s = p_{j,i}^s$.

\begin{definition}
A homogeneous coherent configuration is called \textit{primitive} if every constituent is strongly connected.
\end{definition}

It is not hard to check that every constituent graph of a homogeneous coherent configuration is stongly connected if and only if it is weakly connected.

Note, that by Observation \ref{obs-color-dist} we have $\dist_{i}(j)\leq r-1$ for any edge colors $i,j$ of a primitive coherent configuration.

The following definition will be useful in Section \ref{sec-coherent}.

\begin{definition}\label{def-assoc-diam} We say that an association scheme has \textit{diameter} $d$ if any non-diagonal constituent has diameter at most $d$ and there exists a non-diagonal constituent of diameter~$d$.
\end{definition}

Note, that if an association scheme has finite diameter, then in particular it is  primitive. Alternatively, every primitive association scheme of rank $r$ has diameter $\leq r-1$. 

\begin{definition}
A regular graph is called \textit{edge-regular} if any pair of adjacent vertices has the same number of common neighbors. A graph is called \textit{co-edge-regular} if its complement is edge-regular. 
\end{definition}

Observe that for every undirected color $i$ constituent $X_i$ is an edge-regular graph.

\subsection{Distance-regular graphs}
\begin{definition}
 A connected graph $X$ of diameter $d$ is called \textit{distance-regular} if for any $0\leq i\leq d$ there exist constants $a_i, b_i, c_i$ such that for any $v\in X$ and any $w\in N_i(v)$ the number of edges between $w$ and $N_i(v)$ is $a_i$, between $w$ and $N_{i+1}(v)$ is $b_i$, and between $w$ and $N_{i-1}(v)$  is $c_i$. The sequence
 \[\iota(X) = \{b_0, b_1,\ldots, b_{d-1}; c_1, c_2,\ldots, c_d\}\]
 is called the \textit{intersection array} of $X$.
\end{definition}

 Note, that for a distance-regular graph $b_d = c_0 = 0$, $b_0 = k$, $c_1 = 1$, $\lambda = a_1$ and $\mu = c_2$. From what is written below it follows that numbers $k_i = |N_i(v)|$ do not depend on vertex $v\in X$. By edge counting, the following straightforward conditions on the parameters of distance-regular graphs hold.
\begin{enumerate}
\item $a_i+b_i+c_i = k$ for every $i$,
\item $k_ib_i = k_{i+1}c_{i+1}$, 
\item $b_{i+1} \leq b_{i}$ and $c_{i+1}\geq c_{i}$ for $0\leq i\leq d-1$.
\end{enumerate}

With any graph of diameter $d$ we can naturally associate matrices $A_{i}\in M_n$, where rows and columns are marked by vertices, with entries $(A_i)_{u,v} = 1$ if and only if $\dist(u,v) = i$. That is, $A_i$ is the adjacency matrix of the distance-$i$ graph $X_i$ of $X$. For a  distance-regular graph they satisfy the relations
\begin{equation}
A_0 = I,\quad A_1 =: A,\quad \sum\limits_{i=0}^d A_i = J,
\end{equation}
\begin{equation}\label{eq-rec}
 AA_i = c_{i+1}A_{i+1}+a_iA_i+b_{i-1}A_{i-1}\quad \text{for } 0\leq i\leq d,
 \end{equation}  
where $c_{d+1} = b_{-1} = 0$ and $A_{-1} = A_{d+1} = 0$. Clearly, Eq. \eqref{eq-rec} implies that for every $0\leq i\leq d$ there exist a polynomial $\nu_i$ of degree exactly $i$, such that $A_i = \nu_i(A)$. Moreover, minimal polynomial of $A$ has degree exactly $d+1$. Hence, since $A$ is symmetric, $A$ has exactly $d+1$ distinct real eigenvalues. Additionally, we conclude that for all $0\leq i,j,s\leq d$ there exist numbers $p_{i,j}^{s}$, such that 
\[A_iA_j = \sum\limits_{s = 0}^{d}p_{i,j}^{s} A_s.\]
Recalling the definition of $A_i$ it implies that for any $u,v\in X$ with $\dist(u,v) = s$ there exist exactly $p_{i,j}^{s}$ vertices at distance $i$ from $u$ and distance $j$ from $v$, i.e., $|N_i(u)\cap N_j(v)| = p_{i,j}^{s}$. 

Therefore, every distance-regular graph $X$ of diameter $d$ induces an association scheme $\mathfrak{X}$ of rank $d+1$, where vertices are connected by an edge of color $i$ in $\mathfrak{X}$ if and only if they are at distance $i$ in $X$ for $0\leq i\leq d$.  Hence, we get the following statement.
\begin{lemma}
If graph $X$ is distance-regular of diameter $d$, then the distance-$i$ graphs $X_i$ form constituents of an association scheme $\mathfrak{X}$ of rank $d+1$ and diameter $d$. In opposite direction, if an association scheme of rank $d+1$ has a constituent of diameter $d$, then this constituent is distance-regular. 
\end{lemma}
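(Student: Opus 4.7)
The forward direction is essentially already in the text preceding the lemma. For a distance-regular graph $X$ of diameter $d$, the distance-$i$ adjacency matrices $A_i$ satisfy $A_iA_j=\sum_{s} p_{i,j}^{s}A_s$ with intersection numbers depending only on the colors, and the map $(u,v)\mapsto \dist_X(u,v)$ is symmetric with diagonal value $0$, so we obtain an association scheme of rank $d+1$ whose constituents are exactly the $X_i$. The constituent $X_1=X$ itself has diameter $d$.

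For the converse, assume $\mathfrak{X}$ is an association scheme of rank $d+1$ whose constituent $Y=X_k$ has diameter $d$. The plan is a short pigeonhole argument on the color set. Since $Y$ has finite diameter it is connected, so for every color $j$ and every pair $u,v$ with $c(u,v)=j$ we have $\dist_Y(u,v)\le d$; in particular $\dist_k(j)$ is finite, and by the Observation in the excerpt $\dist_k(j)\le r-1=d$. I would then partition the $d+1$ colors of $\mathfrak{X}$ into classes $S_i=\{j:\dist_k(j)=i\}$ for $0\le i\le d$. Each $S_i$ is nonempty: $S_0$ contains the diagonal color, and for $1\le i\le d$, along any shortest $Y$-path $u_0,u_1,\ldots,u_d$ one has $\dist_Y(u_0,u_i)=i$, so $c(u_0,u_i)\in S_i$. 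Having $d+1$ nonempty classes partition exactly $d+1$ colors forces $|S_i|=1$, yielding a bijection $i\mapsto j(i)$ between $Y$-distances and scheme colors: two vertices sit at $Y$-distance $i$ iff their color is $j(i)$.

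Distance-regularity of $Y$ then follows directly from the scheme axioms: for any $u,v$ with $c(u,v)=j(i)$, the number of $w$ with $c(u,w)=j(i')$ and $c(w,v)=k$ is the intersection number $p_{j(i'),k}^{j(i)}$, which depends only on $i$; specializing to $i'\in\{i-1,i,i+1\}$ produces the distance-regular parameters $c_i,a_i,b_i$. The main obstacle is really just the setup for the pigeonhole: one must recognize that having $d+1$ colors map into the $d+1$ possible $Y$-distances, with every distance realized, forces the correspondence to be a bijection. Once this is observed, the whole argument collapses to a one-line pigeonhole followed by a direct reading of scheme intersection numbers as distance-regular parameters.
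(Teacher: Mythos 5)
Your proof is correct, and it handles both directions appropriately; the paper itself states this lemma without an explicit proof, treating the forward direction as established by the preceding discussion of the $p_{i,j}^{s}$ and leaving the converse as routine. Your pigeonhole argument for the converse — pairing the $d+1$ colors with the $d+1$ possible values of $\dist_k(\cdot)\in\{0,\dots,d\}$, using the cited Observation that $\dist_k(j)\le r-1$ and the realizability of every distance along a geodesic of length $d$ to get a bijection, and then reading off $c_i,a_i,b_i$ as $p_{j(i-1),k}^{j(i)}$, $p_{j(i),k}^{j(i)}$, $p_{j(i+1),k}^{j(i)}$ — is exactly the natural completion and is sound.

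One small caveat on the forward direction, which is really an imprecision in the lemma's phrasing rather than a gap in your argument: the claim that the resulting scheme has ``diameter $d$'' in the sense of Definition~\ref{def-assoc-diam} requires every non-diagonal constituent $X_i$ to have finite diameter, i.e.\ $X$ must be primitive (for a bipartite distance-regular graph some $X_i$ is disconnected, e.g.\ the distance-$2$ graph of $K_{m,m}$). Your proof, like the paper's implicit one, only exhibits that $X_1=X$ has diameter $d$; the rest follows from Observation~\ref{obs-color-dist} precisely when all $\dist_1(j)$ are finite. If you want the forward statement to hold as written, either assume primitivity or weaken ``diameter $d$'' to ``has a constituent of diameter $d$.''
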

Note, that 
\[p_{1,i}^{i-1} = b_{i-1},\quad p_{1,i}^{i} = a_i,\quad p_{1,i}^{i+1} = c_{i+1}\] 
Let $\eta$ be an eigenvalue of $A$, then Eq. \eqref{eq-rec} implies that 
\[\eta \nu_i(\eta) = c_{i+1}\nu_{i+1}(\eta)+a_i\nu_i(\eta)+b_{i-1}\nu_{i-1}(\eta), \text{ so}\]
\[\eta u_i(\eta) = c_{i}u_{i-1}(\eta)+a_i u_i(\eta)+b_{i}u_{i+1}(\eta),\]
where $u_i(\eta) = \frac{\nu_i(\eta)}{k_i}$. Therefore, the eigenvalues of $A$ are precisely the eigenvalues of tridiagonal $(d+1)\times (d+1)$ \textit{intersection matrix} 

\[T(X) = \left(\begin{matrix} 
a_0 & b_0 & 0 & 0 & ... \\ 
c_1 & a_1 & b_1 & 0 &...\\
0& c_2 & a_2 & b_2 & ...\\
...& & \vdots & & ...\\
...& & 0& c_d & a_d  

\end{matrix}\right)\]

In Section \ref{sec-spectral} the exact expressions of $p_{i,j}^s$ for a diameter-3 distance-regular graph will be useful. We summarize the computations in

\begin{proposition}\label{intersection-num-diam-3}
The parameters of a distance-regular graph $X$ of diameter 3 satisfy
\[p_{1,1}^1 = \lambda, \quad p_{1,2}^1 = p_{2,1}^1 = k-\lambda-1, \quad p_{2,2}^1 = \frac{(k-\lambda-1)(k-a-\mu)}{\mu},\]
\[p_{2,3}^1 = p_{3,2}^1 = \frac{a(k-\lambda-1)}{\mu},\quad p_{3,3}^1 = \frac{(k-b)(k-\lambda-1)a}{b\mu},\]
\[p_{1,1}^2 = \mu,\quad p_{1,2}^2 = p_{2,1}^2 = k-a-\mu, \quad p_{1,3}^2 = p_{3,1}^2 = a,\]
\[p_{2,2}^2 = ((k-\lambda-1)+\frac{ba-k+(k-a-\mu)(k-a-\mu-\lambda)}{\mu}),\] 
\[p_{2,3}^2 = p_{3,2}^2 = \frac{((k-b)+(k-a-\lambda)-\mu)a}{\mu},\]
\[p_{3,3}^2 = \left(\frac{(k-b)^2-\lambda(k-b)-k+ab}{\mu}\right)\frac{a}{b},\]
\[p_{1,2}^3 =p_{2,1}^3 = b, \quad p_{1,3}^3 = p_{3,1}^3 = k-b, \quad p_{2,2}^3 = \frac{((k-b)+(k-a-\lambda)-\mu)b}{\mu},\] 
\[p_{2,3}^3 = p_{3,2}^3 = \frac{(k-b)^2-(k-b)\lambda-k+ab}{\mu},\]
where $a = b_2$ and $b = c_3$.
\end{proposition}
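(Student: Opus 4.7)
The plan is a systematic derivation from the standard relations of an association scheme, using four ingredients: (i) $p_{1,j}^{j-1}=b_{j-1}$, $p_{1,j}^{j}=a_j$, $p_{1,j}^{j+1}=c_{j+1}$ read from the intersection array, (ii) symmetry $p_{i,j}^s=p_{j,i}^s$, (iii) the triple-counting identity $k_s p_{i,j}^s = k_i p_{s,j}^i$ from~\eqref{eq-sum-param}, and (iv) the recurrence $AA_i = c_{i+1}A_{i+1}+a_iA_i+b_{i-1}A_{i-1}$ from~\eqref{eq-rec}.

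First, from $a_i + b_i + c_i = k$ I obtain $a_2 = k-a-\mu$ and $a_3 = k-b$; iterating $k_i b_i = k_{i+1}c_{i+1}$ gives $k_2 = k(k-\lambda-1)/\mu$ and $k_3 = k(k-\lambda-1)a/(\mu b)$. Ingredients (i) and (ii) immediately yield all entries with an index equal to $1$: the ``trivial'' values $p_{1,1}^1 = \lambda$, $p_{1,2}^1 = k-\lambda-1$, $p_{1,1}^2 = \mu$, $p_{1,2}^2 = k-a-\mu$, $p_{1,3}^2 = a$, $p_{1,2}^3 = b$, $p_{1,3}^3 = k-b$, together with their symmetric versions. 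Applying (iii) with $s = 1$ and $i\in\{2,3\}$ gives $p_{i,j}^1 = (k_i/k)\,p_{1,j}^i$, which, after substituting the already-known $p_{1,j}^i$ and the valencies $k_2, k_3$, produces the four nontrivial entries $p_{2,2}^1, p_{2,3}^1, p_{3,2}^1, p_{3,3}^1$ of the $s=1$ block.

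For the remaining entries with $i, j, s \in \{2, 3\}$, a quick equation-count shows that (i)--(iii) together with the row sums $\sum_j p_{i,j}^s = k_i$ are insufficient to pin down all six unknowns; here I invoke (iv), using that $\{I, A, A_2, A_3\}$ is a basis of the Bose--Mesner algebra since $A$ has minimal polynomial of degree $4$. From $A^2 = \mu A_2 + \lambda A + kI$ I write $A_2 = (A^2 - \lambda A - kI)/\mu$; the recurrences $AA_2 = bA_3 + (k-a-\mu)A_2 + (k-\lambda-1)A$ and $AA_3 = aA_2 + (k-b)A_3$ then allow iterative reduction of any product to a combination of $I, A, A_2, A_3$. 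Computing $A_2^2 = (A^2A_2 - \lambda AA_2 - kA_2)/\mu$ and $A_2 A_3 = (A^2A_3 - \lambda AA_3 - kA_3)/\mu$ in this way, and reading off coefficients in the basis $\{I, A, A_2, A_3\}$, recovers $p_{2,2}^s$ and $p_{2,3}^s = p_{3,2}^s$ for $s = 0,1,2,3$. A single further application of (iii), namely $k_2 p_{3,3}^2 = k_3 p_{2,3}^3$, yields $p_{3,3}^2 = (a/b)\,p_{2,3}^3$, completing the table.

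The only obstacle is pure algebraic bookkeeping: the successive substitutions in the expansions of $A_2^2$ and $A_2 A_3$ are lengthy, and the resulting rational expressions must be simplified to match the stated formulas. No new combinatorial or spectral idea is required.
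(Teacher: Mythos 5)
Your proof is correct and is essentially the paper's approach: the paper's one-line proof (``compute recursively using $(AA_i)A_j = A(A_iA_j)$'') is exactly the basis reduction you carry out by writing $A_2 = (A^2 - \lambda A - kI)/\mu$ and applying the three-term recurrence to reduce $A_2^2$ and $A_2A_3$ to the $\{I,A,A_2,A_3\}$ basis. Your use of the symmetry $k_s p_{i,j}^s = k_i p_{s,j}^i$ to dispatch the $s=1$ block and to get $p_{3,3}^2$ from $p_{2,3}^3$ is a small but genuine economy over a fully recursive computation, but it does not change the nature of the argument.
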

\begin{proof}
The parameters could be computed recursively using identity $(AA_i)A_j = A(A_iA_j)$.
\end{proof}

\subsection{Weisfeiler-Leman refinement}

Let $\mathfrak{X} = (V, c)$ be a configuration. A \textit{refinement} of the coloring $c$ is a new coloring $c'$ also defined on $V\times V$ such that if $c'(x) = c'(y)$ for $x,y\in V\times V$, then $c(x) = c(y)$. If coloring $c'$ satisfies condition (ii) of Definition \ref{def-conf}, then $\mathfrak{X}' = (V, c')$ is a refined configuration.

An important example of a refinement was introduced by Weisfeiler and Leman in \cite{Weisfeiler-Leman} in 1968. The \textit{Weisfeiler-Leman refinement} proceeds in rounds. On each round it takes a configuration $\mathfrak{X}$ of rank $r$ and for each pair $(x,y)\in V\times V$ it encodes in a new color $c'(x, y)$ the following information: the color $c(x,y)$,  the number $wl_{i,j}(x, y) = |\{z: c(x,z) = i, c(z,y) = j\}|$ for all $i,j\leq r$. That is, pairs $(x_1,y_1)$ and $(x_2, y_2)$ receive the same color if and only if their colors and all the numbers $wl_{i,j}$ were equal. It is easy to check that for the refined coloring $c'$ structure $\mathfrak{X}' = (V, c')$ is a configuration as well. The refinement process applied to a configuration $\mathfrak{X}$ takes $\mathfrak{X}$ as an input on the first round, and on every subsequent round in takes as an input the output of the previous round . The refinement process stops when it reaches a stable configuration (i.e, $\mathfrak{Y}' = \mathfrak{Y}$). It is easy to see that process will always stop as the number of colors increases after a refinement round applied to a non-stable configuration. One can check that configurations that are stable under this refinement process are precisely coherent configurations. Therefore, the Weisfeiler-Leman refinement process takes any configuration and refine it to a coherent configuration.

The Weisfeiler-Leman refinement is \textit{canonical} in the following sense. Let $\mathfrak{X}$ and $\mathfrak{Y}$ be configurations and $\mathfrak{X}^{wl}, \mathfrak{Y}^{wl}$ be the refined configurations outputted by the process. Then the sets of isomorphisms are equal
\[\Iso(\mathfrak{X}, \mathfrak{Y}) = \Iso(\mathfrak{X}^{wl}, \mathfrak{Y}^{wl}) \subseteq \Sym(V).\]

Other important procedure that can be applied to a configuration is individualization. We say that a configuration $\mathfrak{X}^* = (V, c^*)$ is an \textit{individualization} of $\mathfrak{X}$ at pair $(x,y)\in V\times V$ if $c^*$ is obtained from $c$ by replacing $c(x,y)$ and $c(y,x)$ with the new (possibly equal) colors $c^*(x,y)$ and $c^{*}(y,x)$, which were not in the image of $c$. A configuration individualized at pair $(x,x)$ for $x\in V$ is said to be individualized at vertex $x$. Similarly, it is individualized at the set of vertices $S\subseteq V$ if it is subsequently individualized in each vertex from $S$.

\begin{definition}
Let $\mathfrak{X}$ be a configuration and $S\subseteq V$ be a set of vertices. Denote by $\mathfrak{X}'$ the configuration obtained from $\mathfrak{X}$ by individualization at the set $S$. We say that $S$ \textit{splits $\mathfrak{X}$ completely} with respect to some canonical refinement process $\mathfrak{r}$, if this refinement process applied to $\mathfrak{X}'$ stabilizes only when every vertex from $V$ get a unique color.
\end{definition}

We will use this definition only with respect to the Weisfeiler-Leman refinement process, so we will omit mentioning that in the future.

Observe, that if $S$ splits $\mathfrak{X}$ completely, then the pointwise stabilizer $\Aut(\mathfrak{X})_{(S)}$ is the identity group. Thus, in particular $|\Aut(\mathfrak{X})|\leq n^{|S|}$, where $n = |V|$. Hence, individualization/refinement techniques can be used to bound the order of the automorphism group.

\subsection{Thickness of a group}\label{sec-thickness}

For various applications an important measure of largeness of a group is its thickness.

\begin{definition}[Babai \cite{Babai-str-reg}]\label{def-thickness}
 The \textit{thickness} $\theta(G)$ of a group $G$ is the greatest $t$ such that the alternating group $A_t$ is involved in $G$. A group $H$ is said to be \textit{involved} in $G$ if $H \cong L/N$ for some $N\triangleleft L \leq G$.
\end{definition}

As Babai pointed out, this is an old concept, in \cite{Babai-str-reg} he just coined the term.

Clearly, if $\theta(G) = t$, then $|G|\geq \eee^{t(\ln(t)-1)}$. It turns out that for a primitive permutation group essentially tight upper bound on its order can be given in terms of the thickness by the result of Babai, Cameron and P\'alfy \cite{thickness-primitive}. (The result depends on the Classification of the Finite Simple Groups.)

\begin{theorem}[Babai, Cameron, P\'alfy]\label{BCP-thm}
If $G$ is a primitive permutation group of degree $n$ and thickness $t$, then $|G| = n^{O(t\log(t))}$.
\end{theorem}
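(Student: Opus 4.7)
The plan is to apply the O'Nan--Scott theorem, which (on the strength of CFSG) classifies every primitive permutation group $G$ of degree $n$ according to the structure of its socle $N \cong T^r$, where $T$ is a finite simple group: affine (HA), almost simple (AS), simple and compound diagonal (SD, CD), product action (PA), twisted wreath (TW), and holomorph (HS, HC) types. In each case one bounds $|G|$ by bounding $|T|$, the multiplicity $r$, and the index $[G:N]$ separately in terms of $n$ and $t$. Two CFSG-based ingredients drive this: (i) every alternating composition factor $A_m$ of $G$ satisfies $m \leq t$, and every non-alternating non-abelian simple composition factor $T$ has $\theta(T) = O(\log\log |T|)$, which controls $|T|$ in terms of the degree of its minimal faithful permutation representation; and (ii) the Liebeck--Cameron bound that any primitive almost simple group of degree $m$ has order at most $m^{O(\log m)}$, which, via inspection of the four families (alternating, classical, exceptional, sporadic), sharpens to $m^{O(t \log t)}$ once the thickness hypothesis is imposed.

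I would then dispatch the almost simple case (AS) directly using this refined bound. In the affine case (HA), $n = p^d$ and the point stabilizer is an irreducible subgroup of $\mathrm{GL}(d,p)$; the thickness constraint controls the composition structure of this linear group, and standard bounds on linear groups with bounded thickness yield $|G| \leq n^{O(t \log t)}$. For the wreath-like types (PA, CD, TW, HC), $G$ embeds into $H \wr S_r$ acting on $\Delta^r$ with $|\Delta| = m$ and $n = m^r$; since a transitive permutation group on $r$ points is involved in $G$, one obtains $r \leq O(t)$, and the AS bound applied to $H$ gives $|H| \leq m^{O(t \log t)}$. Combining, $|G| \leq |H|^r \cdot r! \leq m^{O(rt \log t)} \cdot t^{O(t)} = n^{O(t \log t)}$, using $n = m^r$. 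The diagonal cases SD, HS are handled in the same spirit by direct inspection of the diagonal embedding of $T^r$.

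The main obstacle is the almost simple case, specifically when $T$ is a classical group of Lie type $\mathrm{Cl}_d(q)$ acting primitively on a non-natural orbit (subspaces, or cosets of a parabolic). The minimal permutation degree of $T$ is roughly $q^d$ while $|T|$ is of order $q^{d^2}$, so a crude bound gives only $|G| \leq n^{d}$; to replace $d$ by $O(t \log t)$ one must exploit the fact that $\mathrm{Cl}_d(q)$ contains Weyl-group-like alternating sections forcing $\theta(T)$ to grow linearly in $d$. This careful bookkeeping between the Lie rank $d$ of a classical composition factor and the thickness $t$ is the technically most delicate step; once it is in hand, combining the bounds across all O'Nan--Scott cases yields the uniform estimate $|G| = n^{O(t \log t)}$.
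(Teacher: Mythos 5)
The paper does not supply a proof of this theorem; it is imported as a black box from Babai, Cameron, and P\'alfy \cite{thickness-primitive}, so there is no in-paper argument to compare against. Evaluated on its own terms, your high-level O'Nan--Scott plan is the route taken in the literature, but two supporting claims you rely on are incorrect, and one of them would make the product-action case fail as written.

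Your ingredient (i), that a non-alternating simple $T$ has $\theta(T) = O(\log\log|T|)$, is false: for $T = \mathrm{PSL}_d(q)$ with $q$ fixed, permutation matrices exhibit $A_d$ as a subgroup, so $\theta(T) = \Theta(d)$, while $\log\log|T| = \Theta(\log d)$. The relation you actually need is the converse, that the Lie rank is $O(\theta(T))$, which together with $|T| \leq q^{O(d^2)}$ and minimal permutation degree $P(T) \geq q^{\Omega(d)}$ gives $|T| \leq P(T)^{O(\theta(T))}$. More seriously, the step ``a transitive group on $r$ points is involved in $G$, hence $r \leq O(t)$'' does not hold: in product action the top group $K \leq S_r$ is required only to be transitive, and a cyclic group of prime order $r$ is transitive with no nontrivial alternating sections, so $r$ can be $\Theta(\log n)$ with $t$ bounded; then your estimate $r! \leq t^{O(t)}$ collapses. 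What the actual proof uses instead is a separate bound on the order of a transitive group of degree $r$ and thickness $\leq t$, of the shape $t^{O(r)}$, which combined with $n = m^r$ and $m \geq 5$ bounds the top-group contribution by $n^{O(\log t)}$. Without that transitive-group lemma the wreath-type recursion does not close. (A lesser slip: the $m^{O(\log m)}$ bound you cite does not hold for all primitive almost simple groups of degree $m$; it fails for $A_m$ and $S_m$ in natural action.)
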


\noindent Later \cite{Pyber-improved} Pyber showed improved $n^{O(t)}$ upper bound  in the above theorem.

The minimal degree of a permutation group $G\leq Sym(n)$ can be used to bound the thickness $\theta(G)$. One way to do this is purely combinatorial, through the modified version of Babai-Seress lemma proven in \cite{Babai-Seress}, as it was done in \cite{Babai-str-reg}. There it is shown that if minimal degree of $G$ is at least $\gamma n$, then $\theta(G) = O\left(\frac{\ln(n)^{2}}{\ln\ln(n)}\right)$.  However, as Babai pointed out, more efficient bounds can be achieved using a group theoretic result of Wielandt \cite{Wielandt}.

\begin{theorem}[Wielandt, see {\cite[Theorem 6.1]{Babai-doublytransitive}}]\label{Wielandt-thm}
Let $n>k>\ell$ be positive integers, $k\geq 7$, and let $0<\alpha<1$. Suppose that $G$ is a permutation group of degree $n$ and minimal degree at least $\alpha n$. If
\[\ell(\ell-1)(\ell-2)\geq (1-\alpha)k(k-1)(k-2),\]
and $G$ involves the alternating group $A_k$, then $n\geq \binom{k}{\ell}$.
\end{theorem}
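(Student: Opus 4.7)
The plan is to combine a double-counting argument on the 3-cycles of $A_k$ with a subgroup-theoretic lemma describing which subgroups of $A_k$ contain few 3-cycles. Since $G$ involves $A_k$, fix $N \triangleleft L \le G$ with $L/N \cong A_k$ and let $\pi\colon L \to A_k$ denote the quotient map; the minimal degree hypothesis is inherited by $L$. Decompose $\Omega$ into $L$-orbits $\Omega_1,\dots,\Omega_r$ with base-point stabilizers $L_i \le L$ and images $H_i := \pi(L_i) \le A_k$. By the second isomorphism theorem,
\[
|\Omega_i| \;=\; [L : L_i] \;=\; [A_k : H_i]\cdot[N : N \cap L_i],
\]
so it suffices to exhibit a single orbit $\Omega_0$ with $|\Omega_0| \ge \binom{k}{\ell}$.

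For the double count, let $C_3 \subset A_k$ be the conjugacy class of 3-cycles, so $|C_3| = k(k-1)(k-2)/3$, and set $S := \pi^{-1}(C_3)$, with $|S|=|N|\cdot|C_3|$. Every $g \in S$ is non-identity, so $|\mathrm{Fix}_\Omega(g)| \le (1-\alpha)n$. Summing over $S$ and swapping the order of summation,
\[
\sum_i |\Omega_i|\cdot |S \cap L_i| \;\le\; (1-\alpha)\,n\,|S|.
\]
Since each 3-cycle in $H_i$ has exactly $|N\cap L_i|$ preimages in $L_i$, we have $|S\cap L_i| = |N\cap L_i|\cdot|C_3\cap H_i|$, and averaging picks out an orbit $\Omega_0$ (with stabilizer $L_0$ and image $H_0$) satisfying
\[
\frac{|C_3 \cap H_0|}{|C_3|}\cdot\frac{|N\cap L_0|}{|N|} \;\le\; 1-\alpha.
\]
In the principal case where $N$ fixes a point of $\Omega_0$ (in particular when $N=1$), this simplifies to $|C_3 \cap H_0| \le (1-\alpha)|C_3| \le 2\binom{\ell}{3}$, the second inequality being the hypothesis $\ell(\ell-1)(\ell-2) \ge (1-\alpha)k(k-1)(k-2)$ rewritten.

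The theorem then reduces to the following \emph{subgroup lemma}: for $H \le A_k$ with $k \ge 7$, if $|C_3\cap H| \le 2\binom{\ell}{3}$ then $[A_k : H] \ge \binom{k}{\ell}$. I would prove this by case analysis on the action of $H$ on $[k]$. Primitive $H$ is handled by Bochert's bound, which gives an index far exceeding $\binom{k}{\ell}$ for $k\ge 7$. For transitive-imprimitive $H \le (S_b \wr S_{k/b})\cap A_k$ the 3-cycles all lie in the base group, giving at most $(k/b)\cdot 2\binom{b}{3}$; combined with a direct estimate of the index of the wreath product, this yields the conclusion. The binding intransitive case is the heart of the lemma: let $m_1 \ge \cdots \ge m_s$ be the $H$-orbit sizes on $[k]$, so $H \subseteq (S_{m_1}\times\cdots\times S_{m_s})\cap A_k$ and $|C_3 \cap H| \le 2\sum_i \binom{m_i}{3}$. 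The hypothesis forces $\max_i m_i \le \ell$, and an elementary factorial inequality (obtained by checking the extremal partition $(\ell,\ell,\dots,\ell,r)$ and using iterated $a!\,b! \le (a+b)!$) gives $\prod_i m_i! \le \ell!\,(k-\ell)!$, whence
\[
[A_k : H] \;\ge\; \frac{k!}{\prod_i m_i!} \;\ge\; \binom{k}{\ell}.
\]

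The subgroup lemma is the main obstacle, and it is tight exactly on the extremal subset-stabilizer $(S_\ell \times S_{k-\ell})\cap A_k$, which accounts for the precise form of the combinatorial hypothesis $\binom{\ell}{3} \ge (1-\alpha)\binom{k}{3}$. The residual case where $N$ acts without a global fixed point on $\Omega_0$ is handled in parallel: the 3-cycle bound weakens by a factor $t := [N:N\cap L_0]$, compensated by the same factor in $|\Omega_0| = t\cdot[A_k:H_0]$, so the same subgroup lemma (applied to a correspondingly weaker hypothesis) still yields the desired bound.
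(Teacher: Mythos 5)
Your global strategy -- decompose $\Omega$ into $L$-orbits, double-count fixed points of preimages of $3$-cycles, average to locate an orbit $\Omega_0$ on which the $3$-cycle density $|C_3\cap H_0|/|C_3|$ is small, then reduce to an index bound for subgroups of $A_k$ -- is in the spirit of Wielandt's original argument. But the ``subgroup lemma'' you reduce to is \emph{false} as you state it. Take $k=7$ and $H=\mathrm{PSL}_2(7)\cong\mathrm{GL}_3(2)$ in its primitive (2-transitive) degree-$7$ action on the Fano plane, so $H\le A_7$ of index $15$. By Jordan's theorem a primitive proper subgroup of $A_7$ contains no $3$-cycle, so $|C_3\cap H|=0\le 2\binom{\ell}{3}$ for every $\ell\ge 3$; yet $[A_7:H]=15<35=\binom{7}{3}=\binom{7}{4}$. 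The theorem itself is not threatened here (for $k=7,\ \ell\le 4$ the hypothesis forces $\alpha\ge 34/35$, and no permutation group on fewer than $35$ points both involves $A_7$ and has minimal degree that large), but this is exactly the information your single-orbit reduction throws away: you are trying to prove the stronger and false statement $|\Omega_0|\ge\binom{k}{\ell}$, whereas the theorem only asserts $n\ge\binom{k}{\ell}$ and does not constrain any one orbit.

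Even setting aside that the lemma needs reformulating, your sketch of its proof has an independent gap in the supposedly ``binding'' intransitive case. You assert that the hypothesis ``forces $\max_i m_i\le\ell$.'' It does not: $|C_3\cap H|\le 2\sum_i\binom{m_i}{3}$ is an \emph{upper} bound and gives no relation between the number of $3$-cycles $H$ actually contains and the sizes of its orbits. A cyclic group generated by a long cycle is transitive on a large orbit yet has no $3$-cycles, so a small $3$-cycle count never constrains orbit sizes, and the factorial inequality never becomes applicable. Finally, the residual case $N\not\le L_0$ is not actually handled: if $H_0=A_k$ then $|C_3\cap H_0|=|C_3|$, your inequality collapses to $t\ge 1/(1-\alpha)$, and the subgroup lemma is vacuous; $1/(1-\alpha)$ can be far smaller than $\binom{k}{\ell}$, so the claimed ``compensation by the factor $t$'' would require a scale-invariance of the lemma that you have not established and that is not true in general. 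In short, the double-counting set-up is reasonable, but the reduction to a freestanding subgroup lemma, the lemma itself, and its proof all have genuine gaps.
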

\begin{corollary}\label{Wielandt-cor}
Suppose that $G$ is a permutation group of degree $n$ and minimal degree at least $\alpha n$. Then the thickness of $G$ is at most $\theta(G)\leq 3(1-\alpha)^{-1/3}\ln\left(\frac{1}{1-\alpha}\right)^{-1}\ln(n)$.
\end{corollary}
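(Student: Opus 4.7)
The plan is to apply Wielandt's theorem (Theorem \ref{Wielandt-thm}) with $k = \theta(G)$ and an appropriately chosen $\ell$, and then read off the bound on $\theta(G)$ from the consequence $n \geq \binom{k}{\ell}$. Write $t = \theta(G)$, so that $G$ involves $A_t$ by definition of thickness. We may assume $t \geq 7$, otherwise the claimed inequality holds trivially for all sufficiently large $n$ (and the small cases can be absorbed into the constant). Since $G$ has minimal degree $\geq \alpha n > 0$, $G$ cannot be the symmetric or alternating group on its own support (these have minimal degree $2$ or $3$), so $t < n$ and the hypothesis $n > k$ in Theorem \ref{Wielandt-thm} is satisfied.

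Next, I would choose $\ell$ to be the least integer with $\ell(\ell-1)(\ell-2) \geq (1-\alpha) t(t-1)(t-2)$. For $t$ large relative to $(1-\alpha)^{-1/3}$ this yields $\ell \leq \lceil (1-\alpha)^{1/3}\, t \rceil + O(1)$, and in particular $\ell < t$. Applying Theorem \ref{Wielandt-thm} with this $k = t$ and $\ell$ then gives
\[
  n \;\geq\; \binom{t}{\ell} \;\geq\; \left(\frac{t}{\ell}\right)^{\!\ell},
\]
using the standard lower bound on binomial coefficients. Taking logarithms,
\[
  \ln n \;\geq\; \ell \ln(t/\ell).
\]
Since $\ell \geq (1-\alpha)^{1/3} t$ by the defining inequality (up to lower-order corrections coming from the integrality of $\ell$), we have $t/\ell \geq (1-\alpha)^{-1/3}$ and so $\ln(t/\ell) \geq \tfrac{1}{3}\ln\bigl(1/(1-\alpha)\bigr)$. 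Substituting,
\[
  \ln n \;\geq\; (1-\alpha)^{1/3}\, t \cdot \tfrac{1}{3}\ln\!\left(\tfrac{1}{1-\alpha}\right),
\]
and rearranging produces exactly $t \leq 3(1-\alpha)^{-1/3}\ln\bigl(1/(1-\alpha)\bigr)^{-1}\ln n$, as desired.

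The only real technical point is to make sure that the ceiling in the definition of $\ell$ does not erode the bound on $\ln(t/\ell)$. I would handle this by distinguishing two regimes: if $(1-\alpha)^{1/3} t$ is bounded (i.e.\ $t$ is small), the claimed bound is already trivial because the right-hand side grows with $\ln n$; if $(1-\alpha)^{1/3} t$ is large, the discrepancy between $\ell$ and $(1-\alpha)^{1/3} t$ is a lower-order term that can be absorbed. Thus the main (and essentially only) work is in verifying this careful bookkeeping; the structural content of the argument is entirely contained in Wielandt's theorem.
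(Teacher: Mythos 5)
The overall strategy — apply Theorem~\ref{Wielandt-thm} with $k=\theta(G)$ and $\ell$ chosen minimally to satisfy the cubic inequality, then extract a bound from $n\geq\binom{k}{\ell}$ — is certainly the intended one, and the final formula you aim for is correct. However, the chain of inequalities you write down does not actually go through, and the problem is not just the ``careful bookkeeping'' you defer at the end; there is a genuine sign error whose repair requires a different estimate.

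You correctly observe that $\ell\geq(1-\alpha)^{1/3}t$. In fact this holds strictly: since $\frac{\ell-i}{t-i}<\frac{\ell}{t}$ for $i=1,2$ when $\ell<t$, we have $\frac{\ell(\ell-1)(\ell-2)}{t(t-1)(t-2)}<(\ell/t)^3$, so the cubic inequality forces $\ell>(1-\alpha)^{1/3}t$. But then $t/\ell<(1-\alpha)^{-1/3}$, whereas you assert $t/\ell\geq(1-\alpha)^{-1/3}$; this is the reverse inequality and makes the step $\ln(t/\ell)\geq\tfrac13\ln\frac1{1-\alpha}$ false, not merely a lower-order loss. In your displayed substitution you are simultaneously using $\ell\geq(1-\alpha)^{1/3}t$ for the first factor and $\ell\leq(1-\alpha)^{1/3}t$ for the second, which can hold only at equality.

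The deeper point is that the simple bound $\binom{t}{\ell}\geq(t/\ell)^\ell$ is \emph{not} strong enough to recover the stated constant. Setting $g(\ell)=\ell\ln(t/\ell)$, this function is decreasing on $(t/e,\,t)$; since the cubic constraint forces $\ell$ strictly above $(1-\alpha)^{1/3}t$, whenever $(1-\alpha)^{1/3}>1/e$ (that is, $\alpha<1-e^{-3}\approx0.95$, which covers most of the admissible range) the value $g(\ell)$ drops \emph{below} the target $(1-\alpha)^{1/3}t\cdot\tfrac13\ln\frac1{1-\alpha}$. A concrete instance: take $\alpha=\tfrac12$, $t=100$. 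Then $(1-\alpha)^{1/3}t\approx79.37$, the least admissible $\ell$ is $80$, and $\ell\ln(t/\ell)=80\ln(1.25)\approx17.85$, while the target $(1-\alpha)^{1/3}t\cdot\tfrac13\ln2\approx18.34$ is larger. So the chain $\ln n\geq\ell\ln(t/\ell)\geq(1-\alpha)^{1/3}t\cdot\tfrac13\ln\frac1{1-\alpha}$ fails as written. To close the gap one needs the stronger estimate $\ln\binom{t}{\ell}\geq tH(\ell/t)-O(\log t)$ with $H$ the (natural-log) binary entropy; the extra term $(1-\ell/t)\ln\frac{1}{1-\ell/t}$ is exactly the slack that absorbs both the rounding loss and the monotonicity issue, and it is what makes the constant $3$ in the statement comfortably achievable. (For instance in the example above $\ln\binom{100}{80}\approx 47.7$, far exceeding $18.34$.) The paper itself gives no proof of this corollary, so there is nothing to compare against directly, but a correct write-up should replace the $(t/\ell)^\ell$ estimate with the entropy bound and fix the direction of the $t/\ell$ inequality.
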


An analog of Conjectures \ref{conj-1} and \ref{conj-3} in terms of the thickness is the following.

\begin{conjecture}[Babai \cite{Babai-str-reg}]\label{conj-2}
For every $\varepsilon>0$, there is some $N_{\varepsilon}$ such that if $\mathfrak{X}$ is a primitive coherent configuration on $n> N_\varepsilon$ vertices and $\theta(\Aut(\mathfrak{X}))\geq n^{\varepsilon}$, then $\mathfrak{X}$ is a Cameron scheme.
\end{conjecture}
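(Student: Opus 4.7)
The plan is to reduce Conjecture~\ref{conj-2} to Conjecture~\ref{conj-3} (the linear-motion conjecture), and then attempt Conjecture~\ref{conj-3} by induction on the rank, using the rank-$3$ and rank-$4$ cases (Theorems~\ref{babai-str-reg-thm} and~\ref{main-coherent}) as the base.

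The reduction step is essentially free. Suppose Conjecture~\ref{conj-3} holds with constant $\gamma>0$: every non-Cameron primitive coherent configuration $\mathfrak{X}$ on $n$ vertices has $\motion(\mathfrak{X})\geq \gamma n$. Apply Corollary~\ref{Wielandt-cor} with $\alpha=\gamma$ to obtain $\theta(\Aut(\mathfrak{X}))=O(\log n)$. Consequently, for every $\varepsilon>0$ there exists $N_\varepsilon$ such that $n>N_\varepsilon$ forces $\theta(\Aut(\mathfrak{X}))<n^\varepsilon$, which is exactly the contrapositive of Conjecture~\ref{conj-2}. So the entire problem reduces to establishing Conjecture~\ref{conj-3}.

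For Conjecture~\ref{conj-3} the plan is an induction on rank $r$, combining the two general tools from Section~\ref{sec-tools} in a dichotomy. Given a non-Cameron primitive coherent configuration $\mathfrak{X}$ of rank $r\geq 5$, first check whether every pair of distinct vertices of $\mathfrak{X}$ is distinguished by at least $\gamma n$ vertices; if so, Observation~\ref{obs1} directly yields $\motion(\mathfrak{X})\geq \gamma n$. Otherwise, some pair $(u,v)$ is distinguished by fewer than $\gamma n$ vertices, and this forces strong numerical relations among the intersection numbers $p_{i,j}^s$: for most edge colors $i$ the neighborhoods of $u$ and $v$ in the $i$-th constituent must be nearly identical. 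One would then look for a non-trivial constituent $X_i$ for which the common-neighbor bound $q(X_i)$ and the zero-weight spectral radius $\xi(X_i)$ are small enough relative to the valency $k_i$ that Lemma~\ref{mixing-lemma-tool} produces a linear upper bound on fixed points of any non-identity automorphism, hence $\motion(\mathfrak{X})\geq\gamma n$. If no such constituent exists, the numerical constraints combined with Metsch's geometricity criterion (Theorem~\ref{Metsch}) should force $\mathfrak{X}$ to carry a clique-geometry, and one then tries to match $\mathfrak{X}$ against the Bang--Koolen-type classification of geometric distance-regular configurations (Conjecture~\ref{conj-bang-k}) to conclude that $\mathfrak{X}$ is itself a Hamming or Johnson scheme, contradicting the assumption that $\mathfrak{X}$ is not Cameron.

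The main obstacle is the second branch of the dichotomy. Even in the rank-$4$ setting the present paper has to split that branch into several delicate subcases (line-graph constituents, edge-regular graphs with smallest eigenvalue $-2$ via Seidel's classification, geometric distance-regular graphs with smallest eigenvalue $-3$ via the Bang and Bang--Koolen results), and apply separate arguments of Sun--Wilmes and Metsch style in each. For $r\geq 5$ the required structure theorems---a full resolution of Conjecture~\ref{conj-bang-k}, together with a rank-$r$ analogue of Metsch's criterion---are simply not yet available, and it is precisely at this step that the induction stalls. The honest conditional outcome of this plan is therefore: assuming Conjecture~\ref{conj-bang-k} and an extension to rank $r$ of the rank-$4$ spectral/geometric dichotomy developed in Section~\ref{sec-coherent}, one obtains Conjecture~\ref{conj-3}, and through the Wielandt reduction above, the desired conjecture follows.
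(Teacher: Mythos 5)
The statement you are addressing is a \emph{conjecture}; the paper does not prove it, and no proof of it in full generality is known. What the paper actually records is: the conjecture holds for $\varepsilon>\frac12$ by Babai's 1981 bound, for $\varepsilon>\frac13$ by Sun--Wilmes, and for primitive coherent configurations of rank $3$ and $4$ (and every $\varepsilon>0$) as a consequence of Theorem~\ref{babai-str-reg-thm} and Theorem~\ref{main-coherent} together with Corollary~\ref{Wielandt-cor}. Your first paragraph --- the reduction of Conjecture~\ref{conj-2} to the motion conjecture (Conjecture~\ref{conj-3}) via Wielandt's theorem --- is correct and is precisely the mechanism the paper uses to deduce the rank-$3$ and rank-$4$ cases; note only that the Cameron schemes must be excluded on both sides, which your contrapositive handles since Conjecture~\ref{conj-3} is stated for non-Cameron configurations.

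The gap is everything after that. Your induction on rank for Conjecture~\ref{conj-3} is not a proof: the base cases $r\leq 4$ are the content of this paper, and for $r\geq 5$ you yourself observe that the argument stalls, because (i) the spectral-gap estimates of Section~\ref{sec-coh-approx} exploit that each $A_i$ satisfies a cubic when $r=4$ and have no known rank-$r$ analogue, (ii) there is no rank-$r$ substitute for the Seidel and Bang--Koolen classifications that resolve the geometric branch, and (iii) Conjecture~\ref{conj-bang-k} is itself open (and even granting it, the $\mu=1$ case of Theorem~\ref{main-general-case} remains unresolved, as the paper notes). So what you have established unconditionally is exactly what the paper already states --- the rank $\leq 4$ cases --- plus a conditional implication resting on open problems. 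As a proof of the conjecture as stated, this does not go through; it should be presented as a reduction and a programme, which is how the paper itself treats the matter in Section~\ref{sec-summary}.
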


As before, Conjecture \ref{conj-2} for $\varepsilon > \frac{1}{2}$ follows from \cite{Babai-annals} and for $\varepsilon >\frac{1}{3}$ follows from \cite{Sun-Wilmes}. For configurations of rank 3 and 4 and any $\varepsilon>0$ the conjecture follows from Theorem \ref{babai-str-reg-thm} and Theorem~\ref{main-coherent}, respectively.

\subsection{Johnson, Hamming and Cameron schemes}\label{sec-subsec-exceptions}

In this subsection we will define families of graphs and coherent configurations with huge automorphism groups. In particular, for certain range of parameters they have motion sublinear in the number of vertices.

\begin{definition}
Let $t\geq 2$ and $\Omega$ be a set of $m\geq 2t+1$ points. The \textit{Johnson graph} $J(m,t)$ is a graph on a set  of $n = \binom{m}{t}$ vertices  $V(J(m,t)) = \binom{\Omega}{t}$, i.e., vertices are the subsets of size $t$ of $\Omega$. Two vertices of $J(m,t)$ are adjacent if and only if the corresponding subsets $U_1, U_2\subseteq \Omega$ differ by exactly one element, i.e., $|U_1\setminus U_2| = |U_2\setminus U_1| = 1$.
\end{definition}

It is not hard to check that $J(m, t)$ is distance-transitive, and so it is a distance-regular graph of diameter $t$ with the intersection numbers
\[b_i = (t-i)(m-t-i) \quad \text{and} \quad c_{i+1} = i^{2}, \quad \text{for } 0\leq i<t.\] 
In particular, $J(m, t)$ is regular of degree $k = t(m-t)$ with $\lambda = m-2$ and $\mu = 4$.  The eigenvalues of $J(m,t)$ are
\[ \xi_j = (t-j)(m-t-j)-j \quad \text{with multiplicity}\quad \binom{m}{j} - \binom{m}{j-1}, \, \text{for } 0\leq j\leq t.\]

The automorphism group of $J(m,t)$ is the Johnson group $S_m^{(t)}$, which acts on $\binom{\Omega}{t}$ via the induced action of $S_m$ on $\Omega$. Indeed, it is obvious, that $S^{(t)}_m\leq \Aut(J(m,t))$. An opposite inclusion can be derived from the Erd\H{o}s-Ko-Rado theorem.

Thus, for fixed $t$ we get that the order is $|Aut(J(m,t)| = m! = \Omega(\exp(n^{\frac{1}{t}}))$, the thickness satisfy $\theta(\Aut(J(s, m))) = m = \Omega(n^{1/t})$ and  
$$\motion(J(m,t)) = O(n^{1-1/t}).$$

The association scheme $\mathfrak{J}(m,t)$ induced by $J(m,t)$ is called \textit{Johnson scheme}.

\begin{definition}
Let $t\geq 2$ and $\Omega$ be a set of $m\geq 2t+1$ points. The \textit{Johnson scheme} $\mathfrak{J}(m,t)$ is an association scheme of rank $t+1$ on a set  of $n = \binom{m}{t}$ vertices  $V = \binom{\Omega}{t}$. Two vertices $v_1$ and $v_2$ of $\mathfrak{J}(m,t)$ are connected by an edge of color $i$ if and only if the corresponding subsets $U_1, U_2\subseteq \Omega$ differ by exactly $i$ elements, i.e., $|U_1\setminus U_2| = |U_2\setminus U_1| = i$.
\end{definition}

It is not hard to check that $\mathfrak{J}(m,t)$ is primitive and $\Aut(\mathfrak{J}(m,t)) =\Aut(J(m,t))$. 

\begin{definition}
 The Johnson graph $J(s,2)$ is also called the \textit{triangular graph} and is denoted by $T(s)$, where $s\geq 5$.
\end{definition} 

\begin{definition}
Let $\Omega$ be a set of $m\geq 2$ points. The \textit{Hamming graph} $H(s, m)$ is a graph on a set of $n = m^s$ vertices $V = \Omega^{s}$. Two vertices in $H(s,m)$ are adjacent if and only if the corresponding $s$-tuples $v_1, v_2$ differ in precisely one position, or in other words, if the Hamming distance $d_H(v_1, v_2)$ for the corresponding tuples is equal 1.
\end{definition}
Again, it is not hard to check that $H(s,m)$ is distance-transitive, so is a distance-regular graph of diameter $s$ with the intersection numbers
\[ b_i =(s-i)(m-1) \quad \text{and} \quad c_{i+1} = i \quad \text{for } 0\leq i\leq s-1.\]
In particular, $H(s,m)$ is regular of degree $k = s(m-1)$ with $\lambda = m-2$ and $\mu = 2$. The eigenvalues of $H(s, m)$ are
\[ \xi_j = s(m-1) - jm \quad \text{with multiplicity}\quad \binom{s}{j}(m-1)^{j}, \, \text{for } 0\leq j\leq s.\]

The automorphism group of $H(s, m)$ is isomorphic to $S_m\wr S_s$. Hence,  its order is $|\Aut(H(s, m))| = (m!)^s s!$, the thickness satisfies $\theta(H(s,m)) \geq m = n^{\frac{1}{s}}$ and $$\motion(H(s, m))\leq 2m^{s-1} = O(n^{1-1/s}).$$

The association scheme $\mathfrak{H}(s,m)$ induced by the Hamming graph $H(s,m)$ is called \textit{Hamming scheme}.

\begin{definition}
Let $\Omega$ be a set of $m\geq 2$ points. The \textit{Hamming scheme} $\mathfrak{H}(s, m)$ is an association scheme of rank $s+1$ on a set of $n = m^s$ vertices $V = \Omega^{s}$. Two vertices in $\mathfrak{H}(s,m)$ are connected by an edge of color $j$ if and only if the corresponding $s$-tuples $v_1, v_2$ differ in precisely $i$ positions, or in other words, if the Hamming distance $d_H(v_1, v_2)$ for the corresponding tuples is equal $j$.
\end{definition}

Again, one can check that $\mathfrak{H}(s,m)$ is a primitive coherent configuration and $\Aut(H(s,m)) = \Aut(\mathfrak{H}(s, m))$.

As for Johnson graphs, the graph $H(2, m)$ has a special name and notation.
\begin{definition}
The Hamming graph $H(2, m)$ is called the \textit{lattice graph} and is denoted by $L_2(m)$, where $m\geq 2$.
\end{definition}

Observe, both the triangular graph $T(s)$ and the lattice graph $L_{2}(m)$ are strongly regular with the smallest eigenvalue $-2$. Notice also that the  smallest eigenvalue of $J(m, t)$ and $H(t, m)$ is $-t$.

Let $G\leq Sym(\Omega)$ be a permutation group. The orbits of the $G$-action on $\Omega\times\Omega$ are called \textit{orbitals} of $G$. 

\begin{definition}\label{def-orbital}
For $G\leq Sym(\Omega)$ with orbitals $R_1, R_2, ..., R_k$ define \textit{orbital configuration} as $\mathfrak{X}(G) = (\Omega, \{R_1, R_2, ..., R_k\})$. A configuration is called \textit{Schurian} if it is equal to $\mathfrak{X}(G)$ for some permutation group $G$.
\end{definition}

Observe that the configuration $\mathfrak{X}(G)$ is coherent. By the discussion above, Johnson and Hamming schemes are Schurian configurations.

Using the Classification of the Finite Simple Groups, Cameron in \cite{Cameron} classified all primitive groups of degree $n$ of order at least $n^{c\log\log n}$. We state here Mar\'oti's refined version of this result.

\begin{definition}[Cameron groups]\label{def-Cameron-groups}
Let $G$ be a primitive group. Let $m,k,d$ be positive integers and $(A_{m}^{(k)})^{d}\leq G\leq S_{m}^{(k)}\wr S_d$, where $S_{m}^{(k)}\wr S_d$ has the product action on $\binom{m}{k}^d$ elements. Then group $G$ is called a \textit{Cameron group}.

 The corresponding orbital configuration $\mathfrak{X}(G)$ is called a \textit{Cameron scheme}.

\end{definition}

\begin{theorem}[Cameron \cite{Cameron}, Mar\'oti \cite{Maroti}]\label{cameron-maroti}
If $G$ is a primitive permutation group of degree $n>24$, then one of the following is true.
\begin{enumerate}
\item $G$ is a Cameron group.
\item $|G|\leq n^{1+\log(n)}$. 
\end{enumerate}
\end{theorem}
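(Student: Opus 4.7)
The plan is to combine the O'Nan--Scott classification of finite primitive permutation groups with CFSG, handling each O'Nan--Scott type separately. Recall that O'Nan--Scott partitions a primitive group $G \leq \Sym(\Omega)$ according to the structure of its socle $\mathrm{soc}(G)$ into the types: affine (HA), almost simple (AS), simple diagonal (SD), compound diagonal (CD), product action (PA), and twisted wreath (TW). In each type one either reads off the order bound $|G|\le n^{1+\log n}$ directly, or identifies $G$ as a Cameron group in the sense of Definition~\ref{def-Cameron-groups}.

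In the affine case, $G \leq \mathrm{AGL}_d(p)$ with $n = p^d$, so
\[
|G| \leq p^d \cdot |\mathrm{GL}_d(p)| < n \cdot p^{d^2} \leq n^{1+\log n},
\]
satisfying conclusion (2). For the diagonal and twisted wreath types, the socle is $T^k$ for a nonabelian simple $T$, with $n \geq |T|^{k-1}$ and $|G| \leq |T|^k \cdot |\mathrm{Out}(T)| \cdot k!$. Using the CFSG bound $|\mathrm{Out}(T)| = O(\log|T|)$ and handling separately the regimes where $k$ is small versus large compared to $\log|T|$, one obtains $|G| \leq n^{1+\log n}$. In the product action type, $G \leq H \wr S_k$ acts on $\Delta^k$ with $|\Delta|=m$, $n=m^k$, and $H$ primitive on $\Delta$; induction on the degree applied to $H$ yields either $|H| \leq m^{1+\log m}$, hence $|G| \leq |H|^k k! \leq n^{1+\log n}$ after a short calculation, or $H$ is itself a Cameron group of shape $A_{m'}^{(\ell)} \leq H \leq S_{m'}^{(\ell)}$. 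In the latter subcase $G$ embeds into $S_{m'}^{(\ell)} \wr S_k$, which is a Cameron group per Definition~\ref{def-Cameron-groups}, giving conclusion (1).

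The decisive step is the almost simple case: $T \triangleleft G \leq \Aut(T)$ with $G$ acting on cosets of a maximal subgroup $H<G$. By CFSG, $T$ is alternating, classical, exceptional Lie type, or sporadic. For non-alternating $T$, bounds of Liebeck on minimal degrees of primitive actions of classical and exceptional groups give $n \geq |T|^c$ for some absolute constant $c>0$, whence $|G| \leq n^{O(1)}$, well within conclusion (2). The finitely many sporadic groups are dispatched by a direct table. For $T=A_m$, apply O'Nan--Scott to $H \leq S_m$: maximal subgroups of $S_m$ are either intransitive, transitive imprimitive, or primitive on $\{1,\dots,m\}$. The intransitive case $H = (S_\ell \times S_{m-\ell}) \cap G$ yields exactly the Johnson-type Cameron groups $A_m^{(\ell)} \leq G \leq S_m^{(\ell)}$ acting on $\ell$-subsets, giving conclusion~(1). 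The transitive imprimitive case yields comparatively small $|H|$ by elementary counting of blocks. The primitive case is controlled by Bochert's classical bound $|H| \leq 4^m$ (or refinements thereof), so that $|G| = [G:H]\cdot|H| \leq n \cdot 4^m \leq n^{1+\log n}$ once $n$ is large enough.

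The main obstacle is tightening constants in the almost simple alternating case to obtain the clean $n^{1+\log n}$ bound of Mar\'oti's refinement, rather than the weaker exponent produced by Cameron's original argument. Mar\'oti achieves this through sharper estimates on primitive subgroups of $S_m$ (going beyond Bochert via Praeger--Saxl type bounds $|H|\le 4^m$ with improved constants) and a careful treatment of the finitely many small-degree exceptions where the generic bounds fail. All other steps are essentially bookkeeping on top of O'Nan--Scott and CFSG.
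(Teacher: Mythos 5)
This theorem is stated in the paper without proof; it is imported as a black box from Cameron~\cite{Cameron} (the qualitative classification of large primitive groups) and Mar\'oti~\cite{Maroti} (the sharp exponent $1+\log n$), so there is no proof in the paper to compare against. Your sketch correctly reproduces the architecture of the proof in those references: reduce via O'Nan--Scott, dispatch the affine and diagonal/twisted-wreath types by direct order estimates, handle product action by induction combined with recognizing when the base group is already Cameron, and concentrate the real work in the almost-simple case with alternating socle, where intransitive maximal subgroups of $S_m$ give the Johnson actions and primitive maximal subgroups are controlled by Praeger--Saxl-type bounds. Two caveats. First, several steps you mark as routine hide real content: in the product-action branch, the base group $H$ may itself be a Cameron group with $d'>1$ in the notation of Definition~\ref{def-Cameron-groups}, and then $G$ embeds into $S_{m'}^{(\ell)}\wr S_{d'k}$ (not $S_{m'}^{(\ell)}\wr S_k$ as you wrote); one must also check the socle containment $(A_{m'}^{(\ell)})^{d'k}\leq G$ rather than merely an upper embedding, and your induction on the base degree $m$ silently assumes $m>24$, so small base degrees need a separate argument. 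Second, Mar\'oti's refinement to the clean exponent $1+\log n$ is genuinely delicate --- it rests on a sharpened bound for primitive subgroups of $S_m$ together with an explicit treatment of small degrees, and is not ``bookkeeping.'' As a summary of the known strategy the proposal is fine; it is not a proof, which is precisely why the paper cites the result rather than proving it.
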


Cameron groups appear as an exceptions in other similar classifications of ``large'' primitive groups. We mention the classification result by Liebeck and Saxl \cite{Liebeck-Saxl} in terms of the minimal degree .


\begin{theorem}[Liebeck, Saxl \cite{Liebeck-Saxl}]
If $G$ is a primitive permutation group of degree $n$, then one of the following is true.
\begin{enumerate}
\item $G$ is a Cameron group.
\item $\mindeg(G)\geq n/3$. 	 
\end{enumerate}
\end{theorem}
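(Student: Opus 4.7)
The plan is to follow the standard CFSG-based reduction. Invoke the O'Nan--Scott theorem to classify $G$ according to the structure of its socle, splitting into the five classes: affine, almost simple, simple diagonal, product action, and twisted wreath product. For each class I will either exhibit an explicit non-identity element with small support (contradicting $\mindeg(G) < n/3$) and thereby prove the lower bound, or force $G$ into the Cameron family by structural constraints coming from the classification.

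First I would dispose of the affine case $G = V \rtimes H$ with $V = \mathbb{F}_p^d$ and $H \leq GL(V)$. Any element of $G$ is an affine map $x \mapsto Ax+b$ whose fixed-point set, if nonempty, is an affine subspace of dimension $\dim\ker(A-I) \leq d-1$, so each non-identity element fixes at most $p^{d-1}$ points and $\mindeg(G) \geq n(1-1/p) \geq n/2$ unless $p=2$. The residual $p=2$ case forces attention on transvections and related low-support elements of $H$, and one checks via Liebeck's earlier structural classification of affine groups with small minimal degree that these cases are either Cameron or satisfy the bound directly.

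The heart of the argument is the almost simple case with socle $T$, where the action of $G$ on $\Omega = G/M$ is encoded by the point stabilizer $M$. I would use the standard estimate $\mindeg(G) \geq n - \max_{g\neq 1}|\mathrm{Fix}_\Omega(g)|$ together with concrete fixed-point bounds for each family: Aschbacher's theorem for classical groups, the O'Nan--Scott-style description of primitive actions of $S_m$ for alternating socles, and the published maximal subgroup tables for exceptional and sporadic groups. The alternating socle $T = A_m$ acting on $k$-subsets is precisely the source of the Johnson Cameron groups; every other primitive action of $S_m$ admits a transposition or $3$-cycle of degree at most $2n/m$, which, combined with $|\Omega| \geq \binom{m}{2}$ in the non-Cameron range, yields the linear lower bound $n/3$. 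Similar but more technical fixed-point estimates, keyed to the Aschbacher class of $M$, dispatch the classical, exceptional, and sporadic actions.

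The diagonal, product-action, and twisted wreath cases reduce to the almost simple analysis applied to a socle factor. For product action $G \leq H \wr S_r$ on $\Delta^r$, an element $h \in H$ with fixed-point ratio $\phi = |\mathrm{Fix}_\Delta(h)|/|\Delta|$ produces an element of $G$ with fixed ratio $\phi$ on $\Delta^r$, so the $n/3$ bound for $H$ transfers to $G$, and the construction collapses into a Cameron group precisely when $H$ is itself of Johnson Cameron type; the diagonal and twisted wreath cases yield large minimal degree by similar transfer arguments since their point stabilizers are too small to fix many points. The main obstacle will be the fine case-by-case analysis for classical groups of small rank and for sporadic groups, where generic dimension or character bounds are too weak and one must appeal to explicit maximal-subgroup and conjugacy-class data, along with a careful delineation of exactly which subset-actions of $S_m^{(k)}\wr S_r$ constitute the Cameron exceptions in the statement.
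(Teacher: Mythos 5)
This statement is not proved in the paper: it is quoted verbatim as an external result of Liebeck and Saxl (1991), cited as background, so there is no in-paper argument to compare yours against. What you have written is a roadmap of the actual Liebeck--Saxl proof --- O'Nan--Scott reduction, fixed-point-ratio estimates keyed to the Aschbacher class of the point stabilizer in the almost simple case, and transfer of ratio bounds to the product-action, diagonal, and twisted wreath cases --- and as a roadmap it is accurate. But it is not a proof. Every step that carries the actual mathematical weight is deferred: the case analysis over Aschbacher classes for classical socles, the exceptional and sporadic groups, the small-rank classical groups where generic bounds fail, and the precise identification of which actions survive as Cameron groups. These occupy the bulk of the original paper and of Liebeck's earlier work on minimal degrees, and they rest on the Classification of Finite Simple Groups; a sketch that says "one checks via the published tables" at each such point cannot be certified as correct.

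One small concrete remark: your treatment of the affine case is both slightly overcautious and leaves a loose end. For $G = V \rtimes H$ with $V = \mathbb{F}_p^d$, any non-identity affine map $x \mapsto Ax + b$ with $A \neq I$ fixes an affine subspace of dimension at most $d-1$, hence at most $p^{d-1} = n/p$ points, and a nontrivial translation fixes none; so $\mindeg(G) \geq n(1 - 1/p) \geq n/2 > n/3$ for \emph{every} prime $p$, including $p = 2$. There is no residual $p=2$ case to worry about, and no appeal to transvections is needed. The genuine difficulty of the theorem lies entirely in the non-affine cases, which your proposal outlines but does not carry out.
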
 

\begin{lemma}\label{cameron-min-deg}
Let $G$ be a Cameron group with $(A_m^{k})^d\leq G\leq S_m^{(k)}\wr S_d$ which acts on $n = \binom{m}{k}^d$ points, where $k\leq m/2$. Then as $m\rightarrow \infty$, the following holds uniformly in $d$: we have $\mindeg(G) = o(n)$ if and only if $k = o(m)$. 
\end{lemma}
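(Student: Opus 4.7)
I will prove the two directions of the equivalence separately, analysing elements of $G\le S_m^{(k)}\wr S_d$ via their wreath-product decomposition. Throughout, write a generic element as $g=(h_1,\dots,h_d;\sigma)$ with $h_i\in S_m^{(k)}$ and $\sigma\in S_d$, acting on $(x_1,\dots,x_d)\in\binom{[m]}{k}^d$ by $g(x_1,\dots,x_d)_i=h_i\,x_{\sigma^{-1}(i)}$. The bound $k\le m/2$ guarantees that the action $S_m\to S_m^{(k)}$ is faithful, so non-trivial elements of $S_m^{(k)}$ really do move $k$-subsets.

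\textbf{Direction} $k=o(m)\Rightarrow\motion(G)=o(n)$. The plan is to exhibit one explicit small-degree element. Since $(A_m^{(k)})^d\le G$, I can take the element acting as the 3-cycle $\tau=(1\,2\,3)\in A_m$ on the first coordinate and trivially on the others. A $d$-tuple $(S_1,\dots,S_d)$ is fixed iff $\tau(S_1)=S_1$, i.e.\ iff $\{1,2,3\}\subseteq S_1$ or $\{1,2,3\}\cap S_1=\emptyset$, so the fraction of moved $d$-tuples equals
\[
1-\frac{\binom{m-3}{k-3}+\binom{m-3}{k}}{\binom{m}{k}}
 =1-\frac{k(k-1)(k-2)+(m-k)(m-k-1)(m-k-2)}{m(m-1)(m-2)}.
\]
Under $k=o(m)$ the second term tends to $1$, so the fraction of moved points tends to $0$ uniformly in $d$.

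\textbf{Converse direction.} Suppose $k$ is not $o(m)$; passing to a subsequence, there is $c\in(0,1/2]$ with $k/m\ge c$. I show every non-identity $g\in G$ moves at least $(c/2)n$ points, uniformly in $d$, by splitting on whether $\sigma$ is trivial. If $\sigma=\mathrm{id}$, then $g$ acts coordinatewise and the fixed fraction factors as $\prod_{i=1}^d f_i$ with $f_i$ the fraction of $k$-subsets fixed by $h_i$. Some $h_i$ is non-trivial, so (by faithfulness) there is a pair $a\neq b$ with $h_i(a)=b$; every $k$-subset containing $a$ but not $b$ is moved, giving at least $\binom{m-2}{k-1}$ moved subsets, hence
\[
f_i\le 1-\frac{\binom{m-2}{k-1}}{\binom{m}{k}}=1-\frac{k(m-k)}{m(m-1)}\le 1-\frac{c}{2},
\]
using $k\ge cm$ and $(m-k)/m\ge 1/2$. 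Thus the fraction of fixed $d$-tuples is $\le 1-c/2$, as required.

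If $\sigma\neq\mathrm{id}$, I analyse the cycles of $\sigma$. For a cycle $(a_1\,a_2\,\cdots\,a_\ell)$ of $\sigma$, the fixed-point conditions give $x_{a_{t+1}}=h_{a_{t+1}}x_{a_t}$, so $x_{a_2},\dots,x_{a_\ell}$ are determined by $x_{a_1}$, while $x_{a_1}$ is further constrained to be fixed by the cyclic product $h_{a_1}h_{a_\ell}\cdots h_{a_2}$. Consequently, each cycle contributes at most $\binom{m}{k}$ fixed tuples (versus $\binom{m}{k}^\ell$ available). Multiplying over the $c(\sigma)$ cycles of $\sigma$, the number of fixed $d$-tuples is at most $\binom{m}{k}^{c(\sigma)}$. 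Since $\sigma\neq\mathrm{id}$, some cycle has length $\ge 2$, so $c(\sigma)\le d-1$, and the fixed fraction is at most $1/\binom{m}{k}\le 1/m\to 0$ uniformly in $d$.

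\textbf{Main obstacle.} The delicate step is the case $\sigma\neq\mathrm{id}$: because $G$ may strictly contain $(A_m^{(k)})^d$, the argument has to accommodate arbitrary wreath-product elements, and the key mechanism is the collapse of each $\ell$-cycle of $\sigma$ from $\binom{m}{k}^\ell$ possibilities down to $\binom{m}{k}$ via the consistency equation around the cycle. Once this bound is established, combining both cases gives $\motion(G)\ge (c/2)n$ whenever $k\ge cm$, completing the equivalence.
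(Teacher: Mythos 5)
Your proof is correct, and its core computation (the fraction of $k$-subsets fixed by a short cycle of $S_m$, giving degree $\approx n\bigl(1-\tfrac{k^3+(m-k)^3}{m^3}\bigr)$ for the 3-cycle) is exactly the computation the paper uses. The only real difference is that the paper simply asserts that the minimal degree is realized by a 2- or 3-cycle acting in a single coordinate ("it is not hard to see"), whereas you actually prove the lower-bound direction for arbitrary elements of the wreath product: the coordinatewise product bound when $\sigma=\mathrm{id}$, and the collapse of each $\ell$-cycle of $\sigma$ to at most $\binom{m}{k}$ fixed tuples when $\sigma\neq\mathrm{id}$. That case analysis is sound (including the uniformity in $d$, since the bound $1-1/m$ in the second case and $c/2$ in the first are both independent of $d$), so your write-up is, if anything, more complete than the paper's.
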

\begin{proof}
It is not hard to see that the minimal degree of $G$ is realized by the induced action of a cycle of length 2 or 3 (in $S_m$ or $A_m$) on $k$-subsets in just one of $d$ coordinates.
If there is a 2-cycle action in a coordinate, then the minimal degree of $G$ is
\[\left(\binom{m}{k} - \binom{m-2}{k} - \binom{m-2}{k-2}\right)\binom{m}{k}^{d-1},\]
otherwise, the minimal degree of $G$ is
\[\left(\binom{m}{k} - \binom{m-3}{k} - \binom{m-3}{k-3}\right)\binom{m}{k}^{d-1}.\]
As $m\rightarrow \infty$ these expressions are equal
\[ n\cdot\left(1-\frac{(m-k)^2+k^2}{m^2}+o(1)\right) \quad \text{and} \quad  n\cdot \left(1-\frac{(m-k)^3+k^3}{m^3}+o(1)\right),\]
respectively. Clearly, each of these expressions is $o(n)$ if and only if $k = o(m)$. 

\end{proof}

\subsection{Approximation tools}
In Sections \ref{sec-drg-approx} and \ref{sec-coh-approx} we will use the following results from the Approximation theory which allow us to estimate the roots of a perturbed polynomial and the eigenvalues of a perturbed matrix.

\begin{theorem}[{\cite[Appendix A]{Ostrowski}}]\label{poly-approx}
Let $n\geq 1$ be an integer. Consider two polynomials of degree $n$
\[ f(x) = a_0x^n+...+a_{n-1}x+a_n, \quad g(x) = b_0x^n+...+b_{n-1}x+b_n,\]
where $a_0 = b_0 = 1$. Denote $M = \max  \{|a_i|^{1/i}, |b_i|^{1/i}:  0\leq i\leq n\}$ and
\[\varepsilon = 2n \left( \sum\limits_{i = 1}^{n} |b_i - a_i| (2M)^{n-i}\right)^{1/n}.\]
Let $x_1, x_2, ..., x_n$ denote the roots of $f$ and $y_1, y_2, ..., y_n$ denote the roots of $g$. Then, there exists a permutation $\sigma \in S_n$ such that for every $1\leq i\leq n$
\[|x_i - y_{\sigma(i)}|\leq \varepsilon.\]

\end{theorem}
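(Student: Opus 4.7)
The plan is to exploit the identity $f(y_j) = (f-g)(y_j)$ at each root of $g$ (and symmetrically for roots of $f$), which converts the coefficient-wise closeness of $f$ and $g$ into a product estimate on distances between roots, and then to upgrade these per-root distance bounds to an actual bijection.

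First I would establish an a priori bound on the location of all roots. Since $f$ is monic with $|a_i|\leq M^i$ by definition of $M$, if $|z|>2M$ then
\[
\Bigl|\sum_{i=1}^n a_i z^{n-i}\Bigr| \leq \sum_{i=1}^n M^i |z|^{n-i} = |z|^n \sum_{i=1}^n (M/|z|)^i < |z|^n,
\]
so $f(z)\neq 0$. The same argument applies to $g$, hence all $x_i,y_j$ lie in the closed disk $|z|\leq 2M$. Second, for any root $y_j$ of $g$ we have $f(y_j) = f(y_j)-g(y_j) = \sum_{i=1}^n (a_i-b_i)y_j^{n-i}$, and since $|y_j|\leq 2M$ this yields
\[
|f(y_j)| \leq \sum_{i=1}^n |a_i-b_i|(2M)^{n-i} =: D.
\]
Using the factorization $f(y_j)=\prod_i(y_j - x_i)$ this gives $\prod_i |y_j-x_i|\leq D$, hence $\min_i |y_j - x_i|\leq D^{1/n}$. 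By symmetry (swap the roles of $f$ and $g$), the same estimate holds with $x_i$'s and $y_j$'s interchanged.

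The remaining and most delicate step is to promote these pointwise nearest-neighbor estimates to a genuine bijection $\sigma\in S_n$. This is where the factor $2n$ in $\varepsilon = 2n D^{1/n}$ appears. The approach I would take is a clustering/Rouch\'e argument: form the bipartite graph on $\{x_i\}\cup\{y_j\}$ in which $x_i\sim y_j$ iff $|x_i-y_j|\leq \varepsilon$, and verify Hall's condition. For a subset $T\subseteq\{x_i\}$, consider the connected components of $\bigcup_{x_i\in T} \overline{D(x_i,\varepsilon)}$. On the boundary $\Gamma$ of such a component, one has lower bounds on $|f|$ coming from the enforced separation from the roots inside the component, while $|f-g|\leq D$ uniformly on $|z|\leq 2M$. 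The factor $2n$ is chosen precisely so that the inequality $|f-g|<|f|$ holds on $\Gamma$; then Rouch\'e's theorem guarantees that each component contains at least as many roots of $g$ as of $f$, which verifies Hall's condition and produces the required matching.

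The main obstacle is this last step: translating the per-root bound $D^{1/n}$ into a uniform bound for a permutation. The factor $2n$ is tight in the sense that clusters of up to $n$ nearly-coincident roots of $f$ can split into a spread of $y_j$'s across a disk of radius $\sim n D^{1/n}$, so the enlargement by a factor of order $n$ is genuinely needed to ensure a global bijection exists. The Rouch\'e lower bound on $|f|$ along the boundary of a component is the technical heart: one must carefully choose the radii so that boundary points are sufficiently far from all roots in the component (giving $|f|\gg D$) while the component still captures exactly the intended cluster.
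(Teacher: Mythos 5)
First, a point of reference: the paper does not prove this statement at all --- it is quoted verbatim from Ostrowski (Appendix A of \emph{Solution of Equations and Systems of Equations}) and used as a black box. So there is no in-paper proof to compare against; your proposal has to stand on its own.

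Your first two steps are correct and are the standard opening: the bound $|a_i|\leq M^i$ forces all roots into $|z|\leq 2M$ via the geometric series, and evaluating $f$ at a root of $g$ gives $\prod_i|y_j-x_i|\leq D$, hence $\min_i|y_j-x_i|\leq D^{1/n}$. The gap is in the matching step, and it is twofold. (1) As formulated, the Rouch\'e argument fails: you propose to verify Hall's condition by applying Rouch\'e on the boundary $\Gamma$ of $\bigcup_{x_i\in T}\overline{D(x_i,\varepsilon)}$ for an \emph{arbitrary} subset $T$ of the roots of $f$. But roots of $f$ outside $T$ may lie on or arbitrarily near $\Gamma$, so there is no lower bound on $|f|$ there and the inequality $|f-g|<|f|$ cannot be established. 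The fix is to drop Hall entirely: take the connected components of $\bigcup_{i=1}^{n}D(x_i,r)$ over \emph{all} roots of $f$; then every point of the boundary of a component is at distance $\geq r$ from every $x_i$, giving $|f|\geq r^n$ there, Rouch\'e yields \emph{equal} numbers of roots of $f$ and $g$ in each component, and one matches within components. (2) Even with that fix, the constant $2n$ is asserted rather than derived, and the obvious bookkeeping does not quite reach it: a component built from $m\leq n$ disks of radius $r$ only guarantees $|x_i-y_j|\leq(2m-1)r$, and Rouch\'e forces $r$ strictly larger than $D^{1/n}$ because on the contour one only has $|z|\leq 2M+r$, so $|f-g|\leq\sum_i|a_i-b_i|(2M+r)^{n-i}$, which exceeds $D$; chasing the inequalities, one needs $r\gtrsim e^{1/n}D^{1/n}$, and $(2n-1)e^{1/n}>2n$, so the argument as sketched lands slightly above the stated bound. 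Closing this last factor is exactly the content of Ostrowski's more careful argument, and your proposal does not supply it.

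In short: the architecture (root localization, product bound at the roots, cluster-plus-Rouch\'e matching) is the right one and matches the classical proof, but the Hall's-condition version of the final step would fail as written, and the claimed constant $2n$ is not actually obtained by the radii you describe.
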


\begin{theorem}[Ostrowski {\cite[Appendix K]{Ostrowski}}]\label{matrix-approximation}
Let $A, B \in M_n(\mathbb{C})$. Let $\lambda_1, \lambda_2, ..., \lambda_n$ be the roots of the characteristic polynomial of $A$ and $\mu_1, \mu_2, ..., \mu_n$ be the roots of the characteristic polynomial of $B$. Consider 
\[M = \max\{|(A)_{ij}|, |(B)_{ij}|: 1\leq i, j\leq n\}, \quad \delta = \frac{1}{nM}\sum\limits_{i = 1}^n\sum\limits_{j=1}^n|(A)_{ij}-(B)_{ij}|.\]

Then, there exists a permutation $\sigma \in S_n$ such that 
\[|\lambda_i - \mu_{\sigma(i)}|\leq 2(n+1)^2M\delta^{1/n}, \quad \text{for all } 1\leq i\leq n.\]
\end{theorem}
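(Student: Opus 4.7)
The plan is to reduce this matrix perturbation statement to the polynomial root perturbation result already stated as Theorem~\ref{poly-approx}, applied to the characteristic polynomials $p_A(x) = \det(xI - A)$ and $p_B(x) = \det(xI - B)$, whose roots are precisely the eigenvalue sequences $\{\lambda_i\}$ and $\{\mu_j\}$. To invoke Theorem~\ref{poly-approx} I must control two quantities: the absolute sizes of the coefficients of $p_A$ and $p_B$, and the sizes of their coefficient-wise differences.

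For the first task, the coefficient $a_k$ of $x^{n-k}$ in $p_A$ equals (up to sign) the sum of all $\binom{n}{k}$ principal $k\times k$ minors of $A$. Each such minor is a signed sum of $k!$ products of $k$ entries, each of modulus at most $M$, so
\[
|a_k| \le \binom{n}{k}\, k!\, M^k \le (nM)^k,
\]
giving $|a_k|^{1/k} \le nM$. The same bound applies to $p_B$, so the quantity playing the role of ``$M$'' in Theorem~\ref{poly-approx} can be taken to be at most $nM$.

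For the second task, I would use the multilinear telescoping expansion: for any principal $k\times k$ submatrix indexed by $S\subseteq [n]$,
\[
\bigl|\det(A_S) - \det(B_S)\bigr| \le (k-1)!\, M^{k-1}\sum_{i,j\in S}|A_{ij}-B_{ij}|,
\]
obtained by rewriting each permutation summand $\prod_i A_{i\pi(i)} - \prod_i B_{i\pi(i)}$ as a telescoping sum over positions and bounding every remaining factor by $M$. Summing over all $S$ with $|S|=k$, each pair $(i,j)$ is counted $\binom{n-2}{k-2}$ times (for $k\ge 2$; the case $k=1$ is handled directly via the trace), and combined with the normalization $\sum_{i,j}|A_{ij}-B_{ij}| = nM\delta$ this yields
\[
|a_k - b_k| \le k\, n^{k-1} M^k\,\delta.
\]

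Plugging these estimates into $\varepsilon = 2n\bigl(\sum_k |a_k - b_k|(2nM)^{n-k}\bigr)^{1/n}$ from Theorem~\ref{poly-approx}, the inner sum becomes at most
\[
\delta M^n 2^n n^{n-1}\sum_{k\ge 1} k\, 2^{-k} \le 2\delta\,(2nM)^n / n,
\]
so extracting the $n$-th root gives $\varepsilon = O(n^2 M\delta^{1/n})$. The stated bound $2(n+1)^2 M\delta^{1/n}$ then requires only a tightening of constant factors: this final bookkeeping (e.g., using $\binom{n}{k}k!\le (n+1)^k$ and sharper grouping of the geometric series) is the only point in the argument that needs care; the conceptual content is entirely the reduction from matrix perturbation to polynomial-root perturbation through the characteristic polynomial, combined with the elementary multilinearity estimate above.
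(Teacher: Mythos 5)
The paper does not prove this statement --- it is quoted from Ostrowski's book (Appendix~K), with Theorem~\ref{poly-approx} (Appendix~A of the same book) as the evident engine --- so your reduction through the characteristic polynomials is certainly the intended route, and your two main estimates are essentially sound: $|a_k|^{1/k}\le nM$ via principal minors, and the telescoping bound for $|\det(A_S)-\det(B_S)|$. One minor slip: when you sum over the $k$-subsets $S$, a diagonal pair $(i,i)$ lies in $\binom{n-1}{k-1}$ of them, not $\binom{n-2}{k-2}$, so the correct intermediate bound is $|a_k-b_k|\le (k-1)!\binom{n-1}{k-1}M^{k-1}\cdot nM\delta\le n(n-1)^{k-1}M^k\delta$ rather than $k\,n^{k-1}M^k\delta$.

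The substantive problem is the final constant: it does not come out, and it is not ``only a tightening of constant factors.'' Feeding the corrected bounds into Theorem~\ref{poly-approx} with $M'=nM$ gives $\varepsilon\le 4n^2M\bigl(\tfrac{n}{n+1}\bigr)^{1/n}\delta^{1/n}$, which exceeds the claimed $2(n+1)^2M\delta^{1/n}$ for every $n\ge 3$ and approaches twice it as $n\to\infty$. Moreover the obstruction is not in your bookkeeping: already the $k=1$ term of the sum in Theorem~\ref{poly-approx} equals $|a_1-b_1|(2nM)^{n-1}$, and the trace difference can genuinely equal $nM\delta$ while $M'=nM$ (take $A=MI$, $B=M(1-\delta)I$), so the formula itself evaluates to at least $4n^2(\tfrac12)^{1/n}M\delta^{1/n}$, which is larger than $2(n+1)^2M\delta^{1/n}$ once $n\ge 4$. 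Hence the stated form of Theorem~\ref{poly-approx} cannot certify the literal constant $2(n+1)^2$ by this argument; your proposed fixes do not help ($\binom{n}{k}k!\le(n+1)^k$ is weaker than the bound $n^k$ you already have). What you have actually proved is the theorem with $2(n+1)^2$ replaced by roughly $4n^2$ --- which, to be fair, is all the paper ever needs downstream, where only an explicit $O(n^2)M\delta^{1/n}$ bound is used --- but as a proof of the statement as written it has a genuine, if quantitatively small, gap that would require either Ostrowski's sharper polynomial perturbation estimate or a separate argument to close.
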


\section{Distance-regular graphs}\label{sec-dist-reg}

\subsection{Case of very large vertex degree}\label{sec-large-deg}

In this section we consider the case of a very large vertex degree of a distance-regular graph~$X$. More precisely, we assume that the vertex degree is linear in the number of vertices, that is, $k>\gamma n$ for some fixed $\gamma>0$.

\begin{lemma}\label{max-min}
Let $X$ be a distance-regular graph of diameter $d\geq 2$. Then
\begin{enumerate}
\item The parameters of $X$ satisfy $k-\mu\leq 2(k-\lambda)$.
\item If $a_2\neq 0$, then they also satisfy $k-\lambda\leq 2(k-\mu)$.
\end{enumerate}
\end{lemma}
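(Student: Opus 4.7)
My plan is to derive both inequalities from a single inclusion--exclusion argument performed inside a vertex neighborhood. In each case I will locate two vertices whose intersections with $N(v)$ must overlap substantially by a size consideration alone, and then compare that overlap with the total number of common neighbors the two vertices share in all of $X$.

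For part~(1), I would fix any vertex $v$ and first note that $d\geq 2$ forces $b_1\geq 1$, hence $\lambda\leq k-2$, so $N(v)$ contains a non-adjacent pair $u,u'$. Such a pair lies at distance $2$ in $X$, so has exactly $\mu$ common neighbors, one of which is $v$ itself; in particular $|N(u)\cap N(u')\cap N(v)|\leq \mu-1$. On the other hand, $N(u)\cap N(v)$ and $N(u')\cap N(v)$ are $\lambda$-subsets of $N(v)\setminus\{u,u'\}$ (a set of size $k-2$), so inclusion--exclusion yields $|N(u)\cap N(u')\cap N(v)|\geq 2\lambda-k+2$. Combining the two bounds gives $2\lambda\leq k+\mu-3$, which is in fact stronger than the desired $k-\mu\leq 2(k-\lambda)$. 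When $2\lambda\leq k$ the conclusion is immediate regardless.

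For part~(2), the hypothesis $a_2\neq 0$ directly provides a vertex $v$ together with an edge $ww'$ lying inside $N_2(v)$. Then $N(w)\cap N(v)$ and $N(w')\cap N(v)$ are $\mu$-subsets of the $k$-set $N(v)$, so inclusion--exclusion yields $|N(w)\cap N(w')\cap N(v)|\geq 2\mu-k$. Since $w\sim w'$, the total intersection $N(w)\cap N(w')$ has size $\lambda$, and therefore this three-way intersection is bounded above by $\lambda$. Combining gives $2\mu-k\leq \lambda$, which rearranges to $k-\lambda\leq 2(k-\mu)$.

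I do not anticipate any serious obstacle: the arguments are essentially symmetric, each reducing to the trivial set-theoretic bound $|A\cap B|\geq |A|+|B|-|U|$ inside an appropriate universe $U$. The only point requiring attention is the existence of the configurations used, namely a non-adjacent pair in $N(v)$ for (1) and an edge inside $N_2(v)$ for (2); these are supplied respectively by $d\geq 2$ and by $a_2\neq 0$.
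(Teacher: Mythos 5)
Your argument is correct and is essentially the paper's own proof recast in the dual language: where the paper applies the inclusion $N(v)\setminus N(w)\subseteq (N(v)\setminus N(u))\cup(N(u)\setminus N(w))$ to a path $v\sim u\sim w$ (for part 1) or to an edge at distance two from a fixed vertex (for part 2), you instead bound the overlap of the two traces $N(u)\cap N(v)$ and $N(u')\cap N(v)$ inside the universe $N(v)$ by inclusion--exclusion and compare with the global intersection size $\mu$ or $\lambda$ --- expanding cardinalities shows this is the same triangle inequality for neighborhood differences. The only cosmetic difference is that in part (1) you shrink the universe to $N(v)\setminus\{u,u'\}$ and delete $v$ from the count, gaining a harmless $-3$; the underlying combinatorics is identical.
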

\begin{proof}
For any two distinct vertices $v,w$ in $X$ we have $N(v)\cap N(w)\in \{0,\lambda,\mu\}$. At the same time, for any vertex $u$
\begin{equation}\label{eq1}
N(v)\setminus N(w)\subseteq (N(v)\setminus N(u))\cup (N(u)\setminus N(w)).
\end{equation}
Now, if $v,w$ are at distance 2 and $u$ is their common neighbor, we get $k-\mu\leq 2(k-\lambda)$. 

Suppose, that $a_2\neq 0$, then for vertex $u$ there are two adjacent vertices $v,w$ at distance 2 from $u$. So, the inclusion for neighborhoods above gives $k-\lambda\leq 2(k-\mu)$. 
\end{proof}

We will also need a straightforward generalization of part 1 of the previous lemma.

\begin{lemma}\label{lambda-mu-ineq}
Let $X$ be a regular graph of degree $k$, such that every pair of adjacent vertices has at least $\lambda$ common  neighbors, while every pair of non-adjacent vertices has at most $\mu$ common neighbors. Then $X$ is a disjoint union of cliques, or $k+\mu\geq 2\lambda$.
\end{lemma}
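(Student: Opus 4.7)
The plan is to mimic the proof of part (1) of Lemma \ref{max-min}. The key identity is the set-theoretic inclusion
\[
N(v)\setminus N(w) \subseteq \bigl(N(v)\setminus N(u)\bigr)\cup \bigl(N(u)\setminus N(w)\bigr),
\]
which holds for any triple $v,w,u$. To exploit it we need to produce a witness triple where $v$ and $w$ are \emph{non-adjacent} (to bound $|N(v)\cap N(w)|$ above by $\mu$) while $u$ is a common neighbor of $v$ and $w$ (to bound $|N(v)\cap N(u)|$ and $|N(u)\cap N(w)|$ below by $\lambda$).

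First I would reduce to the existence of such a triple. Assume $X$ is not a disjoint union of cliques; then some connected component $Y$ of $X$ fails to be a clique, so $Y$ contains two vertices that are not adjacent. Since $Y$ is connected, by walking along a shortest path between two such vertices I can find a pair $v,w\in Y$ with $\dist_X(v,w)=2$. Any vertex $u$ on that shortest path is then a common neighbor of $v$ and $w$, so the desired triple exists.

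Next I would evaluate the three sizes in the inclusion. The hypotheses give $|N(v)\cap N(w)|\leq \mu$ (as $v,w$ are non-adjacent) and $|N(v)\cap N(u)|,\ |N(w)\cap N(u)|\geq \lambda$ (as $u$ is adjacent to each of $v$ and $w$). Since $X$ is $k$-regular, these translate into
\[
|N(v)\setminus N(w)|\geq k-\mu,\qquad |N(v)\setminus N(u)|\leq k-\lambda,\qquad |N(u)\setminus N(w)|\leq k-\lambda.
\]
Plugging these into the inclusion yields $k-\mu\leq 2(k-\lambda)$, i.e., $k+\mu\geq 2\lambda$, which is exactly the desired conclusion.

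The only real step that requires care is producing the witness triple, and that follows immediately from the assumption that $X$ is not a disjoint union of cliques together with the fact that $X$ is regular (so within a non-clique component we always find a pair at distance exactly $2$). No spectral or counting input beyond the simple inclusion is needed, and the argument is insensitive to the fact that here $\lambda$ is a lower bound and $\mu$ an upper bound rather than exact common-neighbor counts.
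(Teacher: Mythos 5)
Your proof is correct and follows the same approach as the paper: locate a pair $v,w$ at distance $2$ in a non-clique component, take a common neighbor $u$, and apply the inclusion $N(v)\setminus N(w)\subseteq (N(v)\setminus N(u))\cup(N(u)\setminus N(w))$ to get $k-\mu\leq 2(k-\lambda)$. The paper states this more tersely, but the argument is the same; your explicit verification that a distance-$2$ pair exists in any connected non-clique component is a correct elaboration of a step the paper leaves implicit.
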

\begin{proof}
If $X$ is not a union of cliques, then there are two vertices $v,w$ at distance 2 in $X$, and let $u$ be their common neighbor. Then inequality follows from the inclusion in Eq.~\eqref{eq1}. 
\end{proof}

\begin{lemma}
Let $X$ be a non-trivial distance-regular graph. Then every pair of distinct vertices is distinguished by at least $k-\mu$ vertices.
\end{lemma}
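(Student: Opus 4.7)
The plan is to fix two distinct vertices $u,v$, set $i := \dist(u,v)$, and split into three cases according to $i$. In each case I will exhibit at least $k-\mu$ vertices $x$ with $\dist(x,u)\neq \dist(x,v)$; the two easy cases handle themselves, and the adjacent case ($i=1$) is precisely where part (1) of Lemma \ref{max-min} is needed.

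If $i\geq 3$, then no vertex can be adjacent to both $u$ and $v$, so every vertex $x\in N(u)$ satisfies $\dist(x,u)=1$ and $\dist(x,v)\geq 2$, distinguishing $u$ from $v$. This already gives $k\geq k-\mu$ distinguishing vertices. If $i=2$, then by distance-regularity $|N(u)\cap N(v)|=\mu$, and for every $x\in N(u)\setminus N(v)$ we have $\dist(x,u)=1$ while $\dist(x,v)\in\{0,2\}\neq 1$ (it can only equal $0$ if $x=v$, which is fine); hence $N(u)\setminus N(v)$ produces at least $k-\mu$ distinguishing vertices.

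The remaining case is $i=1$. Because $u\sim v$ but vertices have no loops, we have $v\in N(u)\setminus N(v)$ and $u\in N(v)\setminus N(u)$, while $|N(u)\cap N(v)|=\lambda$. Therefore
\[
|N(u)\triangle N(v)| \;=\; 2(k-\lambda).
\]
Every $x\in N(u)\triangle N(v)$ distinguishes $u$ from $v$: if $x\in N(u)\setminus N(v)$ then $\dist(x,u)=1$ and $\dist(x,v)\in\{0\}\cup\{2,3,\ldots\}$, which is never $1$; symmetrically for $x\in N(v)\setminus N(u)$. So we obtain $2(k-\lambda)$ distinguishing vertices, and the inequality $k-\mu\leq 2(k-\lambda)$ supplied by part (1) of Lemma \ref{max-min} (applicable since nontriviality means $d\geq 2$) finishes the case.

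There is no serious obstacle; the only mild subtlety is to remember that in the case $i=1$ the vertices $u,v$ themselves lie in the symmetric difference $N(u)\triangle N(v)$, so the natural count is $2(k-\lambda)$ rather than $2(k-\lambda-1)$, and this is exactly the quantity that Lemma \ref{max-min}(1) compares with $k-\mu$.
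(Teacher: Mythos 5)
Your proof is correct and takes essentially the same approach as the paper: lower-bound the number of distinguishing vertices by $|N(u)\triangle N(v)|$ and close with Lemma \ref{max-min}(1). The paper simply compresses your three cases into the single observation that $|N(u)\cap N(v)|\in\{0,\lambda,\mu\}$, hence $|N(u)\triangle N(v)|\geq 2(k-\max(\lambda,\mu))\geq k-\mu$.
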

\begin{proof} Any two vertices $u,v\in X$ are distinguished by at least $|N(u)\bigtriangleup N(v)| = 2(k-|N(u)\cap N(v)|)$ vertices. Now, by Lemma \ref{max-min}, we get $2(k-\max(\lambda,\mu))\geq k-\mu$.
\end{proof}

\begin{corollary}\label{linear-mu-bound}
Let $X$ be a non-trivial distance-regular graph. Fix some constants $\gamma, \delta>0$. If $k>n\gamma$ and $\mu\leq (1-\delta)k$, then $\motion(X)\geq \gamma\delta n$. 
\end{corollary}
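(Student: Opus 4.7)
The plan is essentially a one-line chain of inequalities, combining the previous lemma with Observation~\ref{obs1}. First I would invoke the preceding lemma, which states that every pair of distinct vertices in a non-trivial distance-regular graph is distinguished by at least $k-\mu$ vertices. By Observation~\ref{obs1}, this immediately yields $\motion(X)\geq k-\mu$.

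Next I would plug in the two quantitative hypotheses. The assumption $\mu\leq (1-\delta)k$ gives $k-\mu\geq \delta k$, and then the assumption $k>\gamma n$ gives $\delta k > \delta\gamma n$. Stringing these together produces $\motion(X)\geq k-\mu\geq \delta k\geq \delta\gamma n$, which is the desired bound.

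There is no real obstacle here: the corollary is a direct quantitative specialization of the previous lemma, and the only work is substituting the linear-in-$n$ lower bound on $k$ together with the linear-in-$k$ upper bound on $\mu$. The role of this statement in the paper is presumably to handle the ``very large vertex degree'' regime $k>\gamma n$ under a mild spread assumption between $k$ and $\mu$, reducing the remaining analysis to the complementary case where $\mu$ is very close to $k$ (which will presumably be handled by structural arguments, e.g.\ forcing the graph to be close to a union of cliques or to a bipartite-type structure via Lemma~\ref{max-min}).
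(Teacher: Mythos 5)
Your argument is exactly the paper's: combine the preceding lemma (each pair is distinguished by at least $k-\mu$ vertices) with Observation~\ref{obs1}, then substitute $k-\mu\geq \delta k > \delta\gamma n$. This matches the paper's one-line proof.
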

\begin{proof}
The result follows from the lemma above and Observation \ref{obs1}. 
\end{proof}

Hence, we are going to bound $\mu$ and $\lambda$ away from $k$.

\begin{lemma}\label{min-est}
The parameters of a distance-regular graph of diameter $d\geq 2$ satisfy 
\begin{enumerate}
\item $\min(\lambda, \mu)<k(1+\min(\frac{r-1}{d-1}, (\frac{r}{d})^{\frac{1}{d-1}}))^{-1}$,
\item $\mu <k(\min(\frac{r-1}{d-1}, (\frac{r}{d})^{\frac{1}{d-1}}))^{-1}$,
\end{enumerate}
where $r = \frac{n-1}{k}$.
\end{lemma}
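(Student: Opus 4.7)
The plan is to reduce both statements to an elementary bound on a geometric-like sum. Starting from the standard recursion $k_{i+1} c_{i+1} = k_i b_i$ together with the monotonicity $b_i \le b_1$ and $c_{i+1} \ge c_2$ that holds for all $i \ge 1$ in any distance-regular graph, and setting $\rho_0 := b_1/c_2 = (k-\lambda-1)/\mu$, I obtain $k_{i+1}/k_i \le \rho_0$ for $i \ge 1$, so $k_i \le k\rho_0^{i-1}$. Summing over $i$ and dividing by $k$ gives
\[
r = \frac{n-1}{k} = \sum_{i=1}^{d}\frac{k_i}{k} \;\le\; 1 + \rho_0 + \rho_0^2 + \cdots + \rho_0^{d-1}.
\]

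Next I will split on whether $\rho_0 \ge 1$ or $\rho_0 \le 1$ and use the appropriate crude bound on this sum in each regime. If $\rho_0 \ge 1$, every summand is at most $\rho_0^{d-1}$, so $r \le d\,\rho_0^{d-1}$, which rearranges to $\rho_0 \ge (r/d)^{1/(d-1)}$. If instead $\rho_0 \le 1$, every summand after the first is at most $\rho_0$, so $r \le 1 + (d-1)\rho_0$, which rearranges to $\rho_0 \ge (r-1)/(d-1)$. Since $M := \min((r-1)/(d-1),\,(r/d)^{1/(d-1)})$ is bounded above by both of these quantities, in either case
\[
\rho_0 \;\ge\; M.
\]

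For part (2), observe that $\rho_0 = (k-\lambda-1)/\mu < k/\mu$ strictly (since $\lambda \ge 0$). Combined with $\rho_0 \ge M$, this gives $k/\mu > M$, i.e., $\mu < k\,M^{-1}$, which is the claim.

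For part (1), assume $\min(\lambda,\mu) \ge 1$ (otherwise the inequality is vacuous). I will show the strict comparison $k/\min(\lambda,\mu) - 1 > \rho_0$ by a short two-case algebraic check: if $\lambda \le \mu$, compare $(k-\lambda)/\lambda$ with $(k-\lambda-1)/\mu$, where strictness is immediate; if $\mu < \lambda$, compare $(k-\mu)/\mu$ with $(k-\lambda-1)/\mu$, where strictness uses integrality of $\lambda$ and $\mu$ (namely $\mu \le \lambda - 1$). Combined with $\rho_0 \ge M$, this yields $k/\min(\lambda,\mu) > 1 + M$, equivalently $\min(\lambda,\mu) < k(1+M)^{-1}$, as required. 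The only real content of the argument is the two-regime analysis of the geometric sum above; all remaining steps are routine algebraic manipulations.
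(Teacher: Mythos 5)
Your proof is correct and follows essentially the same route as the paper: bound $n-1$ by a geometric series in $\rho_0 = b_1/c_2 = (k-\lambda-1)/\mu$ using the monotonicity of $(b_i)$ and $(c_i)$, split on $\rho_0 \lessgtr 1$ to get $\rho_0 \ge M$ in both regimes, then convert. The only cosmetic difference is that the paper records the single inequality $M\mu + \lambda + 1 \le k$ and reads both conclusions off it directly, whereas you re-derive part (1) via a separate two-case comparison (where, incidentally, the appeal to integrality in the $\mu<\lambda$ subcase is not needed: $\mu<\lambda$ already gives $\mu<\lambda+1$).
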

\begin{proof}
Recall that the sequences $(b_i)$ and $(c_i)$ are monotone, so $b_i\leq b_1 = k-\lambda-1$ and $c_{i+1}\geq \mu$ for $1\leq i\leq d-1$.
Thus, $k_{i+1}\leq k_i\frac{k-\lambda-1}{\mu}$. Hence, we have $$n = \sum\limits_{i=0}^{d}k_i\leq 1+\sum\limits_{i=0}^{d-1}k\left(\frac{k-\lambda-1}{\mu}\right)^i$$.

If $k-\lambda-1\leq \mu$, then $n\leq 1+k+(d-1)k\frac{k-\lambda-1}{\mu}$, i.e., $\left(\frac{r-1}{d-1}\right)\mu  +\lambda+1\leq k$. Otherwise, we have $n\leq 1+dk\left(\frac{k-\lambda-1}{\mu}\right)^{d-1}$, i.e., $\left(\frac{r}{d}\right)^{\frac{1}{d-1}}\mu+\lambda+1\leq k$.
\end{proof}

\begin{corollary}\label{min-lambda-d3}
Let $X$ be a distance-regular graph of diameter $d\geq 3$. Then $\min(\lambda, \mu)\leq \frac{d-1}{d}k$.
\end{corollary}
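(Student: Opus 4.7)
The statement is a clean consequence of Lemma~\ref{min-est}(1), which yields
\[
\min(\lambda,\mu) < \frac{k}{1+M}, \qquad M = \min\!\left(\tfrac{r-1}{d-1},\,(r/d)^{1/(d-1)}\right),\quad r = \tfrac{n-1}{k}.
\]
So the desired inequality $\min(\lambda,\mu)\le \frac{d-1}{d}k$ follows as soon as one shows $M\ge \frac{1}{d-1}$. Unpacking the two terms defining $M$, this reduces to the two elementary claims $r\ge 2$ and $r\ge d/(d-1)^{d-1}$.

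The second claim is trivial once $r\ge 1$, because $(d-1)^{d-1}\ge d$ for every $d\ge 3$ (e.g.\ $(d-1)^{d-1}\ge 2^{d-1}\ge d$). For the first claim I would prove the standalone graph-theoretic fact that any connected $k$-regular graph of diameter at least $3$ satisfies $n\ge 2k+1$. Indeed, suppose instead $k\ge n/2$. Then for any vertex $v$ the set of non-neighbors $M(v):=V\setminus(N(v)\cup\{v\})$ has size $n-1-k\le k-1$; a vertex $w\in M(v)$ has $k$ neighbors but at most $|M(v)|-1\le k-2$ candidate neighbors outside $\{v\}\cup N(v)$, so $w$ must have a neighbor in $N(v)$. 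This forces every vertex to lie within distance $2$ of $v$, contradicting the hypothesis $d\ge 3$. Hence $k<n/2$, i.e.\ $r\ge 2$.

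Feeding $r\ge 2$ and $r\ge 1$ into the two expressions inside $M$ yields $M\ge 1/(d-1)$, and Lemma~\ref{min-est}(1) then gives
\[
\min(\lambda,\mu) \;<\; \frac{k}{1+\tfrac{1}{d-1}} \;=\; \frac{d-1}{d}\,k,
\]
as required. I do not foresee any genuine obstacle: the argument is a specialization of the general bound of Lemma~\ref{min-est} to $d\ge 3$, with the only substantive input being the elementary diameter/degree inequality $n\ge 2k+1$ established above.
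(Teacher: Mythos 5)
Your proof is correct and follows the paper's own route: the paper likewise deduces the corollary from Lemma~\ref{min-est}(1) via the observation that $d\geq 3$ forces $r=\frac{n-1}{k}\geq 2$ (which it asserts without proof). You have merely supplied the omitted details — the elementary argument that a connected $k$-regular graph of diameter $\geq 3$ has $n\geq 2k+1$, and the verification that both terms in the minimum are at least $\frac{1}{d-1}$ — and both check out.
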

\begin{proof} Since diameter $d\geq 3$, we have $2\leq r = \frac{n-1}{k}$, so the result follows from the previous lemma. 
\end{proof}

\begin{lemma}\label{a2-lambda}
Let $X$ be a distance-regular graph of diameter $d\geq 3$ and $a_2 = 0$. Then $\lambda = 0$.
\end{lemma}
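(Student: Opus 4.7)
The plan is to argue by contradiction: suppose $\lambda \geq 1$ and derive a contradiction with $a_2 = 0$ and $d \geq 3$. First I would note that since $d \geq 3$ we have $b_2 \geq 1$, and since $a_2 = 0$ gives $b_2 = k - \mu$, we obtain $\mu \leq k - 1$. Also, $X$ is not a clique, so $\lambda \leq k - 2$, i.e.\ $b_1 = k - 1 - \lambda \geq 1$.

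Next, since $\lambda \geq 1$, fix a triangle $a, u, c$. The core observation is the following. For any $w \in N(u) \cap N_2(a)$, I would compare the common neighborhoods of $\{a, u\}$ and $\{u, w\}$. Since $u \in N(a)$, every neighbor of $u$ lies in $\{a\} \cup N_1(a) \cup N_2(a)$. Now $a \notin N(w)$ because $\dist(a, w) = 2$, and $N(w) \cap N_2(a) = \emptyset$ by the hypothesis $a_2 = 0$. Therefore
\[
N(u) \cap N(w) \;\subseteq\; N_1(a) \cap N(u) \;=\; N(a) \cap N(u).
\]
Both sets have cardinality $\lambda$ (since $u \sim w$ and $u \sim a$), so they must coincide. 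In particular every common neighbor $c$ of $a$ and $u$ is adjacent to every $w \in N(u) \cap N_2(a)$, giving $N(u) \cap N_2(a) \subseteq N(c) \cap N_2(a)$; comparing sizes (both equal $b_1$) forces $N(u) \cap N_2(a) = N(c) \cap N_2(a) =: T$ whenever $u \sim c$ in $X[N(a)]$.

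Now I would exploit this in two complementary ways. On one hand, since $T \subseteq N(u) \cap N(c) \setminus \{a\}$, a set of size $\lambda - 1$, we obtain $k - 1 - \lambda \leq \lambda - 1$, i.e.\ $\lambda \geq k/2$. On the other hand, $X[N(a)]$ is a $\lambda$-regular graph on $k$ vertices, so every component has at least $\lambda + 1 > k/2$ vertices; hence $X[N(a)]$ is connected. Propagating the equality $N(u) \cap N_2(a) = N(c) \cap N_2(a)$ along edges of this connected graph shows that \emph{every} vertex in $N(a)$ has the same $N_2(a)$-neighborhood $T$.

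To close, since $|T| = b_1 \geq 1$, pick any $t \in T$. Then $t$ is adjacent to every vertex of $N(a)$, so $|N(a) \cap N(t)| \geq k$. But $t \in N_2(a)$ gives $|N(a) \cap N(t)| = \mu$, hence $\mu \geq k$, contradicting $\mu \leq k - 1$ established at the start. Thus $\lambda = 0$. The only delicate step is the double-counting argument that identifies $N(u)\cap N(w)$ with $N(a) \cap N(u)$; once that is in hand the structural collapse to $\mu = k$ is immediate, and everything else consists of size comparisons.
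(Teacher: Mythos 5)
Your proof is correct. The opening observation---that for $w \in N(u) \cap N_2(a)$ one has $N(u)\cap N(w) = N(a)\cap N(u)$, because $a_2=0$ keeps $N(w)$ out of $N_2(a)$ and distance keeps $a$ out of $N(w)$---is in fact the same insight that drives the paper's argument, just phrased from the perspective of a vertex $u\in N(a)$ rather than a vertex $v\in N_2(x)$. Where you diverge is in how you close. The paper applies $a_2=0$ a second time inside $N_1(x)$: given adjacent $w_1, w_2 \in S(v)$, no $w_3 \in N_1(x)$ can be at distance $2$ from both, so $w_3$ is adjacent to one of them and hence lands in $S(v)$, giving $S(v)=N_1(x)$ and $\mu=k$ directly. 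You instead extract $b_1 \leq \lambda - 1$ (hence $\lambda \geq k/2$), use $\lambda$-regularity of the local graph $X[N(a)]$ to deduce it is connected, and propagate the common $N_2(a)$-neighborhood across all of $N(a)$ to reach $\mu \geq k$. Both routes arrive at the same contradiction with $d\geq 3$. Your version is somewhat longer but yields the extra structural conclusion that every vertex of $N(a)$ has the same $N_2(a)$-neighborhood, which the paper's argument does not surface; the paper's is tighter because it milks $a_2=0$ a second time rather than routing through the $\lambda\geq k/2$ bound and local connectivity.
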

\begin{proof}
Fix any vertex $x\in X$. Take $v\in N_2(x)$. Denote the set of neighbors of $v$ in $N_1(x)$ by $S(v)$. Then $|S(v)| = \mu$. Observe, that common neighbors of $v$ and $w\in S(v)$ are in $N_1(x)$ or in $N_2(x)$. Moreover, the condition $a_2 = 0$ force them to be in $S(v)$ and there are exactly $\lambda$ such neighbors. At the same time, $w$ has exactly $\lambda$ neighbors in $N_1(x)$, i.e., all neighbors of $w\in S(v)$ within $N_1(x)$ lie inside $S(v)$.

Suppose that $\lambda\neq 0$, then there are two adjacent vertices $w_1, w_2\in S(v)$. Moreover, for any $w_3\in N_1(x)$ we have $\dist(w_i,w_3)\leq 2$ for $i = 1,2$. And distance could not be 2 for both, as $a_2 = 0$. Therefore, $w_3$ is adjacent to at least one of $w_1, w_2$. However, by the argument above, any such $w_3$ is inside $S(v)$, i.e., $S(v) = N_1(x)$, or in another words $\mu = k$. But it means that there is no vertex at distance $3$ from $x$, so we get a contradiction with the assumtion that diameter is at least $3$.
\end{proof}

\begin{proposition}\label{prop-k-big}
Let $X$ be a distance-regular graph of diameter $d\geq 3$. Suppose $k>n\gamma>2$ for some $\gamma>0$. If $X$ is not a bipartite graph, then $\motion(X)$ is at least $\frac{\gamma}{3} n$.
\end{proposition}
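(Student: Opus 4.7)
The plan is to reduce the proposition to verifying $\mu\le(2/3)k$ and then invoke Corollary~\ref{linear-mu-bound} with $\delta=1/3$: under the hypothesis $k>\gamma n$, this yields $\motion(X)\ge(\gamma/3)n$ immediately. So the whole task is to show that $\mu>(2/3)k$ is incompatible with the given hypotheses. I argue by contradiction, assuming $\mu>(2/3)k$.

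Since $d\ge 3$, Corollary~\ref{min-lambda-d3} gives $\min(\lambda,\mu)\le\frac{d-1}{d}k$; in the principal case $d=3$ this forces $\lambda\le (2/3)k<\mu$, and for larger $d$ the hypothesis $r=(n-1)/k<1/\gamma$ combined with Lemma~\ref{min-est} achieves the same conclusion. I then split on whether $\lambda>0$. If $\lambda>0$, the contrapositive of Lemma~\ref{a2-lambda} gives $a_2\ne 0$, so Lemma~\ref{max-min}(2) applies and yields $k-\lambda\le 2(k-\mu)$; together with $\lambda\le(2/3)k$ this gives $k-\mu\ge(k-\lambda)/2\ge k/6$. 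Moreover $\mu\ge\lambda$, so the sharper distinguishing count $\min_{u\ne v}|N(u)\triangle N(v)|=2(k-\max(\lambda,\mu))=2(k-\mu)$ is available, whence $\motion(X)\ge 2(k-\mu)\ge k/3>(\gamma/3)n$, which already proves the proposition in this sub-case.

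The main obstacle is the sub-case $\lambda=0$. Here Lemma~\ref{max-min}(2) shows that $a_2\ne 0$ would force $k=k-\lambda\le 2(k-\mu)$, i.e.\ $\mu\le k/2$, a direct contradiction; hence $a_2=0$. Non-bipartiteness now requires $a_j\ne 0$ for some $j\ge 3$ (in diameter $3$ necessarily $a_3>0$), and my remaining task is to rule out this regime. I would exploit the geometric decay $k_{i+1}/k_i=b_i/c_{i+1}\le (k/3)/(2k/3)=1/2$ for $i\ge 2$ (valid since $c_i\ge\mu>(2/3)k$ and $b_i\le k-c_i<k/3$), which yields $n\le 1+k+2k_2$ with $k_2=k(k-1)/\mu<3(k-1)/2$, and combine this with $k>\gamma n$ to trap $r$ in a narrow interval. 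A closer structural/walk analysis of triangle-free distance-regular graphs with $a_1=a_2=0$ and $\mu$ close to $k$ should then force $a_j=0$ for every $j$, yielding the desired contradiction with non-bipartiteness. This last step is the delicate point and requires combinatorial bookkeeping beyond the lemmas established so far.
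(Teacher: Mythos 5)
Your overall reduction — show $\mu\leq\tfrac23 k$ and invoke Corollary~\ref{linear-mu-bound} — and your treatment of the diameter-$3$, $a_2>0$ sub-case track the paper's proof, but two steps do not go through as written. For $d\geq 4$, the route through Lemma~\ref{min-est} and $r<1/\gamma$ is unsound: that lemma requires a \emph{lower} bound on $r$ to cap $\mu$, and since $\gamma$ can be arbitrarily small the inequality $r<1/\gamma$ provides no usable estimate (nor does Lemma~\ref{min-est} bound $\lambda$ alone). The paper disposes of $d\geq 4$ directly: take $v,w$ at distance $4$ and $y$ at distance $2$ from each; the $\mu$-sets $N(v)\cap N(y)$ and $N(w)\cap N(y)$ are disjoint subsets of $N(y)$, so $2\mu\leq k$, which already contradicts $\mu>\tfrac23 k$.

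The remaining sub-case $d=3$, $\lambda=0$, $a_2=0$ is the gap you flag yourself, and the geometric-decay bookkeeping you sketch will not by itself force $a_3=0$. The paper closes it with a classical inequality, Lemma~5.4.1 of \cite{BCN}: for a distance-regular graph of diameter $\geq 3$ with $\mu>1$, either $c_3\geq\tfrac32\mu$ or $c_3\geq\mu+b_2=k-a_2$. Under $a_2=0$ and $\mu>\tfrac23 k$ (and $\mu>1$ since $k>2$), the first alternative would give $c_3>k$, impossible, so the second holds and gives $c_3\geq k$, hence $c_3=k$ and $a_3=0$; combined with $a_1=\lambda=0$ and $a_2=0$ this makes $X$ bipartite, contradicting the hypothesis. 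This external lemma from the distance-regular graph literature is the single missing ingredient in your argument.
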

\begin{proof}
Suppose, that $\mu\leq\frac{2}{3}k$, then result follows from the Corollary \ref{linear-mu-bound}. 

If diameter $d\geq 4$, then $\mu\leq \frac{k}{2}$. Indeed, let $v,w$ be vertices at distance $4$ and let $y$ be a vertex at distance 2 from each of them. Then $y$ and $v$ have $\mu$ common neighbors and $y$ and $w$ have $\mu$ common neighbors, and they are all distinct. At the same time, they all are neighbors of $y$, so $\mu\leq k/2$.

If $d = 3$ and $a_2>0$, then by Lemma \ref{max-min} the inequality $k-\min(\lambda, \mu)\leq 2(k-\max(\lambda,\mu))$ holds. Hence, any two vertices $u,v\in X$ are distinguished by at least $$|N(u)\bigtriangleup N(v)| = 2(k-|N(u)\cap N(v)|)\geq 2(k-\max(\lambda,\mu))\geq k-\min(\lambda, \mu)$$ vertices. Moreover, Corollary \ref{min-lambda-d3} for $d=3$ gives $\min(\lambda, \mu)\leq \frac{2}{3}k$. 

Finally, assume $d=3$, $a_2 = 0$ and $\mu > \frac{2}{3}k>1$. Lemma 5.4.1 in \cite{BCN}, states that for a distance-regular graph of diameter $d\geq 3$, if $\mu>1$, then either $c_3\geq 3\mu/2$, or $c_3\geq \mu+b_2 = k-a_2$. Thus, we get that $c_3\geq k$, i.e., $a_3 = 0$. Hence, by Lemma \ref{a2-lambda} graph $X$ is bipartite.
\end{proof}

\subsection{Case of bipartite graphs of diameter 3}\label{sec-bipartite}

In this section we consider the case of a bipartite distance-regular  graph of diameter $3$.

\begin{definition}
Let $X$ be a finite graph with adjacency matrix $A$. Suppose that the set of vertices is partitioned into non-empty sets $X_1, X_2, ..., X_m$. The \textit{quotient matrix} $Q\in M_m(\mathbb{R})$ with respect to this partition is defined as $(Q)_{i,j} = \frac{1}{|X_i|}\mathds{1}_{X_i}^{T}A\mathds{1}_{X_j}$.
\end{definition}
\begin{definition}
We say that a non-increasing sequence $b_1\geq b_1\geq ... \geq b_m$ interlace a sequence $a_1\geq a_2\geq ... \geq a_n$ for $n\geq m$, if for any $i\in [m]$ inequality $a_{i}\geq b_{i} \geq  a_{n-m+i}$ holds.
\end{definition}

To treat the bipartite case we will need the version of the Expander Mixing Lemma for regular bipartite graphs. We include the proof for completeness.

\begin{theorem}[Expander Mixing Lemma: bipartite version, Haemers {\cite[Theorem 5.1]{Haemers}}] Let $X$ be a biregular bipartite graph with parts $U$ and $W$ of sizes $n_U$ and  $n_W$. Denote, by $d_U$ and $d_W$ the degrees of the vertices in parts $U$ and $W$, respectively. Let $\lambda_2$ be the second largest eigenvalue of the adjacency matrix $A$  of $X$. Then for any $S\subseteq U$, $T \subseteq W$
\[\left(E(S,T)\frac{n_U}{|S|} - |T|d_{W}\right)\left(E(S,T)\frac{n_W}{|T|} - |S|d_U\right)\leq \lambda_2^{2}(n_U - |S|)(n_W-|T|),\]
which, using $d_Un_U = d_Wn_W = E(U, W)$, implies

\[\left||E(S,T)| - \frac{d_W |S||T|}{n_U}\right|\leq |\lambda_2|\sqrt{|S||T|},\]
where $E(S, T)$ is the set of edges between $S$ and $T$ in $X$.
\end{theorem}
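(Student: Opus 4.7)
The plan is to apply the eigenvalue interlacing theorem for quotient matrices to the four-part partition $V(X) = S \sqcup (U \setminus S) \sqcup T \sqcup (W \setminus T)$. The degenerate cases where one of the four parts is empty are easily handled separately, so from now on I assume $0 < |S| < n_U$ and $0 < |T| < n_W$. Let $Q$ be the resulting $4 \times 4$ quotient matrix. Because $X$ is bipartite with parts $U$ and $W$, $Q$ has the block anti-diagonal form $Q = \left(\begin{smallmatrix} 0 & B \\ C & 0 \end{smallmatrix}\right)$, where $B$ and $C$ are $2 \times 2$ matrices whose entries are averaged edge-counts between pieces of the partition (for instance, $B_{11} = e/|S|$ with $e := |E(S,T)|$).

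Owing to this block form, the spectrum of $Q$ consists of four values $\pm\theta_1, \pm\theta_2$ with $\theta_1 \geq \theta_2 \geq 0$, and $\theta_1^2, \theta_2^2$ are precisely the eigenvalues of $BC$. For a biregular bipartite graph the spectrum of $A$ is symmetric about zero and its extremal eigenvalues are $\pm\sqrt{d_U d_W}$; so Cauchy interlacing of the eigenvalues of $Q$ with those of $A$ gives $\theta_1 \leq \sqrt{d_U d_W}$ and $\theta_2 \leq \lambda_2$. Hence
\[
\det(BC) \;=\; \theta_1^2 \theta_2^2 \;\leq\; d_U d_W\, \lambda_2^2.
\]

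The next step is to compute $\det(BC) = \det B \cdot \det C$ explicitly. Writing $f := |E(S, W \setminus T)| = d_U|S| - e$, $g := |E(U \setminus S, T)| = d_W|T| - e$, and $h := |E(U \setminus S, W \setminus T)|$, a direct calculation shows that both $\det B$ and $\det C$ have numerator $eh - fg$. Expanding $eh - fg$ and using the edge-count identities together with $d_U n_U = d_W n_W$, one finds the clean factorization $eh - fg = d_U(e n_U - d_W|S||T|) = d_W(e n_W - d_U|S||T|)$. Multiplying these two expressions yields
\[
\det(BC) \;=\; \frac{d_U d_W\,(e\, n_U - d_W |S||T|)(e\, n_W - d_U|S||T|)}{|S|\,|T|\,(n_U - |S|)(n_W - |T|)}.
\]
Combining this with the interlacing bound and clearing denominators produces the first inequality of the statement. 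For the second inequality, the relation $d_W/n_U = d_U/n_W$ (immediate from $d_U n_U = d_W n_W$) allows us to factor the two numerator terms as $n_U(e - \alpha)$ and $n_W(e - \alpha)$ with $\alpha := d_W|S||T|/n_U$; bounding $(n_U - |S|)(n_W - |T|) \leq n_U n_W$ and taking square roots then yields the stated cleaner form.

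The main obstacle is the determinant identity: verifying by direct expansion that $eh - fg$ factorizes as advertised, so that the factor $d_U d_W$ separates cleanly from the eigenvalue-dependent remainder. Once this algebraic identity is in hand, the rest of the proof is routine: interlacing together with the basic spectral structure of biregular bipartite graphs delivers both inequalities.
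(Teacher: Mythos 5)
Your proof uses the same quotient-matrix interlacing argument as the paper: form the $4\times 4$ quotient matrix $Q$ for the partition $S, U\setminus S, T, W\setminus T$, observe that the extreme eigenvalues of $Q$ equal $\pm\sqrt{d_Ud_W}$, and bound the middle pair by $\pm\lambda_2$ via interlacing. The only difference is that you make the block anti-diagonal structure explicit and verify the algebraic identity $eh-fg = d_U\bigl(e\,n_U - d_W|S||T|\bigr)$ that underlies the determinant formula, which the paper simply asserts; this is a correct and welcome elaboration of the same route.
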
 
\begin{proof}
Consider the quotient matrix with respect to the partition $S, (U\setminus S), T, (W\setminus T)$. 
\[
Q = \left(
\begin{matrix}
0& 0& \frac{E(S,T)}{|S|}& d_{U} - \frac{E(S,T)}{|S|} \\
0& 0& \frac{|T|d_W - E(S,T)}{n_U - |S|}& d_U - \frac{|T|d_W - E(S,T)}{n_U - |S|} \\
\frac{E(S,T)}{|T|}& d_{W} - \frac{E(S,T)}{|T|}& 0& 0 \\
\frac{|S|d_U - E(S,T)}{n_W - |T|}& d_W - \frac{|S|d_U - E(S,T)}{n_W - |T|}& 0& 0 
\end{matrix} \right)
\]
Denote the eigenvalues of $X$ by $\lambda_1\geq \lambda_2\geq ...\geq \lambda_{n}$, where $n =n_U+n_W$. The eigenvalues of bipartite graph are symmetric with respect to 0, that is, $\lambda_{n+1-i} = -\lambda_i$ for all $i\in [n]$. Denote the eigenvalues of $Q$ by $\mu_1\geq \mu_2\geq \mu_3\geq \mu_4$. It is easy to see that 
\[\lambda_1 = \mu_1 = -\lambda_n = -\mu_4 = \sqrt{d_Ud_W}.\]
It is known, that the eigenvalues of a quotient matrix with respect to some partition of a graph interlace the eigenvalues of a graph itself. Interlacing implies
\begin{equation}\label{interlace}
\frac{det(Q)}{d_Ud_W} = -\frac{det(Q)}{\mu_1\mu_4} = -\mu_2\mu_3 \leq -\lambda_2\lambda_{n-1} = (\lambda_2)^{2}.
\end{equation}

Therefore, statement of the theorem follows from \eqref{interlace}, as 
\[\det(Q) = \frac{d_Ud_W}{(n_U - |S|)(n_W-|T|)}\left(E(S,T)\frac{n_U}{|S|} - |T|d_{W}\right)\left(E(S,T)\frac{n_W}{|T|} - |S|d_U\right).\]

\end{proof}

Thus, for bipartite graphs we get the analog of Lemma \ref{mixing-lemma-tool} proven by Babai in~\cite[Proposition 12]{Babai-str-reg}. 

\begin{lemma}\label{bip-mixing-lemma-tool}
Let $X$ be a $k$-regular bipartite graph with parts $U$ and $W$ of size $n/2$ each. Let $\lambda_2$ be the second largest eigenvalue of $A$. Moreover, suppose that any two distinct vertices in $X$ have at most $q$ common neighbors. Then any non-trivial automorphism $\sigma$ of $X$ has at most $\frac{k+|\lambda_2|+q}{2k}n$ fixed points. 
\end{lemma}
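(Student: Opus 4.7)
The plan is to adapt Babai's proof of Lemma~\ref{mixing-lemma-tool} to the bipartite setting, using the bipartite Expander Mixing Lemma just stated in place of the standard one. Since $X$ is bipartite, every automorphism $\sigma$ either preserves the bipartition $\{U,W\}$ setwise or swaps $U$ with $W$; in the latter case $\sigma$ has no fixed point at all (each $v\in U$ satisfies $\sigma(v)\in W$), so the claimed bound holds trivially. I may therefore assume $\sigma(U)=U$ and $\sigma(W)=W$, and moreover that $q+|\lambda_2|<k$, since otherwise the claimed upper bound is at least $n$ and is again trivial.

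Set $F_U=F\cap U$, $F_W=F\cap W$, $a=|U|-|F_U|$, $b=|W|-|F_W|$. The key combinatorial observation mirrors the one Babai used in the non-bipartite case: for every $v\in U\setminus F_U$, both $v$ and $\sigma(v)$ lie in $U$, so their common neighbors lie in $W$ and number at most $q$; moreover every neighbor of $v$ in $F_W$ is fixed by $\sigma$, hence is also a neighbor of $\sigma(v)$, hence is a common neighbor. Thus $|N(v)\cap F_W|\leq q$, and so $|N(v)\cap(W\setminus F_W)|\geq k-q$. Summing over $v\in U\setminus F_U$, and symmetrically over $w\in W\setminus F_W$, yields
\[
E(U\setminus F_U,\,W\setminus F_W)\ \geq\ (k-q)\max(a,b).
\]

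For the matching upper bound, the bipartite Expander Mixing Lemma applied with $S=U\setminus F_U$ and $T=W\setminus F_W$ (using $|U|=|W|=n/2$ and degree $k$ on both sides) gives
\[
E(U\setminus F_U,\,W\setminus F_W)\ \leq\ \frac{2kab}{n}+|\lambda_2|\sqrt{ab}.
\]
Combining the two inequalities, and assuming without loss of generality $b\geq a$, we get $(k-q)b\leq \tfrac{2kab}{n}+|\lambda_2|\sqrt{ab}$. Dividing by $b$ and using $\sqrt{a/b}\leq 1$ yields
\[
k-q-|\lambda_2|\ \leq\ \frac{2ka}{n},\qquad\text{hence}\qquad a\ \geq\ \frac{n(k-q-|\lambda_2|)}{2k}.
\]
Therefore $|F_U|=n/2-a\leq n(q+|\lambda_2|)/(2k)$, and combined with the trivial bound $|F_W|\leq n/2$ this gives
\[
|F|\ =\ |F_U|+|F_W|\ \leq\ \frac{n(q+|\lambda_2|)}{2k}+\frac{n}{2}\ =\ \frac{n(k+|\lambda_2|+q)}{2k},
\]
which is what we want; the case $a\geq b$ is symmetric.

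The main obstacle lies not in the algebra but in the careful enumeration of the degenerate situations: $\sigma$ swapping the bipartition, $\sigma$ fixing an entire part pointwise (which would force two distinct vertices of the other part to have $k$ common neighbors, so $q\geq k$ and the bound is trivial), and the regime $q+|\lambda_2|\geq k$. Once these are ruled out one works with $a,b>0$ and $k-q-|\lambda_2|>0$, and the short chain of inequalities above goes through.
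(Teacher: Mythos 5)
Your proof is correct and follows essentially the same route as the paper's: both apply the bipartite Expander Mixing Lemma to the non-fixed sets and combine it with the common-neighbor bound. The paper pads $T$ up to $|T|=|S|$ and finds a single light vertex by averaging, while you keep $S=U\setminus F_U$, $T=W\setminus F_W$ at their natural (unequal) sizes and instead sum the pointwise bound $|N(v)\cap(W\setminus F_W)|\geq k-q$ and absorb the mismatch via $\sqrt{a/b}\leq 1$; this is a cosmetic difference. One small caveat: your opening dichotomy (``$\sigma$ preserves or swaps the bipartition'') is only guaranteed when $X$ is connected; for a disconnected bipartite graph an automorphism can act differently on different components. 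That degenerate case is covered by the paper's formulation (which never invokes the dichotomy) and is in any case irrelevant to the paper's use of the lemma, but since the lemma is stated for general $k$-regular bipartite graphs, it would be cleaner to either add a connectivity hypothesis or note that the bound is trivial once $|\lambda_2|=k$, which happens whenever $X$ is disconnected.
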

\begin{proof}
Take any non-trivial automorphism $\sigma$ of $X$. Consider $S_1\subseteq U$ and $S_2\subseteq W$, such that  $S_1\cup S_2 = \supp(\sigma) = \{x\in X | x^{\sigma} \neq x\}$ be the support of $\sigma$. Without lost of generality, we may assume that $|S_1|\geq |S_2|$. Denote $S = S_1$ and let $T\subset W$ satisfies $S_2\subseteq T$ and $|T| = |S|$. By Expander Mixing Lemma we get 
\[ \frac{|E(S, T)|}{|S|}\leq |\lambda_2|+k\frac{2|S|}{n}.\]
Hence, there exist a vertex $x$ in $S$ which has at most $|\lambda_2|+k\frac{2|S|}{n}$ neighbors in $T$. Thus, $x$ has at least $k - (|\lambda_2|+k\frac{2|S|}{n})$ neighbors in $W\setminus T$, and they all are fixed by $\sigma$. Therefore, they all are common neighbors of $x$ and $x^{\sigma} \neq x$. We get the inequality $q\geq k - (|\lambda_2|+k\frac{2|S|}{n})$, which is equivalent to 
$\frac{|\lambda_2|+q}{k}\frac{n}{2}\geq \frac{n}{2}-|S|$. By definition of $S$ and $T$ the number of fixed points of $\sigma$ is at most
\[n-|S_1|-|S_2|\leq n-|S| \leq \left(\frac{1}{2}+\frac{|\lambda_2|+q}{2k}\right)n.\]
\end{proof}

\begin{fact}[\cite{BCN}, p. 432]\label{fact1}
Let $X$ be a bipartite distance-regular graph of diameter $3$. Then $X$ has the intersection array
$$\iota(X) = \{k, k-1, k-\mu; 1, \mu, k\}.$$
The eigenvalues of $X$ are $k$, $-k$ with multiplicity 1, and $\pm \sqrt{k-\mu}$ with multiplicity $\frac{n}{2}-1$ each. 
\end{fact}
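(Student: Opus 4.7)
The plan is to first determine the intersection array from bipartiteness and then read the eigenvalues off the $4 \times 4$ tridiagonal intersection matrix.

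First, I would observe that in any bipartite graph the coefficients $a_i$ vanish for all $i$. Indeed, if $v$ lies in one part and $w \in N_i(v)$, then $w$ lies in the part determined by the parity of $i$, and every neighbor of $w$ lies in the opposite part of $w$, so every neighbor of $w$ is at distance of opposite parity from $v$, hence cannot be in $N_i(v)$. Combined with the identity $a_i + b_i + c_i = k$, this gives $b_i = k - c_i$ for all $i \in \{0,1,2,3\}$. Using the standard values $b_0 = k$, $c_1 = 1$, $c_2 = \mu$, and $b_3 = 0$ (forced by diameter $3$), one reads off
\[
\iota(X) = \{k,\, k-1,\, k-\mu;\; 1,\, \mu,\, k\}.
\]

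Next, I would compute the spectrum via the tridiagonal intersection matrix
\[
T(X) = \begin{pmatrix} 0 & k & 0 & 0 \\ 1 & 0 & k-1 & 0 \\ 0 & \mu & 0 & k-\mu \\ 0 & 0 & k & 0 \end{pmatrix},
\]
whose eigenvalues coincide with those of $X$. Using the standard three-term recurrence for leading principal minors of a tridiagonal matrix, a short expansion of $\det(xI - T(X))$ gives the characteristic polynomial
\[
x^{4} - (k^{2} + k - \mu)\,x^{2} + k^{2}(k-\mu).
\]
Setting $y = x^{2}$ produces a quadratic in $y$ whose discriminant simplifies to the perfect square $(k^{2}-k+\mu)^{2}$; this yields $y \in \{k^{2},\, k-\mu\}$ and hence eigenvalues $\pm k$ and $\pm\sqrt{k-\mu}$.

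Finally, for the multiplicities I would appeal to two standard facts. Connectedness of a distance-regular graph gives that the trivial eigenvalue $k$ has multiplicity one. Bipartiteness implies the spectrum is symmetric about $0$ (the sign-vector on the bipartition is an eigenvector for $-k$), so $-k$ has multiplicity one as well. The remaining $n - 2$ eigenvalues are distributed among $\pm\sqrt{k-\mu}$, and the symmetry of the spectrum about zero forces these two multiplicities to be equal, giving $\frac{n}{2} - 1$ each. The only computational step of any weight is the determinant expansion, and there is no real obstacle: the characteristic polynomial factors cleanly once one notices that its discriminant as a quadratic in $x^{2}$ is a perfect square.
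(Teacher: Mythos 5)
Your proof is correct and complete. The paper itself does not prove this statement: it cites it as a known fact from Brouwer--Cohen--Neumaier, so there is no in-paper argument to compare against. Your derivation is the standard one for this fact. A few checks confirm every step: bipartiteness kills all $a_i$, and with $b_0=k$, $c_1=1$, $c_2=\mu$, $b_3=0$ the identity $b_i+c_i=k$ pins down the array; the characteristic polynomial of the $4\times 4$ tridiagonal intersection matrix is indeed $x^4-(k^2+k-\mu)x^2+k^2(k-\mu)$; writing $A=k^2$, $B=k-\mu$ the quadratic in $y=x^2$ has discriminant $(A+B)^2-4AB=(A-B)^2$, giving $y\in\{k^2,\,k-\mu\}$; connectedness plus bipartite spectral symmetry give multiplicity one to $\pm k$, and the same symmetry forces equal multiplicities $\tfrac{n}{2}-1$ for $\pm\sqrt{k-\mu}$. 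One remark you implicitly rely on but could flag: $k-\mu>0$ (equivalently $b_2\neq 0$) is forced by diameter $3$, so $\sqrt{k-\mu}$ and $-\sqrt{k-\mu}$ really are distinct and the count of four distinct eigenvalues is consistent. An alternative to the bipartite-symmetry step would be the standard multiplicity formula for distance-regular graphs, but your route is cleaner.
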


\begin{definition}
Graph $X$ is called a \textit{cocktail-party graph} if it is obtained from a regular complete bipartite graph by deletion of one perfect matching. 
\end{definition}

\begin{theorem}\label{main-bipartite}
Let $X$ be a $k$-regular bipartite distance-regular graph of diameter $3$ for $k\geq 400$. Then $X$ is a cocktail-party graph, or $\motion(X)\geq \frac{1}{100}n$. 
\end{theorem}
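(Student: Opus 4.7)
The plan is to combine the combinatorial distinguishing-number bound from Observation \ref{obs1} with the bipartite spectral bound from Lemma \ref{bip-mixing-lemma-tool}, and separately to identify the cocktail-party graph as the one structural obstruction. Throughout, set $t:=k-\mu$.

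First, Fact \ref{fact1} fixes the intersection array $\{k,k-1,k-\mu;1,\mu,k\}$, so I can count distinguishing vertices by pair type. For a pair at distance $1$ or $3$ the two endpoints lie in opposite parts of the bipartition, so the distances to any third vertex have opposite parities and every such pair is distinguished by all $n$ vertices. For a pair $(u,v)$ at distance $2$, Proposition \ref{intersection-num-diam-3} applied with the bipartite substitutions $\lambda=0$, $a=b_{2}=t$, $b=c_{3}=k$ gives
\[
p_{1,1}^{2}=\mu,\quad p_{2,2}^{2}=(k-1)+\frac{k(t-1)}{\mu},\quad p_{3,3}^{2}=\frac{t(t-1)}{\mu},
\]
and since $n=2+2k(k-1)/\mu$, a short simplification yields $n-\sum_{j}p_{j,j}^{2}=2t+2$. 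Hence every distance-$2$ pair is distinguished by exactly $2t+2$ vertices, and Observation \ref{obs1} gives $\motion(X)\geq 2t+2$.

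Second, Fact \ref{fact1} identifies $|\lambda_{2}|=\sqrt{k-\mu}=\sqrt{t}$, and the analysis above shows that the maximum number of common neighbours of distinct vertices is $q=\mu$ (distance-$2$ pairs share $\mu$ common neighbours, while distance-$1$ and distance-$3$ pairs share none by bipartiteness). Automorphisms swapping the two parts have full support, so it suffices to consider part-preserving automorphisms; for these Lemma \ref{bip-mixing-lemma-tool} gives
\[
\motion(X)\geq \frac{n\,(k-\mu-\sqrt{k-\mu})}{2k}=\frac{n(t-\sqrt{t})}{2k}.
\]

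Third, I combine the two bounds. Integrality of $k_{2}=k(k-1)/\mu$ forces $(k-t)\mid t(t-1)$, so either $t=1$ or $t\geq\sqrt{k}\geq 20$ (using $k\geq 400$). If $t=1$ then $b_{2}=k_{3}=1$, so each vertex has a unique ``partner'' at distance $3$ in the other part and $|U|=|W|=k+1$; because any two same-part vertices share $\mu=k-1$ of their $k$ neighbours out of $|W|=k+1$, their missed neighbours must be distinct, which forces $X$ to be $K_{k+1,k+1}$ minus a perfect matching, i.e., the cocktail-party graph. If instead $t\geq\sqrt{k}$, I split on the size of $\mu$. For $\mu\leq k/2$ (so $t\geq k/2$) the spectral bound alone gives $n(t-\sqrt{t})/(2k)\geq n/5$ once $k\geq 400$. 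For $\mu>k/2$ the identity $n=2k+2+2k(t-1)/(k-t)$ combined with $t<k/2$ gives $n\leq 2k+4t+2$, and a case analysis on the residual values of $t$ permitted by the divisibility $(k-t)\mid t(t-1)$ shows that the combinatorial bound $2t+2$ meets or exceeds $n/100$. The main obstacle is the narrow intermediate window where $t$ is comparable to $\sqrt{k}$ and $\mu$ is close to $k$: here neither bound has a generous margin on its own, and one must use the exact expression for $n$ rather than the crude $n\leq 4k$, together with the divisibility constraint $(k-t)\mid t(t-1)$, to close the gap. The thresholds $k\geq 400$ and $1/100$ in the statement are tuned precisely so that, once $t\geq 2$, the forced inequality $t\geq\sqrt{k}$ passes the crossover beyond which the combinatorial bound $2t+2$ beats $n/100$.
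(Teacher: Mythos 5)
Your opening computations are correct and even sharper than what the paper records: every pair at odd distance in $X$ is distinguished by all $n$ vertices, and using $\mu=k-t$ the identity $(k+t)(k-t)=\mu(k+t)$ collapses $n-\sum_{j}p_{j,j}^{2}$ to exactly $2t+2$, so $\motion(X)\ge 2t+2$. The $t=1\Rightarrow$ cocktail-party step and the integrality deduction $t\ge\sqrt{k}$ for $t\ge 2$ are also fine. Your route differs from the paper's: the paper never counts distinguishing vertices here; it observes that $k+k_{3}=n/2$ and that $\Aut(X)=\Aut(X_{3})$, passes to $Y=X_{3}$ if $k>n/4$, uses Lemma~\ref{min-est} to force $\mu(Y)<\tfrac{\sqrt3}{2}k(Y)$, and then applies the bipartite spectral bound to $Y$.

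The gap is in your final crossover claim, and it is genuine. You assert that once $t\ge\sqrt{k}$ and $\mu>k/2$, divisibility and the exact formula for $n$ force $2t+2\ge n/100$. They do not. Take $k=s^{2}$, $t=s$, $\mu=s(s-1)$; then $(k-t)\mid t(t-1)$ holds with equality and $n=2+2k(k-1)/\mu=2(s^{2}+s+1)$. Here $2t+2=2s+2$ while $n/100=(s^{2}+s+1)/50$, and $2s+2\ge(s^{2}+s+1)/50$ holds only for $s\le 99$. For $k=10404$ ($s=102$, within the theorem's range $k\ge 400$) the combinatorial bound gives $206$ while $n/100\approx 210.1$, and the spectral bound applied to $X$ gives only $(t-\sqrt{t})/(2k)\approx 4\cdot 10^{-3}$, so both of your bounds fail. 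What is missing is precisely the paper's reduction to $Y=X_{3}$: in this example $k_{3}=s+1=103$ and one checks $\mu(Y)=1$, so the spectral bound applied to $Y$ yields $\gtrsim 0.4\,n$. Nothing in your argument ever invokes $Y$, and without that move the intermediate window you flag as "the main obstacle" is not closed by the exact $n$ plus divisibility — it is unbounded as $k\to\infty$.
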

\begin{proof}
Denote the parts of the bipartite graph $X$ by $U$ and $W$. We consider 2 cases. Let $Y = X_3$ be a distance-3 graph of $X$.

Case 1. Suppose that $Y$ is disconnected. Then there exist a pair of vertices $u,v$ in one of the parts, say $F$, so that $u$ and $v$ lie in different connected components of $Y$. Clearly, $\dist(u,v) = 2$, so $p^{2}_{3,3} = 0$. Hence, $k_3 = 1$ and the pairs of vertices at distance $3$ form a perfect matching. Thus $X$ is a regular complete bipartite graph with one perfect matching deleted. 

Case 2. $Y$ is connected and so is itself distance-regular of diameter $3$. Note, that $k+k_3 = n/2$, so if necessary by passing to $Y$, we may assume that $k\leq n/4$. Lemma~\ref{min-est} implies $\mu<\frac{\sqrt{3}}{2}k$. By Fact \ref{fact1} the second largest eigenvalue is $\lambda_2 = \sqrt{k-\mu}$. So the statement of the theorem follows from the Lemma~\ref{bip-mixing-lemma-tool} and inequality
\[\frac{99}{100}k\geq \frac{(\sqrt{3}+2)}{4}k+\sqrt{k} \quad \text{for } k\geq 400.\]

\end{proof}

\subsection{Spectral gap analysis for the diameter-3 case}\label{sec-spectral}

Assume that a distance-regular graph does not satisfy the assumption of Section \ref{sec-large-deg}, that is, we assume here $k\leq \gamma n$ for some fixed $\gamma>0$. Then, by Lemma \ref{min-est}, we get upper bound on $\mu$. If $\gamma$ is small enough, we get that $\mu$ is small comparable to $k$, which allows us to analyze the eigenvalues of $X$ effectively and use Lemma \ref{mixing-lemma-tool}.

For the convenience of computations denote $a = b_2$ and $b = c_3$. Let $\xi_1, \xi_2, \xi_3$ be the non-trivial eigenvalues of $X$ (i.e., that do not correspond to the all-ones vector). By computing the characteristic polynomial of intersection matrix $T(X)$, introduced in Section \ref{sec-prelim}, we get
\begin{equation}\label{eq-f-p}
((a+b)-k)k+(k-b)\mu = \xi_1\xi_2\xi_3
\end{equation}
\begin{equation}
(k-(a+b))\lambda - (k-b)\mu -k+\mu = \xi_1\xi_2+\xi_2\xi_3+\xi_3\xi_1
\end{equation}
\begin{equation}
(k-(a+b))+(\lambda-\mu) = \xi_1+\xi_2+\xi_3
\end{equation}

Let us order the eigenvalues so that $|\xi_1|\geq|\xi_2|\geq|\xi_3|$.

\begin{obs}
Eq. \eqref{eq-f-p} tells us that $|\xi_1\xi_2\xi_3|\leq  2k^2$,  thus $|\xi_3|\leq (2k)^{2/3}$.
\end{obs}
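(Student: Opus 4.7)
The observation is a direct consequence of Eq.~\eqref{eq-f-p} together with elementary bounds on the intersection numbers, so my proof plan is a short calculation rather than a deep argument.

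First I would verify the needed bounds on the three non-negative quantities appearing on the right-hand side of Eq.~\eqref{eq-f-p}, namely $a=b_2$, $b=c_3$, and $\mu=c_2$. Each lies in $[0,k]$: we have $b_2\le b_0=k$ from the monotonicity $b_{i+1}\le b_i$ listed in the preliminaries; we have $c_3\le k$ because $a_3+b_3+c_3=k$ with $b_3=0$ (since $d=3$) and all three are non-negative; and $\mu=c_2\le c_3\le k$ from the monotonicity $c_{i+1}\ge c_i$.

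Next I would rearrange the right-hand side of Eq.~\eqref{eq-f-p} to make both the upper and lower bounds transparent:
\[
\xi_1\xi_2\xi_3 \;=\; \bigl((a+b)-k\bigr)k + (k-b)\mu \;=\; ak \,+\, b(k-\mu) \,+\, k(\mu-k).
\]
In this form, $ak\ge 0$ and $b(k-\mu)\ge 0$, while $k(\mu-k)\le 0$. For the upper bound, drop the last (non-positive) term and use $ak\le k^2$ and $b(k-\mu)\le bk\le k^2$ to conclude $\xi_1\xi_2\xi_3\le 2k^2$. For the lower bound, drop the first two (non-negative) terms and use $k(\mu-k)\ge -k^2$ to conclude $\xi_1\xi_2\xi_3\ge -k^2$. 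Combining, $|\xi_1\xi_2\xi_3|\le 2k^2$.

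Finally, since the eigenvalues are ordered so that $|\xi_3|\le |\xi_2|\le|\xi_1|$, we have $|\xi_3|^3\le |\xi_1\xi_2\xi_3|\le 2k^2$, giving $|\xi_3|\le (2k^2)^{1/3}\le (2k)^{2/3}$ (the last inequality uses $2^{1/3}\le 2^{2/3}$). There is no real obstacle here; the only point worth being careful about is confirming that the three variables $a,b,\mu$ really are each bounded by $k$, which follows cleanly from the basic identities $a_i+b_i+c_i=k$ and the monotonicity of $b_i$ and $c_i$.
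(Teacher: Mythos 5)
Your proof is correct, and the paper itself gives no argument for this observation, so you have simply supplied the elementary calculation. Your rearrangement $\xi_1\xi_2\xi_3 = ak + b(k-\mu) + k(\mu-k)$ into two non-negative terms and one non-positive term works; one could get the same bound even more directly from the original form of Eq.~\eqref{eq-f-p}, noting that $|((a+b)-k)k|\le k^2$ (since $0\le a+b\le 2k$) and $0\le (k-b)\mu\le k^2$, giving $\xi_1\xi_2\xi_3\in[-k^2,2k^2]$. Either way the key ingredients are the same (bounds $a,b,\mu\le k$ from the identity $a_i+b_i+c_i=k$ and the monotonicity of the intersection parameters), and the final deduction $|\xi_3|^3\le|\xi_1\xi_2\xi_3|\le 2k^2\le(2k)^2$ is exactly what the ordering $|\xi_1|\ge|\xi_2|\ge|\xi_3|$ is set up to deliver.
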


In what follows, we denote by $\varepsilon$ some small positive number, say $0<\varepsilon\leq \frac{1}{100}$.

\begin{proposition}\label{eigen-approxim-d-3}
Suppose that $\mu\leq\frac{\varepsilon^2}{2}k$. Denote $x = k-(a+b)$ and $y = \lambda-\mu$. Then $\min\{|x-\xi_1|, |y-\xi_1|\}\leq \varepsilon k$ and $\min\{|x-\xi_2|, |y-\xi_2|\}\leq \varepsilon k$ for $k\geq (10\varepsilon^{-2})^3$.
\end{proposition}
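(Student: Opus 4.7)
The plan is to leverage the Observation immediately preceding the statement—namely $|\xi_3|\le(2k)^{2/3}$—together with the hypothesis $k\ge(10\varepsilon^{-2})^3$, which gives $|\xi_3|\le 2^{2/3}\varepsilon^2k/10\le \varepsilon^2 k/6$; in particular $|\xi_3|$ is much smaller than $\varepsilon k$. Intuitively, $\xi_3$ is so small that the quadratic $(t-\xi_1)(t-\xi_2)$ should be a tiny perturbation of $(t-x)(t-y)$, and I will extract the conclusion from a direct Vieta-type computation rather than appealing to the more wasteful Theorem \ref{poly-approx}.

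Concretely, the displayed identities right before the proposition yield
\[\xi_1+\xi_2 = (x+y)-\xi_3,\qquad \xi_1\xi_2 = s_2 - \xi_3(\xi_1+\xi_2) = s_2-\xi_3(x+y-\xi_3),\]
where $s_2 = xy + (1-a)\mu - k$. Since $x$ satisfies $x^2-(x+y)x+xy=0$, I can write
\[(\xi_1-x)(\xi_2-x) = \xi_1\xi_2 - x(\xi_1+\xi_2) + x^2 = (\xi_1\xi_2 - xy) + x\xi_3,\]
and the first summand equals $(1-a)\mu - k - \xi_3(x+y) + \xi_3^2$. Each term is easy to bound: $|(1-a)\mu|\le k\mu\le \varepsilon^2k^2/2$; the standalone $k$, the product $|\xi_3|(|x+y|+|x|)\le 3k(2k)^{2/3}$, and $\xi_3^2\le (2k)^{4/3}$ are each at most a small constant times $\varepsilon^2 k^2$ once $k^{-1/3}\le \varepsilon^2/10$, i.e.\ once $k\ge(10\varepsilon^{-2})^3$. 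Summing these contributions gives $|(\xi_1-x)(\xi_2-x)|\le \varepsilon^2 k^2$, so
\[\min(|\xi_1-x|,|\xi_2-x|)\le \varepsilon k.\]
The same computation with $y$ in place of $x$ (using $y^2-(x+y)y+xy=0$) yields $(\xi_1-y)(\xi_2-y)=(\xi_1\xi_2-xy)+y\xi_3$, hence also $\min(|\xi_1-y|,|\xi_2-y|)\le\varepsilon k$.

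To finish, I use the sum identity $\xi_1+\xi_2=(x+y)-\xi_3$ to transfer each near-$x$ statement into a near-$y$ statement for the partner. Say WLOG $|\xi_1-x|\le\varepsilon k$; then $|\xi_2-y|=|(x-\xi_1)-\xi_3|\le\varepsilon k+\varepsilon^2k/6$, which is at most $\varepsilon k$ after absorbing the lower-order slack—either by tightening the statement's constant (the hypothesis has room to spare) or by checking the symmetric four-case split using both inequalities derived above. In every case each of $\xi_1,\xi_2$ falls within $\varepsilon k$ of $\{x,y\}$. The main obstacle is the constant bookkeeping in the key bound on $|(\xi_1-x)(\xi_2-x)|$: the ``remainder'' terms $k$, $|\xi_3|\cdot(|x|+|y|)$ and $\xi_3^2$ must each be shown to be small compared to $\varepsilon^2 k^2$, which is precisely what the lower bound $k\ge(10\varepsilon^{-2})^3$ is calibrated to guarantee via the Observation $|\xi_3|\le(2k)^{2/3}$.
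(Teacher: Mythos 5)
Your strategy is essentially the same as the paper's: exploit the smallness of $\xi_3$ (via $|\xi_3|\le(2k)^{2/3}$) to treat the nontrivial spectrum as a perturbation of $\{x,y\}$, and extract the estimate from a direct Vieta computation instead of the blackbox Theorem~\ref{poly-approx}. However, there is a gap in the final transfer that your own writeup flags but does not close.

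You bound the products $(\xi_1-x)(\xi_2-x)$ and $(\xi_1-y)(\xi_2-y)$, which yield $\min(|\xi_1-x|,|\xi_2-x|)\le\varepsilon k$ and $\min(|\xi_1-y|,|\xi_2-y|)\le\varepsilon k$, i.e.\ each of $x,y$ lies within $\varepsilon k$ of the set $\{\xi_1,\xi_2\}$. The proposition is transposed: each of $\xi_1,\xi_2$ lies within $\varepsilon k$ of $\{x,y\}$. In the two ``bad'' cases (e.g.\ $|\xi_1-x|\le\varepsilon k$ and $|\xi_1-y|\le\varepsilon k$, so both minima are attained at the same eigenvalue), passing to the other eigenvalue via $\xi_2=x+y-\xi_3-\xi_1$ only gives $|\xi_2-y|\le\varepsilon k+|\xi_3|$, which strictly exceeds $\varepsilon k$. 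The ``four-case split'' alone does not remove this excess: precisely those two cases still need the $|\xi_3|$ correction. Absorbing it would require proving the sharper intermediate bound $|(\xi_1-x)(\xi_2-x)|\le(\varepsilon-|\xi_3|/k)^2k^2$, i.e.\ a genuinely smaller right-hand side, and you cannot ``tighten the statement's constant'' since the lower bound $k\ge(10\varepsilon^{-2})^3$ is fixed.

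The clean fix is to run exactly your computation on the other product. Expand $(x-\xi_i)(y-\xi_i)=xy-(x+y)\xi_i+\xi_i^2$, subtract the identity $\xi_1\xi_2-(\xi_1+\xi_2)\xi_i+\xi_i^2=0$, and get $(x-\xi_i)(y-\xi_i)=(xy-\xi_1\xi_2)-\xi_3\xi_i$, with $xy-\xi_1\xi_2=(a-1)\mu+k+\xi_3(\xi_1+\xi_2)$. Bounding these terms exactly as you did gives $|(x-\xi_i)(y-\xi_i)|\le\varepsilon^2k^2$ for $k\ge(10\varepsilon^{-2})^3$, which delivers $\min(|x-\xi_i|,|y-\xi_i|)\le\varepsilon k$ directly, with no transfer step. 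This is the paper's proof; your version is a transposed variant of it with a small unclosed gap.
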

\begin{proof}
Recall that $xy-a\mu-k+\mu = \xi_1\xi_2+\xi_2\xi_3+\xi_3\xi_1$ and 
$x+y = \xi_1+\xi_2+\xi_3$. In other words, $|xy-\xi_1\xi_2| \leq \frac{\varepsilon^2}{2}k^2+(2k)^{5/3}+k$ and $|x+y-\xi_1-\xi_2| \leq (2k)^{2/3}$. Therefore, since $|\xi_2|\leq k$ we get $|xy-(x+y)\xi_2+\xi_2^2|-(2k)^{2/3}k\leq \frac{\varepsilon^2}{2}k^2+(2k)^{5/3}+k$. Hence, $|x-\xi_2||y-\xi_2|\leq \varepsilon^2k^2$ for $k\geq (10\varepsilon^{-2})^3$. Thus, $\min\{|x-\xi_2|, |y-\xi_2|\}\leq \varepsilon k$. But the initial equations were symmetric in $\xi_1, \xi_2$, so similarly we get  $\min\{|x-\xi_1|, |y-\xi_1|\}\leq \varepsilon k$.
\end{proof}
\begin{remark}
Similar result can be also derived from Theorem \ref{poly-approx}.
\end{remark}
\begin{corollary}
Suppose that $\mu\leq\frac{\varepsilon^2}{2}k$, then $|\xi_1|\leq \max\{|k-(a+b)|+\varepsilon k, (\lambda +\varepsilon k)\}$ for $k\geq(10\varepsilon^{-2})^3$.
\end{corollary}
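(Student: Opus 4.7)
The plan is to read the corollary directly off Proposition~\ref{eigen-approxim-d-3} by a two-case analysis. With the shorthand $x = k - (a+b)$ and $y = \lambda - \mu$, the proposition asserts that $\min\{|\xi_1 - x|, |\xi_1 - y|\} \leq \varepsilon k$, so at least one of the two approximations must hold. In the first case the triangle inequality immediately gives $|\xi_1| \leq |x| + \varepsilon k = |k - (a+b)| + \varepsilon k$, which is the first entry of the desired maximum. In the second case $|\xi_1| \leq |y| + \varepsilon k = |\lambda - \mu| + \varepsilon k$, and I would finish by bounding $|\lambda - \mu| \leq \max(\lambda, \mu)$: when $\lambda \geq \mu$ this is $\leq \lambda$ and we are done, while when $\lambda < \mu$ we instead have $|\lambda - \mu| < \mu \leq \varepsilon^2 k/2$, so the right-hand side becomes at most $\lambda + \varepsilon k + \varepsilon^2 k/2$, which is absorbed into $\lambda + \varepsilon k$ by choosing the constant in the hypothesis slightly smaller (equivalently, by running the proof of Proposition~\ref{eigen-approxim-d-3} with $\varepsilon/2$ in place of $\varepsilon$, which requires the threshold $k \geq (10\varepsilon^{-2})^3$ in the hypothesis).

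I do not expect any genuine obstacle: the statement is a cosmetic rearrangement of the preceding proposition. The only point to keep track of is the minor slack in the second case when $\mu$ exceeds $\lambda$, and this is harmless because $\mu$ is quadratically small in $\varepsilon$ compared with the admitted error $\varepsilon k$. Taking the maximum over the two cases yields the stated bound.
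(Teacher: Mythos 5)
The paper states this corollary without proof, and your reading of it as a triangle-inequality consequence of Proposition~\ref{eigen-approxim-d-3} is the only sensible route, so the overall structure is right. You also correctly spot the one place that needs care: when $|\xi_1 - y|\le\varepsilon k$ with $y=\lambda-\mu$, the bound produced is $|\xi_1|\le|\lambda-\mu|+\varepsilon k$, and if $\mu>\lambda$ this is $\mu-\lambda+\varepsilon k$, which exceeds $\lambda+\varepsilon k$ by $\mu-2\lambda$ whenever $\mu>2\lambda$. That overshoot is genuine, and nothing in the hypotheses rules it out.

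However, the repair you propose does not quite work under the stated hypotheses. Running Proposition~\ref{eigen-approxim-d-3} with $\varepsilon/2$ in place of $\varepsilon$ requires $\mu\le\frac{(\varepsilon/2)^2}{2}k=\frac{\varepsilon^2}{8}k$ and $k\ge(10(\varepsilon/2)^{-2})^3=(40\varepsilon^{-2})^3$; both of these are \emph{strictly stronger} than the $\mu\le\frac{\varepsilon^2}{2}k$ and $k\ge(10\varepsilon^{-2})^3$ you are given, so the rescaling is not available for free. The honest conclusion is that the corollary as printed is informally stated and carries a hidden slack of at most $\mu\le\frac{\varepsilon^2}{2}k$ in the subcase $\lambda<\mu$: what the proposition actually delivers is $|\xi_1|\le\max\bigl(|k-(a+b)|,\,|\lambda-\mu|\bigr)+\varepsilon k$, and $|\lambda-\mu|\le\max(\lambda,\mu)$. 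This slack is harmless for the paper: the corollary is used only in Observation~\ref{obs2}, where $\xi$ is added to $q=\max(\lambda,\mu)$ and compared against $k-\varepsilon k$; when $\lambda<\mu$ one has $q+\xi\le 2\mu+\varepsilon k\le(\varepsilon^2+\varepsilon)k$, which is comfortably below the threshold. So you should either record the corollary with $\max(\lambda,\mu)$ in place of $\lambda$, or note explicitly that the $O(\varepsilon^2 k)$ overshoot is absorbed downstream, rather than invoke a rescaling that the stated hypotheses do not support.
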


\begin{obs}\label{obs2}
Suppose that $\lambda<(1/2-\varepsilon)k$, $\mu\leq\frac{\varepsilon^2}{2}k$ and  $k\geq(10\varepsilon^{-2})^3$, then with the notation of Lemma \ref{mixing-lemma-tool} we get  $q+\xi<\max(k-\varepsilon k, |k-(a+b)|+\varepsilon k+\max(\lambda,\mu))$. 
\end{obs}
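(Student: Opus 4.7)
The statement is largely a bookkeeping assembly of the preceding eigenvalue estimates. My plan is as follows.

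First, identify $q$. In a distance-regular graph of diameter $3$, two vertices at distance $3$ share no common neighbor (such a neighbor would place them at distance at most $2$), so $p_{1,1}^{3}=0$. Hence the maximum over all pairs is $q=\max(\lambda,\mu)$.

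Second, bound the zero-weight spectral radius $\xi$. Since $X$ has diameter $3$, its adjacency matrix has exactly four distinct eigenvalues, so $\xi=\max\{|\xi_1|,|\xi_2|,|\xi_3|\}$ in the paper's ordering $|\xi_1|\geq |\xi_2|\geq |\xi_3|$ of the three non-trivial eigenvalues. The elementary observation preceding Proposition~\ref{eigen-approxim-d-3} gives $|\xi_3|\leq (2k)^{2/3}$, which is strictly below $\varepsilon k$ once $k\geq (10\varepsilon^{-2})^{3}$. For $|\xi_1|$ and $|\xi_2|$, Proposition~\ref{eigen-approxim-d-3} places each of them within $\varepsilon k$ of $x:=k-(a+b)$ or $y:=\lambda-\mu$. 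Since $|y|=|\lambda-\mu|\leq \max(\lambda,\mu)$, this yields
\[
\xi\leq \max\bigl\{|k-(a+b)|,\ \max(\lambda,\mu)\bigr\}+\varepsilon k.
\]

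Third, combine these two estimates. Adding $q=\max(\lambda,\mu)$ gives
\[
q+\xi\leq \max\bigl\{|k-(a+b)|+\max(\lambda,\mu)+\varepsilon k,\ 2\max(\lambda,\mu)+\varepsilon k\bigr\}.
\]
The first argument of this maximum coincides (after commuting) with the second argument in the target bound. For the second argument, the hypotheses $\lambda<(1/2-\varepsilon)k$ and $\mu\leq \frac{\varepsilon^{2}}{2}k<(1/2-\varepsilon)k$ yield
\[
2\max(\lambda,\mu)+\varepsilon k<2(1/2-\varepsilon)k+\varepsilon k=k-\varepsilon k,
\]
matching the first argument in the target bound. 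Either way $q+\xi$ lies below the stated maximum.

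There is no real conceptual obstacle here; the only point requiring care is keeping the inequality strict. The strictness is automatic in the second subcase from $\lambda<(1/2-\varepsilon)k$, while in the first subcase it follows from the fact that Proposition~\ref{eigen-approxim-d-3} has slack to spare, as the lower-order terms $(2k)^{5/3}$ and $k$ appearing in its proof are strictly less than $\frac{\varepsilon^{2}}{2}k^{2}$ for $k\geq (10\varepsilon^{-2})^{3}$, upgrading the $\leq$ on each of $|\xi_1|,|\xi_2|$ to $<$, and hence the same for $q+\xi$.
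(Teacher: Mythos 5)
The paper gives no explicit proof of Observation~\ref{obs2}; it is left to the reader, and your argument is the natural reconstruction of the intended one. The three ingredients — $q=\max(\lambda,\mu)$ since $p_{1,1}^{3}=0$ at distance~$3$, $\xi=|\xi_1|$ bounded via Proposition~\ref{eigen-approxim-d-3} together with $|\xi_3|\leq(2k)^{2/3}$, and the bound $|\lambda-\mu|\leq\max(\lambda,\mu)$ — are exactly what the paper's corollary following Proposition~\ref{eigen-approxim-d-3} is gesturing at, and your two-case assembly is correct.

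One point to tighten. When $|k-(a+b)|+\max(\lambda,\mu)+\varepsilon k$ is the larger branch of your maximum you only obtain $q+\xi\leq$ that expression, and your appeal to "slack in Proposition~\ref{eigen-approxim-d-3}" is not verified: at the threshold $k=(10\varepsilon^{-2})^3$, the estimate $(2k)^{5/3}+k+(2k)^{2/3}k\leq\tfrac{\varepsilon^2}{2}k^2$ holds with essentially no room to spare, so the upgrade from $\leq$ to $<$ is not automatic. This is a cosmetic issue rather than a gap — in the one place Observation~\ref{obs2} is invoked (deriving the hypotheses of Propositions~\ref{Prop-a-b-big}/\ref{Prop-a-b-small} when the spectral tool fails), the non-strict version serves equally well — but as written the last paragraph of your proof overstates what has been checked. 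A cleaner fix would be to prove the observation with $\leq$, or to use that $\mu\geq 1>0$ together with $\lambda\geq 1$ (when $\lambda>0$) to get $|\lambda-\mu|<\max(\lambda,\mu)$, handling $\lambda=0$ separately.
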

Thus, our goal for now will be to bound the second term out from $k$.

\begin{proposition}\label{Prop-a-b-big}
Let $X$ be a distance-regular graph of diameter $3$ with $\mu\leq \frac{\varepsilon^2}{2} k$. Assume that $\delta = 2\varepsilon+\varepsilon^2/2\leq \frac{1}{100}$ and $k\geq 100$,  then $X$ is bipartite or $$(a+b)-k+\varepsilon k+\max(\lambda,\mu)\leq k-\varepsilon k.$$
\end{proposition}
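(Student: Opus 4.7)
The stated inequality is equivalent to asserting that $X$ is bipartite or $(a+b) + \max(\lambda, \mu) \leq 2(1-\varepsilon) k$. My plan is to assume the inequality fails and show that $X$ must be bipartite.

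First I would extract the standard DRG inequalities $a + \lambda \leq k - 1$ (from $b_2 \leq b_1$), $a + \mu \leq k$ (from $a_2 = k-a-\mu \geq 0$), and $b \leq k$. Since $X$ is connected of diameter $3$, hence not a union of cliques, Lemma~\ref{lambda-mu-ineq} yields $\lambda \leq (k+\mu)/2 \leq k/2 + \varepsilon^2 k/4$. Setting $s := b_1 - b_2 \geq 0$ and $t := a_3 = k - b \geq 0$, I split into two cases. In Case~B where $\max(\lambda, \mu) = \mu$ (so $\lambda \leq \mu \leq \varepsilon^2 k/2$), the failed inequality reduces to $a_2 + a_3 < 2\varepsilon k$. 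In Case~A where $\max(\lambda, \mu) = \lambda \geq \mu \geq 1$ (so $X$ is automatically non-bipartite), it reduces to $s + t < 2\varepsilon k - 1$.

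In Case~B the goal is to strengthen the bound $a_2 + a_3 < 2\varepsilon k$ to $a_2 = a_3 = 0$. The main tool is Lemma~5.4.1 of \cite{BCN} (as quoted in the proof of Proposition~\ref{prop-k-big}): for $\mu \geq 2$, either $c_3 \geq 3\mu/2$ or $c_3 \geq k - a_2$. For the tiny $\mu$ at hand, the first alternative is uninformative; the second yields $a_3 \leq a_2$. Combining this with the integrality and non-negativity of the intersection numbers from Proposition~\ref{intersection-num-diam-3}, one iterates to force $a_2 = 0$. Then Lemma~\ref{a2-lambda} yields $\lambda = 0$, and re-applying BCN~5.4.1 with $a_2 = 0$ forces $c_3 \geq k$, i.e., $a_3 = 0$. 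With $\lambda = a_1 = a_2 = a_3 = 0$, the graph $X$ is bipartite. The corner case $\mu = 1$ is handled separately using Proposition~\ref{intersection-num-diam-3} directly.

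In Case~A I need a genuine contradiction, since $X$ is already non-bipartite. The constraint $s + t < 2\varepsilon k - 1$ together with $\lambda \geq 1$, $\mu \leq \varepsilon^2 k/2$, $\delta \leq 1/100$, and $k \geq 100$ confines $(s, t, \lambda, \mu)$ to a narrow region. Within this region, the non-negativity of $p^3_{3,3}$ (computable via $p^3_{3,3} = k_3 - 1 - (k-b) - p^3_{2,3}$ from the formulas in Proposition~\ref{intersection-num-diam-3}) and the integrality of $p^1_{3,3} = (k-b)(k-\lambda-1)a/(b\mu)$ impose incompatible conditions. For instance, at $t = 0$ the requirement $p^3_{3,3} \geq 0$ unfolds to $s(\lambda+1) \geq k\lambda - (\lambda+1)^2 + \mu$, forcing $s \gtrsim k/2$ for $\lambda \in [1, k/2]$, which vastly exceeds the counter-assumption cap $s < 2\varepsilon k - 1$; at $t \geq 1$, the integer constraint on $p^1_{3,3}$ takes over, since $(k-t)\mu$ typically fails to divide $t(k-\lambda-1)(k-\lambda-1-s)$ in the small-parameter range.

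The main obstacle is the Case~A analysis: one must systematically rule out each admissible parameter tuple using a coordinated combination of non-negativity and integer-divisibility constraints on several intersection numbers, with different constraints binding for different sub-ranges of $t$. Making this uniform across the whole small region carved out by the counter-assumption is the technical heart of the argument.
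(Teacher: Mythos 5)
Your proposal takes a genuinely different route from the paper, but it is not a proof — it is a plan with two significant gaps that are not filled in, and where the hard work is acknowledged but not carried out.

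The paper's argument is substantially simpler and uniform. Assuming the inequality fails, it replaces the maximum by the sum to get $(k-\lambda-a) + (k-b) < \delta k$, then applies Lemma~\ref{lambda-mu-ineq} to the distance-2 graph $X_2$. Using the intersection-number formulas of Proposition~\ref{intersection-num-diam-3} it bounds $\lambda(X_2)$, $k_2$, and $\mu(X_2)$ and shows that $2\lambda(X_2) > k_2 + \mu(X_2)$, contradicting Lemma~\ref{lambda-mu-ineq} unless $X_2$ is a union of cliques — which gives bipartiteness. No case split on $\max(\lambda,\mu)$, no integrality arguments.

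In your Case~B the argument has a logical hole. The BCN Lemma~5.4.1 dichotomy says that \emph{one} of $c_3 \geq 3\mu/2$ or $c_3 \geq k - a_2$ holds; you cannot simply declare the first alternative ``uninformative'' and proceed as if the second must hold. When $c_3 \geq 3\mu/2$ is the case that holds (and with $\mu$ tiny this is entirely compatible with $a_3$ being positive and sizeable relative to $\mu$), you obtain nothing, and the promised ``iteration'' to $a_2 = 0$ is never exhibited. Moreover there is no a priori reason why the counter-assumption plus tiny $\lambda, \mu, a_2, a_3$ forces $a_2 = a_3 = 0$ rather than merely small; you would need an actual contradiction or an exhaustive argument, not a bipartiteness claim by fiat. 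Your Case~A is likewise an outline rather than a proof: your $t = 0$ computation (non-negativity of $p_{3,3}^3$ forcing $s \gtrsim k/2$) is correct and a promising first step, but for $t \geq 1$ you invoke divisibility of $p_{3,3}^1$ that ``typically fails,'' which is not a uniform argument and, since $\lambda$ ranges freely up to roughly $k/2$, cannot be dismissed as a finitely-checkable corner. Without a uniform inequality (like the one the paper extracts from Lemma~\ref{lambda-mu-ineq} applied to $X_2$, or perhaps a careful version of the $p_{3,3}^3 \geq 0$ computation carried out for all $t$) the argument does not close. I would recommend either adopting the paper's $X_2$-based argument directly, or, if you want to pursue the intersection-number route, replacing the integrality heuristic with a non-negativity inequality that holds for all admissible $t$ in the small region.
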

\begin{proof}
Suppose $(a+b)-k+\varepsilon k+\max(\lambda,\mu)>k-\varepsilon k$. Then, after replacing maximum by the sum we get $((k-\lambda)-a)+(k-b)<(2\varepsilon+\varepsilon^2/2) k=\delta k$. Thus, as $a\leq k-\lambda-1$ and $b\leq k$, we get
\begin{equation}\label{eq-prop321}
b>k(1-\delta),\quad  a>(k-\lambda)-\delta k\quad \text{ and}\quad \mu<\delta k.
\end{equation}

Let $X_2$ be a distance-2 graph of $X$, recall that $V(X_2) = V(X)$ and two vertices in $X_2$ are adjacent if and only if they are at distance 2 in $X$. Additionally, recall that $\lambda(X_2)$ is the number of common neighbors of two adjacent vertices in $X_2$ and $\mu(X_2)$ is the maximal number of common neighbors of two non-adjacent vertices in $X_2$. We want to show for $\delta\leq \frac{1}{100}$, if $X_2$ is not a union of cliques, then our assumptions lead us to a contradiction with the inequality $2\lambda(X_2)\leq k_2+\mu(X_2)$ from Lemma \ref{lambda-mu-ineq}. If $X_2$ is a union of cliques, then being at distance $2$ in $X$ is an equivalence relation, which implies that $X$ is bipartite.

Recall that by Lemma \ref{max-min} we have $\lambda\leq \frac{k+\mu}{2}\leq \frac{2}{3}k$. In particular, it implies  $k-\lambda-\delta k\geq (1-3\delta)(k-\lambda)$. Additionally, observe that, by Eq. \ref{eq-prop321}, 
$$(k-a-\mu)(k-a-\mu-\lambda)\geq k(-2\delta k).$$ 
Thus, by Proposition \ref{intersection-num-diam-3}, we have the following estimates
\[k_2 = \frac{k(k-\lambda-1)}{\mu}\leq \frac{k(k-\lambda)}{\mu},\]
\[\lambda(X_2) = p_{2,2}^2 >\frac{k(1-\delta)(k-\lambda)(1-3\delta)-2\delta k^2-k}{\mu}, \]
\[p_{2,2}^1<\frac{(\lambda+\delta k)(k-\lambda)}{\mu}\leq \left(\frac{2}{3}+\delta\right)\frac{k(k-\lambda)}{\mu}, \quad p_{2,2}^3<\frac{2\delta k^2}{\mu}\leq \frac{6\delta k(k-\lambda)}{\mu}.\]
Hence, as $k>100$ implies $k\leq \frac{3}{100}k(k-\lambda)$, we get
\[\lambda(X_2)\frac{\mu}{k(k-\lambda)}\geq (1-\delta)(1-3\delta)-6\delta-\frac{3}{100} \geq \frac{97}{100}-10\delta,\]
\[(k_2+\mu(X_2))\frac{\mu}{k(k-\lambda)}\leq 1+\frac{2}{3}+\delta.\]
Therefore, for $\delta \leq \frac{1}{100}$ we have $2\lambda(X_2)>k_2+\mu(X_2)$, so if $X$ is not bipartite we get a contradiction.

\end{proof}

 For the next proposition, the following simple observation will be useful.
\begin{obs}\label{aplusb}
We have $a+b\geq \lambda+2$.
\end{obs}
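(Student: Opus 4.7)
The plan is to prove the inequality by counting the common neighbors of a carefully chosen edge. Since $X$ has diameter $3$, we can fix a vertex $x\in X$ and a vertex $y\in N_3(x)$. Because $c_3 = b\geq 1$, the vertex $y$ has at least one neighbor $z$ in $N_2(x)$. The edge $(z,y)$ has exactly $\lambda = a_1$ common neighbors, and the entire argument will consist of showing that this number is at most $a + b - 2$ by splitting common neighbors according to their distance from $x$.

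Any common neighbor $u$ of $z$ and $y$ satisfies $u\sim y$ and $y\in N_3(x)$, which forces $\dist(x,u)\in\{2,3\}$, since the remaining possibilities would contradict $\dist(x,y) = 3$. So I will analyze the two cases $u \in N_2(x)$ and $u \in N_3(x)$ separately. If $u\in N_2(x)$, then $u$ is a neighbor of $y$ in $N_2(x)$, and there are only $c_3 = b$ such vertices; since $u\neq z$ and $z$ is itself such a neighbor, this contributes at most $b-1$ common neighbors. If $u\in N_3(x)$, then $u$ is a neighbor of $z$ in $N_3(x)$, and there are only $b_2 = a$ such vertices; since $u\neq y$ and $y$ is itself such a neighbor, this case contributes at most $a-1$. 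Adding the two bounds yields $\lambda \leq (b-1) + (a-1) = a + b - 2$, which is the desired inequality.

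There is no real obstacle here: the argument is a direct consequence of the distance-partition structure around $x$ and the definitions of $a = b_2$ and $b = c_3$. The only things to verify are that the relevant vertices exist (which follows from $\mathrm{diam}(X) = 3$ and $c_3\geq 1$) and that in each of the two cases the vertex excluded from the count ($z$ in Case 1 and $y$ in Case 2) is itself a neighbor of the other endpoint of the edge $(z,y)$ in the stated distance class, so that we legitimately subtract $1$. Both of these are immediate, so the proof is completely elementary and uses only the distance-regularity of $X$ at diameter $3$.
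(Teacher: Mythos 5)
Your proof is correct and is essentially identical to the paper's: both fix $x$, take an adjacent pair with one endpoint in $N_2(x)$ and the other in $N_3(x)$, and bound the $\lambda$ common neighbors of that edge by the $b-1$ remaining $N_2(x)$-neighbors of the far endpoint plus the $a-1$ remaining $N_3(x)$-neighbors of the near endpoint. Your write-up merely adds the explicit existence check ($c_3\geq 1$) and the case analysis that the paper leaves implicit.
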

\begin{proof}
Fix some $x\in X$ and take adjacent $v\in N_2(x)$ and $w\in N_3(x)$. Then $v$ and $w$ have exactly $\lambda$ common neighbors and clearly all of them lie either in $N_3(x)$ or in $N_2(x)$. At the same time, $w$ has $b-1$ neighbors in $N_2(x)$ different from $v$ and $v$ has $a-1$ neighbors in $N_3(x)$ different from $w$. Thus, $a+b\geq \lambda+2$.
\end{proof}

\begin{proposition}\label{Prop-a-b-small}
Let $X$ be a distance-regular graph of diameter $3$ with $\mu\leq \frac{\varepsilon^2}{2} k$. Suppose $\delta = 2\varepsilon+\varepsilon^2/2\leq \frac{1}{100}$ and $k\geq 100$. Then $\lambda+1>\frac{4}{15}k$ or 
\[ k-(a+b)+\varepsilon k+\max(\lambda, \mu)\leq k- \varepsilon k.\]
\end{proposition}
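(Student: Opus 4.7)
The proof parallels that of Proposition~\ref{Prop-a-b-big}. Assume, for contradiction, that $\lambda+1\le 4k/15$ and the stated second inequality fails, i.e., $a+b<\max(\lambda,\mu)+2\varepsilon k$. Combined with $\mu\le\varepsilon^2k/2$, this forces $a+b$ to be a small fraction of $k$, bounded above by $\lambda+2\varepsilon k\le 4k/15+2\varepsilon k$. First observe that $X$ cannot be bipartite: by Fact~\ref{fact1} a bipartite distance-regular graph of diameter~$3$ has $a=k-\mu$ and $b=k$, so $a+b=2k-\mu$, directly contradicting the smallness of $a+b$. As in the proof of Proposition~\ref{Prop-a-b-big} this in turn implies that the distance-$2$ graph $X_2$ is not a disjoint union of cliques.

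Next, apply Lemma~\ref{lambda-mu-ineq} to $X_2$. This graph is $k_2$-regular with $k_2=k(k-\lambda-1)/\mu$; every pair of adjacent vertices in $X_2$ has exactly $p_{2,2}^2$ common neighbors, while every pair of non-adjacent vertices in $X_2$ has at most $\max(p_{2,2}^1,p_{2,2}^3)$ common neighbors (corresponding to the pair being at distance $1$ or $3$ in $X$). The lemma therefore yields
\[
2p_{2,2}^2 \;\le\; k_2 + \max(p_{2,2}^1,p_{2,2}^3).
\]
Using the explicit formulas from Proposition~\ref{intersection-num-diam-3} together with the bounds $\mu\le\varepsilon^2k/2$, $\lambda\le 4k/15-1$, $a+b\le\lambda+2\varepsilon k$, the monotonicity constraints $a\le k-\lambda-1$ and $b\ge\mu\ge 1$, and the lower bound $a+b\ge\lambda+2$ from Observation~\ref{aplusb}, one expands both sides and shows that the above inequality is in fact violated, producing the desired contradiction.

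The crucial step, and the main obstacle, is the quantitative comparison in this last calculation: both $2p_{2,2}^2$ and $k_2+\max(p_{2,2}^1,p_{2,2}^3)$ share the same leading order $k^2/\mu$, so massive cancellation is essential, and the contradiction can emerge only from a careful second-order analysis. The specific threshold $\lambda+1>4k/15$ arises from solving the resulting quadratic inequality in $\lambda/k$: the dominant negative correction, roughly $-k(4a+2b+\lambda)$ coming from $-k_2-\max(p_{2,2}^1,p_{2,2}^3)$, must be dominated by a positive contribution of the form $(2a+b)(a+b+\lambda)$ extracted from $2p_{2,2}^2$, and solving the corresponding quadratic in $\lambda/k$ yields precisely the constant $4/15$. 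One likely has to split into two subcases according to whether $p_{2,2}^1$ or $p_{2,2}^3$ is the larger of the two, and may need to invoke auxiliary non-negativity conditions on other intersection numbers (such as $p_{3,3}^3\ge 0$, which forces $a$ to be not much smaller than $b$) in order to rule out boundary parameter regimes where either $a$ or $b$ is very close to $1$.
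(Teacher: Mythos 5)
Your proposal does not parallel the paper's proof of this proposition, and the key step does not go through. You observed (correctly) that Proposition~\ref{Prop-a-b-big} is proved by applying Lemma~\ref{lambda-mu-ineq} to the distance-$2$ graph $X_2$, so you try the same here. But the paper deliberately switches to a different auxiliary graph for Proposition~\ref{Prop-a-b-small}: it uses $Y=X_{1,2}$ (adjacency iff distance $\le 2$) and the \emph{asymmetric} triangle inequality $\lambda_1(Y)+\lambda_2(Y)\le k_Y+\mu(Y)$ obtained from a triangle $u,v,w$ with $\dist(u,v)=2$, $\dist(u,w)=1$, $\dist(v,w)=3$ (always present in a diameter-$3$ graph). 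This switch is not cosmetic: when $a+b$ is small, $p_{2,2}^1=\tfrac{(k-\lambda-1)(k-a-\mu)}{\mu}$ is of the same leading order $k^2/\mu$ as $p_{2,2}^2$ and $k_2$, and in your setup this large term lands on the \emph{wrong} side as $\mu(X_2)=\max(p_{2,2}^1,p_{2,2}^3)$, so no contradiction can be forced. In the paper's setup it is absorbed into $\lambda_1(Y)$ on the \emph{left} side, which is what makes the argument work.

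Concretely, your claim that $2p_{2,2}^2>k_2+\max(p_{2,2}^1,p_{2,2}^3)$ ``is in fact violated'' is false in a representative case. Take $\lambda=0$, $\mu=1$, $a=b=1$ (which satisfies $\mu\le \varepsilon^2k/2$ and $a+b<\lambda+2\varepsilon k+\mu$ for $k$ large, so it lies squarely in the regime you must rule out). Then $p_{2,2}^2=(k-2)^2$, $p_{2,2}^1=(k-1)(k-2)$, $k_2=k(k-1)$, and
\[
2p_{2,2}^2-\bigl(k_2+p_{2,2}^1\bigr)=2(k-2)^2-2(k-1)^2=-4k+6<0,
\]
so Lemma~\ref{lambda-mu-ineq} gives no contradiction at all. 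By contrast, the paper's quantities for the same parameters give $\lambda_1(Y)+\lambda_2(Y)=(k-1)(2k-1)$ and $k_Y+\mu(Y)=k^2+2k-1$, and $(k-1)(2k-1)>k^2+2k-1$ for $k\ge 5$, yielding the contradiction. More structurally, after clearing denominators one finds
\[
\mu\bigl(2p_{2,2}^2-k_2-p_{2,2}^1\bigr)=2\mu(k-\lambda-1)+2ab+(a+\mu)\bigl(-3k+2a+2\mu+\lambda-1\bigr),
\]
whose dominant term $(a+\mu)(-3k+\cdots)$ is negative of magnitude $\gtrsim 2ka$ throughout the regime $a+b\lesssim \lambda+\delta k$, $\lambda<\tfrac{4}{15}k$, so the sign is essentially never in your favor (only in the degenerate fringe where $a$ is so tiny that $2ka\lesssim 2\mu k$). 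The non-negativity conditions and case splitting you mention cannot repair a leading-order sign problem. You would need to rebuild the argument with $Y=X_{1,2}$ and the inequality $\lambda_1(Y)+\lambda_2(Y)\le k_Y+\mu(Y)$ (and, as the paper does, invoke Observation~\ref{aplusb} in the second, more refined stage to pin down the constant $4/15$).
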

\begin{proof}
Suppose $k-(a+b)+\varepsilon k+\max(\lambda, \mu)>k- \varepsilon k$,  then $\lambda+2\varepsilon k+\mu>a+b$. Thus, 
\begin{equation}\label{eq-lambda-a-b-delta}
\lambda+\delta k \geq \lambda+(2\varepsilon)k+\mu>a+b.
\end{equation}
 
Consider the graph $Y = X_{1,2}$ for which $V(Y) = V(X)$ and two vertices $x,y\in V(Y)$ are adjacent if and only if they are at distance 1 or 2 in $X$. Denote by $k_Y$ the uniform degree of  $Y$, by $\mu(Y)$ the number of common neighbors of two vertices at distance 2 in $Y$. Additionally, let $\lambda_i(Y)$ denote the number of common neighbors of any pair of vertices $x,y\in V(Y)$, which are at distance $i$ from each other in $X$ for $i = 1, 2$. Then, by Proposition~\ref{intersection-num-diam-3},

\[k_Y = k+k_2 = k+\frac{(k-\lambda-1)k}{\mu},\]
\[\lambda_1(Y) = p_{2,2}^1+p_{1,2}^1+p_{2,1}^1+p_{1,1}^1 = \frac{(k-\lambda-1)(k-a-\mu)}{\mu}+2(k-\lambda-1)+\lambda,\]
\[\lambda_2(Y) = \left(k-\lambda-1+\frac{ba-k+(k-a-\mu)(k-a-\mu-\lambda)}{\mu}\right)+2(k-a-\mu)+\mu,\]
\[\mu(Y) = p_{2,2}^3+p_{1,2}^3+p_{2,1}^3+p_{1,1}^3 = 
2b+\frac{(2k-\lambda-\mu-a-b)b}{\mu} = b \frac{2k-\lambda-a+(\mu-b)}{\mu}.\]

Observe, since $X$ is a distance-regular graph of diameter $3$, there exist vertices $u,v,w$ in $X$, such that $\dist(u,v) = 2$, $\dist(u,w) = 1$ and $\dist(v,w) = 3$. Applying the inclusion from Eq.~\eqref{eq1} to these $u,v,w$ we get that the parameters of $Y$ satisfy 
\begin{equation}\label{eq-tr-1}
\lambda_1(Y)+\lambda_2(Y)\leq k_Y+\mu(Y).
\end{equation}

Suppose that $\lambda+1< \alpha k$. Then, by Eq. \eqref{eq-lambda-a-b-delta}, both $a< (\alpha+\delta) k$ and $b< (\alpha+\delta) k$. Recall also, that by Lemma \ref{max-min} we have $\lambda+1\leq \frac{k+\mu}{2}+1\leq \frac{2k}{3}$, so in particular $k\leq \frac{1}{10}k(k-\lambda-1)$ for $k\geq \frac{1}{100}$. 
Thus, we have the following estimates:
\[\lambda_1(Y)\geq \frac{(k-\lambda-1)(1-\alpha-2\delta) k}{\mu},\]
\[\lambda_2(Y)\geq \frac{(k-\lambda-1-2\delta k)k(1-2\alpha+2\delta)-k}{\mu}\geq \frac{k(k-\lambda-1)[(1-2\alpha-2\delta)(1-6\delta)-\frac{1}{10}]}{\mu},\]
\[\mu(Y)\leq (\alpha+\delta) \frac{k(2k-\lambda)}{\mu}\leq (\alpha+\delta) \frac{4k(k-\lambda-1)}{\mu}, \quad k_Y\leq \frac{(k-\lambda-1)k}{\mu}(1+3\delta).\]
Therefore, by Eq. \eqref{eq-tr-1},
\[1-\alpha-2\delta+(1-2\alpha-2\delta)(1-6\delta)-\frac{1}{10}\leq 3\delta+1+4\alpha+4\delta.\]
Hence, $\alpha\geq \frac{1-17\delta+12\delta^2-1/10}{7-12\delta}> \frac{1}{10}$ for $\delta\leq \frac{1}{100}$. In other words, $\lambda+1\leq \frac{1}{10} k$ is impossible under the assumptions we made.

Now, we will do a more careful analysis. Suppose, $\lambda+1 = \alpha k$, where $\frac{2}{3}\geq \alpha>\frac{1}{10}$.  Note also, that by Observation \ref{aplusb} we have $a+b\geq \lambda+2>\alpha k$. Thus, we have the following estimates
\[\lambda_1(Y)\geq \frac{(1-\alpha)(k-a-\mu)k}{\mu},\]
\[ \lambda_2(Y) \geq \frac{(b-\mu)a-k+(k-a)(k-a-\mu-\lambda)}{\mu}+(k-1)\geq \frac{(k-a)(k-a-\mu-\lambda)-\mu}{\mu},\]
\[\lambda_2(Y)\geq \frac{(1-\alpha-\delta)(k-a-\alpha k-\delta k)k-\delta k}{\mu}\geq \frac{[(1-\alpha)(k-a-\alpha k-\delta k)-3\delta(1-\alpha) k]k}{\mu},\]
\[\mu(Y)\leq \frac{(2-2\alpha)bk}{\mu}, \quad k_Y\leq \frac{(1-\alpha+\delta)k^2}{\mu}\leq \frac{(1-\alpha)(1+3\delta)k^2}{\mu}.\]
Therefore, by Eq. \eqref{eq-tr-1},
\[k-a-\delta k+k-a-\alpha k -4\delta k\leq 2b+(1+3\delta)k,\]
\[(1-8\delta)\leq 2\frac{a+b}{k}+\alpha.\]
Suppose, that $a+b>\frac{6}{5}\alpha k$. Then, since $\alpha>\frac{1}{10}$, we have 
$$k-(a+b)+\lambda+(\mu+\varepsilon k)\leq k-\frac{1}{5}\alpha k+\delta k\leq \frac{99}{100}k\leq k-\varepsilon k.$$ 
Finally, if $a+b\leq \frac{6}{5}\alpha k$, we get $\alpha\geq (1-8\delta)\frac{5}{17}> \frac{4}{15}$.
\end{proof}

\subsection{Geometric distance-regular graphs}\label{sec-geom}

In this section we discuss geometric distance-regular graphs. More on geometric distance-regular graphs can be found in~\cite{Koolen-survey}.

Let $X$ be a distance-regular graph, and $\theta_{\min}$ be its smallest eigenvalue. Delsarte proved in \cite{Delsarte} that any clique $C$ in $X$ satisfies $|C|\leq 1-k/\theta_{\min}$. A clique in $X$ of size $1-k/\theta_{\min}$ is called a \textit{Delsarte clique}.
\begin{definition}\label{def-geom}
A distance-regular graph $X$ is called \textit{geometric} if there exists a collection $\mathcal{C}$ of Delsarte cliques such that every edge of $X$ lies in precisely one clique of $\mathcal{C}$.
\end{definition}

More generally, we say that any graph $X$ contains a \textit{clique geometry}, if there exists a collection $\mathcal{C}_0$ of (not necessarily Delsarte) maximal cliques, such that every edge is contained in exactly one clique of $\mathcal{C}_0$. We will sometimes call the cliques of $C_0$ by \textit{lines}. Metsch proved that any graph $X$ under rather modest assumptions contains a clique geometry.

\begin{theorem}[Metsch {\cite[Result 2.2]{Metsch}}]\label{Metsch}
Let $\mu\geq 1$, $\lambda^{(1)}, \lambda^{(2)}$ and $m$ be integers. Assume that $X$ is a connected graph with the following properties.
\begin{enumerate}
\item Every pair of adjacent vertices has at least $\lambda^{(1)}$ and at most $\lambda^{(2)}$ common neighbors;
\item Every pair of non-adjacent vertices has at most $\mu$ common neighbors;
\item $2\lambda^{(1)}-\lambda^{(2)}>(2m-1)(\mu-1)-1$;
\item Every vertex has fewer than $(m+1)(\lambda^{(1)}+1)-\frac{1}{2}m(m+1)(\mu-1)$ neighbors.
\end{enumerate}

Define a line to be a maximal clique $C$ satisfying $|C|\geq \lambda^{(1)}+2-(m-1)(\mu-1)$. Then every vertex is on at most $m$ lines, and every pair of adjacent vertices lies in a unique line. 

\end{theorem}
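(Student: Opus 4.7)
The plan is to adapt the Bose--Metsch clique-finding template: attach to each edge a canonical candidate line, verify the clique property by inclusion--exclusion using hypothesis (3), and control global combinatorics with hypothesis (4). I would execute it in four steps.

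\textbf{Candidate line.} For each edge $\{u,v\}$ set $M := N(u)\cap N(v)$, so $\lambda^{(1)}\le|M|\le\lambda^{(2)}$, and put $t := \lambda^{(1)}-(m-1)(\mu-1)$. Define
\[
\mathcal{L}(u,v) := \{u,v\}\,\cup\,\bigl\{\,w\in M : |N(w)\cap M|\ge t\,\bigr\}.
\]
The goal is to show (i) $\mathcal{L}(u,v)$ is a clique, (ii) $|\mathcal{L}(u,v)|\ge t+2$, (iii) any maximal clique of size $\ge t+2$ through $\{u,v\}$ coincides with $\mathcal{L}(u,v)$, and (iv) any vertex lies in at most $m$ lines.

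\textbf{Step (i): clique property.} Suppose $w_1,w_2\in\mathcal{L}(u,v)\setminus\{u,v\}$ are non-adjacent. Hypothesis (2) gives $|N(w_1)\cap N(w_2)|\le \mu$; since $u,v\in N(w_1)\cap N(w_2)$ but $u,v\notin M$, we get $|N(w_1)\cap N(w_2)\cap M|\le \mu-2$. Inclusion--exclusion inside $M$ now yields
\[
\lambda^{(2)}\ \ge\ |M|\ \ge\ \bigl|(N(w_1)\cap M)\cup (N(w_2)\cap M)\bigr|\ \ge\ 2t-(\mu-2),
\]
which rearranges to $2\lambda^{(1)}-\lambda^{(2)}\le (2m-1)(\mu-1)-1$, contradicting (3). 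Step (iii) is the same argument: two lines $C_1\ne C_2$ through $\{u,v\}$ contain $w_i\in C_i\setminus C_{3-i}$ that are forced to be non-adjacent by maximality, and each has at least $t$ neighbors inside $M$, so the identical inclusion--exclusion forbids it.

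\textbf{Step (ii): size bound.} The delicate point is that enough vertices of $M$ pass the threshold. A natural route is an iterated version of (i): if more than $|M|-t$ vertices of $M$ have small internal degree, then picking two of them produces a pair whose common-neighborhood pattern inside $M$ again violates the Metsch inequality (3). Equivalently one can define $\mathcal{L}(u,v)$ as a \emph{maximal} clique through $\{u,v\}$, build it greedily, and use hypotheses (1)--(2) to show that every maximal clique one can build through $\{u,v\}$ has at least $t+2$ vertices; this matches Metsch's original bookkeeping.

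\textbf{Step (iv): degree count.} Uniqueness (iii) implies that the lines through a fixed vertex $v$ partition its neighborhood. If $v$ lies on $\ell$ lines, each contributing at least $t+1=\lambda^{(1)}+1-(m-1)(\mu-1)$ neighbors, then $\ell\cdot(\lambda^{(1)}+1-(m-1)(\mu-1))\le k$. This crude bound alone does not yield $\ell\le m$ from hypothesis (4); the quadratic term $\tfrac12 m(m+1)(\mu-1)$ in (4) signals that one needs a second count. The expected argument: each ordered pair of distinct lines $L,L'$ through $v$ contributes many pairs of vertices at graph distance $2$, which by (2) each have $\le\mu$ common neighbors, and an accounting of these pairs together with hypothesis (1) refines the degree bound to $k\ge (m+1)(\lambda^{(1)}+1)-\tfrac12 m(m+1)(\mu-1)$ whenever $\ell\ge m+1$, contradicting (4).

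\textbf{Main obstacle.} Step (iv) is the most intricate: extracting the precise quadratic correction $\tfrac12m(m+1)(\mu-1)$ from a count over pairs of lines requires carefully aligning the contributions of in-line and cross-line vertex pairs, and using hypothesis (2) to charge the overlap. Step (ii) is the secondary difficulty, since the threshold construction is not self-evidently large and demands the same iterated inclusion--exclusion that powers step (i). Once these two counts are set up cleanly, the remaining steps (i) and (iii) are immediate from the single identity $2t-(\mu-2)\le\lambda^{(2)}$ and hypothesis (3).
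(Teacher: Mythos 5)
The paper does not prove this statement; it cites it verbatim as Result~2.2 of Metsch~\cite{Metsch}, so there is no proof here to compare against. I therefore evaluate your proposal on its own terms.

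The overall template (canonical candidate clique per edge, inclusion--exclusion powered by hypothesis~(3), a second count for~(4)) is the right shape, and your step~(i) is arithmetically correct as written. However, step~(iii) as you state it does not follow from step~(i), for two reasons. First, with $t=\lambda^{(1)}-(m-1)(\mu-1)$, a maximal clique $C\ni u,v$ with $|C|\ge t+2$ gives, for $w\in C\setminus\{u,v\}$, only $|N(w)\cap M|\ge|C\setminus\{u,v,w\}|\ge t-1$, which is one short of your threshold $t$; so $C\subseteq\mathcal{L}(u,v)$ is not guaranteed. The natural fix is to lower the threshold to $t-1$ \emph{and} tighten your inclusion--exclusion by noting that $w_1,w_2\in M$ are disjoint from $(N(w_1)\cup N(w_2))\cap M$ when $w_1,w_2$ are nonadjacent, giving $|M|\ge 2+2(t-1)-(\mu-2)$; this recovers the same contradiction with~(3) and makes line-containment work. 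Second, the clause ``$w_i\in C_i\setminus C_{3-i}$ are forced to be non-adjacent by maximality'' is false as stated: maximality of $C_1$ only produces \emph{some} $z\in C_1$ nonadjacent to a chosen $w_2\in C_2\setminus C_1$, not that $w_1,w_2$ specifically are nonadjacent. The standard repair is to pick the nonadjacent pair that maximality hands you (necessarily in $M$) and run the same count on it.

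Beyond those repairs, steps~(ii) and~(iv) --- the lower bound $|\mathcal{L}(u,v)|\ge t+2$ and the deduction that a vertex lies on at most $m$ lines --- are only described, not carried out. You correctly flag them as the hard parts: step~(ii) is not a straightforward iteration of step~(i) (the step~(i) argument controls nonadjacent pairs of \emph{large}-degree vertices, which does not directly bound the number of \emph{small}-degree vertices), and step~(iv) requires the quadratic double-count over pairs of lines that produces the exact coefficient $\tfrac12 m(m+1)(\mu-1)$ in hypothesis~(4). As it stands the proposal is a plausible outline with a fixable off-by-one in the threshold, a fixable misstatement about maximality, and two substantial unwritten counts; it is not yet a proof.
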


\begin{remark}\label{geom-smallest-eigenvalue} Suppose that $X$ satisfies the conditions of the previous theorem. Then the smallest eigenvalue of $X$ is at least $-m$.
\end{remark}
\begin{proof}
Let $\mathcal{C}$ be the collection of lines of $X$. Consider $|V|\times |\mathcal{C}|$ vertex-clique incidence matrix $N$. That is, $(N)_{v,C} = 1$ if and only if $v\in C$ for $v\in X$ and $C\in \mathcal{C}$. Since every edge belongs to exactly one line, we get $NN^{T} = A+D$, where $A$ is the adjacency matrix of $X$ and $D$ is a diagonal matrix. Moreover, $(D)_{v,v}$ equals to the number of lines that contain $v$. By the previous theorem, $D_{v,v} \leq m$ for every $v\in X$. Thus, 
\[A+mI = NN^{T}+(mI-D)\] 
is positive semidefinite, so all eigenvalues of $A$ are at least $-m$.
\end{proof}
\begin{remark}\label{m2-line}
Suppose $X$ is a non-complete regular graph that satisfies the conditions of Theorem~\ref{Metsch} with $m=2$. Then $X$ is a line graph $L(Y)$ for some graph $Y$.
\end{remark}
\begin{proof}
Let $\mathcal{C}$ be the collection of lines of $X$. Observe that since $X$ is regular and non-complete, every vertex is in at least two elements of $\mathcal{C}$. Indeed, if $v$ belongs to the unique $C\in \mathcal{C}$, then $N(v) = C$, as every edge of $X$ lies in a unique element of $\mathcal{C}$. Thus, regularity of $X$ implies that $N(v)$ is a connected component of $X$, which by assumptions is connected and non-complete. Therefore, by Theorem \ref{Metsch} every vertex of $X$ is in exactly two lines.

Define a graph $Y$ with the set of vertices $V(Y) = \mathcal{C}$. Two vertices $C_1, C_2 \in \mathcal{C}$ in $Y$ are adjacent if and only if $C_1\cap C_2 \neq \emptyset$. We claim that $L(Y) \cong X$. Indeed, since every edge of $X$ is in exactly one line, $|C_1\cap C_2| \leq 1$, so there is a well-defined map $f:E(Y)\rightarrow V(X)$. Moreover, since every vertex of $X$ is in exactly two lines, map $f$ is bijective. Additionally, two edges in $Y$ share the same vertex if and only if there is an edge between corresponding vertices in $X$. Hence, $L(Y) \cong X$. 
\end{proof}

Another useful tool is the following sufficient condition of geometricity.
\begin{proposition}[{\cite[Proposition 9.8]{Koolen-survey}}]\label{suff-cond}
Let $X$ be a distance-regular graph of diameter $d$. Assume there exist a positive integer $m$ and a set $\mathcal{C}$ of cliques of $X$ such that every edge in $X$ is contained in precisely one clique of $\mathcal{C}$ and every vertex $u$ is contained in exactly $m$ cliques of $\mathcal{C}$. If $|\mathcal{C}|<|V(X)| = n$, then $X$ is geometric with smallest eigenvalue $-m$. Moreover, $|\mathcal{C}|<n$ holds if $\min\{|C| : C\in \mathcal{C}\}>m$.  
\end{proposition}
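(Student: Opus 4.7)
The plan is to encode the hypotheses on $\mathcal{C}$ into a single matrix identity and then extract both the spectral conclusion and the clique-size conclusion from it. Let $N \in \{0,1\}^{n \times |\mathcal{C}|}$ be the vertex-clique incidence matrix of the system $\mathcal{C}$. The two conditions on $\mathcal{C}$ (every vertex lies in exactly $m$ members of $\mathcal{C}$; every edge lies in exactly one, and nonedges in none since any $C\in\mathcal{C}$ is a clique of $X$) translate directly into $NN^{T} = A + mI$, where $A$ is the adjacency matrix of $X$: the diagonal of $NN^T$ counts cliques through each vertex, while the off-diagonal $(u,v)$-entry counts cliques containing both $u$ and $v$, which is $1$ if $uv\in E(X)$ and $0$ otherwise.

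From $A + mI = NN^{T} \succeq 0$ I would conclude $\theta_{\min}(X) \geq -m$. For the reverse inequality — which is precisely where the hypothesis $|\mathcal{C}| < n$ enters — I would use $\operatorname{rank}(NN^T) \leq |\mathcal{C}| < n$, so $NN^T$ is singular, meaning $-m$ is in fact an eigenvalue of $A$. Hence $\theta_{\min}(X) = -m$, and the Delsarte bound sharpens to $|C| \leq 1 + k/m$ for every $C \in \mathcal{C}$.

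To upgrade this bound to the equality required by Definition~\ref{def-geom}, I would fix any vertex $v$ and partition its $k$ neighbors according to the unique clique of $\mathcal{C}$ through each edge at $v$: the cliques $C \ni v$ contribute $|C|-1$ neighbors each, yielding $\sum_{C \ni v}(|C|-1) = k$, i.e.\ $\sum_{C \ni v}|C| = k+m$. Since this sum has exactly $m$ terms, each at most $1 + k/m$, the sum is at most $m+k$; equality forces every clique through $v$ to have size exactly $1 + k/m$. Varying $v$ over all vertices exhausts $\mathcal{C}$, so every $C\in\mathcal{C}$ is Delsarte, exhibiting $X$ as geometric with smallest eigenvalue $-m$.

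The final clause is a one-line double count of vertex-clique incidences: $\sum_{C \in \mathcal{C}}|C| = nm$, so if $\min\{|C| : C \in \mathcal{C}\} > m$, then $m|\mathcal{C}| < nm$, giving $|\mathcal{C}| < n$. I do not anticipate any genuine obstacle in this argument; the only conceptual step is recognizing the identity $NN^T = A + mI$, after which the spectral conclusion and the size rigidity are both formal consequences of rank/psd considerations and Delsarte's inequality.
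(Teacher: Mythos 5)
The paper does not prove this statement; it is imported from Koolen's survey (Proposition~9.8) with only a citation, so there is no internal proof to compare against. Your argument is correct and is the standard one: the identity $NN^{T} = A + mI$ together with $\operatorname{rank}(NN^{T}) \leq |\mathcal{C}| < n$ forces $-m$ to be an eigenvalue and hence the smallest; the local count $\sum_{C \ni v}(|C|-1) = k$ (a partition of $N(v)$ by the $m$ cliques through $v$) combined with Delsarte's bound $|C| \leq 1 + k/m$ then forces equality throughout, so every member of $\mathcal{C}$ is Delsarte; and the double count $\sum_{C \in \mathcal{C}}|C| = nm$ gives the final clause. This is in fact the same incidence-matrix machinery the paper itself deploys a few lines earlier in the proof of Remark~\ref{geom-smallest-eigenvalue}, so your route is fully consistent with the paper's own treatment of this circle of facts.
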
 

\begin{corollary}[{\cite[Proposition 9.9]{Koolen-survey}}]\label{geom-corol}
Let $m\geq 2$ be an integer, and let $X$ be a distance-regular graph  with $(m-1)(\lambda+1)<k\leq m(\lambda+1)$ and diameter $d\geq 2$. If $\lambda\geq \frac{1}{2}m(m+1)\mu$, then $X$ is geometric with smallest eigenvalue $-m$.
\end{corollary}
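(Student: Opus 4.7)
The plan is to derive the corollary by combining Metsch's criterion (Theorem \ref{Metsch}) with the sufficient condition from Proposition \ref{suff-cond}. Since $X$ is distance-regular of diameter $d\geq 2$, every pair of adjacent vertices has exactly $\lambda$ common neighbors and every pair of vertices at distance~$2$ has exactly $\mu$ common neighbors, so in Metsch's theorem we can take $\lambda^{(1)}=\lambda^{(2)}=\lambda$. We then need to verify the two quantitative hypotheses. For condition (3), we want $\lambda \geq (2m-1)(\mu-1)$, and this follows from the assumption $\lambda\geq \tfrac{1}{2}m(m+1)\mu$ together with the elementary inequality $\tfrac{1}{2}m(m+1)\geq 2m-1$, which holds for all $m\geq 2$ since it is equivalent to $(m-1)(m-2)\geq 0$. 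For condition (4), $k\leq m(\lambda+1)$ reduces the requirement to $\lambda+1 > \tfrac{1}{2}m(m+1)(\mu-1)$, and this is immediate from $\lambda \geq \tfrac{1}{2}m(m+1)\mu$.

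Metsch's theorem then provides a set $\mathcal{C}$ of maximal cliques (``lines'') such that every edge lies in a unique line, every vertex is in at most $m$ lines, and every $C\in\mathcal{C}$ satisfies $|C|\geq \lambda+2-(m-1)(\mu-1)$. The next step is to promote ``at most $m$'' to ``exactly $m$'' using the lower bound on $k$. Fix a vertex $v$ and suppose it lies in $t\leq m$ lines $C_1,\dots,C_t$. Since every edge incident to $v$ is contained in exactly one $C_i$, we get $\sum_{i=1}^{t}(|C_i|-1)=k$. On the other hand, any two adjacent vertices of $X$ have exactly $\lambda$ common neighbors, at least $|C_i|-2$ of which lie in $C_i$, so $|C_i|\leq \lambda+2$. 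Therefore $k\leq t(\lambda+1)$, and combining with $k>(m-1)(\lambda+1)$ yields $t>m-1$, hence $t=m$.

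To apply Proposition \ref{suff-cond} it remains to verify $|\mathcal{C}|<n$, and the proposition guarantees this as soon as every line has size strictly greater than $m$. We thus need $\lambda+2-(m-1)(\mu-1)>m$. A short computation using $\lambda\geq \tfrac{1}{2}m(m+1)\mu$ gives
\[
\lambda+2-(m-1)(\mu-1)-m \;\geq\; \frac{m(m-1)\mu+2\mu+2}{2} \;>\; 0
\]
for $m\geq 2$ and $\mu\geq 1$ (note $\mu\geq 1$ since $d\geq 2$). Proposition \ref{suff-cond} then concludes that $X$ is geometric with smallest eigenvalue $-m$, as desired.

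The argument is almost entirely mechanical once one settles on the correct strategy; there is no genuine obstacle, only bookkeeping. The one place that requires a small idea is the step showing every vertex lies in exactly $m$ lines, which is precisely where the lower bound $k>(m-1)(\lambda+1)$ is used; the upper bound $k\leq m(\lambda+1)$ is used only to make Metsch's condition (4) applicable.
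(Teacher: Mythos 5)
Your proof is correct and follows exactly the route the paper indicates: the paper's proof is the one-line remark that the corollary ``directly follows from Theorem~\ref{Metsch} and Proposition~\ref{suff-cond},'' and you have simply carried out that combination in full, checking Metsch's hypotheses (3) and (4), upgrading ``at most $m$ lines'' to ``exactly $m$'' via the lower bound $k>(m-1)(\lambda+1)$, and verifying that the line size exceeds $m$ so that Proposition~\ref{suff-cond} applies.
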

\begin{proof}
Directly follows from Theorem \ref{Metsch} and Proposition \ref{suff-cond}.
\end{proof}

Next lemma is an analog of Lemma 8.3 in \cite{Sun-Wilmes}.
\begin{lemma}\label{clique-mu-bound}
Let $X$ be a graph. Let $\mathcal{C}$ be a collection of cliques in $X$, such that every edge lies in a unique clique from $\mathcal{C}$ and every vertex is in at most $m$ cliques from $\mathcal{C}$. Then $\mu(X)\leq m^2$. If $X$ is distance-regular of diameter $d\geq 3$, then  $\mu = \mu(X)\leq (m-1)^2$.
\end{lemma}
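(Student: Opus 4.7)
The plan is to define an injective map from the common neighbors of any pair $u, v$ with $\dist(u,v) = 2$ to pairs $(C, C') \in \mathcal{C}^2$ with $u \in C$ and $v \in C'$, and to count.

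First I would record the key incidence observation: any two distinct cliques $C_1, C_2 \in \mathcal{C}$ share at most one vertex, for otherwise they would share an edge, contradicting the hypothesis that every edge lies in a unique clique of $\mathcal{C}$. Given $u, v$ at distance $2$ and any common neighbor $w$, let $C_{uw}, C_{vw}$ be the unique cliques of $\mathcal{C}$ containing the edges $uw$ and $vw$; these are distinct because $u \not\sim v$ forbids $u$ and $v$ from lying in a common clique. If $w_1 \neq w_2$ gave the same pair $(C, C')$, then both would lie in $C \cap C'$, contradicting the incidence observation. Since $u$ (respectively $v$) lies in at most $m$ cliques of $\mathcal{C}$, the image of the injection has size at most $m^2$, giving $\mu(X) \leq m^2$.

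For the distance-regular case with diameter $d \geq 3$, I would use the fact that $b_2 \geq 1$: for any $u, v$ with $\dist(u, v) = 2$ there exists a neighbor $u'$ of $u$ with $\dist(u', v) = 3$. I claim the clique $C_{uu'} \in \mathcal{C}$ contains no common neighbor of $u$ and $v$, since any such $w \in C_{uu'} \cap N(v)$ with $w \neq u$ would be adjacent to $u'$ (sharing the clique $C_{uu'}$) and to $v$, forcing $\dist(u', v) \leq 2$, a contradiction. Thus the first coordinate of the injection avoids $C_{uu'}$, giving at most $m - 1$ values; the symmetric argument applied at $v$ gives the same bound on the second coordinate. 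Therefore $\mu \leq (m-1)^2$.

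The argument is short and I do not anticipate a serious technical obstacle. The conceptual content is packaging each common neighbor into a uniquely determined pair of cliques (using edge-uniqueness of $\mathcal{C}$), together with the observation that the existence of a vertex at distance $3$ --- guaranteed by the diameter hypothesis --- produces a distinguished clique through each endpoint that no common neighbor can occupy.
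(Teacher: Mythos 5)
Your proof is correct and takes essentially the same approach as the paper: each common neighbor $w$ of $u,v$ determines a pair of cliques (one through $u$, one through $v$), two distinct cliques of $\mathcal{C}$ meet in at most one vertex, and in the distance-regular case a neighbor of $u$ at distance $3$ from $v$ (guaranteed since $b_2>0$ for $d\geq 3$) produces a clique through $u$ disjoint from $N(v)$, and symmetrically for $v$. The paper phrases the first step as a covering of $N(u)\cap N(v)$ by pairwise intersections rather than as an explicit injection, but the underlying argument is the same.
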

\begin{proof}
Let $u,v\in X$ be a pair of vertices at distance $2$. By the assumptions of the lemma we can write $N(u) = \bigcup\limits_{i = 1}^{m_u} C_i^u$ and $N(v) = \bigcup\limits_{i = 1}^{m_v} C_i^v$, where $C_i^u, C_j^v\in \mathcal{C}$. Since $\dist(u,v) = 2$, all cliques are distinct. Observe, that any two different cliques in $\mathcal{C}$ intersect each other in at most one vertex. Hence, $N(u)\cap N(v)\leq m_u m_v$, so $\mu(X)\leq m^2$.

If $X$ is distance-regular of $d\geq 3$, there exist vertices $x,y\in X$ such that the distances in $X$ are $\dist(x,u) = 1$, $\dist(x,v) = 3$, $\dist(y,v) = 1$, $\dist(y,u) = 3$.  Let $C_1^u$ be a clique in $\mathcal{C}$ that contains vertices $x$ and $u$. Then since $\dist(x,v) = 3$ we get $C_1^u\cap N(v) = \emptyset$. Similarly, let $\{y,v\}\in C_1^v$, then $C_1^v\cap N(u) = \emptyset$. Again, any two cliques in $\mathcal{C}$ intersect each other in at most one vertex. Hence, $N(u)\cap N(v)\leq (m_u-1)(m_v-1)$, which implies $\mu\leq (m-1)^2$. 
\end{proof}

Our strategy is to show that graphs, for which we could not apply Lemma~\ref{mixing-lemma-tool} and Observation~\ref{obs1}, satisfy Metsch's theorem for small $m$. Thus, in particular, by Remark~\ref{geom-smallest-eigenvalue}, smallest eigenvalue of $X$ is at least $-m$. In this sense, the problem of classifying graphs with the linear motion is strongly related with the problem of classifying graphs with certain level of regularity and smallest eigenvalue at least $-m$. The first important result for the latter problem is due to Seidel, who characterized the strongly regular graphs with smallest eigenvalue $-2$. We state here a generalization of Seidel's result given by Brouwer, Cohen and Neumaier.

\begin{definition}
 Graph $X$ is called \textit{$m\times n$-grid} if it is the line graph of a complete bipartite graph $K_{m,n}$.
\end{definition}

\begin{theorem}[{\cite[Corollary 3.12.3]{BCN}}]\label{geom-eig-more2}
Let $X$ be a connected regular graph with smallest eigenvalue $>-2$. Then $X$ is a complete graph, or $X$ is an odd polygon.  
\end{theorem}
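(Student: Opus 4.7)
My plan for the proof is based on Cauchy eigenvalue interlacing: any induced subgraph $H$ of $X$ satisfies $\theta_{\min}(H) \geq \theta_{\min}(X) > -2$, so $X$ contains no induced subgraph with smallest eigenvalue $\leq -2$. I would single out a handful of explicit forbidden induced subgraphs, each with smallest eigenvalue exactly $-2$: the even cycles $C_{2m}$ for $m \geq 2$, the star $K_{1,4}$, the complete tripartite graph $K_{1,1,3}$, and the double star $S_{2,2}$ on six vertices obtained by attaching two pendant leaves to each endpoint of an edge. A direct calculation with the adjacency matrix, exploiting the natural symmetry of each graph, confirms that $-2$ is indeed an eigenvalue. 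The proof then splits on the girth of $X$.

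In the triangle-free case (girth $\geq 4$), the ban on induced $C_4$ forces girth $\geq 5$. If $k \geq 3$, then for any edge $uv$ I would pick two further neighbors of $u$ and two further neighbors of $v$; by girth $\geq 5$ all six chosen vertices are distinct, and the only edges in the induced subgraph are $uv$ together with the four pendant edges, so the induced subgraph is exactly $S_{2,2}$, contradiction. Hence $k \leq 2$, leaving $X \in \{K_1, K_2\}$ or $X$ a cycle; the forbidden even cycles then force any such cycle to be odd.

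In the triangle case (girth $=3$), if $k = 2$ then $X = K_3$, which is complete. If $k \geq 3$ and $X \neq K_n$, then since $X$ is connected but is not a disjoint union of cliques, adjacency is not transitive, so there is an induced $P_3$: vertices $w,u,v$ with $wu, uv \in E$ and $wv \notin E$. Combining this induced $P_3$ with the existence of a triangle $abc \subseteq X$ and using $k$-regularity to generate further neighbors, the forbidden subgraphs yield strong local restrictions: any two non-adjacent vertices have pairwise adjacent common neighbors (from $C_4$-freeness), any edge's common neighborhood has independence number at most $2$ (from $K_{1,1,3}$-freeness), and no vertex has four pairwise non-adjacent neighbors (from $K_{1,4}$-freeness). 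A case analysis on the adjacency patterns of each new neighbor introduced by regularity forces one of these forbidden patterns to appear, giving the contradiction.

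The main obstacle is the triangle case. The triangle-free part is short thanks to the double-star trick, but when a triangle is present the local configurations around it and the induced $P_3$ branch into several subcases; the heart of the argument is that the three local restrictions above leave no room to maintain a non-complete configuration once $k \geq 3$ and a triangle is present, and the careful bookkeeping needed to always expose one of the four forbidden induced subgraphs is the technical crux.
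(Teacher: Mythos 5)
Your triangle-free case is correct and pleasant: $C_4$-freeness plus $S_{2,2}$-freeness plus regularity reduces the problem to cycles, and the even ones are eliminated. The eigenvalue computations for the four forbidden subgraphs ($C_{2m}$, $K_{1,4}$, $K_{1,1,3}$, $S_{2,2}$) also check out; each has smallest eigenvalue $-2$, so interlacing rules them out as induced subgraphs.

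The triangle case, however, has a genuine gap, and it cannot be closed with the forbidden set you have chosen. Your plan relies on the claim that a connected $k$-regular graph with $k\geq 3$, a triangle, and no induced $C_4$, $K_{1,4}$, $K_{1,1,3}$, or $S_{2,2}$ must be complete. This is false. Take $X = L(\text{Petersen})$, the line graph of the Petersen graph: it is connected, $4$-regular, has girth $3$, is neither complete nor a cycle, and yet avoids all four of your forbidden subgraphs. It avoids $K_{1,4}$ and $S_{2,2}$ because every line graph is $K_{1,3}$-free (note that $S_{2,2}$ contains an induced $K_{1,3}$ at each endpoint of the central edge). It avoids induced $C_4$ because $L(Y)$ has an induced $C_4$ exactly when $Y$ has a $4$-cycle, and Petersen has girth $5$. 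It avoids $K_{1,1,3}$ because in $L(Y)$ with $Y$ triangle-free, the common neighborhood of two adjacent vertices (two edges of $Y$ sharing a vertex $a$) is a clique (the other edges through $a$), which has no independent triple. The same phenomenon occurs for $L(Y)$ whenever $Y$ is a regular graph of girth $\geq 5$, so this is an infinite family of counterexamples to the implication you are trying to force. Each of these graphs has smallest eigenvalue exactly $-2$, so they do not contradict the theorem itself, but they do show your proposed case analysis cannot terminate in a contradiction: there is no forbidden pattern to expose.

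To repair this you would either need a much larger list of forbidden induced subgraphs (and a priori it is not clear the minimal such list is finite or manageable in a hand analysis), or a structurally different argument — for instance the classical route via the positive definiteness of $A+2I$ and a representation of the vertices by vectors in a root system, or a counting argument using $\sum\theta_i = 0$, $\sum\theta_i^2 = nk$, together with $\theta_i>-2$. As it stands, the heart of the argument — the triangle case — is not a proof but a hope, and the line-graph family above shows the hope is unfounded with the tools assembled.
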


\begin{theorem}[Seidel \cite{Siedel}; Brouwer-Cohen-Neumaier {\cite[Theorem 3.12.4]{BCN}}]\label{geom-eig-2}
Let $X$ be a connected regular graph on $n$ vertices with smallest eigenvalue $-2$.
\begin{enumerate}[(i)]
\item If $X$ is strongly regular, then $X = L_{2}(s)$, or $X = T(s)$ for some $s$, or $n\leq 28$.
\item If $X$ is edge-regular, then $X$ is strongly regular or the line graph of a regular triangle-free graph.
\item If $X$ is co-edge-regular, then $X$ is strongly regular, an $m_1\times m_2$-grid, or one of the two regular subgraphs of the Clebsh graph with $8$ and $12$ vertices, respectively.
\end{enumerate}
\end{theorem}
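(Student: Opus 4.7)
The plan is to use the classical reduction to root systems, following Seidel's original approach and its refinement by Cameron, Goethals, Seidel, and Shult. Since the smallest eigenvalue of $X$ equals $-2$, the matrix $A+2I$ is positive semidefinite with integer entries: diagonal $2$ and off-diagonal entries in $\{0,1\}$. Hence we may write $A+2I = B^{T}B$ for some real matrix $B$, and the columns $v_1,\ldots,v_n$ of $B$ are vectors of squared norm $2$ with pairwise inner products in $\{0,1\}$. After a suitable sign change on each $v_i$, the set $\{v_1,\ldots,v_n\}$ embeds into a root system, and connectivity of $X$ forces the image to lie in a single irreducible component. By the Cartan-Killing classification, this component is of type $A_m$, $D_m$, or one of the exceptionals $E_6$, $E_7$, $E_8$.

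The proof then splits into cases based on the root-system type. In the $A_m$-case, $X$ is an induced subgraph of the triangular graph $T(m+1)$, and regularity together with the vertex count pins $X$ down to $T(s)$. The $D_m$-case identifies $X$ with (a generalized) line graph: edge-regularity forces the underlying graph $Y$, with $X = L(Y)$, to be regular and triangle-free; strong regularity further forces $Y = K_{s,s}$, so that $X = L_2(s) = L(K_{s,s})$; and co-edge-regularity yields the grids $L(K_{m_1,m_2})$. The exceptional root systems contain only finitely many roots (at most $240$ in $E_8$), so they produce only finitely many graphs; these are then enumerated and checked against each regularity hypothesis to produce the ``$n\le 28$'' sporadic list in (i) and the two Clebsch-related subgraphs on $8$ and $12$ vertices appearing in (iii).

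The main technical obstacle is the sign-change step needed to align the $\{0,1\}$-valued Gram representation with a standard root system, whose roots have inner products in $\{0,\pm 1\}$. This is handled by fixing a spanning tree of $X$, choosing signs for its vertices inductively so that every tree edge contributes inner product $+1$, and then verifying that each non-tree edge is consistent with a root-system structure; connectedness of $X$ and the bounded inner-product values make this procedure terminate and constrain the span to a root lattice. A secondary but more laborious step is the case-by-case analysis inside the exceptional root systems, which, because there are only finitely many Weyl-inequivalent subsets of $E_8$, reduces to a finite (and classically carried out) enumeration; each candidate is then tested against strong, edge-, or co-edge-regularity to produce exactly the exceptional examples listed in (i)-(iii).
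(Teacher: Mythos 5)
This theorem is not proved in the paper at all: it is imported verbatim from the literature (Seidel's classification and Brouwer--Cohen--Neumaier, Theorem 3.12.4), so there is no in-paper argument to compare against. What you have written is an outline of the standard proof from those sources, namely the Cameron--Goethals--Seidel--Shult reduction: $A+2I\succeq 0$ gives a representation by norm-$2$ vectors, the line system star-closes into an irreducible root system, and the classification $A_m$, $D_m$, $E_6\subset E_7\subset E_8$ drives a case analysis. As a description of the known proof strategy, this is accurate.

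Two caveats. First, you have the correspondence between root systems and graph families backwards: a representation inside $A_m$ (roots $e_i-e_j$, normalized to have pairwise inner products in $\{0,1\}$) forces the edges to be coherently oriented across a bipartition, so the $A_m$ case yields line graphs of \emph{bipartite} graphs --- hence $L_2(s)=L(K_{s,s})$ and the $m_1\times m_2$-grids --- while $T(s)=L(K_s)$ arises from the vectors $e_i+e_j$ inside $D_m$, where generalized line graphs live. Your text assigns $T(s)$ to $A_m$ and $L_2(s)$ to $D_m$. Second, the substance of the theorem --- deducing each of (i), (ii), (iii) from the root-system trichotomy, and in particular the finite enumeration of regular graphs represented in $E_8$ that produces the ``$n\le 28$'' bound and the two Clebsch subgraphs --- is asserted rather than carried out. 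That enumeration is the laborious core of Seidel's and BCN's arguments, so what you have is a correct roadmap to the classical proof rather than a proof. Since the paper itself treats the result as a black box, this is an acceptable level of detail for the paper's purposes, but the swapped $A_m$/$D_m$ attributions should be fixed if the sketch is retained.
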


For the case of distance-regular graphs of diameter $3$ we use two recent classification results for geometric distance-regular graphs with smallest eigenvalue $-3$ proven by Bang~\cite{Bang-diam-3} and Bang, Koolen~\cite{Bang-Koolen-diam-3}. Since Bang's classification result includes a lot of cases we state here the short corollary of her result that fits our purposes.

\begin{theorem}[Bang, Koolen \cite{Bang-Koolen-diam-3}]\label{bang-koolen} 
A geometric distance-regular graph with smallest eigenvalue $-3$, diameter $d\geq 3$ and $\mu\geq 2$ is one of the following.
\begin{enumerate}
\item The Hamming graph $H(3, s)$, where $s\geq 3$, or Johnson graph $J(s,3)$, where $s\geq 6$;
\item The collinearity graph of a generalized quadrangle of order $(s,3)$ deleting the edges in a spread, where $s\in \{3, 5\}$.
\end{enumerate}
\end{theorem}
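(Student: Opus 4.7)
The plan is to exploit the clique geometry given to us by geometricity. Since the smallest eigenvalue is $-3$, every Delsarte clique has size $s+1:=1+k/3$, and by the definition of a geometric distance-regular graph there is a family $\mathcal{C}$ of such cliques covering every edge exactly once. A double-counting argument shows that each vertex then lies in exactly $m=3$ cliques of $\mathcal{C}$. So $(V,\mathcal{C})$ is the point graph of a partial linear space in which every point is on exactly $3$ lines of size $s+1$, and this is the main structural object to analyze.

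First I would apply Lemma~\ref{clique-mu-bound} to conclude $\mu \le (3-1)^2 = 4$, so $\mu\in\{2,3,4\}$. I would then split into cases by $\mu$. In each case, let $u,v$ be vertices at distance $2$ and write $N(u)=L_1^u\cup L_2^u \cup L_3^u$ and $N(v)=L_1^v\cup L_2^v\cup L_3^v$ as disjoint unions of the cliques through $u$ and through $v$ (minus $u$ resp.\ $v$). Each common neighbor $w\in N(u)\cap N(v)$ corresponds to a unique pair $(L_i^u,L_j^v)$, and the proof of Lemma~\ref{clique-mu-bound} actually shows that at least one pair $(i,j)$ must be ruled out by a vertex in $N_3(u)\cap N(v)$ or $N_3(v)\cap N(u)$, so $\mu\le 4$ with equality forcing $d=3$ and a very rigid combinatorial structure.

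Next, I would analyze each residue. For $\mu=4$, the extremal case, every pair of lines through $u$ and through $v$ meets at a common neighbor, which matches exactly the incidence pattern of $3$-subsets in the Johnson graph $J(s,3)$; comparing the forced intersection array $\{3(s-3),2(s-4),s-5;1,4,9\}$ with the assumption of geometricity yields $X\cong J(s,3)$ with $s\ge 6$. For $\mu=2$, the geometry $(V,\mathcal{C})$ is a partial linear space of order $(s,3)$ in which any two non-collinear points have exactly $2$ common neighbors, which is precisely the definition of (a spread-deleted) generalized quadrangle $\mathrm{GQ}(s,3)$; one then invokes the classification of $\mathrm{GQ}(s,3)$ by Payne--Thas to obtain $s\in\{3,5\}$. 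For $\mu=3$, I would show that the geometry decomposes as a direct product of three $(s+1)$-cliques, yielding the Hamming graph $H(3,s)$; this is done by fixing a vertex $v$, labelling its three lines as coordinates, and showing by induction on distance that every vertex is uniquely determined by its three coordinates relative to $v$.

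The main obstacle will be the case $\mu=2$: one must verify that the weak regularity conditions inherited from $X$ suffice to certify a generalized quadrangle (i.e., that any nonincident point-line pair admits a unique collinear point-on-line), and then correctly identify the spread being deleted so as to recover the distance-regularity. The cases $\mu=3,4$ reduce to checking intersection-number identities and invoking the characterizations of the Hamming and Johnson graphs among distance-regular graphs with their parameters; these are standard but non-trivial, and together with the $\mathrm{GQ}$ classification they constitute the bulk of the work.
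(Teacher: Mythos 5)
This theorem is not proved in the paper: it is quoted from Bang and Koolen \cite{Bang-Koolen-diam-3} and used as a black box (in Proposition~\ref{geom-case}), so there is no in-paper argument to compare your sketch against.

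That said, your outline contains an error that would break the argument even at the level of the case split. Your opening reductions are fine: the Delsarte cliques have size $1+k/3$, each vertex lies on exactly $3$ of them, and Lemma~\ref{clique-mu-bound} gives $\mu\le(3-1)^2=4$, hence $\mu\in\{2,3,4\}$ under the hypothesis $\mu\ge 2$. But your assignment of conclusions to $\mu$-values is misrouted. The Hamming graph $H(3,s)$ has $\mu=c_2=2$, not $3$: two words at Hamming distance $2$ have exactly two common neighbors. The spread-deleted collinearity graph of a $\mathrm{GQ}(s,3)$ has intersection array $\{3s,2s,1;1,2,3s\}$, so it too has $\mu=2$. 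Only $J(s,3)$ has $\mu=4$. The correct picture is therefore: $\mu=4$ must force $J(s,3)$; $\mu=2$ must be split, by a further argument, between $H(3,s)$ and the spread-deleted $\mathrm{GQ}$ (e.g.\ via antipodality); and $\mu=3$ must be shown to admit no example at all. Your plan instead parks $H(3,s)$ in the $\mu=3$ branch, which cannot occur, and treats $\mu=2$ as a single $\mathrm{GQ}$ case, so the hardest dichotomy is missed entirely. Two further gaps: the hypothesis is $d\ge 3$, so a complete proof must also exclude $d\ge 4$, which your sketch never addresses; and in the $\mu=2$ case one must show that the spread is of the special type for which deleting it yields a distance-regular graph and that $s\in\{3,5\}$, neither of which follows merely from citing a classification of $\mathrm{GQ}(s,3)$.
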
  

\begin{theorem}[Bang \cite{Bang-diam-3}, corollary to Theorem 4.3]\label{bang}
Let $X$ be a geometric distance-regular graph with diameter $d = 3$, smallest eigenvalue $-3$, $\mu = 1$ and $k>24$. Then $\lambda\geq \beta\geq 2$ and $b_2 = 2\lambda-2\beta+4$, $c_3 = 3\beta$.

\end{theorem}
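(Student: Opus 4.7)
The plan is to extract the structural identities from the clique geometry of $X$, using the Feit-Higman theorem to rule out the degenerate case $\beta=1$. Since $X$ is geometric with smallest eigenvalue $-3$, the Delsarte bound forces each clique in the given cover $\mathcal{C}$ to have size $s+1:=1+k/3$, and a routine counting argument shows that every vertex lies in exactly three lines of $\mathcal{C}$. The hypothesis $\mu=1$ then implies that any two distinct lines share at most one vertex, since otherwise two non-adjacent vertices of the common edge would have at least two common neighbors.

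I would introduce the parameter $\beta:=\lambda-s+2$, so that $\lambda=s-2+\beta$; geometrically, $\beta-1$ is the number of ``external'' common neighbors of an adjacent pair $u,v$, i.e., common neighbors lying outside the unique line containing $\{u,v\}$. The inequality $\lambda\geq\beta$ is then automatic since $s=k/3>8$.

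To obtain the identities $b_2=2s=2\lambda-2\beta+4$ and $c_3=3\beta$ I would carry out a pair of double counts. For $b_2$: take $u,v$ at distance $2$ with unique common neighbor $w$, and partition $N(v)$ into its three incident lines $D_1:=\overline{vw}$, $D_2$, $D_3$. The $s$ vertices of $D_1\setminus\{v\}$ are all at distance at most $2$ from $u$ (they are adjacent to $w$), whereas the $2s$ vertices of $(D_2\cup D_3)\setminus\{v\}$ lie at distance exactly $3$ from $u$: this last claim uses $\mu=1$ together with the distance-regularity of $X$, which propagates the $\mu=1$ constraint to force the auxiliary lines through any $v'\in(D_2\cup D_3)\setminus\{v\}$ to avoid the three lines through $u$. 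A dual count for $u,v$ at distance $3$: each of the three lines through $v$ contributes exactly $\beta$ neighbors of $v$ at distance $2$ from $u$, by an analogous symmetry argument, yielding $c_3=3\beta$.

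Finally, $\beta\geq2$ is where $k>24$ is essential. From $c_3\geq1$ (diameter $3$) we obtain $\beta\geq1$. If $\beta=1$, the intersection array becomes $\{3s,2s,2s;\,1,1,3\}$, which is exactly the intersection array of a generalized hexagon of order $(s,2)$. By the Feit-Higman theorem, a generalized hexagon $GH(s,t)$ with $s,t\geq2$ exists only if $st$ is a perfect square; with $t=2$ this forces $s\in\{2,8\}$, giving $k=3s\leq24$ and contradicting $k>24$. The principal obstacle in the whole argument is the derivation of the two intersection identities, for which the full strength of distance-regularity (packaged in Bang's Theorem 4.3) seems to be needed; the elimination of $\beta=1$, by contrast, is a clean application of Feit-Higman.
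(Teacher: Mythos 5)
The paper does not prove this theorem; it cites it directly from Bang's paper as a ``corollary to Theorem 4.3'', so there is no in-paper proof to compare against. Evaluating your argument on its own terms, there is a genuine error at its core: your identification of the parameter $\beta$.

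You set $\beta := \lambda - s + 2$ (with $s := k/3$), so that $\beta - 1$ counts common neighbours of an adjacent pair $u,v$ lying outside the line $L = \overline{uv}$. But under the hypotheses ($\mu = 1$, geometric with smallest eigenvalue $-3$, $s \geq 2$), this quantity is forced to be zero: if $z \notin L$ were a common neighbour of $u,v$, let $L_u = \overline{uz}$ and $L_v = \overline{vz}$; these are distinct from $L$ and from each other, and for any $x \in L_u \setminus \{u,z\}$ both $u$ and $z$ are common neighbours of $x$ and $v$, so $\mu = 1$ forces $x \sim v$; hence $v$ is adjacent to all of $L_u$, making $L_u \cup \{v\}$ a clique of size $s+2 > s+1$, contradicting maximality of the Delsarte clique. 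Thus $\lambda = s-1$ and your $\beta$ equals $1$ identically. Consequently the identity $b_2 = 2\lambda - 2\beta + 4 = 2s$ that you derive is just the $\beta = 1$ specialization, which the theorem in fact excludes (it asserts $\beta \geq 2$). Your two double counts, were they to go through, would yield $b_2 = 2s$ and $c_3 = 3$ unconditionally, making the theorem's conclusion $\beta \geq 2$ vacuous --- clearly not the intended content. Moreover $b_2 = 2s$ together with $b_1 = k - \lambda - 1 = 2s$ already saturates $b_1 \geq b_2$ and cannot accommodate the family of intersection arrays $\{3s,\, 2s,\, 2(s+1-\beta);\ 1,\,1,\,3\beta\}$ that the theorem describes.

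The missing idea is that $\beta$ must be read off from the distance-$3$ structure (e.g.\ as $c_3/3$, the per-line contribution to $c_3$), not from $\lambda$; the nontrivial content of Bang's theorem is precisely that this per-line count is uniform and that $b_2$ obeys the compensating identity $b_2 = 2\lambda - 2\beta + 4$, both of which require the full local analysis of the clique geometry at distance $2$ and $3$ that you acknowledge is ``packaged in Bang's Theorem 4.3''. Your Feit--Higman step for ruling out $\beta = 1$ is sound in spirit (the array $\{3s,2s,2s;1,1,3\}$ is that of the point graph of a $GH(s,2)$, and Feit--Higman plus Haemers--Roos give $s \in \{2,8\}$, i.e.\ $k \leq 24$), and this is indeed the one part of the statement that does not simply restate Bang's structural identities; but it cannot stand on its own once the $\beta$-identification collapses.
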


\subsection{Proof of the diameter-3 case}\label{sec-diam-3-proof}

In this section we finally provide the classification of all distance-regular graphs of diameter 3 with sublinear motion. 

As before, we assume that $\varepsilon$ is some small positive number, say $0< \varepsilon \leq \frac{1}{100}$. We start with stating two corollaries to Theorem \ref{Metsch}.

\begin{corollary}\label{lem-3-lines}
Let $X$ be a distance-regular graph. Suppose that $\frac{4}{15}k<\lambda+1\leq(\frac{1}{2}-\varepsilon)k$ and $\mu<\varepsilon k$, then $X$ is a  geometric distance-regular graph and every vertex lies in exactly $3$ lines.
\end{corollary}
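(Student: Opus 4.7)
The plan is to apply Metsch's clique-geometry criterion (Theorem~\ref{Metsch}) with $m=3$ and then invoke Proposition~\ref{suff-cond}. Taking $\lambda^{(1)}=\lambda^{(2)}=\lambda$, condition~(3) of Metsch becomes $\lambda > 5(\mu-1)-1$, which follows from $\lambda+1 > \tfrac{4}{15}k$ and $\mu < \varepsilon k$, since $\tfrac{4}{15}-5\varepsilon > 0$ for $\varepsilon \le \tfrac{1}{100}$. Condition~(4) becomes $k < 4(\lambda+1)-6(\mu-1)$, and the lower bound on $\lambda$ gives $4(\lambda+1) > \tfrac{16}{15}k$ while $6(\mu-1) < 6\varepsilon k$, so the right-hand side exceeds $(\tfrac{16}{15}-6\varepsilon)k > k$. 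Metsch's theorem then produces a collection $\mathcal{C}$ of lines, each a maximal clique of size at least $\lambda+2-2(\mu-1) = \lambda - 2\mu + 4$, such that every edge of $X$ lies in a unique line and every vertex lies on at most three lines.

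Next I would upgrade ``at most three'' to ``exactly three.'' Suppose a vertex $v$ lies on only $t\le 2$ lines $L_1,\dots,L_t$. Since every edge through $v$ lies in a unique line, $N(v) = \bigsqcup_i (L_i\setminus\{v\})$, so $k = \sum_i(|L_i|-1)$. Any clique $L\subseteq X$ satisfies $|L| \le \lambda+2$, because for any edge $\{u,w\}\subseteq L$ the other $|L|-2$ vertices of $L$ are common neighbors of $u,w$, hence $|L|-2\le\lambda$. Thus $k \le 2(\lambda+1)$, contradicting the upper bound $\lambda+1 \le (\tfrac{1}{2}-\varepsilon)k$. Therefore every vertex of $X$ lies on exactly three lines.

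Finally, every line has size at least $\lambda - 2\mu + 4 > \tfrac{4}{15}k - 2\varepsilon k + 3$, which exceeds $m=3$ for $k$ large relative to $\varepsilon^{-1}$. Proposition~\ref{suff-cond} then gives $|\mathcal{C}| < n$ and concludes that $X$ is a geometric distance-regular graph with smallest eigenvalue $-3$.

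I do not expect a serious obstacle; the argument is essentially bookkeeping with the numerical hypotheses of Metsch's criterion. The only subtle point is forcing the number of lines at each vertex to be exactly $3$ rather than fewer, which is resolved by the elementary clique-size bound $|L|\le\lambda+2$ combined with the assumed upper bound on $\lambda+1$.
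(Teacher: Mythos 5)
Your proof is correct and follows essentially the same approach as the paper: apply Metsch's criterion with $m=3$, note that the upper bound on $\lambda$ rules out a vertex lying on only two lines, and invoke Proposition~\ref{suff-cond}. The paper's own proof is the same argument stated more tersely; the one minor remark is that your caveat ``for $k$ large relative to $\varepsilon^{-1}$'' is unnecessary, since $\lambda - 2\mu + 4 > (\tfrac{4}{15}-2\varepsilon)k + 3 > 3$ holds for all $k$ once $\varepsilon < \tfrac{2}{15}$.
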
 
\begin{proof}
Applying Theorem \ref{Metsch} with $m=3$ we get that every vertex lies on at most 3 lines. At the same time the assumed upper bound on $\lambda$ assures that no vertex can lie in just 2 cliques. The geometricity of $X$ directly follows from Proposition \ref{suff-cond} with $m = 3$.
\end{proof}

\begin{theorem}[Corollary to Theorem 4.2.16 \cite{BCN}]\label{Theor4-2-16}
 Let $Y$ be a connected graph, such that its line graph $X = L(Y)$ is distance-regular. Then $X$ is a polygon, a bipartite graph or has diameter $\leq 2$.
\end{theorem}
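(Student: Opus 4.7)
The plan is to derive the statement as a direct consequence of Theorem 4.2.16 of Brouwer, Cohen and Neumaier, which classifies the connected distance-regular graphs that arise as line graphs. Our job reduces to a short inspection of the list produced by that classification.

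First I would record the spectral constraint that drives the classification: every line graph $L(Y)$ has smallest eigenvalue at least $-2$. Writing $N$ for the vertex-edge incidence matrix of $Y$, one has $A_{L(Y)} = N^{T}N - 2I$, so the spectrum of $A_{L(Y)}$ is bounded below by $-2$. In the diameter-two case this already places $X$ into one of the families catalogued in Theorem~\ref{geom-eig-2} (the triangular graphs $T(s)=L(K_s)$, the lattice graphs $L_2(s)=L(K_{s,s})$, and a short list of exceptions), all of which satisfy the third alternative of the corollary.

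Next I would invoke BCN Theorem 4.2.16 in full. It asserts that a connected distance-regular line graph of diameter at least $3$ is necessarily a polygon $C_n = L(C_n)$. A short complementary check rules out any other bipartite example: $L(Y)$ is bipartite if and only if $E(Y)$ admits a proper edge 2-coloring, i.e.\ $\chi'(Y)\le 2$, which forces $Y$ to be a disjoint union of paths and even cycles; among connected such $Y$ the only ones whose line graphs are distance-regular are the even cycles, and these are already polygons. Hence every connected distance-regular line graph is a polygon, is bipartite (a degenerate subcase of being a polygon), or has diameter at most~$2$, which is the stated trichotomy.

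The main obstacle is the depth of the underlying classification: BCN Theorem 4.2.16 rests on substantial structural analysis combining the spectral bound above with the Cameron--Goethals--Seidel--Shult theorem on graphs with smallest eigenvalue $\ge -2$. For the purposes of this paper we simply treat that classification as a black box, and the corollary then follows by the inspection sketched above.
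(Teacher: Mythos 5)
Your argument hinges on the claim that BCN Theorem~4.2.16 asserts that a connected distance-regular line graph of diameter at least~$3$ must be a polygon. This is false, and it is not what BCN 4.2.16 says. The line graph of the Petersen graph is distance-regular with intersection array $\{4,2,1;1,1,4\}$: it has diameter~$3$, valency~$4$ (so it is not a polygon), and it contains triangles (as does every line graph of a graph with a vertex of degree~$\geq 3$), so it is not bipartite. The line graph of the Heawood graph (array $\{4,2,2;1,1,2\}$, diameter~$3$), the line graph of the Hoffman--Singleton graph, and line graphs of incidence graphs of generalized polygons of order $(q,q)$ are further examples of non-polygonal, non-bipartite distance-regular line graphs of diameter $\geq 3$. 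What the BCN classification actually constrains is the \emph{base} graph: if $X = L(Y)$ is distance-regular then $Y$ is a cycle, a complete graph, a complete bipartite graph $K_{m,m}$, a Moore graph of diameter~$2$, or the incidence graph of a regular generalized polygon; equivalently, $Y$ (not $X$) is a polygon, is bipartite, or has diameter $\leq 2$. The printed statement in the paper, which attributes these alternatives to $X$, should be read with $Y$ in the latter two clauses; your proposal instead imports the alternatives onto $X$ and then strengthens the claim to a dichotomy, producing a proof of something false.

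Your second paragraph --- correctly observing that a bipartite line graph $L(Y)$ forces $Y$ to be a disjoint union of paths and even cycles, so that under your reading the ``bipartite'' branch of the trichotomy is vacuous --- should have been the warning sign. A classification theorem in which one of the listed alternatives never occurs is implausible; that redundancy is precisely the symptom of having transplanted onto $X$ conditions that belong to $Y$. The opening remark about the smallest eigenvalue $\geq -2$ and Theorem~\ref{geom-eig-2} is correct but idle here: once one invokes the BCN classification as a black box, the eigenvalue bound contributes nothing, and in the diameter-$\leq 2$ case it is the classification, not Seidel's theorem, that is being cited.
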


\begin{corollary}\label{lambda-half}
Let $X$ be a distance-regular graph. Suppose that $\lambda+1>(\frac{1}{2}-\varepsilon)k$ and $\mu<\varepsilon k$, then $X$ is a geometric distance-regular graph and every vertex lies in exactly $2$ lines. Moreover, $X$ is either a polygon, or a bipartite graph, or has diameter 2.
\end{corollary}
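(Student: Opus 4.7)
The plan is to apply Metsch's criterion (Theorem~\ref{Metsch}) with $m = 2$ and $\lambda^{(1)} = \lambda^{(2)} = \lambda$, then invoke the classification of distance-regular line graphs from Theorem~\ref{Theor4-2-16}.

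First, I would verify the two numerical hypotheses of Theorem~\ref{Metsch} for $m = 2$. Condition~(3) reads $\lambda > 3(\mu - 1) - 1 = 3\mu - 4$; since $\lambda > (\tfrac{1}{2} - \varepsilon)k - 1$ and $\mu < \varepsilon k$, this holds comfortably for $\varepsilon \leq 1/100$ and $k$ sufficiently large (the left-hand side is of order $k/2$, the right-hand side of order $3\varepsilon k$). Condition~(4) reads $k < 3(\lambda + 1) - 3(\mu - 1)$; the right-hand side exceeds $(\tfrac{3}{2} - 6\varepsilon)k + 3$, which is larger than $k$ for $\varepsilon < 1/12$. Metsch's theorem then produces a collection $\mathcal{C}$ of maximal cliques of $X$, each of size at least $\lambda + 3 - \mu$, such that every edge lies in exactly one element of $\mathcal{C}$ and every vertex lies in at most two elements of $\mathcal{C}$.

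Second, I would argue that (excluding the degenerate case $X = K_n$, where diameter is $1$ and $\mu$ is not even defined) every vertex is in \emph{exactly} two lines. Indeed, in a connected distance-regular graph with $X \neq K_n$, for every vertex $u$ there is a pair of neighbors $v, w$ of $u$ with $vw \notin E(X)$, for otherwise $N(u)$ would be a clique and $X$ would be a disjoint union of cliques, forcing $X = K_n$ by connectedness. Then the edges $uv$ and $uw$ cannot lie in the same line, since a line is a clique. Hence $u$ lies in at least two lines, and by Metsch in exactly two. Now Proposition~\ref{suff-cond} applies: the lines have size at least $\lambda + 3 - \mu > 2 = m$ (using $\lambda + 1 > (\tfrac{1}{2} - \varepsilon)k$ and $\mu < \varepsilon k$), so $|\mathcal{C}| < n$ and $X$ is geometric with smallest eigenvalue $-2$.

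Finally, since $X$ is a non-complete regular graph satisfying the hypotheses of Theorem~\ref{Metsch} with $m = 2$, Remark~\ref{m2-line} identifies $X$ as the line graph $L(Y)$ of some connected graph $Y$. Applying Theorem~\ref{Theor4-2-16} to this $L(Y)$ then forces $X$ to be a polygon, a bipartite graph, or a graph of diameter at most $2$. The main obstacle is not conceptual but bookkeeping: checking Metsch's inequalities with room to spare and disposing of the degenerate complete-graph case; the core of the argument is simply Metsch's criterion combined with the structural theorem on distance-regular line graphs.
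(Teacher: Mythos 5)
Your proposal is correct and follows the same route as the paper's proof: apply Metsch's criterion with $m=2$, upgrade ``at most two lines'' to ``exactly two lines'' (the argument you give is the one implicit in Remark~\ref{m2-line}), invoke Proposition~\ref{suff-cond} for geometricity, and finish with Theorem~\ref{Theor4-2-16} on distance-regular line graphs. You have simply spelled out the arithmetic verification of Metsch's hypotheses and the non-complete degenerate case, which the paper elides.
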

\begin{proof}
By Metsch's Theorem \ref{Metsch}, $X$ is a geometric distance-regular graph and every vertex is in exactly 2 lines. Thus, by Remark \ref{m2-line}, $X$ is a line graph. So statement follows from Theorem \ref{Theor4-2-16}.
\end{proof}

We will need the following inequality from \cite{BCN}.

\begin{proposition}[Prop. 4.3.2(i), Prop. 4.3.3 \cite{BCN}]\label{Prop433}
Let $X$ be a distance-regular graph and $\mu=1$, then
\[\frac{nk}{(\lambda+1)(\lambda+2)}\geq 1+\frac{\lambda+2}{\lambda+1}b_1+\frac{\lambda+2}{\lambda+1}b_1^2.\]
\end{proposition}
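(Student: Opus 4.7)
My plan is to extract a clique geometry from the condition $\mu=1$ and then bound the total number of lines from below by a Moore-style count in an auxiliary block graph. For the structural step: since $\mu=c_2=1$, any two common neighbors $w_1,w_2$ of an edge $\{u,v\}$ must themselves be adjacent, for otherwise $w_1,w_2$ would be a non-adjacent pair sharing the two common neighbors $u,v$, contradicting $\mu=1$. Hence the closed neighborhood of every edge in $X$ is a clique of size $\lambda+2$; it is maximal by the same obstruction, and any two such cliques (call them \emph{lines}) share at most one vertex. Counting incidences two ways (each vertex lies in $k/(\lambda+1)$ lines, each line has $\lambda+2$ vertices) yields
\[
|\mathcal{L}| \;=\; \frac{nk}{(\lambda+1)(\lambda+2)},
\]
which is precisely the LHS of the inequality.

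Next I would introduce the block graph $\mathcal{B}$ on the vertex set $\mathcal{L}$, with adjacency being ``sharing a vertex''. Each vertex of a fixed line $L_0$ lies in $k/(\lambda+1)-1=b_1/(\lambda+1)$ further lines, and these are pairwise distinct, so $\mathcal{B}$ is regular of degree $d:=(\lambda+2)b_1/(\lambda+1)$. I would fix $L_0$ and bound $|\mathcal{L}|$ from below by the number of lines at $\mathcal{B}$-distance at most $2$ from $L_0$. The counts at $\mathcal{B}$-distances $0$ and $1$ are exactly $1$ and $d$, matching the first two summands of the RHS. For distance $2$ I would count triples $(L_1,u,L_2)$ with $L_1$ meeting $L_0$ in some vertex $v$, $u\in L_1\setminus\{v\}$, and $L_2\ni u$ with $L_2\neq L_1$; this gives $d\cdot(\lambda+1)\cdot b_1/(\lambda+1)=db_1=(\lambda+2)b_1^2/(\lambda+1)$ triples, matching the third summand.

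The main obstacle is to show that distinct lines $L_2$ at $\mathcal{B}$-distance $2$ from $L_0$ arise from distinct triples, so that the triple count is a genuine lower bound on the number of such $L_2$. Concretely, I must prove that for any such $L_2$ the number of edges between $L_0$ and $L_2$ in $X$ is exactly one (the multiplicity of $L_2$ in the triple count equals $\sum_{u\in L_2}|N(u)\cap L_0|=e(L_0,L_2)$, and a single edge in that sum forces multiplicity one). This is where $\mu=1$ is used beyond the line structure. If two distinct edges $(u_1,w_1),(u_2,w_2)$ existed between $L_2$ and $L_0$ (with $L_2\cap L_0=\emptyset$), then maximality of the clique $L_0$ together with $\mu=1$ forces any vertex of $L_2$ to have at most one neighbor in $L_0$, so $u_1\neq u_2$ and $w_1\neq w_2$; but then the non-adjacent pair $(u_1,w_2)$ would have the two distinct common neighbors $u_2$ and $w_1$, again violating $\mu=1$. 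Combining the three counts yields $|\mathcal{L}|\geq 1+d+db_1$, which upon substituting $d=(\lambda+2)b_1/(\lambda+1)$ is exactly the RHS of the proposition.
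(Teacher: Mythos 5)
The paper cites this proposition from [BCN] without reproducing a proof, so there is no in-text argument to compare against; your reconstruction via a Moore-style count on the set $\mathcal{L}$ of lines is the standard route. The structural step is correct: with $\mu=c_2=1$, every edge's closed neighborhood $\{u,v\}\cup(N(u)\cap N(v))$ is a maximal $(\lambda+2)$-clique, any two such lines share at most one vertex, each vertex lies on exactly $k/(\lambda+1)$ lines, and double counting gives $|\mathcal{L}|=\frac{nk}{(\lambda+1)(\lambda+2)}$. Your multiplicity lemma, $e(L_0,L_2)\le 1$ whenever $L_2\cap L_0=\emptyset$, is also correct and is the essential use of $\mu=1$.

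There is one gap in the counting logic. The conclusion $|\mathcal{L}|\ge 1+d+db_1$ adds (i) $L_0$ itself, (ii) the $d$ lines at $\mathcal{B}$-distance $1$, and (iii) the triple count $db_1$, treating (iii) as a lower bound on the number of lines at $\mathcal{B}$-distance $\ge 2$. For that, it is not enough to know (as you prove) that each $L_2$ disjoint from $L_0$ is produced by at most one triple; you must also rule out that some triples land on lines at distance $1$, since then $db_1$ would partly double-count the $d$ term and the inequality would not follow. Your definition of a triple only requires $L_2\ni u$ and $L_2\neq L_1$, so this is not automatic. The fix is short and in the same spirit as your multiplicity lemma: suppose $L_2\cap L_0=\{w\}$ and $(L_1,u,L_2)$ is a triple with $L_1\cap L_0=\{v\}$ and $u\in L_1\cap L_2$. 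Then $u\notin L_0$ (as $u\in L_1\setminus\{v\}$), while $u\sim v$ (both in $L_1$) and $u\sim w$ (both in $L_2$, $u\ne w$). If $v\ne w$, then $u$ is a common neighbor of the adjacent pair $v,w\in L_0$, hence lies in the line of $\{v,w\}$, which is $L_0$ — contradiction. If $v=w$, then $L_1$ and $L_2$ both contain the edge $\{u,w\}$, so $L_1=L_2$ — contradicting $L_1\neq L_2$. With this observation the Moore count is sound and the inequality follows.
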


\begin{proposition}\label{geom-case}
Let $X$ be a geometric distance-regular graph of diameter $d=3$, such that every vertex lies in exactly $3$ lines. Then one of the following is true:
\begin{enumerate}
\item $X$ is a Hamming graph or a Johnson graph,
\item $b_2\geq \frac{11}{10} (\lambda+1)$,
\item $k\leq 100$.
\end{enumerate}
\end{proposition}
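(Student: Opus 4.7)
I would begin by noting that since $X$ is geometric with each vertex in exactly three lines, the smallest eigenvalue of $X$ equals $-3$ and each line has Delsarte size $|C| = 1 + k/3$. Two adjacent vertices on a common line share $|C|-2 = k/3 - 1$ neighbors inside that line, which yields the basic geometric inequality $k \leq 3(\lambda+1)$. The argument then splits on the value of $\mu$.

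If $\mu \geq 2$, the Bang--Koolen classification (Theorem \ref{bang-koolen}) shows $X$ is $H(3,s)$ or $J(s,3)$ (alternative (1)), or the collinearity graph of a generalized quadrangle of order $(s,3)$ with the edges of a spread removed for $s \in \{3,5\}$; a direct parameter calculation in the latter two cases gives $k = 3s \in \{9,15\}$, placing $X$ in alternative (3).

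If $\mu = 1$, Bang's theorem (Theorem \ref{bang}) says either $k \leq 24$ (alternative (3)), or there exists an integer $\beta$ with $2 \leq \beta \leq \lambda$, $b_2 = 2\lambda - 2\beta + 4$, and $c_3 = 3\beta$. Assume the latter, and towards contradiction assume $b_2 < \frac{11}{10}(\lambda+1)$ and $k > 100$. Setting $L = \lambda+1$ and $b_1 = k - L$, the hypothesis on $b_2$ rearranges to $\beta > (9L+20)/20$, so
\[
\frac{b_2}{c_3} \;=\; \frac{2(L+1)-2\beta}{3\beta} \;<\; \frac{22L}{27L+60}.
\]
Since $\mu = 1$ and $d = 3$, we have $n = 1 + k + k b_1 + k b_1 (b_2/c_3)$, hence
\[
nk \;<\; k + k^2 + \frac{49L+60}{27L+60}\, k^2 b_1.
\]
Proposition \ref{Prop433} applied with $\mu = 1$ supplies the opposing lower bound $nk \geq L(L+1) + (L+1)^2 b_1 (1+b_1)$. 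Combining these two inequalities with $k \leq 3L$, after a short calculation, should yield the absolute bound $b_1 \leq 16$ (the heart of the estimate is $(L+1)^2 b_1^2 \leq (49/27)\, k^2 b_1 + O(L^2)$, which reduces to $b_1^2 \leq (49/3) b_1 + O(1)$). Then $k \leq L + 16$; combining with $c_3 = 3\beta \leq k$ and $3\beta > (27L+60)/20$ forces $(27L+60)/20 < L + 16$, i.e., $7L < 260$, so $L \leq 37$ and hence $k \leq 53 < 100$, contradicting $k > 100$.

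The main obstacle is purely computational: the lower-order terms in the two bounds on $nk$ must be handled carefully so that they do not inflate the final constants too much. No additional structural input beyond Bang, Bang--Koolen, and Proposition \ref{Prop433} appears to be needed.
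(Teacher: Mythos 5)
Your proposal is correct in outline and follows essentially the same strategy as the paper: split on $\mu$, invoke Bang--Koolen for $\mu\geq 2$, Bang for $\mu=1$, and use Proposition \ref{Prop433} against the explicit formula for $n$ to force a contradiction. The one substantive difference is that the paper exploits the \emph{equality} $k=3(\lambda+1)$ in the $\mu=1$ case (this holds because, with $\mu=1$, any common neighbor $w$ of adjacent $u,v$ lying outside their unique common line $C$ would give two common neighbors $u,v$ for $w$ and any $z\in C\setminus\{u,v\}$ at distance $2$; hence $\lambda=|C|-2=k/3-1$). That equality gives $b_1=2(\lambda+1)$ at once, so Proposition \ref{Prop433} becomes $3n>4(\lambda+1)^3$, and comparing with the vertex count yields $c_3<6$ directly — a one-line contradiction with $c_3=3\beta\geq 6$. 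You instead keep only the Delsarte inequality $k\leq 3(\lambda+1)$, treat $b_1$ as free, and derive an absolute bound $b_1\leq 16$; the arithmetic does close (I get $b_1\leq 15$ or $16$ with your bounds $\tfrac{49L+60}{27L+60}<\tfrac{49}{27}$ and $k^2<9(L+1)^2$), and then $3\beta\leq k\leq L+16$ versus $3\beta>\tfrac{27L+60}{20}$ forces $L\leq 37$ and $k\leq 53$. So your route is valid but somewhat longer; noticing the exact relation $k=3(\lambda+1)$ is what lets the paper avoid the separate bound on $b_1$. Your handling of the $\mu\geq 2$ sporadic cases ($k=3s\in\{9,15\}$ for the GQ minus a spread) is also fine and slightly more explicit than the paper's blanket ``$k\leq 20$.''
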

\begin{proof}
 The definition of a geometric graph and the conditions on $X$ imply that the smallest eigenvalue of $X$ is $-3$. Now we consider two cases. If $\mu>1$, by Theorem \ref{bang-koolen} we get that $X$ is a Johnson graph, a Hamming graph or $k\leq 20$. Otherwise, if $\mu = 1$, by Theorem \ref{bang}, we get that for $X$ either $k\leq 24$, or $b = c_3 = 3\beta\geq 6$.
 By Proposition \ref{Prop433}, 
 \[\frac{3n}{\lambda+2} = \frac{nk}{(\lambda+1)(\lambda+2)}\geq 1+\frac{\lambda+2}{\lambda+1}b_1+\frac{\lambda+2}{\lambda+1}b_1^2>b_1^2.\]
 Hence, as $k=3(\lambda+1)$ and $b_1 = 2(\lambda+1)$ in this case, the inequality 
  $3n>4(\lambda+1)^3$ holds.
  At the same time, $n = 6(\lambda+1)^2\frac{b_2}{c_3}+6(\lambda+1)^2+3(\lambda+1)+1$. Suppose, that $b_2\leq \frac{11}{10}(\lambda+1)$ and $k>100$. Then, $\frac{198}{10c_3}+\frac{3}{5}>4$. Therefore, $c_3<6$, contradiction.
\end{proof}

\begin{theorem}\label{main-thm}
Let $X$ be a distance-regular graph of diameter $3$ on $n$ vertices. Then one of the following is true.
\begin{enumerate}
\item We have $\motion(X)\geq \gamma n$ for some absolute constant $\gamma>0$.
\item $X$ is the Johnson graph $J(s,3)$ for $s\geq 7$, or the Hamming graph $H(3,s)$ for $s\geq 3$. 
\item $X$ is the cocktail-party graph.
\end{enumerate}
\end{theorem}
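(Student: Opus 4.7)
The plan is to reduce, by a spectral argument, to the case where Metsch's theorem (Theorem \ref{Metsch}) forces a Delsarte clique geometry with small clique-per-vertex parameter, and then classify using Proposition \ref{geom-case}. Fix $\varepsilon > 0$ sufficiently small and then a much smaller $\gamma > 0$; the final constant in the theorem will be the minimum of the constants produced along the way. Values of $n$ below a suitable threshold can be absorbed by shrinking $\gamma$, so I assume throughout that $k$ (and hence $n$) is large.

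First I dispose of the easy cases. If $X$ is bipartite, Theorem \ref{main-bipartite} gives either the cocktail-party graph (item~3) or $\motion(X) \geq n/100$. If $k > \gamma n$, Proposition \ref{prop-k-big} gives $\motion(X) \geq (\gamma/3)\,n$. So assume henceforth $X$ is non-bipartite and $k \leq \gamma n$; then $r = (n-1)/k$ is large, and Lemma \ref{min-est} with $d=3$ forces $\mu \leq (\varepsilon^2/2)\,k$.

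Next I dichotomize on $\lambda$. If $\lambda + 1 > (1/2 - \varepsilon) k$, Corollary \ref{lambda-half} together with Remark \ref{m2-line} and Theorem \ref{Theor4-2-16} forces $X$ to be a polygon, bipartite, or of diameter at most $2$; each contradicts the standing assumptions once $k$ is large. Otherwise $\lambda < (1/2 - \varepsilon) k$ and Observation~\ref{obs2} applies: either Lemma \ref{mixing-lemma-tool} already yields $\motion(X) \geq \varepsilon n$, or
\[
|k - (a+b)| + \varepsilon k + \max(\lambda, \mu) > (1 - \varepsilon) k,
\]
where $a = b_2$ and $b = c_3$. In this second alternative, the case $a+b \geq k$ would force $X$ bipartite by Proposition~\ref{Prop-a-b-big}, hence $a+b < k$, and Proposition \ref{Prop-a-b-small} then gives $\lambda + 1 > (4/15)\,k$.

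At this point $(4/15) k < \lambda + 1 \leq (1/2 - \varepsilon) k$, so Corollary~\ref{lem-3-lines} shows that $X$ is a geometric distance-regular graph in which every vertex lies on exactly three Delsarte cliques. Its smallest eigenvalue is therefore $-3$, and Lemma \ref{clique-mu-bound} sharpens $\mu \leq 4$. Proposition~\ref{geom-case} leaves only four possibilities: $X$ is a Hamming graph $H(3,s)$ or Johnson graph $J(s,3)$ (item~2), $k \leq 100$ (absorbed into the bounded case), or $b_2 \geq (11/10)(\lambda + 1)$. The last alternative is ruled out by feeding the improvement $\mu \leq 4$ back into the spectral step: since $\lambda > \mu$, the failure inequality from Observation \ref{obs2} becomes $\lambda > (a+b) - 2\varepsilon k$, whereas $a = b_2 \geq (11/10)(\lambda+1)$ combined with $\lambda + 1 > (4/15) k$ gives
\[
a - \lambda \;\geq\; \frac{\lambda + 11}{10} \;>\; 2\varepsilon k
\]
for $\varepsilon$ small and $k$ large, contradicting the previous inequality. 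The main obstacle is precisely this closing step: the first spectral pass only delivers the fairly weak bound $\lambda + 1 > (4/15) k$, and it is the Metsch-based structural improvement $b_2 \geq (11/10)(\lambda+1)$ that finally makes $a - \lambda$ a positive fraction of $k$ and allows the spectral estimate to close the argument.
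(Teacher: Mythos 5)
Your proposal is correct and follows essentially the same route as the paper: dispose of the bipartite and large-degree cases via Theorem~\ref{main-bipartite} and Proposition~\ref{prop-k-big}, use Lemma~\ref{min-est} to force $\mu$ small, run the spectral dichotomy of Observation~\ref{obs2} with Propositions~\ref{Prop-a-b-big} and~\ref{Prop-a-b-small}, and finish with Corollaries~\ref{lem-3-lines} and~\ref{lambda-half} and Proposition~\ref{geom-case}. Your closing step — feeding $b_2\geq\frac{11}{10}(\lambda+1)$ and $\lambda+1>\frac{4}{15}k$ back into the failed inequality $\lambda>(a+b)-2\varepsilon k$ to eliminate the second outcome of Proposition~\ref{geom-case} — is exactly the argument the paper leaves implicit, and you state it correctly.
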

\begin{proof}
 Fix $\varepsilon = \frac{1}{300}$, so that $\delta = 2\varepsilon+\frac{\varepsilon^2}{2}<\frac{1}{100}$ and let $\gamma_0 = \frac{1}{12}\varepsilon^{4}$. Let $C = \max(100, (10\varepsilon^{-2})^3)$. Observe that for any group its minimal degree is at least 1. Hence, if $k\leq C$, then $n\leq k^3\leq C^3$ and so for $\gamma_1\leq C^{-3}$ the motion of $X$ is at least $\gamma_1 n$. Take $\gamma = \frac{2}{3}\min(\gamma_0, \gamma_1)$.
 
 If the parameters of $X$ satisfy $k>\frac{3}{2}\gamma
 (n-1)>\gamma n$, then the result follows from Proposition~\ref{prop-k-big}. If $k\leq\frac{3}{2}\gamma (n-1)$, then by Lemma \ref{min-est}, $\mu<\frac{\varepsilon^2}{2}k$. In this case, for $k\geq(10\varepsilon^{-2})^3$ we get bounds on the eigenvalues of $X$ in Theorem \ref{eigen-approxim-d-3}. By Observation~\ref{obs2}, we either get a linear lower bound $\gamma n$ on the motion of $X$, or Propositions~\ref{Prop-a-b-big} and \ref{Prop-a-b-small} show that $X$ is bipartite or $\lambda\geq \frac{4}{15}k$. Corollary \ref{lem-3-lines}, Corollary \ref{lambda-half}, Theorem \ref{main-bipartite} and Proposition \ref{geom-case} finish the proof.
\end{proof}

By Corollary \ref{Wielandt-cor}, Theorem \ref{main-thm} immediately implies

\begin{theorem}\label{main-thickness}
Let $X$ be a distance-regular graph of diameter $3$. Then one of the following is true.
\begin{enumerate}
\item $\theta(\Aut(X)) = O(\log(n))$.
\item $X$ is the Johnson graph $J(s,3)$ for $s\geq 7$, or the Hamming graph $H(3,s)$ for $s\geq 3$. Hence, $\theta(\Aut(X))$ has order $n^{1/3}$ in this case.
\item $X$ is the cocktail-party graph and $\theta(\Aut(X)) = \frac{n}{2}$ in this case.
\end{enumerate}
\end{theorem}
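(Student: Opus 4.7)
The plan is to derive this statement as an essentially immediate corollary of Theorem~\ref{main-thm}, splitting into its three cases and analyzing each separately. First, I would invoke Theorem~\ref{main-thm} to conclude that $X$ satisfies one of the three alternatives there: $\motion(X) \geq \gamma n$, or $X$ is Johnson/Hamming of diameter~$3$, or $X$ is the cocktail-party graph.

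In the generic case $\motion(X) \geq \gamma n$, I would simply apply Corollary~\ref{Wielandt-cor} with $\alpha = \gamma$ to the permutation group $\Aut(X) \leq \Sym(n)$. Since $\gamma$ is an absolute constant, the prefactor $3(1-\gamma)^{-1/3}\ln(1/(1-\gamma))^{-1}$ is likewise an absolute constant, so the bound reads $\theta(\Aut(X)) = O(\log n)$, which is alternative~(1).

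In the Johnson case $X = J(s,3)$ with $s \geq 7$, we have $n = \binom{s}{3}$ and $\Aut(X) = S_s$, which obviously involves $A_s$ and hence has thickness at least $s = \Theta(n^{1/3})$; the matching upper bound $\theta(\Aut(X)) \leq s$ is immediate since $|S_s| = s!$ and any alternating section $A_t$ of $S_s$ satisfies $t \leq s$. The Hamming case $X = H(3,s)$ with $s \geq 3$ is analogous: $n = s^{3}$, $\Aut(X) = S_s \wr S_3$ involves $A_s$, and every alternating section of this wreath product has degree at most $s$, giving $\theta(\Aut(X)) = \Theta(n^{1/3})$. Finally, the cocktail-party graph on $n$ vertices has automorphism group $S_2 \wr S_{n/2}$, which involves (in fact contains) $A_{n/2}$ and no larger alternating group as a section, yielding $\theta(\Aut(X)) = n/2$ exactly.

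The proof is thus essentially bookkeeping on top of Theorem~\ref{main-thm}: the only non-trivial input is the Wielandt-style bound of Corollary~\ref{Wielandt-cor}, which converts a linear lower bound on the minimal degree into a logarithmic upper bound on the thickness. There is no real obstacle here beyond checking the thickness of the three exceptional automorphism groups, which is elementary.
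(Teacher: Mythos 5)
Your proposal matches the paper's proof exactly: the paper also derives this as an immediate corollary of Theorem~\ref{main-thm} combined with Corollary~\ref{Wielandt-cor}, and your bookkeeping on the thickness of the exceptional automorphism groups ($S_s$, $S_s \wr S_3$, $S_2 \wr S_{n/2}$) is the routine verification implicit in the paper's one-line justification. No gaps.
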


\subsection{A tradeoff for intersection numbers}\label{sec-tradeoff}

We start our discussion of the case of distance-regular graphs of arbitrary diameter from the following inequality, which is the main ingredient of the reduction to geometric graphs in this case.
 It essentially states that if $b_j$ is large (so are $b_i$ for $i\leq j$, by monotonicity) and $c_{j+1}$ is small, then both $b_{j+1}$ and $c_{j+2}$ could not be small simultaneously.

\begin{lemma}\label{b-c-ineq}
Let $X$ be a primitive distance-regular graph of diameter $d\geq 3$. Fix $1 \leq j\leq d-2$ and $\alpha>\varepsilon>0$. Suppose that $c_{j+1}\leq \varepsilon k$ and $b_j\geq \alpha k$. Denote $C = \frac{\alpha}{\varepsilon}$, then for any $1\leq s\leq j+1$
\begin{equation}
b_{j+1}\left(\sum\limits_{t=1}^{s}\frac{1}{b_{t-1}}+\sum\limits_{t=1}^{j+2-s}\frac{1}{b_{t-1}}\right)+c_{j+2}\sum\limits_{t=1}^{j+1}\frac{1}{b_{t-1}}\geq 1-\frac{4}{C-1}.
\end{equation}

\end{lemma}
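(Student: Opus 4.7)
The plan is to combine a layer-growth estimate for the first $j+1$ layers (which will supply the $O(1/(C-1))$ error term) with a double-counting argument linking the intersection numbers $b_{j+1}, c_{j+2}$ to the partial sums $S_m := \sum_{t=1}^{m} 1/b_{t-1}$.

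First, I would exploit the monotonicity of the sequences $(b_i)$ and $(c_i)$: the hypotheses $b_j \geq \alpha k$ and $c_{j+1} \leq \varepsilon k$ propagate to $b_{t-1} \geq \alpha k$ and $c_t \leq \varepsilon k$ for every $1 \leq t \leq j+1$, so $k_t/k_{t-1} = b_{t-1}/c_t \geq C$ on this range. A geometric-series summation then gives
$$\sum_{i=0}^{j} k_i \leq \frac{k_{j+1}}{C-1}, \qquad \text{and hence} \qquad \sum_{i=0}^{j+1} k_i \leq k_{j+1}\cdot\frac{C}{C-1}.$$
Every loss of order $1/(C-1)$ in the target inequality should ultimately be traced back to this estimate.

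Second, the heart of the proof is a combinatorial identity that converts $b_{j+1}$, $c_{j+2}$ and the sums $S_m$ into a vertex-count. Since the diameter is at least $j+2$, we may fix a vertex $v$ and a vertex $w \in N_{j+2}(v)$, and then count ordered pairs $(x, y)$ with $x \in N_{j+1}(v)$, $y \in N_{j+2}(v)$, $x$ adjacent to $y$, of which there are exactly $k_{j+1} b_{j+1} = k_{j+2} c_{j+2}$. Using the recursion $k_t c_t = k_{t-1} b_{t-1}$ and the product formula $k_m = \prod_{t=1}^{m} b_{t-1}/c_t$, the partial sums $S_m$ should arise as normalised inverse-flow weights along a shortest path from $v$. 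The freedom in the $s$-parameter reflects splitting the length-$(j+2)$ geodesic into two legs of lengths $s$ and $j+2-s$ meeting at an auxiliary midpoint, so that the two legs contribute $S_s$ and $S_{j+2-s}$ respectively.

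The technically delicate point, and the one I expect to be the main obstacle, is pinning down the exact double-counting (equivalently, the right test function on vertex pairs) so that the resulting identity equals $1$ up to terms controllable by the geometric-growth estimate above. The constant $4$ on the right-hand side should come from at most four boundary-type correction terms (two at each end of the split), each bounded by $1/(C-1)$ via the first step. Once the identity is set up, substituting $\sum_{i \leq j} k_i \leq k_{j+1}/(C-1)$ yields the announced $-4/(C-1)$ error. The asymmetric dependence on $s$ strongly suggests a concatenation-of-paths construction in which the midpoint lies at distance $s$ from one endpoint, but finding the precise configurations that give $b_{j+1}$ and $c_{j+2}$ their correct coefficients is where the work lies.
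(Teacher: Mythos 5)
Your first step (the geometric-growth estimate $\sum_{i=0}^{j}k_i\le k_{j+1}/(C-1)$) and your heuristic that the parameter $s$ marks a splitting of a length-$(j+2)$ geodesic into legs of lengths $s$ and $j+2-s$ are both correct and do match the paper. But the "technically delicate point" you flag is exactly where you are missing the two ideas that make the proof work, and the pair-count $k_{j+1}b_{j+1}=k_{j+2}c_{j+2}$ that you propose as the engine does not by itself lead anywhere.

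The first missing idea is the choice of auxiliary structure: the paper forms the graph $Y=X_{\{1,\dots,j+1\}}$ (vertices of $X$, with $u\sim_Y v$ iff $\dist_X(u,v)\le j+1$) and then applies the inclusion $N_Y(v)\setminus N_Y(w)\subseteq (N_Y(v)\setminus N_Y(u))\cup (N_Y(u)\setminus N_Y(w))$ (Eq.~\eqref{eq1}, as in Lemma~\ref{max-min}) to a triple $u,v,w$ with $\dist_X(v,w)=j+2$, $\dist_X(u,v)=s$, $\dist_X(u,w)=j+2-s$. This yields $\lambda_s^Y+\lambda_{j+2-s}^Y\le k^Y+\mu_{j+2}^Y$, which is the identity you were searching for; the $s$-dependence enters through the choice of the intermediate vertex $u$, not through a path decomposition. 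The second missing idea is the path-counting bound that produces the $\sum 1/b_{t-1}$ factors: for $c(u,v)=j+1$, one bounds $\sum_{r\ge j+2}p_{j+1,r}^{i}$ by counting geodesics of length $i$ from $v$ that end at distance $\ge j+2$ from $u$; such a path must cross from $N_{j+1}(u)$ to $N_{j+2}(u)$ at some step $t\le i$, giving at most $b_{j+1}$ choices at that step and at most $b_{t'-1}$ at every other step $t'$, and after dividing by the $\prod c_t$ count of geodesics one obtains $\sum_{r\ge j+2}p_{j+1,r}^{i}\le k_{j+1}\sum_{t=1}^{i} b_{j+1}/b_{t-1}$ (and analogously $p_{j+1,j+1}^{j+2}\le k_{j+1}\sum_{t=1}^{j+1} c_{j+2}/b_{t-1}$). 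Substituting these into $\lambda_s^Y+\lambda_{j+2-s}^Y\le k^Y+\mu_{j+2}^Y$ together with your growth estimate gives the claimed inequality with the constant $4$; your expectation of "four boundary-type corrections each $\le 1/(C-1)$" is not quite how the $4$ arises (it comes from $k^Y\le k_{j+1}\tfrac{C}{C-1}$, from $2\sum_{i\le j}k_i\le \tfrac{2}{C-1}k_{j+1}$ inside $\mu^Y_{j+2}$, and from a diagonal term $p_{j+1,0}^{i}\le 1$). As written, your argument has a real gap at its core.
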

\begin{proof}
Consider the graph $Y$ with the set of vertices $V(Y) = V(X)$, where two vertices $u,v$ are adjacent if they are at distance $\dist(u,v)\leq j+1$ in $X$. We want to find the restriction on the parameters of $X$ implied by inclusion (\ref{eq1}) applied to graph $Y$ and $w,v$ at distance $j+2$ in $X$. Let $\lambda_i^Y$ denote the number of common neighbors in $Y$ for two vertices $u,v$ at distance $i$ in $X$ for $i\leq j+1$. Let $\mu_{j+2}^{Y}$ denote the number of common neighbors in $Y$ for two vertices $u,v$ at distance $j+2$ in $X$. For $C = \frac{\alpha}{\varepsilon}$ we have $k_{i+1}\geq C k_i$ for $i\leq j$. Thus, the degree of every vertex in $Y$ satisfies
\[k^Y = \sum\limits_{i=1}^{j+1}k_{i}\leq k_{j+1}\sum\limits_{i = 0}^{j} C^{-i}\leq k_{j+1}\frac{C}{C-1}.\]
Note, that $\sum\limits_{t = 0}^{d}p^i_{s, t} = k_{s}$. Hence, we have
\[\mu_{j+2}^Y = \sum\limits_{1\leq s,t\leq j+1} p^{j+2}_{s,t}\leq 2\sum\limits_{i = 1}^{j} k_i +p^{j+2}_{j+1, j+1}\leq \frac{2}{C-1}k_{j+1}+p^{j+2}_{j+1, j+1},\] 
\[\lambda_i^Y = \sum\limits_{1\leq s, r\leq j+1} p^{i}_{r,s}\geq \sum\limits_{1\leq s\leq j+1} p^{i}_{j+1, s}= k_{j+1}-\sum\limits_{j+2\leq s\leq d} p^{i}_{j+1, s} - p_{j+1, 0}^{i}.\]
Now we are going to get some bounds on $\sum\limits_{j+2\leq s\leq d} p^{i}_{j+1, s}$. We use the following observation. Suppose, that $x, y$ are two vertices at distance $i$. Then there are exactly $\prod\limits_{t = 1}^{i} c_t$ paths of length $i$ between $x$ and $y$. Thus, $(\prod\limits_{t = 1}^{i} c_t)\sum\limits_{s = j+2}^{d} p^{j+1}_{i, s}$ equals to the number of paths of length $i$ starting at $v$ and ending at distance at least $j+2$ from $u$ and at distance $i$ from $v$, where $\dist(u,v) = j+1$ in $X$. We count such paths by considering possible choices of edges for a path at every step. At step $t$ every such path should go from $N_{t-1}(v)$ to $N_{t}(v)$, hence there are at most $b_{t-1}$ possible choices for a path at step $t$ for $1\leq t\leq i$. Moreover, since path should end up at distance at least $j+2$ from $u$, then for some $1\leq t\leq i$ path should go from $N_{j+1}(u)$ to $N_{j+2}(u)$. Therefore, the number of paths that go from $N_{j+1}(u)$ to $N_{j+2}(u)$ at step $t$ is at most $(\prod\limits_{s = 1}^{i}b_{s-1})\frac{b_{j+1}}{b_{t-1}}$. Hence,
\[ \sum\limits_{s = j+2}^{d} p^{i}_{j+1, s} = \frac{k_{j+1}}{k_i}\sum\limits_{s = j+2}^{d} p^{j+1}_{i, s}\leq \frac{k_{j+1}}{k_i} \sum\limits_{t = 1}^{i} \left(\prod\limits_{s = 1}^{i}b_{s-1}\right)\frac{b_{j+1}}{b_{t-1}} \left(\prod\limits_{t = 1}^{i} c_t\right)^{-1} = k_{j+1}\sum\limits_{t = 1}^{i}\frac{b_{j+1}}{b_{t-1}}. \]
Thus, in particular,
\[\lambda_i^Y\geq k_{j+1}\left(1-\sum\limits_{t=1}^{i}\frac{b_{j+1}}{b_{t-1}}\right) - p_{j+1, 0}^{i}.\]
 Similarly, 
 \[ p_{j+1, j+1}^{j+2} \leq k_{j+1}\sum\limits_{t=1}^{j+1} \frac{c_{j+2}}{b_{t-1}}. \]
Hence, 
\[\mu_{j+2}^Y \leq k_{j+1} \left(\frac{2}{C-1}+\sum\limits_{t=1}^{j+1}\frac{c_{j+2}}{b_{t-1}}\right).\]
Therefore, by applying inclusion from Eq. \eqref{eq1} to vertices $u,v,w$ in  $Y$, that satisfy $\dist(v,w) = j+2$, $\dist(u,v) = s$ and $\dist(w,u) = j+2-s$  in $X$, we get the desired inequality.
\end{proof}

\subsection{Minimal distinguishing number of primitive distance-regular graphs}\label{sec-dist-number-gen-drg}
Recall that a distance-regular graph $X$ is \textit{primitive} if the distance-$i$ graph $X_i$ is connected for every $1\leq i\leq d$. A distance-regular graph $X$ is \textit{antipodal} if being at distance $d$ in $X$ is an equivalence relation. It is known that every distance-regular graph is primitive, antipodal or bipartite.

As mentioned in Section \ref{sec-tools} via the corresponding distance configuration (see Def.~\ref{def-dist-conf}) a distance-regular graph can be seen as a special case of an association scheme. An edge $(u, v)$ receives color $i$ if distance between $u$ and $v$ in $X$ is precisely $i$. In any configuration define $D(u,v)$ to be the number of vertices that distinguish $u$ and $v$ (see Definition \ref{def-disting}). Define the \textit{minimal distinguishing number} $D_{\min}(\mathfrak{X})$ of the configuration $\mathfrak{X}$ to be 
\[D_{\min}(\mathfrak{X}) = \min\limits_{u\neq v\in V} D(u, v).\]
It is easy to see that for a homogeneous coherent configuration $\mathfrak{X}$ number $D(u,v)$ depends only on the color between $u$ and $v$, so one can correctly define $D(i) = D(u,v)$. We will need the following observation by Babai from \cite{Babai-annals}.

\begin{lemma}[Babai {\cite[Proposition 6.4]{Babai-annals}}]\label{babai-dist}
Let $\mathfrak{X}$ be a homogeneous coherent configuration of rank $r$. Then for any colors $1\leq i, j\leq r-1$ the inequality $D(j)\leq \dist_{i}(j) D(i)$ holds.
\end{lemma}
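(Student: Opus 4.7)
The plan is a direct telescoping argument via a shortest $i$-colored path. Fix vertices $u, v \in V$ with $c(u,v) = j$, and let $\ell = \dist_i(j)$; by the well-definedness of $\dist_i(j)$ (established earlier in the excerpt) we may choose a path $u = u_0, u_1, \ldots, u_\ell = v$ with $c(u_{t-1}, u_t) = i$ for every $t \in [\ell]$.

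The key step is the following telescoping observation: every vertex $x$ that distinguishes $u$ from $v$ must distinguish at least one of the consecutive pairs $(u_{t-1}, u_t)$. Indeed, if $c(x, u_{t-1}) = c(x, u_t)$ held for every $t \in [\ell]$, then chaining the equalities yields $c(x, u_0) = c(x, u_\ell)$, contradicting the assumption that $x$ distinguishes $u = u_0$ from $v = u_\ell$.

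Now I apply a union bound. For each $t \in [\ell]$ the pair $(u_{t-1}, u_t)$ has color $i$, so by homogeneity the number of vertices distinguishing this particular pair equals $D(i)$. Hence the set of vertices distinguishing $u$ from $v$ is contained in the union of $\ell$ sets each of size $D(i)$, and therefore
\[
D(j) \;=\; D(u,v) \;\leq\; \ell \cdot D(i) \;=\; \dist_i(j)\, D(i),
\]
which is the desired inequality.

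There is essentially no obstacle here; the only subtlety worth flagging is that the argument uses homogeneity twice — once so that $D(j)$ and $D(i)$ are actually well-defined functions of the colors (independent of the chosen representative pair), and once so that every consecutive edge $(u_{t-1}, u_t)$ along the chosen path contributes exactly the same bound $D(i)$ regardless of which particular pair it is. Both uses are immediate from the definitions already set up in the preliminaries.
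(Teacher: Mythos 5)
Your proof is correct and follows essentially the same route as the paper: the paper invokes the triangle inequality $D(u,v)\leq D(u,w)+D(w,v)$ and telescopes it along a shortest $i$-colored path, which is precisely the union-bound argument you spell out. The unpacking of the triangle inequality into ``every distinguishing vertex for $(u_0,u_\ell)$ must distinguish some consecutive pair'' is exactly how one proves that inequality, so the two proofs are the same up to presentation.
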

\begin{proof}
Since the constituent $X_i$ is connected, statement follows from the triangle inequality  $D(u,v)\leq D(v,w)+D(w, u)$ for any  vertices $u,v,w$.
\end{proof}

\begin{proposition}\label{primitive-distinguish} Let $X$ be a primitive distance-regular graph of diameter $d\geq 2$. Fix some positive real numbers $\alpha, \beta >0$. Suppose that for some $1\leq j \leq d-1$ inequalities $b_j\geq \alpha k$ and $c_{j+1}\geq \beta k$ hold. Then any two distinct vertices in $X$ are distinguished by at least $\frac{2\min(\alpha, \beta)}{d^2}(n-1)$ vertices.
\end{proposition}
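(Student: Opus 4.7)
The plan is to obtain an exact formula for $D(1)$, the distinguishing count at color~$1$, lower-bound it via monotonicity of the intersection arrays, and then propagate the bound to every edge color using Lemma \ref{babai-dist}.

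I would first establish the identity
\[
D(1) \;=\; \frac{1}{k}\sum_{i=0}^{d} k_i\,(b_i+c_i).
\]
For adjacent vertices $u,v$, each distinguisher $x$ lies in $N_i(u)\triangle N_i(v)$ for both $i=\dist(x,u)$ and $i'=\dist(x,v)$, so $D(1)=\tfrac12\sum_i |N_i(u)\triangle N_i(v)|$. The main step is the double-counting identity $k\cdot p_{i,i}^1 = k_i a_i$, which I would prove by summing $|N_i(u)\cap N_i(v')|$ over the $k$ neighbors $v'$ of $u$: reversing the order of summation, each $x\in N_i(u)$ contributes $|N(u)\cap N_i(x)|=a_i$, since $u\in N_i(x)$. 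Combined with $a_i=k-b_i-c_i$, this yields $|N_i(u)\triangle N_i(v)|=2k_i(b_i+c_i)/k$, from which the displayed identity follows.

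With the identity in hand, I use the monotonicity $b_0\geq b_1\geq\cdots\geq b_d$ and $c_0\leq c_1\leq\cdots\leq c_d$. The hypothesis $b_j\geq\alpha k$ gives $b_i\geq\alpha k$ for all $i\leq j$; the hypothesis $c_{j+1}\geq\beta k$ gives $c_i\geq\beta k$ for all $i\geq j+1$; at the boundaries, $b_0=k$ and $c_d\geq c_{j+1}\geq\beta k$. Thus $b_i+c_i\geq\min(\alpha,\beta)\,k$ for every $i\in\{0,\ldots,d\}$, and
\[
D(1) \;\geq\; \min(\alpha,\beta)\sum_{i=0}^d k_i \;=\; \min(\alpha,\beta)\cdot n.
\]

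Finally, primitivity of $X$ forces each constituent $X_i$ to be connected, so $\dist_i(1)$ is finite and, by Observation \ref{obs-color-dist}, satisfies $\dist_i(1)\leq d$. Lemma \ref{babai-dist} then yields $D(1)\leq \dist_i(1)\,D(i)\leq d\cdot D(i)$, giving $D(i)\geq \min(\alpha,\beta)\,n/d$ for every edge color $i$. Since $d\geq 2$ implies the elementary inequality $n/d\geq 2(n-1)/d^2$, this implies the claimed lower bound $D_{\min}(X)\geq 2\min(\alpha,\beta)(n-1)/d^2$. The only nontrivial step is deriving the closed form for $D(1)$; once that is in place, the hypothesis plugs in at once via the monotonicity of $b$ and $c$, and a single invocation of Babai's distance lemma closes the argument.
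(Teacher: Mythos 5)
Your proof is correct, and it takes a genuinely different route from the paper's. The paper pigeonholes to a single distance class: it picks $t$ with $k_t\geq (n-1)/d$, notes that monotonicity and the hypotheses force $a_t\leq (1-\min(\alpha,\beta))k$, and bounds $D(1)\geq |N_t(u)\triangle N_t(v)| = 2(k_t-p^1_{t,t}) = 2k_t(1-a_t/k)\geq 2\min(\alpha,\beta)(n-1)/d$. You instead derive the exact identity $D(1)=\frac{1}{k}\sum_{i=0}^d k_i(b_i+c_i)$ (which is correct: $|N_i(u)\triangle N_i(v)|=2k_i-2p^1_{i,i}$ and $kp^1_{i,i}=k_ia_i$, so summing and halving gives the formula, and it checks out on $K_n$ where it returns $D(1)=2$), then lower-bound every summand at once by $\min(\alpha,\beta)k$ to get $D(1)\geq\min(\alpha,\beta)\,n$. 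Both proofs then propagate to all colors via Lemma~\ref{babai-dist} and Observation~\ref{obs-color-dist}, paying a factor $1/d$. Your intermediate bound on $D(1)$ is stronger than the paper's, and your final bound $D(j)\geq\min(\alpha,\beta)\,n/d$ dominates the stated $\frac{2\min(\alpha,\beta)}{d^2}(n-1)$ for all $d\geq 2$ (strictly for $d\geq 3$), so you prove slightly more than required. The pigeonhole argument is quicker to state; the exact identity is cleaner and more informative since it displays $D(1)$ in closed form rather than bounding a single term of the sum.
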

\begin{proof}
Clearly, $n-1 = \sum\limits_{i = 1}^{d} k_i$. Thus, for some $1\leq t\leq d$ we have $k_t\geq \frac{n-1}{d}$. Since the sequence $(b_i)$ is non-increasing, if $t\leq j$, then $a_t = k-b_t-c_t\leq (1-\alpha)k$. Similarly, the sequence $(c_i)$ is non-decreasing, so if $t>j$, then $a_t = k-b_t-c_t\leq (1-\beta)k$.

Consider any pair of adjacent vertices $u,v\in X$. Then they are distinguished by at least $|N_t(u)\bigtriangleup N_t(v)| = 2(k_t-p^1_{t,t})$ vertices. Note, that $$p^1_{t,t} = p^t_{t,1}\frac{k_t}{k} = k_t\frac{a_t}{k}\leq (1-\min(\alpha, \beta))k_t.$$
Hence, any two adjacent vertices are distinguished by at least $2\min(\alpha,\beta)k_t\geq \frac{2\min(\alpha,\beta)(n-1)}{d}$ vertices. Finally, the result follows from the Lemma \ref{babai-dist}.
\end{proof}

\subsection{Bound on the spectral gap of a distance-regular graph}\label{sec-drg-approx}

In this section we give a simple estimate on the spectral gap of a distance-regular graph in terms of its intersection numbers. We will use it to apply Lemma~\ref{mixing-lemma-tool} (the spectral tool).

Note, that if $b_i$ and $c_i$ are simultaneously small, then by monotonicity, $b_j$ for $j\geq i$ and $c_t$ for $t\leq i$ are small. Hence, the intersection matrix $T(X)$ is a small perturbation of a block diagonal matrix $N$, where one block is upper triangular and the second block is lower triangular. So the eigenvalues of $N$ are just the diagonal entries.

\begin{lemma}\label{eigenvalues-approximation}
Let $X$ be a distance-regular graph of diameter $d$. Suppose that $b_i\leq \varepsilon k$ and $c_i\leq \varepsilon k$ for some $i$ and some $\varepsilon>0$. Suppose furthermore that $b_{i-1}\geq \alpha k$ and $c_{i+1}\geq \beta k$, for some $\alpha, \beta >0$. Consider the matrix
\[T = T(X) = \left(\begin{matrix} 
a_0 & b_0 & 0 & 0 & ... \\ 
c_1 & a_1 & b_1 & 0 &...\\
0& c_2 & a_2 & b_2 & ...\\
...& & \vdots & & ...\\
...& & 0& c_d & a_d  

\end{matrix}\right)\]
Then the zero-weight spectral radius $\xi$ of $X$ satisfies 
\begin{equation}\label{eq-spectral-gap-dist}
\xi\leq k (1-\min(\alpha, \beta)+2(d+2)^2\varepsilon^{\frac{1}{d+1}}).
\end{equation}

\end{lemma}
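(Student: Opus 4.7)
The plan is to compare the intersection matrix $T = T(X)$ to a modified matrix $N$ whose eigenvalues are exactly the diagonal entries $a_0, a_1, \ldots, a_d$, and then transfer eigenvalue information via Ostrowski's perturbation theorem (Theorem \ref{matrix-approximation}). Define $N$ by setting $c_j := 0$ for $1 \leq j \leq i$ and $b_j := 0$ for $i \leq j \leq d-1$, keeping all other entries of $T$. By the monotonicity of the intersection numbers of a distance-regular graph ($c_j$ nondecreasing, $b_j$ nonincreasing), this modifies exactly $d$ entries, each of magnitude at most $\varepsilon k$. Row $i$ of $N$ then has $a_i$ as its only nonzero entry, so $a_i \in \operatorname{spec}(N)$; and after deleting the $i$th row and column one is left with a block-diagonal matrix whose top block is upper-triangular on coordinates $\{0,\ldots,i-1\}$ with diagonal $a_0,\ldots,a_{i-1}$, and whose bottom block is lower-triangular on $\{i+1,\ldots,d\}$ with diagonal $a_{i+1},\ldots,a_d$. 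Consequently $\operatorname{spec}(N) = \{a_0, a_1, \ldots, a_d\}$.

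Each $a_j = k - b_j - c_j$, so monotonicity gives the key estimates: for $j \leq i-1$, $b_j \geq b_{i-1} \geq \alpha k$ and thus $a_j \leq (1-\alpha)k$; for $j \geq i+1$, $c_j \geq c_{i+1} \geq \beta k$ and thus $a_j \leq (1-\beta)k$. Hence $0 \leq a_j \leq (1-\min(\alpha,\beta))k$ for every $j \neq i$, while $a_i \geq (1-2\varepsilon)k$. Apply Theorem \ref{matrix-approximation} to $T$ and $N$ with matrix dimension $d+1$, entrywise bound $M \leq k$, and
$$\delta \;=\; \frac{1}{(d+1)k}\sum_{j,l} |T_{jl} - N_{jl}| \;\leq\; \frac{d\varepsilon k}{(d+1)k} \;\leq\; \varepsilon.$$
The theorem supplies a bijection $\sigma$ between $\operatorname{spec}(T)$ and $\operatorname{spec}(N)$ such that matched eigenvalues differ by at most $2(d+2)^2 k\, \varepsilon^{1/(d+1)}$.

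We may assume the asserted bound is nontrivial, i.e.\ $2(d+2)^2 \varepsilon^{1/(d+1)} < \min(\alpha,\beta)$; otherwise the inequality $\xi \leq k$ makes the claim automatic. Under this assumption, the (simple) trivial eigenvalue $k$ of $T$ must be paired by $\sigma$ with $a_i$, since every other $a_j$ lies at distance $\geq \min(\alpha,\beta)k$ from $k$, exceeding the Ostrowski error. By injectivity, every nontrivial eigenvalue $\mu$ of $T$ is paired with some $a_j$ with $j \neq i$, yielding
$$\mu \leq (1-\min(\alpha,\beta))k + 2(d+2)^2 k\, \varepsilon^{1/(d+1)} \quad\text{and}\quad \mu \geq -2(d+2)^2 k\, \varepsilon^{1/(d+1)},$$
where the lower bound uses $a_j \geq 0$. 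The upper bound dominates in absolute value, giving $|\mu| \leq k(1-\min(\alpha,\beta)+2(d+2)^2 \varepsilon^{1/(d+1)})$, as desired. The only subtle step is the matching argument — forcing $\sigma(k)=a_i$ rather than some other $a_j$ — but this is a clean pigeonhole once the error is smaller than $\min(\alpha,\beta)k$, exactly the regime where the claimed bound is meaningful.
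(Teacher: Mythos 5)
Your proof is correct and follows essentially the same route as the paper: you define the same matrix $N$ (zeroing out $c_1,\ldots,c_i$ and $b_i,\ldots,b_{d-1}$), observe the same block-triangular structure giving $\operatorname{spec}(N)=\{a_0,\ldots,a_d\}$, estimate the same $M$ and $\delta$, and apply Theorem~\ref{matrix-approximation}. The only difference is that you spell out the pigeonhole step forcing $\sigma(k)=a_i$ (and the reduction to the regime where the bound is nontrivial), which the paper leaves implicit with the remark that $k$ is a simple eigenvalue.
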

\begin{proof}
Let $N$ be a matrix obtained from $T$ by replacing all $b_s$ and $c_t$ with 0 for $s\geq i$ and $t\leq i$. As in Theorem \ref{matrix-approximation}, consider
\[ M = \max\{|(T)_{sj}|, |(N)_{sj}|: 1\leq s, j\leq d\} = k, \]
\[ \delta = \frac{1}{(d+1)M}\sum\limits_{s = 1}^{d+1}\sum\limits_{j=1}^{d+1}|(T)_{sj}-(N)_{sj}| \leq \frac{(d+1)\varepsilon k}{(d+1)M} = \varepsilon\]

 Observe, that the diagonal entry $a_i$ is the only non-zero entry in the $i$-th row of $N$. Furthermore, the upper-left $i\times i$ submatrix is upper triangular and $(d-i)\times (d-i)$ lower-right submatrix is lower triangular. Then the eigenvalues of $N$ are equal to $a_j$ for $0\leq j\leq d$.

Conditions on $X$ imply that $a_i\geq (1-2\varepsilon) k$, while $a_j\leq k(1-\min(\alpha, \beta))$ for $j\neq i$. Hence, since $k$ is an eigenvalue of multiplicity 1 of $X$, by Theorem \ref{matrix-approximation}, the zero-weight spectral radius of $X$ satisfies Eq.~\eqref{eq-spectral-gap-dist}.
\end{proof}

\subsection{Reduction to geometric case for primitive distance-regular graphs with fixed diameter}\label{sec-general}

Let $X$ be a primitive distance-regular graph. In this section we finally show that either the motion of $X$ is linear in the number of vertices, or $X$ is geometric (Theorem \ref{main-general-case}).

\begin{definition} Let $0\leq\delta<1$. We say that $(\alpha_i)_{i=0}^{\infty}$ is the $FE(\delta)$ sequence, if $\alpha_0 = 1$ and for $j\geq 1$ the element $\alpha_j$ is defined by the recurrence
\[\alpha_{j+1} = (1-\delta)\left( \sum\limits_{t=1}^{\lceil \frac{j+2}{2} \rceil}\frac{1}{\alpha_{t-1}}+\sum\limits_{t=1}^{\lfloor \frac{j+2}{2}\rfloor}\frac{1}{\alpha_{t-1}} \right)^{-1}.\] 

Let $\widehat{\alpha} = (\alpha_i)_{i=0}^{s}$ be a sequence. We say that $\widehat{\beta} = (\beta_i)_{i=2}^{s+2}$ is the $BE(\delta, \widehat{\alpha})$ sequence, if for $j\geq 2$ the element $\beta_j$ is defined as
\[\beta_j = (1-\delta)\left(\sum\limits_{t = 0}^{j-2}\frac{1}{\alpha_t}\right)^{-1}.\]
If additionally, $\widehat{\alpha}$ is the $FE(\delta)$ sequence, then we will say that $\widehat{\beta}$ is just the $BE(\delta)$ sequence.
\end{definition}
\begin{remark}
FE stands for ``forward expansion'' and BE stands for ``backward expansion''.
\end{remark}

\begin{definition}
Let $\widehat{\alpha} = (\alpha)_{i=0}^{s}$ be a decreasing sequence of positive real numbers with $\alpha_0 = 1$. Let $\widehat{\beta} = (\beta_i)_{i=2}^{s
+2}$ be the corresponding $BE(\delta, \widehat{\alpha})$ sequence. Let $0< \delta< 1$, we say that $\varepsilon>0$ is \textit{$(\delta, j, \widehat{\alpha})$-compatible} for $j\leq s$, if $\varepsilon$ satisfies
\[  \left(\frac{\alpha_j-5\varepsilon}{\alpha_j-\varepsilon} - 2\varepsilon \sum\limits_{t=1}^{j+1}\frac{1}{\alpha_{t-1}}\right)>(1-\delta)\quad  \text{and} \quad
2(d+2)^2\varepsilon^{\frac{1}{d+1}} \leq \min(\alpha_j, \beta_{j+2})\delta. \] 
\end{definition}

Note that if $\varepsilon$ is $(\delta, j, \widehat{\alpha})$-compatible for $j\geq 1$, then it is $(\delta, (j-1) , \widehat{\alpha})$-compatible as well. Note also that the second condition on $\varepsilon$ implies that $\delta>\varepsilon$ and $\beta_{j+2}>\varepsilon$, $\alpha_j>\varepsilon$. 

The proposition below states that for a distance-regular graph $X$ either we are in a favorable situation, or all parameters $b_i$ of $X$ are relatively large. 

\begin{proposition}\label{bound-on-b}
Let $X$ be a primitive distance-regular graph of diameter $d\geq 3$. Fix any $0<\delta<1$. Let $1 \leq j\leq d-2$ and $\widehat{\alpha} = (\alpha_i)_{i=0}^{j}$ be a decreasing sequence of positive real numbers. Consider corresponding $BE(\delta, \widehat{\alpha})$ sequence $\widehat{\beta}$ and $(\delta, j, \widehat{\alpha})$-compatible $\varepsilon>0$.  Assume that parameters of $X$ satisfy $c_{j+1}\leq \varepsilon k$ and $b_i\geq \alpha_i k$ for all $1\leq i\leq j$.
 Then one of the following is true.
\begin{enumerate}
\item $\motion(X)\geq \frac{2\varepsilon}{d^2}(n-1)$.
\item The zero-weight spectral radius $\xi$ for $X$ satisfies $\xi\leq k(1-(1-\delta)\beta_{j+2})$.
\item Let $\alpha_{j+1} = (1-\delta)\left( \sum\limits_{t=1}^{\lceil \frac{j+2}{2} \rceil}\frac{1}{\alpha_{t-1}}+\sum\limits_{t=1}^{\lfloor \frac{j+2}{2}\rfloor}\frac{1}{\alpha_{t-1}} \right)^{-1}$, then $b_{j+1}\geq\alpha_{j+1} k$ and $c_{j+2}\leq \varepsilon k$.
\end{enumerate}
\end{proposition}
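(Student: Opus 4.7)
My plan is to apply the tradeoff of Lemma \ref{b-c-ineq} at the optimal value of $s$ and then case-split on the sizes of $c_{j+2}$ and $b_{j+1}$, matching each case with one of the three conclusions.

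First I invoke Lemma \ref{b-c-ineq} with $C = \alpha_j/\varepsilon$ and use $b_{t-1} \geq \alpha_{t-1} k$ for $1 \leq t \leq j+1$ to obtain, for every $1 \leq s \leq j+1$,
\[
\frac{b_{j+1}}{k}\left(\sum_{t=1}^{s}\frac{1}{\alpha_{t-1}} + \sum_{t=1}^{j+2-s}\frac{1}{\alpha_{t-1}}\right) + \frac{c_{j+2}}{k}\, T_2 \;\geq\; \frac{\alpha_j-5\varepsilon}{\alpha_j-\varepsilon},
\]
where $T_2 := \sum_{t=1}^{j+1} 1/\alpha_{t-1}$ and the right-hand side is just $1 - 4/(C-1)$ rewritten. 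Since $(\alpha_i)$ is decreasing, the coefficient of $b_{j+1}$ is minimized by the balanced choice $s = \lceil (j+2)/2 \rceil$; denote this minimum by $T_1$. Then $\alpha_{j+1} = (1-\delta)/T_1$ and $\beta_{j+2} = (1-\delta)/T_2$ by definition, and an entrywise comparison gives $T_1 \leq 2T_2$. The first part of the compatibility condition is precisely
\[
\frac{\alpha_j-5\varepsilon}{\alpha_j-\varepsilon} - 2\varepsilon T_2 \;>\; 1-\delta,
\]
which is the single estimate that drives the three cases.

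Case A is $c_{j+2} \leq \varepsilon k$: the displayed inequality yields $(b_{j+1}/k)\, T_1 \geq (\alpha_j-5\varepsilon)/(\alpha_j-\varepsilon) - \varepsilon T_2 > 1-\delta$, so $b_{j+1} > \alpha_{j+1} k$ and we are in conclusion~(3). Case B is $c_{j+2} > \varepsilon k$, with two sub-cases. If additionally $b_{j+1} \geq \varepsilon k$, then Proposition \ref{primitive-distinguish} applied at index $j+1$ (valid because $j+1 \leq d-1$) combined with Observation \ref{obs1} gives $\motion(X) \geq \frac{2\varepsilon}{d^2}(n-1)$, which is conclusion~(1). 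If instead $b_{j+1} < \varepsilon k$, then using $T_1 \leq 2T_2$ we get $(c_{j+2}/k)\, T_2 \geq (\alpha_j-5\varepsilon)/(\alpha_j-\varepsilon) - \varepsilon T_1 \geq (\alpha_j-5\varepsilon)/(\alpha_j-\varepsilon) - 2\varepsilon T_2 > 1-\delta$, so $c_{j+2} > \beta_{j+2} k$. Together with $b_{j+1}, c_{j+1} \leq \varepsilon k$ and $b_j \geq \alpha_j k$, Lemma \ref{eigenvalues-approximation} applied at $i = j+1$ yields $\xi \leq k\bigl(1 - \min(\alpha_j, \beta_{j+2}) + 2(d+2)^2 \varepsilon^{1/(d+1)}\bigr)$. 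Since $\sum_{t=0}^j 1/\alpha_t \geq 1/\alpha_j$, we have $\beta_{j+2} \leq (1-\delta)\alpha_j < \alpha_j$, so the minimum equals $\beta_{j+2}$, and the second part of the compatibility condition $2(d+2)^2 \varepsilon^{1/(d+1)} \leq \delta\beta_{j+2}$ gives $\xi \leq k(1 - (1-\delta)\beta_{j+2})$, namely conclusion~(2).

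The only genuinely nontrivial step is the choice $s = \lceil (j+2)/2 \rceil$ together with the verification $T_1 \leq 2T_2$: this is precisely what allows the single compatibility inequality to simultaneously produce the lower bound on $b_{j+1}$ in Case A and the lower bound on $c_{j+2}$ in Sub-case B2, making the forward-expansion and backward-expansion recurrences work in lockstep. All remaining arithmetic is routine bookkeeping with the $FE(\delta)$ and $BE(\delta,\widehat{\alpha})$ sequences.
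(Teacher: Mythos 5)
Your proof is correct and follows essentially the same route as the paper: both rely on the tradeoff inequality (Lemma \ref{b-c-ineq}) to constrain the pair $(b_{j+1}, c_{j+2})$ and then route to Proposition \ref{primitive-distinguish}, Lemma \ref{eigenvalues-approximation}, or the definition of $\alpha_{j+1}$ according to the outcome. The only cosmetic difference is that the paper trichotomizes on $c_{j+2}$ versus $\beta_{j+2}k$ and $\varepsilon k$ (handling the middle band by deducing $b_{j+1}\geq\varepsilon k$), whereas you dichotomize on $c_{j+2}$ and then on $b_{j+1}$, deriving $c_{j+2}>\beta_{j+2}k$ from the tradeoff inequality using the explicit observation $T_1\leq 2T_2$ — the same single compatibility inequality drives both directions, exactly as in the paper.
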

\begin{proof}
 Assume that $c_{j+2}\geq \beta_{j+2} k$. If $b_{j+1}\geq \varepsilon k$, then by Proposition \ref{primitive-distinguish} any two distinct vertices in $X$ are distinguished by at least $\frac{2\varepsilon}{d^2}(n-1)$ vertices, which implies that the motion of $X$ is at least $\frac{2\varepsilon}{d^2}(n-1)$.

Thus, suppose that $b_{j+1}\leq \varepsilon k$. Then we fall into the assumptions of Lemma \ref{eigenvalues-approximation} with $i = j+1$. Hence, zero-weight spectral radius $\xi$ for $X$ satisfies $$\xi\leq k (1-\min(\alpha_j, \beta_{j+2})+2(d+2)^2\varepsilon^{\frac{1}{d+1}})\leq k(1-(1-\delta)\beta_{j+2}).$$
Note, that by definition $\beta_{j+2}<\alpha_j$, so $\min(\alpha_{j}, \beta_{j+2}) = \beta_{j+2}$.

Assume now that $\varepsilon k\leq c_{j+2}\leq \beta_{j+2} k$. the conditions on $\varepsilon$ imply $$\beta_{j+2} \leq \left(\frac{\alpha_j-5\varepsilon}{\alpha_j-\varepsilon}\right)\left(\sum\limits_{t=1}^{j+1}\frac{1}{\alpha_{t-1}}\right)^{-1}-2\varepsilon.$$  Then, by Lemma \ref{b-c-ineq}, we get $b_{j+1}\geq \varepsilon k$. Hence, by Proposition \ref{primitive-distinguish} any two distinct vertices in $X$ are distinguished by at least $\frac{2\varepsilon}{d^2}(n-1)$ vertices, which implies that the motion of $X$ is at least $\frac{2\varepsilon}{d^2}(n-1)$.

Finally, assume that $c_{j+2}\leq \varepsilon k$. Then, by Lemma \ref{b-c-ineq}, for 
\[0<\alpha_{j+1} \leq \left(\frac{\alpha_j-5\varepsilon}{\alpha_j-\varepsilon} - \varepsilon \sum\limits_{t=1}^{j+1}\frac{1}{\alpha_{t-1}}\right)\left( \sum\limits_{t=1}^{\lceil \frac{j+2}{2} \rceil}\frac{1}{\alpha_{t-1}}+\sum\limits_{t=1}^{\lfloor \frac{j+2}{2}\rfloor}\frac{1}{\alpha_{t-1}} \right)^{-1}
\]
we have $b_{j+1}\geq \alpha_{j+1}k$.
\end{proof}

\begin{theorem}\label{main-general-case}
For any $d\geq 3$ there exist constants $\gamma_d>0$ and $m_d\in \mathbb{N}$, such that for any primitive distance-regular graph $X$ with $n$ vertices and diameter $d$ one of the following is true. 
\begin{enumerate}
\item We have $\motion(X)\geq \gamma_d n$.
\item $X$ is a geometric distance-regular graph with smallest eigenvalue $-m$, where $m\leq m_d$. 
\end{enumerate}
Furthermore, one can set $m_d = \lceil 2(d-1)(d-2)^{\log_2(d-2)} \rceil$.
\end{theorem}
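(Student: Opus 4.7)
The plan is to iterate Proposition~\ref{bound-on-b} for $j = 0, 1, \ldots, d-2$. First I fix $\delta > 0$ sufficiently small (depending on $d$) and then $\varepsilon > 0$ that is $(\delta, j, \widehat{\alpha})$-compatible for every $0 \leq j \leq d-2$ and is small enough that Metsch's inequalities admit any integer $m \leq m_d$. Starting from the trivial bounds $\alpha_0 = 1$, $b_0 = k$, $c_1 = 1 \leq \varepsilon k$ (valid once $k$ exceeds an absolute constant; the complementary case is trivial since then $n \leq k^d$ is bounded), each step of the iteration produces one of three outcomes: (i) a direct linear motion bound $\motion(X) \geq \frac{2\varepsilon}{d^2}(n-1)$ via Proposition~\ref{primitive-distinguish}; (ii) the spectral estimate $\xi \leq k(1 - (1-\delta)\beta_{j+2})$ on the zero-weight spectral radius of $X$; or (iii) the propagated inequalities $b_{j+1} \geq \alpha_{j+1} k$ and $c_{j+2} \leq \varepsilon k$ feeding into the next step.

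If the iteration finishes with outcome (iii) at every step, I will have $b_i \geq \alpha_i k$ for all $1 \leq i \leq d-1$ and $c_j \leq \varepsilon k$ for all $2 \leq j \leq d$. In particular $\lambda + 1 = k - b_1 \leq (1-\alpha_1) k$ and $\mu = c_2 \leq \varepsilon k$. I then pick $m = \lceil k/(\lambda+1) \rceil$, so $(m-1)(\lambda+1) < k \leq m(\lambda+1)$. The smallness of $\mu$ reduces conditions (3) and (4) of Theorem~\ref{Metsch} to $\lambda \geq (2m-1)(\mu-1)$ and $\lambda+1 > \frac{1}{2}m(m+1)(\mu-1)$, both of which hold by the choice of $\varepsilon$ provided $m \leq m_d$. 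Metsch then yields a line system with at most $m$ lines per vertex, and Proposition~\ref{suff-cond} together with Remark~\ref{geom-smallest-eigenvalue} promote this to a geometric structure with smallest eigenvalue $-m$.

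If instead outcome (ii) occurs at some step $j$, I combine the spectral bound with the bounds $\lambda \leq (1-\alpha_1)k$ and $\mu \leq \varepsilon k$ already propagated from earlier iterations: Lemma~\ref{mixing-lemma-tool} delivers linear motion whenever $(1-\delta)\beta_{j+2} > 1-\alpha_1$, which handles small $j$. For larger $j$ the spectral gap degrades and the mixing argument by itself is insufficient; the plan is then to split on the size of $\lambda$, using the mixing lemma when $\lambda$ is small enough relative to the spectral gap, and otherwise observing that $\lambda + 1 \geq k/m_d$ forces us into the Metsch regime of the preceding paragraph with $m \leq m_d$.

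The hard part will be establishing the quantitative bound $m \leq m_d$ with the explicit constant $m_d = \lceil 2(d-1)(d-2)^{\log_2(d-2)} \rceil$. This requires controlling $k/(\lambda+1)$ in the Metsch regime, which in turn demands a careful asymptotic analysis of the FE recurrence $\alpha_j$ and its backward counterpart $\beta_j$: one must quantify the growth of $\alpha_{d-1}^{-1}$ and of the Metsch threshold $m$, and verify that both are dominated by $m_d$. The peculiar $(d-2)^{\log_2(d-2)}$ factor will emerge from solving (or tightly estimating) this recurrence, and matching the quantitative bounds across the spectral and Metsch regimes is the principal technical effort of the proof.
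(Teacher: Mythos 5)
Your proposal follows the paper's approach quite closely (iterating Proposition~\ref{bound-on-b}, then splitting between the spectral mixing argument and Metsch's criterion, with the explicit $m_d$ extracted from the $FE(\delta)/BE(\delta)$ recurrence), but there is a genuine gap in your treatment of the case where outcome~(iii) occurs at every step.

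In that case you pick $m = \lceil k/(\lambda+1)\rceil$ and invoke Metsch, noting the conditions ``hold by the choice of $\varepsilon$ \emph{provided} $m \le m_d$.'' But the propagated bound $\lambda+1 \le (1-\alpha_1)k$ is only an \emph{upper} bound on $\lambda$; nothing so far prevents $\lambda+1 < k/m_d$, in which case $m > m_d$ and Metsch's hypotheses fail. You describe the fix correctly in your treatment of outcome~(ii) — split on whether $\lambda$ is small (use Lemma~\ref{mixing-lemma-tool}) or large (fall into the Metsch regime with $m \le m_d$) — but that split requires a zero-weight spectral radius bound, and in the all-(iii) case you never derive one. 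The missing step, which is exactly what the paper does, is to observe that the propagated inequalities $b_{d-1} \ge \alpha_{d-1}k$, $c_d \le \varepsilon k$, and $b_d = 0$ put you in the hypotheses of Lemma~\ref{eigenvalues-approximation} with $i=d$, yielding $\xi \le k\bigl(1-(1-\delta)\alpha_{d-1}\bigr)$. Once you have this, the dichotomy $\lambda < rk$ versus $\lambda \ge rk$ with $r = (1-2\delta)\min(\alpha_{d-1},\beta_d)$ works uniformly in both the (ii) and (iii) branches; this is also cleaner than your $(1-\delta)\beta_{j+2} > 1-\alpha_1$ criterion, which (since $\alpha_1 = (1-\delta)/2$ makes $1-\alpha_1 \approx 1/2$ while $\beta_3 \approx 1/3$) already fails at $j=1$, so it handles only $j=0$ rather than ``small $j$.'' Your description of the final asymptotic analysis of the $FE(\delta)$ recurrence to obtain the $2(d-1)(d-2)^{\log_2(d-2)}$ bound is on the right track and matches the paper's computation.
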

\begin{proof}
Fix some small $1>\delta>0$. Let $\widehat{\alpha} = (\alpha)_{i=0}^{\infty}$ be the $FE(\delta)$ sequence and $\widehat{\beta} = (\beta)_{i=2}^{\infty}$ be the $BE(\delta)$ sequence.  Let $\varepsilon>0$ be $(\delta, d-2, \widehat{\alpha})$-compatible. Additionally, assume that 
\begin{equation}\label{eq-eps-dist-reg-bound}
\varepsilon \leq \left(\frac{1}{(1-2\delta)\min(\alpha_{d-1}, \beta_d)}+1\right)^{-3}.
\end{equation}
Then, by Proposition \ref{bound-on-b} applied to $j = 0,..., d-2$ one by one, one of the following is true.
\begin{enumerate}
 \item The motion of $X$ is at least $\frac{2\varepsilon}{d^2}(n-1)$;
 \item the zero-weight spectral radius satisfies $\xi\leq (1-(1-\delta)\beta_i) k\leq (1-(1-\delta)\beta_d)k$ for some $1\leq i\leq d$;
 \item $b_i\geq \alpha_i k$ and $c_{i+1}\leq \varepsilon k$ for all $0\leq i\leq d-1$.
 \end{enumerate}  
 
 In the third case, we apply Lemma \ref{eigenvalues-approximation} for $i = d$. It implies that the zero-weight spectral radius $\xi$ of $X$ satisfies $\xi\leq k(1-(1-\delta)\alpha_{d-1})$. 
 
 Suppose, that $\lambda<(1-2\delta)\min(\alpha_{d-1}, \beta_{d})k=: rk$, then by Lemma \ref{mixing-lemma-tool}, we obtain $\motion(X)\geq \delta \min(\alpha_{d-1}, \beta_{d})n$. Hence, for $\gamma_d = \frac{1}{2}\min(\frac{2\varepsilon}{d^2}, \delta\min(\alpha_{d-1}, \beta_{d}))$ the motion of $X$ is at least $\gamma_d n$.
 
 Otherwise, since Eq.~\eqref{eq-eps-dist-reg-bound} holds, the conditions $\lambda\geq rk$, and $\mu = c_2\leq \varepsilon k$, by Metsch's criteria (see Theorem \ref{Metsch} and Corollary \ref{geom-corol}), imply that $X$ is geometric distance-regular graph, such that any vertex is contained in exactly $m$ lines, where $m$ is taken to satisfy 
 $$\frac{k}{m}\leq(\lambda+1)< \frac{k}{m-1}.$$
Clearly, condition $\lambda\geq rk$ implies 
$m<\frac{1}{r}+1$. Hence, $m_d$ can be defined as $$m_d = \lceil \frac{1}{(1-2\delta)\min(\alpha_{d-1} , \beta_{d})} \rceil.$$

Finally, consider the $FE(\delta)$ sequence $(\alpha_i)_{i=0}^{\infty}$ and the corresponding $BE(\delta)$ sequence $(\beta_i)_{i=2}^{\infty}$ for $0<\delta\leq\frac{1}{9}$.
We show by induction that $\alpha_j\geq \frac{(1-\delta)^2}{2} j^{-\log_2(j)}$  for all $j\geq 1$. Indeed, for $j=1,2$ we have $\alpha_1 = \frac{1-\delta}{2}$ and $\alpha_2 \geq \frac{(1-\delta)^2}{4}$, so inequality is true. For $j\geq 2$, we have
\[\alpha_{j+1} = (1-\delta)\left( \sum\limits_{t=1}^{\lceil \frac{j+2}{2} \rceil}\frac{1}{\alpha_{t-1}}+\sum\limits_{t=1}^{\lfloor \frac{j+2}{2}\rfloor}\frac{1}{\alpha_{t-1}} \right)^{-1}\geq \frac{(1-\delta)}{j+2}\alpha_{\lceil \frac{j}{2} \rceil}\geq \frac{(1-\delta)^3}{2(j+2)}  \left(\frac{j+1}{2}\right)^{-\log_2(\frac{j+1}{2})} = \]
\[ = \frac{(1-\delta)(j+1)2^{\log_2(\frac{j+1}{2})}}{j+2} \frac{(1-\delta)^2}{2} (j+1)^{-\log_2(j+1)} \geq \frac{(1-\delta)^3}{2} (j+1)^{-\log_2(j+1)}.
 \]
 Thus, 
 \[\beta_{j+2} = (1-\delta)\left(\sum\limits_{t = 0}^{j}\frac{1}{\alpha_t}\right)^{-1} \geq \frac{1-\delta}{j+1}\alpha_j\geq \frac{(1-\delta)^3}{2(j+1)} j^{-\log_2(j)}\] 
Hence, taking $\delta$ small enough we get $$m_d< \lceil\max\left(2(d-1)(d-2)^{\log_2(d-2)}, 2 (d-1)^{\log_2(d-1)}\right)\rceil+1.$$

\end{proof}
\vspace{30pt}

\section{Primitive coherent configurations of rank 4}\label{sec-coherent}

In this section we prove that a primitive coherent configuration of rank 4 on $n$ vertices is a Johnson or a Hamming scheme, or its motion is at least $\gamma n$ for some absolute constant $\gamma>0$ (Theorem \ref{main-coherent}). 

Primitive coherent configurations of rank 4 naturally split into three classes: configurations induced by a primitive distance-regular graph of diameter 3,  association schemes of diameter $2$ (see Definition \ref{def-assoc-diam}), and primitive coherent configurations with one undirected color and two oriented colors. The case of distance-regular graphs of diameter 3 was settled in Sections \ref{sec-large-deg} - \ref{sec-diam-3-proof}. So we need to deal with the other two classes. 

It is not hard to see that in the case with two oriented colors, the undirected constituent is strongly regular. Thus, by Babai's result (Theorem \ref{babai-str-reg-thm}), if $n\geq 29$, the only possibility for $\mathfrak{X}$ to have motion less than $\frac{n}{8}$ is when the undirected constituent is $T(s)$ and $L_{2}(s)$ or their complement. In the latter case we prove that the motion is linear using one of the arguments of Sun and Wilmes, appearing in the proof of {\cite[Lemma 3.5]{Sun-Wilmes}} (see Lemma~\ref{sun-wilmes-tool} in this paper).

Hence, we need to concentrate on the case of primitive association schemes of rank 4 and diameter 2. The general strategy is similar to the one used in the case of distance-regular graphs. As a first step we show that either we have a constituent with very large degree, or every pair of vertices can be distinguished by $\varepsilon n$ vertices (see Lemma~\ref{assoc-k2-large}). This directly implies that the motion is also at least $\varepsilon n$. On the other hand, the fact that one of the constituents, say $X_3$, has very large degree implies that some of the intersection numbers are quite small (see Proposition \ref{assoc-param-ineq}). This allows us to approximate the eigenvalues of the constituents $X_1$ and $X_2$, and so to approximate their zero-weight spectral radii with simple expressions involving the intersection numbers (see Lemma~\ref{assoc-spectral-radius}). Therefore, we try to apply Lemma \ref{mixing-lemma-tool} to the constituents $X_1$ and $X_2$. Considering the cases how degrees $k_1$ and $k_2$ can differ from each other, we obtain that either the motion of $\mathfrak{X}$ is linear, or one of the graphs $X_J$ is a line graph, where $J\in\{1, 2, \{1,2\}\}$. Recall, by definition, $X_{1,2}$ is a complement of $X_3$. Since a constituent graph is always edge-regular, we use the classification of edge-regular and co-edge-regular graphs with smallest eigenvalue $-2$ proved by Seidel and Brouwer, Cohen, Neumaier (see Theorem \ref{geom-eig-2}). The classification tells us that $X_i$ is strongly regular with smallest eigenvalue $-2$ or it is the line graph of a triangle-free regular graph (see Proposition~\ref{assoc-diam2} for the more precise statement). Moreover, if $X_J$ is strongly regular with smallest eigenvalue $-2$, then $X_J$ is a triangular graph $T(s)$, or a lattice graph $L_2(s)$,  or has at most 28 vertices.

If one of the constituents is a line graph, this allows us to obtain more precise bounds on the intersection numbers. In particular, we  approximate the zero-weight spectral radius of the graph $X_{1,2}$ with a relatively simple expression as well. From this point, our main goal becomes to get constraints on the intersection numbers, that will allow us to apply Lemma \ref{mixing-lemma-tool} effectively to one of the graphs $X_1$, $X_2$ or $X_{1,2}$. We consider four cases. Three of them are defined by which of the graphs $X_1$, $X_2$ or $X_{1,2}$ is strongly regular. In the fourth case, one of the constituents is the line graph of a triangle-free regular graph.  For the ranges of the parameters when Lemma \ref{mixing-lemma-tool} cannot be used effectively, we use a generalization of the argument due to Sun and Wilmes, which is proven in Lemma \ref{sun-wilmes-tool}. Roughly speaking, it says that if a triangular graph $T(s)$ is a union of several constituents of a coherent configuration $\mathfrak{X}$, then $\Aut(\mathfrak{X})$ is small if the following holds for every Delsarte clique: if we look on the configuration induced on the Delsarte clique, then each pair of vertices is distinguished by a constant fraction of the vertices of the clique. The hardest case in the analysis is the case when the constituent of the smallest degree, $X_1$, is strongly regular. This case is settled in Proposition \ref{assoc-x1-strongly-reg} and it requires preparatory work with several new ideas. In particular, we use an analog of the argument from the proof of Metsch's criteria to get a constant upper bound on the fraction $k_2/k_1$ in certain range of parameters. 

In the final subsection (Sec.~\ref{sec-coherent-thm-subsec}) we combine all the cases to prove our main result for the coherent configurations, Theorem \ref{main-coherent}.

The following definition will be useful.

\begin{definition}
We say that a homogeneous coherent configuration $\mathfrak{X}$ of rank $r$ \textit{has constituents ordered by degree}, if color $0$ corresponds to the diagonal constituent and the degrees of non-diagonal constituents satisfy $k_1\leq k_2\leq ...\leq k_{r-1}$. 
\end{definition}

\subsection{Approximation of eigenvalues}\label{sec-coh-approx}

In this subsection we provide technical lemmas that allow us to approximate the zero-weight spectral radius of $X_1$, $X_2$ and $X_{1,2}$ under quite modest assumptions.

\begin{lemma}\label{rank-4-roots}
Let $\mathfrak{X}$ be an association scheme of rank 4. Let $\eta$ be a non-trivial eigenvalue of $A_1$. Then $\eta$ satisfies the following cubic polynomial
\[\eta^{3} - (p_{1,1}^{1}+p_{1,2}^{2}-p_{1,1}^{3}-p_{1,2}^{3})\eta^2+\]
\[+\left( (p_{1,2}^{2}-p_{1,2}^{3})(p^{1}_{1,1} - p_{1,1}^{3}) - (p^{2}_{1,1} - p_{1,1}^{3})(p_{1,2}^{1}-p_{1,2}^{3})- (k_1-p^{3}_{1,1})\right)\eta+\]
\[+\left( (p_{1,2}^{2}-p_{1,2}^{3})(k_1-p_{1,1}^{3})+(p^{2}_{1,1} - p_{1,1}^{3})p_{1,2}^{3}\right) = 0. \]
\end{lemma}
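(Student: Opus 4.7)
The plan is to exploit the simultaneous diagonalization of the commuting symmetric matrices $A_1,A_2,A_3$. Since $\mathfrak{X}$ is an association scheme of rank $4$, the Bose--Mesner algebra is commutative, so there is an orthogonal basis of common eigenvectors of $A_0,A_1,A_2,A_3$. Let $v$ be such an eigenvector with $A_iv=\eta_iv$ for $i=1,2,3$, and assume $\eta_1=\eta$ is a non-trivial eigenvalue (i.e.\ $v$ is orthogonal to the all-ones vector~$\mathbf{1}$). I will derive two polynomial equations linking $\eta$ with the companion eigenvalues $\theta:=\eta_2$ and $\psi:=\eta_3$, then eliminate $\theta$ and $\psi$ to obtain the displayed cubic in $\eta$.

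For the first equation, apply Eq.~\eqref{eq-constit-sum}: since $v\perp\mathbf{1}$ we have $Jv=0$, and combined with $A_0v=v$ this yields
\[
v+\eta v+\theta v+\psi v=0,\qquad\text{i.e.\ }\psi=-(1+\eta+\theta).
\]
This lets me eliminate $\psi$ from everything that follows. For the second equation, apply Eq.~\eqref{eq-int-num} with $(i,j)=(1,1)$ to $v$, using that $p_{1,1}^{0}=k_1$ (because $1^{*}=1$ in any association scheme):
\[
\eta^{2}=k_{1}+p_{1,1}^{1}\eta+p_{1,1}^{2}\theta+p_{1,1}^{3}\psi.
\]
Substituting the expression for $\psi$ and rearranging,
\[
(p_{1,1}^{2}-p_{1,1}^{3})\,\theta=\eta^{2}-(p_{1,1}^{1}-p_{1,1}^{3})\eta-(k_{1}-p_{1,1}^{3}).
\tag{$\ast$}
\]
For the third equation, apply Eq.~\eqref{eq-int-num} with $(i,j)=(1,2)$ to $v$, noting $p_{1,2}^{0}=0$:
\[
\eta\theta=p_{1,2}^{1}\eta+p_{1,2}^{2}\theta+p_{1,2}^{3}\psi.
\]
Again substituting $\psi=-(1+\eta+\theta)$ and rearranging,
\[
\bigl(\eta-p_{1,2}^{2}+p_{1,2}^{3}\bigr)\,\theta=(p_{1,2}^{1}-p_{1,2}^{3})\eta-p_{1,2}^{3}.
\tag{$\ast\ast$}
\]

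The final step is to eliminate $\theta$ from $(\ast)$ and $(\ast\ast)$. To avoid division (and the need to handle $p_{1,1}^{2}=p_{1,1}^{3}$ as a separate case) I will multiply $(\ast)$ by $\bigl(\eta-p_{1,2}^{2}+p_{1,2}^{3}\bigr)$ and $(\ast\ast)$ by $\bigl(p_{1,1}^{2}-p_{1,1}^{3}\bigr)$, so that both left-hand sides equal $(p_{1,1}^{2}-p_{1,1}^{3})(\eta-p_{1,2}^{2}+p_{1,2}^{3})\theta$. Equating the right-hand sides and expanding produces the claimed identity
\[
\eta^{3}-\bigl(p_{1,1}^{1}+p_{1,2}^{2}-p_{1,1}^{3}-p_{1,2}^{3}\bigr)\eta^{2}+\bigl[\,\cdots\,\bigr]\eta+\bigl[\,\cdots\,\bigr]=0,
\]
with coefficients exactly as stated in the lemma. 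Since this polynomial identity was obtained by multiplication, it holds regardless of whether $p_{1,1}^{2}-p_{1,1}^{3}$ or $\eta-p_{1,2}^{2}+p_{1,2}^{3}$ vanishes, covering every non-trivial eigenvalue $\eta$.

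The only potentially delicate step is the bookkeeping in the final expansion; the conceptual content is entirely captured by the three identities $Jv=0$, $A_1^{2}v=(\dots)v$, and $A_1A_2v=(\dots)v$. I do not expect a genuine obstacle, since everything reduces to the standard fact that in a commutative association scheme each simple module is one-dimensional, and the intersection numbers give explicit structure constants.
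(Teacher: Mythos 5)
Your proof is correct, and it takes a genuinely different (and arguably cleaner) route than the paper. The paper works purely at the matrix level: starting from $A_1^2=\sum_t p_{1,1}^t A_t$, it eliminates $A_3$ via $\sum_i A_i=J$, multiplies by $A_1$ to get a cubic matrix identity, eliminates $A_2$ by combining with the squared identity, and only at the very end applies the resulting polynomial relation (which still carries $I$- and $J$-terms) to an eigenvector $v$ with $Jv=0$. You instead invoke the commutativity of the Bose--Mesner algebra to pass immediately to a common eigenvector with eigenvalue triple $(\eta,\theta,\psi)$, use $1+\eta+\theta+\psi=0$ to eliminate $\psi$, and then eliminate $\theta$ from the two scalar relations coming from $A_1^2$ and $A_1A_2$. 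The cross-multiplication trick in your elimination is valid precisely because it produces an identity that holds even when the multipliers vanish, so no case split is needed. I carried out your final expansion: writing $q=p_{1,2}^2-p_{1,2}^3$, $r=p_{1,1}^1-p_{1,1}^3$, $s=k_1-p_{1,1}^3$, $t=p_{1,1}^2-p_{1,1}^3$, $u=p_{1,2}^1-p_{1,2}^3$, $w=p_{1,2}^3$, the identity reads $\eta^3-(r+q)\eta^2+(qr-s-tu)\eta+(qs+tw)=0$, which matches the lemma's coefficients exactly. The paper's route has the virtue of needing no appeal to simultaneous diagonalization; yours has the virtue of manipulating scalars rather than matrices and making the elimination visibly symmetric. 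One minor point worth stating explicitly in a final write-up: every non-trivial eigenvalue of $A_1$ is indeed attained on a \emph{common} eigenvector of $A_1,A_2,A_3$ orthogonal to $\mathbf{1}$, which is what licenses the passage to the triple $(\eta,\theta,\psi)$ in the first place.
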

\begin{proof}
By Eq. \eqref{eq-int-num} for intersection numbers we have
\[A_1^{2} = p^{1}_{1,1}A_1+p_{1,1}^{2}A_2+p_{1,1}^{3}A_3+k_1I.\]
We can eliminate $A_3$ using Eq. \eqref{eq-constit-sum}.
\begin{equation}\label{eq-eig-comp-1}
A_1^{2}= (p^{1}_{1,1} - p_{1,1}^{3})A_1+(p_{1,1}^{2}-p_{1,1}^{3})A_2+(k_1-p_{1,1}^{3})I+p_{1,1}^{3}J.
\end{equation}
Multiply previous equation by $A_1$ and use Eq. \eqref{eq-int-num}.
\begin{equation}\label{eq-eig-comp-2}
\begin{multlined}
A_1^{3} = (p^{1}_{1,1} - p_{1,1}^{3})A_1^{2}+(k_1-p^{3}_{1,1})A_1+p_{1,1}^{3}k_1J+\\
+(p^{2}_{1,1} - p_{1,1}^{3})((p_{1,2}^{1}-p_{1,2}^{3})A_1+(p_{1,2}^{2}-p_{1,2}^{3})A_2+p_{1,2}^{3}J-p_{1,2}^{3}I).
\end{multlined}
\end{equation}
Combining Eq. \eqref{eq-eig-comp-1} and \eqref{eq-eig-comp-2} we eliminate $A_2$ as well.

\[A_1^{3} - (p_{1,2}^{2}-p_{1,2}^{3})A_1^2 = (p^{1}_{1,1} - p_{1,1}^{3})A_1^{2}+(k_1-p^{3}_{1,1})A_1+p_{1,1}^{3}k_1J -  \]
\[ - (p_{1,2}^{2}-p_{1,2}^{3})(p^{1}_{1,1} - p_{1,1}^{3})A_1 - (p_{1,2}^{2}-p_{1,2}^{3})((k_1-p_{1,1}^{3})I+p_{1,1}^{3}J)+\]
\[+(p^{2}_{1,1} - p_{1,1}^{3})(p_{1,2}^{1}-p_{1,2}^{3})A_1+(p^{2}_{1,1} - p_{1,1}^{3})(p_{1,2}^{3}J-p_{1,2}^{3}I).\]
Suppose that $v$ is an eigenvector of $A_1$, which is different from the all-ones vector, and $\eta$ be the corresponding eigenvalue. Then $Jv = 0$ and $A_1v = \eta v$, so non-trivial eigenvalue $\eta$ is a root of the polynomial
\[ \eta^{3} - (p_{1,1}^{1}+p_{1,2}^{2}-p_{1,1}^{3}-p_{1,2}^{3})\eta^2+\]
\[+\left( (p_{1,2}^{2}-p_{1,2}^{3})(p^{1}_{1,1} - p_{1,1}^{3}) - (p^{2}_{1,1} - p_{1,1}^{3})(p_{1,2}^{1}-p_{1,2}^{3})- (k_1-p^{3}_{1,1})\right)\eta+\]
\[+\left( (p_{1,2}^{2}-p_{1,2}^{3})(k_1-p_{1,1}^{3})+(p^{2}_{1,1} - p_{1,1}^{3})p_{1,2}^{3}\right). \]

\end{proof}

\begin{proposition}\label{assoc-spectral-radius}
Fix $\varepsilon>0$. Let $\mathfrak{X}$ be an association scheme of rank 4. Suppose that the parameters of $\mathfrak{X}$ satisfy $\frac{1}{\varepsilon}\leq k_1$ and $p^{3}_{1,i}\leq \varepsilon k_1$ for $i = 1,2$. Then the zero-weight spectral radius $\xi(X_1)$ of $X_1$ satisfies
\[\xi(X_1)\leq \frac{p_{1,1}^{1}+p_{1,2}^{2}+\sqrt{(p_{1,1}^{1} - p_{1,2}^{2})^2+4p_{1,1}^{2}p_{1,2}^{1}}}{2}+25\varepsilon^{1/3} k_1. \]
\end{proposition}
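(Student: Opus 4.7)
The plan is to examine the cubic polynomial $P(\eta)$ satisfied by the non-trivial eigenvalues of $A_1$ given in Lemma~\ref{rank-4-roots}, and to apply a perturbation argument treating $p_{1,1}^{3}$ and $p_{1,2}^{3}$ as small. Let $\tilde P(\eta)$ denote the simplified cubic obtained by setting $p_{1,1}^{3} = p_{1,2}^{3} = 0$:
\[\tilde P(\eta) = \eta^{3} - (p_{1,1}^{1}+p_{1,2}^{2})\eta^{2} + (p_{1,1}^{1}p_{1,2}^{2} - p_{1,1}^{2}p_{1,2}^{1} - k_1)\eta + p_{1,2}^{2}k_1.\]
A direct polynomial division yields the key identity
\[\tilde P(\eta) = \eta\, Q(\eta) - k_1(\eta - p_{1,2}^{2}), \qquad Q(\eta) := \eta^{2} - (p_{1,1}^{1}+p_{1,2}^{2})\eta + (p_{1,1}^{1}p_{1,2}^{2} - p_{1,1}^{2}p_{1,2}^{1}),\]
whose larger root is precisely the quantity $M$ appearing in the proposition. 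Denoting the smaller root of $Q$ by $m$, one has $M+m = p_{1,1}^{1} + p_{1,2}^{2} \geq 0$, hence the crucial inequality $|m| \leq M$.

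The first step is to bound the roots of $\tilde P$. At any root $\tilde\eta$, the identity above gives $\tilde\eta\,Q(\tilde\eta) = k_1(\tilde\eta - p_{1,2}^{2})$. If $\tilde\eta > M$, then since $M \geq p_{1,2}^{2}$, one has $Q(\tilde\eta) = (\tilde\eta - M)(\tilde\eta - m) \geq (\tilde\eta - M)^{2}$ while $Q(\tilde\eta) \leq k_1$, which gives $\tilde\eta \leq M + \sqrt{k_1}$. For a negative root I would substitute $\tilde\eta = -s$ with $s>0$, turning the identity into $s(s+M)(s+m) = k_1(s+p_{1,2}^{2})$; a two-subcase analysis on the sign of $m$, combined with $|m|\leq M$ and the a priori bound $s \leq p_{1,2}^{2} \leq M$ in the regime where $s$ is small, yields $s \leq M + \sqrt{2k_1}$. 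Roots in $[m,M]$ trivially satisfy $|\tilde\eta|\leq M$. Altogether, every root of $\tilde P$ satisfies $|\tilde\eta| \leq M + \sqrt{2k_1}$.

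The second step is to transfer this bound from $\tilde P$ to the true cubic $P$ via Theorem~\ref{poly-approx} with $n=3$. Using $p_{1,i}^{3}\leq \varepsilon k_1$ together with $p_{1,j}^{s}\leq k_1$, the differences of the three non-leading coefficients of $P$ and $\tilde P$ are bounded by $2\varepsilon k_1$, $3\varepsilon k_1^{2}$, and $3\varepsilon k_1^{2}$, respectively, while the parameter $M_{\mathrm{approx}}$ of Theorem~\ref{poly-approx} satisfies $M_{\mathrm{approx}} \leq 2k_1$; routine arithmetic then gives $\varepsilon_{\mathrm{approx}} \leq 6(47\varepsilon k_1^{3})^{1/3} < 22\varepsilon^{1/3}k_1$. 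The hypothesis $k_1 \geq 1/\varepsilon$ implies $\sqrt{2k_1} \leq 3\varepsilon^{1/3}k_1$, so every non-trivial eigenvalue $\eta$ of $A_1$ satisfies
\[|\eta| \leq M + \sqrt{2k_1} + 22\varepsilon^{1/3}k_1 \leq M + 25\varepsilon^{1/3}k_1,\]
as required. The main obstacle is the case analysis for negative roots of $\tilde P$: without the identity $|m|\leq M$, a naive squaring argument would produce a correction of order $\sqrt{k_1\cdot |m|}$, which could exceed $25\varepsilon^{1/3}k_1$ when $m$ is very negative (as happens when $p_{1,1}^{2}p_{1,2}^{1}$ dominates $p_{1,1}^{1}p_{1,2}^{2}$). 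Once that point is in place, the rest of the argument is bookkeeping.
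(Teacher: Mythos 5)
Your route is genuinely different from the paper's, and it does lead to the stated bound, but two points in the write-up need tightening before the argument is airtight.

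The paper compares against the reference polynomial $x^3 - (p_{1,1}^1+p_{1,2}^2)x^2 + (p_{1,1}^1p_{1,2}^2 - p_{1,1}^2p_{1,2}^1)x$, i.e.\ it \emph{also} drops the $-k_1\eta$ and $p_{1,2}^2 k_1$ terms, making the constant term zero. The roots are then immediately $0$, $M$, and $m$, with $|m|\le M$ following from $M+m=p_{1,1}^1+p_{1,2}^2\ge 0$; there is nothing further to analyze. The price is a larger perturbation (of order $k_1$ in the linear coefficient and $k_1^2$ in the constant), which is absorbed via $1/k_1\le\varepsilon$. You keep the $k_1$-terms in $\tilde P$, which shrinks the perturbation but forces you to bound the roots of a genuine cubic, and that is where the argument is thin. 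First, the treatment of negative roots is only gestured at (``the a priori bound $s\le p_{1,2}^2$ in the regime where $s$ is small'' does not parse as a step); the clean version is: if $s\le M$ there is nothing to prove, and if $s>M$ then $(s+M)(s+m)=k_1(s+p_{1,2}^2)/s\le 2k_1$ (using $p_{1,2}^2\le M<s$) while $(s+M)(s+m)\ge(s+M)(s-M)=s^2-M^2$ (using $m\ge -M$), whence $s\le\sqrt{M^2+2k_1}\le M+\sqrt{2k_1}$. Second, and more substantively, you implicitly assume all roots of $\tilde P$ are real; Theorem~\ref{poly-approx} does not hand you that. It is true --- $\tilde P$ changes sign at $-\infty$, $0$, $M$, $+\infty$ once $p_{1,2}^2>0$ and $M>p_{1,2}^2$, and the degenerate cases factor by inspection --- but it must be verified, or else sidestepped by running the bound for $x\in\mathbb{C}$ directly: from $xQ(x)=k_1(x-p_{1,2}^2)$ one gets, for any root with $|x|>M\ge|m|$, that $|Q(x)|\ge(|x|-M)^2$ and $|xQ(x)|\le 2k_1|x|$, yielding $|x|\le M+\sqrt{2k_1}$ uniformly. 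With those two repairs your proof is complete. The net trade-off: the paper avoids analyzing a cubic at the cost of a cruder reference polynomial; your route keeps a more faithful reference but must pay for it with a nontrivial root bound (and should handle complex roots when doing so).
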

\begin{proof}
By Lemma \ref{rank-4-roots}, any non-trivial eigenvalue is a root of the polynomial
\[\eta^3+a_1\eta^2+a_2\eta+a_3 := \eta^{3} - (p_{1,1}^{1}+p_{1,2}^{2}-p_{1,1}^{3}-p_{1,2}^{3})\eta^2+\]
\[+\left( (p_{1,2}^{2}-p_{1,2}^{3})(p^{1}_{1,1} - p_{1,1}^{3}) - (p^{2}_{1,1} - p_{1,1}^{3})(p_{1,2}^{1}-p_{1,2}^{3})- (k_1-p^{3}_{1,1})\right)\eta+\]
\[+\left( (p_{1,2}^{2}-p_{1,2}^{3})(k_1-p_{1,1}^{3})+(p^{2}_{1,1} - p_{1,1}^{3})p_{1,2}^{3}\right). \]
Observe, that for
\[ b_1  = -(p_{1,1}^{1}+p_{1,2}^{2}), \quad b_2 = p_{1,2}^2 p_{1,1}^{1} - p_{1,1}^{2} p_{1,2}^{1}, \quad b_3 = 0,\]
the following inequalities are true
\[ |a_1 - b_1|\leq 2\varepsilon k_1, \quad
|a_2 - b_2| \leq \left(4\varepsilon+2\varepsilon^2+\frac{1}{k_1}\right)k_1^2, \quad   |a_3 - b_3|\leq 2k_1^2\leq 2\varepsilon k_1^{3}.\]

Denote by $\nu_1, \nu_2, \nu_3$ non-trivial eigenvalues of $A_1$. Therefore, by Theorem \ref{poly-approx}, we can enumerate the roots $x_1, x_2, x_3$ of  $x^3+b_1x^2+b_2x+b_3 = 0$ so that $|\nu_i - x_i|\leq \delta$, where
\[\delta = 6 \left(2\varepsilon k_1(4k_1)^2+6\varepsilon k_1^2 (4k_1)+2\varepsilon k_1^3\right)^{1/3}\leq 25\varepsilon^{1/3} k_1.\]

\end{proof}

\begin{proposition}\label{assoc-x12-approx}
Fix $\varepsilon>0$. Let $\mathfrak{X}$ be an association scheme of rank 4. Suppose that the parameters of $\mathfrak{X}$ satisfy $\frac{1}{\varepsilon}\leq k_1$, $p_{1,1}^{2}\leq \varepsilon k_1$ and $p^{3}_{i,j}\leq \varepsilon \min(k_{i}, k_{j})$ for $\{i,j\} = \{1,2\}$. Then the zero-weight spectral radius $\xi(X_{1,2})$ of $X_{1,2}$ satisfies
\[\xi(X_{1, 2})\leq \frac{p_{1,1}^{1}+p_{1,2}^{2}+p_{2,2}^{2}+\sqrt{(p_{2,2}^{2}+ p_{1,2}^{2}- p_{1,1}^{1})^2+4p_{1,2}^{2}p_{2,2}^{1}}}{2}+25\varepsilon^{1/3} (k_1+k_2). \]
\end{proposition}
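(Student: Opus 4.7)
My plan is to imitate the strategy used for Proposition~\ref{assoc-spectral-radius}: produce an explicit cubic polynomial annihilating the non-trivial eigenvalues of $A_{1,2}$, show that under the hypotheses its coefficients are close (in a quantitative sense) to those of the cubic $\eta(\eta^2-\operatorname{tr}(M)\eta+\det M)$ where $M=\begin{pmatrix} p_{1,1}^1 & p_{2,2}^1 \\ p_{1,2}^2 & p_{1,2}^2+p_{2,2}^2\end{pmatrix}$, and then invoke Theorem~\ref{poly-approx}. Note that the RHS of the statement is exactly the larger eigenvalue $\lambda_+$ of $M$, and since $M\ge 0$ entrywise, $\lambda_+$ is its Perron root and $|\lambda_-|\le\lambda_+$; so once I can locate the roots of the approximating cubic as $0,\lambda_+,\lambda_-$, every root of the exact cubic will be within $O(\varepsilon^{1/3}(k_1+k_2))$ of one of these, which is what I need.

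To derive the cubic, I would work in the Bose-Mesner algebra on the orthogonal complement of the all-ones vector. Using $A_iA_j=\sum_s p_{i,j}^s A_s$ together with $A_3=-I-A_1-A_2$ on this complement, I can write $A_{1,2}\cdot I$, $A_{1,2}\cdot A_1$, and $A_{1,2}\cdot A_2$ as linear combinations of $I,A_1,A_2$. The resulting $3\times 3$ matrix $T$ represents left multiplication by $A_{1,2}$ on the complement, and $p_T(\eta)=\det(\eta I-T)$ is the cubic I want: its three roots are exactly the three non-trivial eigenvalues of $A_{1,2}$. Once the coefficients of $p_T(\eta)$ are expanded in terms of the $p_{i,j}^s$, I substitute $p_{1,1}^2=p_{1,2}^3=0$ (and consequently $p_{1,2}^1=(k_2/k_1)p_{1,1}^2=0$, using the duality $k_s p_{i,j}^s=k_i p_{s,j}^i$) to produce an approximating cubic $b_1\eta^2+b_2\eta+b_3$ with errors $|a_i-b_i|$ of order $\varepsilon\cdot(k_1+k_2)^i$.

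The crucial and most delicate point is that the direct substitution still leaves the coefficients dependent on $p_{1,1}^3$ and $p_{2,2}^3$, which need not be small. To make the approximating cubic factor through $\det(\eta I-M)$ I will use the further linear identities $\sum_j p_{1,j}^1=k_1-1$, $\sum_j p_{2,j}^2=k_2$, $\sum_j p_{1,j}^2=k_1$, $\sum_j p_{2,j}^1=k_2$ (valid in any homogeneous coherent configuration), combined with the duality relations that turn $p_{1,3}^1,p_{2,3}^1,p_{1,3}^2,p_{2,3}^2$ into $\tfrac{k_3}{k_i}$ multiples of $p_{1,1}^3,p_{1,2}^3,p_{1,2}^3,p_{2,2}^3$ respectively. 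These allow me to re-express the surplus $p_{1,1}^3$ and $p_{2,2}^3$ terms in the coefficients of $p_T(\eta)$ as $p_{1,1}^1$, $p_{1,2}^2$, $p_{2,2}^2$, $p_{2,2}^1$ terms plus error $O(\varepsilon k_1)$ times $k_1,k_2$ factors, so that after the bookkeeping the approximating polynomial is exactly $\eta\bigl(\eta^2-\operatorname{tr}(M)\eta+\det M\bigr)$, with the $p_{1,1}^3,p_{2,2}^3$ contributions absorbed into the $|a_i-b_i|$ estimates.

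Finally I apply Theorem~\ref{poly-approx}. With $M_0\le 2(k_1+k_2)$ controlling the modulus of the coefficients, the theorem yields an enumeration of the roots of $p_T(\eta)$ and of the approximating cubic such that corresponding roots differ by at most $6\bigl(|a_1-b_1|(2M_0)^2+|a_2-b_2|(2M_0)+|a_3-b_3|\bigr)^{1/3}$, and using $k_1\ge 1/\varepsilon$ to absorb the residual $1/k_1$ factors exactly as in Proposition~\ref{assoc-spectral-radius} this is bounded by $25\varepsilon^{1/3}(k_1+k_2)$. Combined with $|\lambda_\pm|\le\lambda_+$ this gives the stated bound on $\xi(X_{1,2})$. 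I expect the main obstacle to be the bookkeeping in the third paragraph: matching the exact coefficients of $p_T(\eta)$ with $\eta(\eta^2-\operatorname{tr}(M)\eta+\det M)$ requires carefully combining several identities among the $p_{i,j}^s$, not just substituting the ``small'' parameters by zero.
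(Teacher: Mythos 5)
Your plan is essentially the paper's: compute the cubic satisfied by the nontrivial eigenvalues of $A_1+A_2$ (the paper does this by directly expanding $(A_1+A_2)^2$ and $(A_1+A_2)^3$ and eliminating $A_2$, which is algebraically the same as your $3\times3$ left-multiplication matrix on the quotient), observe that the coefficients match $\eta(\eta^2-\operatorname{tr}(M)\eta+\det M)$ up to $O(\varepsilon (k_1+k_2)^i)$ errors, and finish with Theorem~\ref{poly-approx}. Your identification of $\lambda_+$ as the Perron root of $M$ and the check that it equals the right-hand side of the proposition are both correct.

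However, the ``crucial and most delicate point'' you raise in your third paragraph rests on a misreading of the hypothesis. The condition $p^{3}_{i,j}\leq \varepsilon \min(k_{i}, k_{j})$ for $\{i,j\} = \{1,2\}$ is intended to mean all $i,j\in\{1,2\}$ (including $i=j$); this is exactly how the paper uses it (``by assumptions of this proposition $0\leq p_{1,1}^{3}+2p_{1,2}^{3}+p_{2,2}^{3} \leq 2\varepsilon k$''), and it is how the proposition is invoked later (in Proposition~\ref{assoc-diam2} the bounds $p_{1,1}^3\le\varepsilon k_1$ and $p_{2,2}^3\le\varepsilon k_2$ are in force via Lemma~\ref{assoc-param-ineq}). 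Thus $p_{1,1}^3$ and $p_{2,2}^3$ \emph{are} small, and no clever re-expression via the sum identities is needed: the terms involving them go directly into the $O(\varepsilon(k_1+k_2)^i)$ error absorbed by Theorem~\ref{poly-approx}. Once you drop that detour, your proof is the paper's proof in slightly different notation.
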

\begin{notation} We will use the non-asymptotic notation $y = \square(x)$ to say that $|y|\leq x$.
\end{notation}
\begin{proof}
The proof is similar to the proofs of Proposition \ref{assoc-spectral-radius} and Lemma \ref{rank-4-roots}.
Denote $k = k_1+k_2$.  By Eq.~\eqref{eq-int-num} we have
\begin{equation}
(A_1+A_2)^{2} = (p_{1,1}^{1}+2p_{1,2}^{1}+p_{2,2}^{1})A_1+(p_{1,1}^{2}+2p_{1,2}^{2}+p_{2,2}^{2})A_2+(p_{1,1}^{3}+2p_{1,2}^{3}+p_{2,2}^{3})A_3+kI.
\end{equation}
Note that, by assumptions of this proposition $0\leq p_{1,1}^{3}+2p_{1,2}^{3}+p_{2,2}^{3} \leq 2\varepsilon k$ and $0\leq p_{1,2}^{1} = \frac{k_2}{k_1}p_{1,1}^2\leq \varepsilon k$. Using Eq. \eqref{eq-constit-sum} we eliminate $A_3$.
\begin{equation}\label{eq-eigx12-sq}
\begin{multlined}
(A_1+A_2)^{2} = (p_{1,1}^{1}+p_{2,2}^{1}+2\square(\varepsilon k))A_1+ (2p_{1,2}^{2}+p_{2,2}^{2}+2\square(\varepsilon k))A_2+
\\
+(k+2\square(\varepsilon k))I+2\square(\varepsilon k)J = \\
 = (p_{1,1}^{1}+p_{2,2}^{1}+2\square(\varepsilon k))(A_1+A_2)+2\square(\varepsilon k^2)I+2\square(\varepsilon k)J+\\
 +\left(2p_{1,2}^{2}+p_{2,2}^{2}-p_{1,1}^{1}-p_{2,2}^{1}+4\square(\varepsilon k)\right)A_2.
\end{multlined}
\end{equation}
Denote by $R = 2p_{1,2}^{2}+p_{2,2}^{2}-p_{1,1}^{1}-p_{2,2}^{1}+4\square(\varepsilon k)$ the last coefficient in Eq.~\eqref{eq-eigx12-sq}. Multiplying Eq. \eqref{eq-eigx12-sq} by $(A_1+A_2)$ we get
\begin{equation}\label{eq-eigx12-cub}
\begin{multlined}
(A_1+A_2)^{3} = (p_{1,1}^{1}+p_{2,2}^{1}+2\square(\varepsilon k))(A_1+A_2)^{2}+2\square(\varepsilon k^2)(A_1+A_2)+2\square(\varepsilon k^2)J+\\
+R\left(  (p_{1,2}^{1}+p_{2,2}^{1}+\square(\varepsilon k))(A_1+A_2)+(k_2+\square(\varepsilon k))I+\square(\varepsilon k)J \right)+\\
+R(p_{1,2}^{2}+p_{2,2}^{2}-p_{1,2}^{1}+p_{2,2}^{1})A_2 = \\
 = (p_{1,1}^{1}+p_{2,2}^{1}+2\square(\varepsilon k))(A_1+A_2)^{2}+9\square(\varepsilon k^2)(A_1+A_2)+5\square(\varepsilon k^2)J+\\
 + (2p_{1,2}^{2}+p_{2,2}^{2}-p_{1,1}^{1}-p_{2,2}^{1})p_{2,2}^{1}(A_1+A_2)+3\square(\varepsilon k^{2})I\\
 +R(p_{1,2}^{2}+p_{2,2}^{2} - p_{1,2}^{1} - p_{2,2}^{1})A_2.
\end{multlined}
\end{equation}

Multiply Eq. \eqref{eq-eigx12-sq} by $p_{1,2}^{2}+p_{2,2}^{2} - p_{1,2}^{1} - p_{2,2}^{1} = p_{1,2}^{2}+p_{2,2}^{2} - p_{2,2}^{1}+\square(\varepsilon k)$ to eliminate $A_2$ from Eq.~\eqref{eq-eigx12-cub}. Observe first 
\[(2p_{1,2}^{2}+p_{2,2}^{2} - p_{1,1}^{1} - p_{2,2}^{1})p_{2,2}^{1} - (p_{1,1}^{1}+p_{2,2}^{1})(p_{1,2}^{2}+p_{2,2}^{2} - p_{2,2}^{1}) = p_{1,2}^{2}p_{2,2}^{1} - p_{1,1}^{1}p_{1,2}^{2} - p_{1,1}^{1}p_{2,2}^{2}.\]
Thus, 
\begin{equation}\label{eq-poly-for-x12}
\begin{multlined}
(A_1+A_2)^{3} - (p_{1,1}^{1}+p_{2,2}^{2} +p_{1,2}^{2})(A_1+A_2)^{2}-\\-(p_{1,2}^{2}p_{2,2}^{1} - p_{1,1}^{1}p_{1,2}^{2} - p_{1,1}^{1}p_{2,2}^{2})(A_1+A_2)+\\
+3\square(\varepsilon k)(A_1+A_2)^{2}+13\square(\varepsilon k^{2})(A_1+A_2)+5\square(\varepsilon k^{3})I+8\square(\varepsilon k^{2})J = 0.
\end{multlined}
\end{equation}
Consider
\[ b_1 = -(p_{1,1}^{1}+p_{2,2}^{2} +p_{1,2}^{2}), \quad b_2 =  p_{1,1}^{1}(p_{1,2}^{2} + p_{2,2}^{2}) - p_{1,2}^{2}p_{2,2}^{1}, \quad b_3 = 0.\]
Then, Eq. \eqref{eq-poly-for-x12} implies that any non-trivial eigenvalue $\eta$ of $X_{1,2}$ satisfies the polynomial $\eta^{3}+a_1\eta^{2}+a_2\eta+a_3 = 0$, where
\[ |a_1 - b_1|\leq 3\varepsilon k, \quad |a_2 - b_2|\leq 13\varepsilon k^2, \quad |a_3 - b_3|\leq 5\varepsilon k^{3}.\]

Denote by $\nu_1, \nu_2, \nu_3$ non-trivial eigenvalues of $A_1+A_2$. Therefore, by Theorem \ref{poly-approx} we can enumerate the roots $x_1, x_2, x_3$ of  $x^3+b_1x^2+b_2x+b_3 = 0$ so that $|\nu_i - x_i|\leq \delta$, where
\[\delta = 6 \left(3\varepsilon k(2k)^2+13\varepsilon k^2 (2k)+5\varepsilon k^3\right)^{1/3}\leq 25\varepsilon^{1/3} k.\]
Here we use that, by Eq. \eqref{eq-sum-param}, inequalities $b_{1} = p_{1,1}^{1}+(p_{2,1}^{2}+p_{2,2}^{2})\leq k$ and $b_2\leq k^{2}$ hold.

\end{proof}

\subsection{Reduction to the case of a constituent with a clique geometry}\label{sec-coh-reduction} 

\begin{lemma}\label{assoc-k2-large}
Let $\mathfrak{X}$ be an association scheme of rank 4 and diameter $2$ with the constituents ordered by degree. If $k_2\geq \gamma k_3$, then every pair of distinct vertices is distinguished by at least $\frac{\gamma(n-1)}{6}$ vertices.
\end{lemma}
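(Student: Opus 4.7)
The approach is a weighted-average argument combined with Babai's triangle inequality for distinguishing numbers (Lemma~\ref{babai-dist}). Since $\mathfrak{X}$ is homogeneous, the distinguishing count $D(u,v)$ depends only on $i = c(u,v)$; call it $D(i)$. A direct count—using that $u$ and $v$ themselves always distinguish the pair, and that the non-distinguishing $w \notin \{u,v\}$ are exactly those with $c(u,w) = c(v,w)$—gives
\[
D(i) \;=\; n \;-\; \sum_{j=1}^{3} p_{j,j}^{\,i}.
\]

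The first step is to compute the weighted sum $S := \sum_{i=1}^{3} k_i D(i)$. Using Eq.~\eqref{eq-sum-param} in the form $k_i p_{j,j}^{\,i} = k_j p_{i,j}^{\,j}$, together with $\sum_{i=0}^{3} p_{i,j}^{\,j} = k_j$ and $p_{0,j}^{\,j} = 1$, one obtains the identity $\sum_{i=1}^{3} k_i p_{j,j}^{\,i} = k_j(k_j - 1)$. Substituting gives $S = n(n-1) - \sum_{j=1}^{3}(k_j^{2} - k_j) = n^{2} - 1 - \sum_{j=1}^{3} k_j^{2}$. The hypothesis now enters: since $k_j \leq k_3$ for all $j$, $\sum_{j} k_j^{2} \leq k_3 (n-1)$, whence
\[
S \;\geq\; (n-1)(n+1-k_3) \;=\; (n-1)(k_1+k_2+2) \;\geq\; (n-1)\,k_2 \;\geq\; \gamma (n-1)\,k_3 \;\geq\; \tfrac{\gamma}{3}(n-1)^{2},
\]
where the last step uses $k_3 \geq (n-1)/3$, which holds because $k_3$ is the largest of $k_1,k_2,k_3$.

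Dividing by $\sum_{i=1}^{3} k_i = n-1$ then yields $\max_{i} D(i) \geq \gamma(n-1)/3$. To pass from this to a pointwise bound, observe that since the association scheme has diameter $2$, every non-diagonal constituent $X_i$ satisfies $\dist_{i}(j) \leq 2$ for all $j$, so Lemma~\ref{babai-dist} gives $D(j) \leq 2\,D(i)$ for all $i,j \in \{1,2,3\}$. Applied to the color maximizing $D(\cdot)$, this gives $D_{\min}(\mathfrak{X}) \geq \tfrac{1}{2}\max_{i} D(i) \geq \gamma(n-1)/6$, which is the claim. I do not anticipate any real obstacle here; the only piece needing care is the identity $\sum_{i=1}^{3} k_i p_{j,j}^{\,i} = k_j(k_j-1)$, which is a routine consequence of the structure equations for a homogeneous coherent configuration.
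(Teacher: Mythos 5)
Your proof is correct, but takes a genuinely different route from the paper's. You compute the exact distinguishing count $D(i) = n - \sum_{j=1}^3 p_{j,j}^i$, form the weighted sum $\sum_i k_i D(i)$, and use the identity $\sum_i k_i p_{j,j}^i = k_j(k_j-1)$ for all three edge colors $j$ to get a clean closed form $n^2 - 1 - \sum_j k_j^2$; the hypothesis enters via $\sum k_j^2 \leq k_3(n-1)$, and then $k_3 \geq (n-1)/3$. The paper instead works only with the single identity for $j=2$: from $k_2(k_2-1) = \sum_i k_i p_{2,2}^i$ and $k_3 \geq k_2$ it deduces $\min(p_{2,2}^2, p_{2,2}^3) \leq k_2/2$, then bounds $D(i)$ from below by the (possibly smaller) quantity $|N_2(u) \triangle N_2(v)| = 2(k_2 - p_{2,2}^i)$ for the color $i$ achieving that minimum, getting $D(i) \geq k_2 \geq \gamma k_3 \geq \gamma(n-1)/3$. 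Both proofs finish identically, invoking Lemma~\ref{babai-dist} and the diameter-$2$ hypothesis to pass from one well-distinguished color to $D_{\min} \geq \gamma(n-1)/6$. Your averaging argument is more symmetric and produces an exact identity for $\sum_i k_i D(i)$ that would port to higher rank; the paper's argument is shorter and more local, requiring only the middle-degree constituent's neighborhood structure. Both are sound, and the constants come out the same.
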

\begin{proof}
Since $\mathfrak{X}$ has diameter 2 it is enough to show that some pair of vertices is distinguished by at least $\frac{\gamma(n-1)}{3}$ vertices, as then result follows by Lemma \ref{babai-dist}. Observe that vertices $u,v$ connected by an edge of color $i$ are distiguished by at least $|N_2(u)\bigtriangleup N_2(v)| = 2(k_2 - p_{2,2}^i)$ vertices. At the same time we have
\[k_2 (k_2 - 1) = \sum\limits_{i = 1}^{3} k_i p_{2, 2}^{i}.\]
Thus, $k_3\geq k_2$ implies $k_2\geq p_{2,2}^{2}+p_{2,2}^{3}$, so $\min(p_{2,2}^{2}, p_{2,2}^{3})\leq \frac{k_2}{2}$. Hence, vertices connected by an edge of color $i$, which minimize $p_{2,2}^{i}$, are distinguished by at least $k_2\geq \gamma k_3\geq \frac{\gamma (n-1)}{3}$ vertices.
\end{proof}
\begin{remark}
Note that the result of the lemma can be also derived directly from Proposition~6.3 proven by Babai in \cite{Babai-annals}.
\end{remark}

\begin{lemma}\label{assoc-param-ineq}
Let $\mathfrak{X}$ be an association scheme of rank 4 and diameter $2$ with the largest degree equal $k_3$. Fix some $\varepsilon>0$. 
Assume $\max(k_1, k_2)\leq \frac{\varepsilon}{2} k_3$. Then
\begin{equation}\label{eq-statement-2} 
\quad p_{1,2}^{3}\leq \varepsilon k_1, \quad p_{1,1}^{3}\leq \varepsilon k_1, \quad p_{2,2}^{3}\leq \varepsilon k_2, \quad \text{and}
\end{equation}
\begin{equation}
p^{1}_{3,3}\geq k_3(1-\varepsilon), \quad p^{2}_{3,3}\geq k_3(1-2\varepsilon).
\end{equation}

\end{lemma}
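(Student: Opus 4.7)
The entire lemma is a direct consequence of the two standard intersection-number identities from Eq.~\eqref{eq-sum-param}, namely the row-sum relation $\sum_{j} p_{i,j}^{s} = k_i$ and the symmetry relation $p_{i,j}^{s} k_{s} = p_{s,j}^{i} k_{i}$, combined with the trivial bound $p_{a,b}^{c} \le \min(k_a,k_b)$ (which holds because the set counted by $p_{a,b}^{c}$ sits inside both the color-$a$ neighborhood of $u$ and the color-$b$ neighborhood of $v$).

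For the three upper bounds in Eq.~\eqref{eq-statement-2}, my plan is to use the symmetry identity to transport each $p_{i,j}^{3}$ to an intersection number with superscript $1$ or $2$, which introduces a denominator of $k_3$. Explicitly,
\[
p_{1,2}^{3} = \frac{k_1}{k_3}\,p_{3,2}^{1} \le \frac{k_1 k_2}{k_3}, \qquad p_{1,1}^{3} = \frac{k_1}{k_3}\,p_{3,1}^{1} \le \frac{k_1^{2}}{k_3}, \qquad p_{2,2}^{3} = \frac{k_2}{k_3}\,p_{3,2}^{2} \le \frac{k_2^{2}}{k_3}.
\]
The hypothesis $\max(k_1,k_2) \le (\varepsilon/2)k_3$ turns each right-hand side into at most $(\varepsilon/2)k_i$, which is stronger than what the statement requires.

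For the two lower bounds on $p_{3,3}^{s}$ with $s\in\{1,2\}$, I would apply the row-sum identity with $i=3$:
\[
p_{3,3}^{s} = k_3 - p_{3,0}^{s} - p_{3,1}^{s} - p_{3,2}^{s}.
\]
The term $p_{3,0}^{s}$ vanishes, because $c(w,v)=0$ forces $w=v$ and then $c(u,v)$ would be $3$, contradicting $c(u,v)=s\in\{1,2\}$. The remaining two terms are at most $k_1$ and $k_2$ respectively, each bounded by $(\varepsilon/2)k_3$, so $p_{3,3}^{s} \ge (1-\varepsilon)k_3$ for both $s=1,2$; in particular $p_{3,3}^{2}\ge (1-2\varepsilon)k_3$ as claimed (the slightly weaker constant presumably matches the form used downstream). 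There is no real obstacle here; the lemma is a short bookkeeping exercise once one sets up the two identities correctly.
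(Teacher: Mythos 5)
Your proof is correct and follows essentially the same route as the paper's: both rest on the same two identities in Eq.~\eqref{eq-sum-param} together with the trivial bound $p_{a,b}^{c}\le\min(k_a,k_b)$, and both arrive at $p_{i,j}^{3}\le k_ik_j/k_3\le(\varepsilon/2)k_i$ and $p_{3,3}^{s}\ge k_3-k_1-k_2\ge(1-\varepsilon)k_3$. The only cosmetic difference is that for $p_{i,i}^{3}$ the paper invokes the counting relation $k_i(k_i-p_{i,i}^{i}-1)\ge k_3p_{i,i}^{3}$ rather than the symmetry identity you use, but this yields the same $k_i^{2}/k_3$ bound, so the arguments coincide in substance.
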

\begin{proof}
 Note that $k_i(k_i-p_{i,i}^{i}-1)\geq k_3p^{3}_{i,i}$, so $p^{3}_{i,i}\leq \varepsilon k_i/2$ for $i=1,2$. Additionally, $p^{1}_{2,3}\leq k_2\leq \varepsilon k_3/2$. Thus, by Eq.~\eqref{eq-sum-param}, $p_{1,2}^{3} = \frac{k_1}{k_3}p_{2,3}^{1} \leq \varepsilon k_1/2$.

Finally, by Eq.~\eqref{eq-sum-param} we have $p_{3,3}^i+p_{3, 2}^i+p_{3, 1}^i = k_3$ and $p_{3,j}^{i}\leq k_j\leq \varepsilon k_3/2$ for $j\in \{1, 2\}$. Therefore, $k_{3,3}^{i}\geq (1-\varepsilon)k_3$.

\end{proof}
\begin{remark} Note that in inequalities from Eq. \eqref{eq-statement-2} are still true if we replace $\varepsilon$  by $\varepsilon/2$.
\end{remark}

We need another generalization of Lemma \ref{max-min}, corollaries of which will be used several times.

\begin{lemma}\label{assoc-tr} Let $\mathfrak{X}$ be an association scheme. Suppose that there exists a triangle with sides of colors $(s,r,t)$. Then  
\[  p_{i,j}^{s}+p_{j,l}^{r}\leq k_j+p_{i,l}^{t}. \]
\end{lemma}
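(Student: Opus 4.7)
The plan is to realize the intersection numbers $p_{i,j}^s$ and $p_{j,l}^r$ as counting two subsets of a common color-$j$ neighborhood, and then finish by inclusion-exclusion. Fix a triangle witnessing the hypothesis, i.e.\ vertices $x,y,z$ with $c(x,y)=s$, $c(y,z)=r$, $c(x,z)=t$. Since $\mathfrak{X}$ is an association scheme, every color is symmetric, so $N_j(y):=\{w : c(y,w)=j\}$ is a well-defined set of size $k_j$.

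Next I would introduce the two counting sets
\[
A=\{w : c(x,w)=i,\; c(w,y)=j\}, \qquad B=\{w : c(y,w)=j,\; c(w,z)=l\}.
\]
By the very definition of the intersection numbers relative to the pair $(x,y)$ of color $s$ and the pair $(y,z)$ of color $r$, we have $|A|=p_{i,j}^{s}$ and $|B|=p_{j,l}^{r}$. The crucial observation is that both $A$ and $B$ are contained in $N_j(y)$, so $|A\cup B|\le k_j$, which by inclusion–exclusion gives
\[
p_{i,j}^{s}+p_{j,l}^{r}=|A|+|B|=|A\cup B|+|A\cap B|\le k_j+|A\cap B|.
\]

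The final step is to bound $|A\cap B|$ by $p_{i,l}^{t}$. Here I would use the triangle, not just the colors: a vertex $w\in A\cap B$ satisfies $c(x,w)=i$ and $c(w,z)=l$, so $A\cap B$ is a subset of the set counted by $p_{i,l}^{t}$ relative to the pair $(x,z)$ of color $t$. Hence $|A\cap B|\le p_{i,l}^{t}$, and the desired inequality follows.

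There is no real obstacle: the lemma is a clean two-line inclusion–exclusion argument once one notices that $A$ and $B$ sit inside the color-$j$ neighborhood of the shared vertex $y$ of the triangle, and that their intersection is controlled by the third side of the triangle through the color $t$. The only thing to be slightly careful about is invoking symmetry of colors (valid in association schemes) so that ``$c(w,y)=j$'' really means $w\in N_j(y)$.
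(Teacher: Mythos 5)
Your proof is correct and amounts to essentially the same elementary counting argument as the paper's. The paper phrases it by applying the set-difference inclusion $N_i(u)\setminus N_l(w)\subseteq (N_i(u)\setminus N_j(v))\cup(N_j(v)\setminus N_l(w))$ to a triangle $u,v,w$ with sides colored $(s,r,t)$; after translating set differences into cardinalities this is algebraically the same estimate as your inclusion--exclusion on $A,B\subseteq N_j(y)$ with $|A\cap B|\le p_{i,l}^t$.
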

\begin{proof}
Apply the inclusion
\[N_i(u)\setminus N_l(w) \subseteq (N_i (u)\setminus N_j(v))\cup (N_j(v)\setminus N_l(w)),\] to vertices $u,v,w$, where $c(u,v) = s$, $c(v,w) = r$, and $c(u,w) = t$.
\end{proof}
\begin{corollary}\label{cor1} Suppose an association scheme $\mathfrak{X}$ of rank 4 satisfies $\max(k_1, k_2)\leq \frac{\varepsilon}{2} k_3$. Suppose also that there exists a triangle with sides $(s,t,3)$.
Then,  $p_{i,j}^{s}\leq p^{t}_{i,3}+\varepsilon k_j$, where $i,j\in \{1,2\}$.
\end{corollary}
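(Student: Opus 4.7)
The plan is to deduce Corollary \ref{cor1} as a direct consequence of Lemma \ref{assoc-tr} applied to a suitably arranged triangle of colors $(s,t,3)$, combined with the size estimates from Lemma \ref{assoc-param-ineq}. The idea is that when the degrees $k_1,k_2$ are negligible compared to $k_3$, any $3$-colored edge is ``almost saturated'' by $3$-colored neighbors, so the slack term $k_j - p_{j,3}^{3}$ on the right-hand side of the triangle inequality becomes of order $\varepsilon k_j$.

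Concretely, I would take the promised triangle with sides of colors $s,t,3$ and orient it as $c(u,v) = s$, $c(v,w) = 3$, $c(u,w) = t$, so that $v$ is the ``middle'' vertex in the sense of Lemma \ref{assoc-tr}. Applying that lemma with these colors and with the free index $l$ set to $3$ gives
\[
p_{i,j}^{s} + p_{j,3}^{3} \;\leq\; k_j + p_{i,3}^{t},
\]
and therefore $p_{i,j}^{s}\leq p_{i,3}^{t} + (k_j - p_{j,3}^{3})$.

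It remains to show $k_j - p_{j,3}^{3}\leq \varepsilon k_j$ for $j\in\{1,2\}$. Using the symmetry relation $p_{j,3}^{3}k_3 = p_{3,3}^{j}k_j$ from Eq.~\eqref{eq-sum-param}, this reduces to a lower bound on $p_{3,3}^{j}$. Lemma \ref{assoc-param-ineq} together with the hypothesis $k_1+k_2 \leq \varepsilon k_3$ yields $p_{3,3}^{j}\geq k_3(1-\varepsilon)$ (the proof of that lemma actually gives this bound for both $j=1$ and $j=2$, since $p_{3,3}^{j}\geq k_3 - k_1 - k_2$). Therefore $p_{j,3}^{3}\geq (1-\varepsilon)k_j$, which gives $k_j - p_{j,3}^{3}\leq \varepsilon k_j$ and completes the proof.

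There is no serious obstacle here; the only point requiring mild care is the correct orientation of the triangle so that the lemma produces the desired color indices, and the realization that the index $l$ should be chosen to equal $3$, so that the correction term $k_j - p_{j,l}^{r}$ becomes controllable via the large-degree constituent.
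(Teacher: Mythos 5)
Your proposal is correct and follows essentially the same route as the paper: set $r=l=3$ in Lemma \ref{assoc-tr} applied to the (re-oriented) triangle, then bound $k_j - p_{j,3}^3 = k_j\bigl(1 - p_{3,3}^j/k_3\bigr) \leq \varepsilon k_j$ using the estimates from Lemma \ref{assoc-param-ineq}. Your observation that the proof of Lemma \ref{assoc-param-ineq} in fact yields $p_{3,3}^j \geq (1-\varepsilon)k_3$ for both $j=1,2$ (even though the lemma states $(1-2\varepsilon)$ for $j=2$) is correct and is indeed the bound the paper's one-line proof implicitly uses.
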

\begin{proof}
Take $r = l = 3$ in Lemma \ref{assoc-tr} and note that by Lemma~\ref{assoc-param-ineq} we have $p_{j,3}^{3} = \frac{k_j}{k_3}p_{3,3}^{j}\geq (1-\varepsilon)k_j$.
\end{proof}

\begin{corollary}\label{cor2} Suppose an association scheme $\mathfrak{X}$ has rank 4 and diameter~2. Moreover, assume $\max(k_1, k_2)\leq \frac{\varepsilon}{2} k_3$.
Then,  $p_{i,j}^{s}\leq p^{s}_{i,3}+\varepsilon k_j$, where $i,j,s \in \{1,2\}$.
\end{corollary}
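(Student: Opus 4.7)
The proof strategy mirrors Corollary \ref{cor1}: apply Lemma \ref{assoc-tr} to a suitable triangle, taking $r = l = 3$ so that the term $p_{j,3}^{3}$ is close to $k_{j}$ by Lemma \ref{assoc-param-ineq}. The difference is that Cor \ref{cor1} assumed the existence of a triangle of the form $(s, t, 3)$ as part of the hypothesis, whereas here I must produce a triangle of the form $(s, 3, s)$ from the diameter-2 assumption on $\mathfrak{X}$.

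First, I would observe that for $s \in \{1, 2\}$ the constituent graph $X_s$ has diameter exactly $2$: it has diameter $\leq 2$ by the diameter-2 assumption on the scheme, and it cannot have diameter $1$, since a complete constituent graph would force all off-diagonal edges to have color $s$ and contradict the rank being $4$. Consequently, for any pair $u \neq v$ with $c(u,v) = 3$, there exists some $w$ with $c(u, w) = c(w, v) = s$, which gives $p_{s,s}^{3} > 0$. Equivalently, a triangle with color sequence $(s, 3, s)$ exists.

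With the triangle in hand, I apply Lemma \ref{assoc-tr} with $r = 3$ and $t = s$, choosing $l = 3$, to obtain
\[
p_{i,j}^{s} + p_{j, 3}^{3} \leq k_{j} + p_{i, 3}^{s}.
\]
By Lemma \ref{assoc-param-ineq} (more precisely, by its refinement described in the remark following it, which allows us to replace $\varepsilon$ by $\varepsilon/2$ in \eqref{eq-statement-2}) we have $p_{3,3}^{j} \geq (1-\varepsilon) k_{3}$, and therefore $p_{j,3}^{3} = \frac{k_{j}}{k_{3}} p_{3,3}^{j} \geq (1-\varepsilon) k_{j}$. Substituting yields the claimed bound $p_{i,j}^{s} \leq p_{i,3}^{s} + \varepsilon k_{j}$.

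I do not expect any serious obstacle here: the only non-routine step is verifying existence of the triangle $(s, 3, s)$, which is forced by the combination of the rank-$4$ and diameter-$2$ hypotheses. The remainder is a direct application of Lemma \ref{assoc-tr} together with the intersection-number estimates already established in Lemma \ref{assoc-param-ineq}.
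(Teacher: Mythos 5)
Your proof is correct and takes essentially the same route as the paper: apply Lemma~\ref{assoc-tr} with $r=l=3$, $t=s$, and use $p_{j,3}^3 = \frac{k_j}{k_3}p_{3,3}^j \geq (1-\varepsilon)k_j$ from Lemma~\ref{assoc-param-ineq}. Your elaboration of the triangle existence (that $X_s$ has diameter exactly $2$, so a pair at $c$-color $3$ has a common $X_s$-neighbor) is exactly the content the paper compresses into the phrase ``as diameter of $\mathfrak{X}$ is 2 and $s\neq 3$''. One small inaccuracy: the remark after Lemma~\ref{assoc-param-ineq} refines the inequalities for $p_{1,1}^3, p_{1,2}^3, p_{2,2}^3$, not for $p_{3,3}^j$; the bound $p_{3,3}^j \geq (1-\varepsilon)k_3$ you need (for both $j=1,2$) comes directly from the final line of the proof of that lemma rather than from the remark. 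This does not affect the validity of your argument.
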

\begin{proof}
Take $r = l = 3$ and $s = t$ in Lemma \ref{assoc-tr}. Observe, that a triangle with sides of colors $(s,s,3)$ exists, as diameter of $\mathfrak{X}$ is 2 and $s\neq 3$.
\end{proof}

\begin{corollary}\label{cor3} Suppose an association scheme $\mathfrak{X}$ has rank 4 and diameter~2. Assume $\max(k_1, k_2)\leq \frac{\varepsilon}{2} k_3$.
Then, $2p_{i,j}^{s}\leq k_j+\varepsilon k_i$, where $i,j,s\in \{1,2\}$. Moreover, if $k_1\leq k_2$, then $2p_{1,2}^{2}\leq (1+\varepsilon)k_1$.
\end{corollary}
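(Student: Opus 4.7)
\emph{Proof proposal.} The key point is that in an association scheme we have the symmetry $p_{i,j}^s = p_{j,i}^s$, which lets us apply Corollary~\ref{cor2} in either order of the first two indices. For the first inequality, fix $i,j,s \in \{1,2\}$. Applying Corollary~\ref{cor2} to $p_{j,i}^s$ (i.e., with the roles of the two lower indices swapped) yields
\[
p_{i,j}^s \;=\; p_{j,i}^s \;\leq\; p_{j,3}^s + \varepsilon k_i.
\]
Now combine this with the row-sum identity $p_{j,0}^s + p_{j,1}^s + p_{j,2}^s + p_{j,3}^s = k_j$ from Eq.~\eqref{eq-sum-param}. Noting that $p_{j,0}^s = \delta_{j,s} \in \{0,1\}$, we obtain $p_{j,i}^s + p_{j,3}^s \leq k_j$, whence
\[
2 p_{i,j}^s \;=\; p_{i,j}^s + p_{j,i}^s \;\leq\; p_{j,i}^s + p_{j,3}^s + \varepsilon k_i \;\leq\; k_j + \varepsilon k_i.
\]

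For the moreover-part we need the slightly sharper bound $(1+\varepsilon) k_1$, so we cannot simply specialize the first inequality (that would give only $k_2 + \varepsilon k_1$). The idea is to route the application of Corollary~\ref{cor2} through the intersection number $p_{2,2}^1$ and then return to $p_{1,2}^2$ via the two symmetry relations of Eq.~\eqref{eq-sum-param}, which each contribute a factor $k_1/k_2$ or $k_2/k_1$ that cancels. Concretely, Eq.~\eqref{eq-sum-param} gives
\[
p_{1,2}^2 \;=\; \frac{k_1}{k_2}\, p_{2,2}^1, \qquad p_{2,3}^1 \;=\; \frac{k_2}{k_1}\, p_{1,3}^2.
\]
Applying Corollary~\ref{cor2} with $(i,j,s) = (2,2,1)$ yields $p_{2,2}^1 \leq p_{2,3}^1 + \varepsilon k_2$, and substituting both identities above collapses the $k_1/k_2$ factors to give
\[
p_{1,2}^2 \;\leq\; \frac{k_1}{k_2}\left(\frac{k_2}{k_1} p_{1,3}^2 + \varepsilon k_2\right) \;=\; p_{1,3}^2 + \varepsilon k_1.
\]
Finally, Eq.~\eqref{eq-sum-param} applied to the row with upper index $2$ and lower index $1$ gives $p_{1,1}^2 + p_{1,2}^2 + p_{1,3}^2 = k_1$ (since $p_{1,0}^2 = \delta_{1,2} = 0$), so $p_{1,2}^2 + p_{1,3}^2 \leq k_1$. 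Adding this to $p_{1,2}^2 \leq p_{1,3}^2 + \varepsilon k_1$ gives $2p_{1,2}^2 \leq (1+\varepsilon) k_1$.

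There is no real obstacle here; the statement is a short bookkeeping consequence of Corollary~\ref{cor2} together with the two standard identities of Eq.~\eqref{eq-sum-param}. The only mildly tricky part is recognizing that for the second inequality one must go through $p_{2,2}^1$ (not through $p_{1,2}^2$ directly), so that the $k_j$ in Corollary~\ref{cor2}'s error term ends up multiplied by $k_1/k_2$ rather than appearing with $k_2$. The hypothesis $k_1 \leq k_2$ is only used to fix which of the two constituents plays the role of the smaller-degree one and does not intervene further in the calculation.
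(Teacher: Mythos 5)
Your proof is correct, but it takes a mildly different route from the paper's, so let me compare. For the first inequality the paper instantiates Lemma~\ref{assoc-tr} directly with $t = 3$, $r = s$, $l = i$ (triangle of colors $(s,s,3)$), which immediately gives $p_{i,j}^s + p_{j,i}^s \le k_j + p_{i,i}^3 \le k_j + \varepsilon k_i$ in a single step. You instead go through Corollary~\ref{cor2}, which is a \emph{different} instantiation of the same Lemma~\ref{assoc-tr} (there $r = l = 3$, $s = t$), and then have to add the row-sum identity to recover the factor of $2$; the conclusion and the ingredients are the same, but the paper's choice of parameters folds the two steps into one. For the ``moreover'' part, the paper simply specializes the already-proved first inequality to $(i,j,s) = (2,2,1)$ to get $2p_{2,2}^1 \le (1+\varepsilon)k_2$ and then applies the single scaling $p_{1,2}^2 = \tfrac{k_1}{k_2} p_{2,2}^1$; you re-derive a bound on $p_{1,2}^2$ from scratch via Corollary~\ref{cor2} and two scaling identities. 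Your remark that one ``cannot simply specialize the first inequality'' is therefore a bit misleading: one cannot specialize it \emph{to} $p_{1,2}^2$, but one can specialize it to $p_{2,2}^1$ and scale, which is exactly what the paper does and is shorter. Still, your insight that the argument must route through $p_{2,2}^1$ (exploiting $k_1 \le k_2$) is the right one, and your observation that $k_1 \le k_2$ only selects the role of the smaller constituent is correct; both your route and the paper's are valid elementary applications of the same triangle inequality.
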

\begin{proof}
 Take $t = 3$, $s = r$ and $l = i$ in Lemma \ref{assoc-tr} and we use Lemma \ref{assoc-param-ineq}. Take $s=1$ and $i=j=2$, then $2p_{2,2}^{1}\leq (1+\varepsilon)k_2$, so $2p_{1,2}^{2} \leq \frac{k_1}{k_2} (1+\varepsilon)k_2 = (1+\varepsilon)k_1$.
\end{proof}

We state the following simple corollary of Metsch's criteria (Theorem \ref{Metsch}) and of the classification of graphs with the smallest eigenvalue $\geq -2$ (Theorems \ref{geom-eig-more2} and \ref{geom-eig-2}), which will be used in the proof of Proposition \ref{assoc-diam2}. We would like to note, that the lemma below is considerably easier then the classification of graphs with the smallest eigenvalue $\geq -2$, as the fact that the given graph is a line graph comes for free.

\begin{lemma}\label{assoc-clique-geom}
Let $\mathfrak{X}$ be an association scheme of rank $r\geq 4$ and diameter 2 on $n$ vertices.
\begin{enumerate}
 \item Assume that for some $i$ the constituent $X_i$ satisfies the assumptions of Theorem \ref{Metsch} for $m=2$. Then $X_i$ is a strongly regular graph with smallest eigenvalue $-2$, or is the line graph of a regular triangle-free graph.
 
 In particular, $X_i$ satisfies the assumptions of Theorem \ref{Metsch} for $m=2$, if we have one of the following
 \begin{enumerate}
 \item $\lambda(X_i) = p_{i,i}^{i}\geq\frac{2}{5}k_i$ and $\mu(X_i) = \max\{p_{i,i}^{j}: 0<j\neq i\} \leq \frac{1}{30}k_i$, or
 \item $\lambda(X_i) = p_{i,i}^{i}\geq(\frac{1}{2}-\frac{1}{20})k_i$ and $\mu(X_i) = \max\{p_{i,i}^{j}: 0<j\neq i\} \leq \frac{1}{11}(1+\frac{1}{100})k_i$. 
\end{enumerate}

\item Assume for some $i$ the complement $\overline{X_{i}}$ of $X_i$ satisfies the conditions of Theorem \ref{Metsch} for $m=2$. If $n\geq 12$, then graph $\overline{X_{i}}$ is strongly regular with smallest eigenvalue $-2$.  
\end{enumerate} 
\end{lemma}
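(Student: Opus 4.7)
The plan is to reduce the statement to Metsch's Theorem~\ref{Metsch}, Remarks~\ref{m2-line} and \ref{geom-smallest-eigenvalue}, and the classification Theorems~\ref{geom-eig-more2}--\ref{geom-eig-2}. I would start with the ``in particular'' sufficient conditions, which are pure arithmetic: every constituent $X_i$ of an association scheme is edge-regular with $\lambda(X_i) = p^i_{i,i}$, so in Theorem~\ref{Metsch} one may take $\lambda^{(1)} = \lambda^{(2)} = p^i_{i,i}$. Hypotheses (1)--(2) of Metsch become trivial, while (3)--(4) with $m=2$ collapse to
\[p^i_{i,i} > 3\mu(X_i) - 4 \quad\text{and}\quad k_i < 3p^i_{i,i} - 3\mu(X_i) + 6,\]
which I would verify by direct substitution of the bounds in (a) and (b). Case (a) leaves ample slack in both; case (b) is deliberately tight, with condition (4) reducing to $k_i < \tfrac{1182}{1100}\, k_i + 6$.

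For the main assertion of part 1, Metsch's theorem equips $X_i$ with a clique geometry through at most two lines per vertex, each edge lying on exactly one line. Since $X_i$ is regular, connected, and non-complete (rank $\geq 4$ forces $k_i < n-1$), Remark~\ref{m2-line} gives $X_i = L(Y)$ for some graph $Y$, while Remark~\ref{geom-smallest-eigenvalue} yields smallest eigenvalue at least $-2$. If strict, Theorem~\ref{geom-eig-more2} would force $X_i$ to be complete (excluded) or an odd polygon (excluded by $k_i \geq 3$ in the regime of (a) or (b)). Hence the smallest eigenvalue equals $-2$ exactly, and Theorem~\ref{geom-eig-2}(ii), applied to the edge-regular $X_i$, delivers the advertised dichotomy: strongly regular, or the line graph of a regular triangle-free graph.

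For part 2, I would observe that $X_i$ being edge-regular is equivalent to $\overline{X_i}$ being co-edge-regular. Running the same Metsch + smallest-eigenvalue argument on $\overline{X_i}$ shows that its smallest eigenvalue equals $-2$. Theorem~\ref{geom-eig-2}(iii) then lists four possible shapes for $\overline{X_i}$: strongly regular, an $m_1\times m_2$-grid, or one of two exceptional regular subgraphs of the Clebsh graph on $8$ or $12$ vertices. The bound $n\geq 12$ immediately excludes the $8$-vertex Clebsh subgraph; for the non-square $m_1\times m_2$-grid (with $m_1 \neq m_2$) and the $12$-vertex Clebsh subgraph I would substitute their known intersection parameters into Metsch conditions (3) and (4) for $\overline{X_i}$ --- in particular for the grid $\lambda^{(1)} = \min(m_1,m_2) - 2$ and $\lambda^{(2)} = \max(m_1,m_2) - 2$ --- and show, combined with $n\geq 12$ and the co-edge-regularity coming from the constituent structure, that only the strongly-regular option survives. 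A square grid is just the lattice graph $L_2(m)$, which is strongly regular.

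The arithmetic verification of Metsch's conditions under (a) and (b), as well as the reduction in part 1, is routine. The main obstacle I anticipate is the endgame of part 2, namely ruling out the non-square grid and the $12$-vertex Clebsh subgraph using only $n \geq 12$ and the constraints coming from $\overline{X_i}$ being the complement of an association-scheme constituent; this is the one place in the argument that does not directly follow from quoting a prior theorem.
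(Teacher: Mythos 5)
Your reduction of part 1 and the ``in particular'' arithmetic is correct and matches the paper's proof: setting $\lambda^{(1)}=\lambda^{(2)}=p^i_{i,i}$ collapses Metsch's conditions (3) and (4) for $m=2$ to $\lambda > 3\mu-4$ and $k_i < 3\lambda-3\mu+6$, both of which hold under (a) and (b), and the ensuing chain through Remarks~\ref{m2-line} and~\ref{geom-smallest-eigenvalue} and Theorems~\ref{geom-eig-more2},~\ref{geom-eig-2} is exactly the paper's route.

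The gap is in your endgame for part 2. You propose to rule out the non-square $m_1\times m_2$-grid by substituting $\lambda^{(1)}=\min(m_1,m_2)-2$, $\lambda^{(2)}=\max(m_1,m_2)-2$, $\mu=2$ into Metsch's conditions (3) and (4) for $m=2$. But both conditions then reduce to $\max(m_1,m_2) < 2\min(m_1,m_2)-4$, and for any pair with $m_1 < m_2 < 2m_1-4$ (e.g.\ a $10\times 15$ grid) the grid \emph{does} satisfy Metsch for $m=2$. So this arithmetic cannot exclude the non-square grid, and ``$n\geq 12$'' doesn't save you either. The paper's argument is structural rather than parametric: if $\overline{X_i}$ were a non-square grid $L(K_{m_1,m_2})$, then grid-edges joining vertices in the same $U_1$-class have $m_1-2$ common grid-neighbours while edges in the same $U_2$-class have $m_2-2$, and since the number of common $\overline{X_i}$-neighbours is an invariant of the color of a pair, no single color of $\mathfrak{X}$ can contain edges of both types. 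Consequently the colors inside the grid split along the row/column dichotomy, so one of them is a disjoint union of cliques and hence disconnected --- contradicting the primitivity of $\mathfrak{X}$, which follows from the diameter-2 hypothesis (as noted after Definition~\ref{def-assoc-diam}). That primitivity argument is the missing ingredient; the phrase ``constraints coming from $\overline{X_i}$ being the complement of an association-scheme constituent'' gestures in the right direction, but the concrete Metsch-parameter check you actually propose would fail.
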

\begin{proof}
\begin{enumerate}
\item First, we check that $X_i$ satisfies the conditions of Theorem \ref{Metsch} for $m=2$.
\begin{enumerate}
\item Compute 
$$\lambda(X_i) \geq \frac{2}{5}k_i > \frac{3}{30}k_i\geq 3\mu(X_i)\quad \text{and}\quad 3\lambda(X_i)-3\mu(X_i)\geq \left(\frac{6}{5}-\frac{1}{10}\right)k_i>k_i.$$
\item Compute 
\[\lambda(X_i) - 3\mu(X_i) \geq \left(\frac{1}{2} - \frac{1}{20} - \frac{3}{11}\left(1+\frac{1}{100}\right)\right)k_i = \frac{48}{275}k_i>0,\]
\[3\lambda(X_i)-3\mu(X_i)\geq k_i+\left(\frac{1}{2} - \frac{3}{20} - \frac{3}{11}\left(1+\frac{1}{100}\right)\right)k_i = k_i+\frac{41}{550}k_i>k_i.\]
\end{enumerate}
Now, if $X_i$ satisfies the conditions of Theorem \ref{Metsch} for $m=2$, by Remark \ref{m2-line}, it is a line graph and by Remark \ref{geom-smallest-eigenvalue} the smallest eigenvalue of $X_i$ is at least $-2$. Moreover, recall that $X_i$ is edge-regular. If the smallest eigenvalue is $>-2$, by Theorem~\ref{geom-eig-more2}, $X_i$ is a complete graph or an odd polygon. It is impossible, since $\mathfrak{X}$ has diameter $2$ and at least three non-empty constituents.  If the smallest eigenvalue is $-2$, then by Theorem~\ref{geom-eig-2}, we get that $X_i$ is a strongly regular graph, or is the line graph of a regular triangle-free graph.

\item Since $\overline{X_{i}}$ satisfies the conditions of Theorem \ref{Metsch}, by Remarks \ref{m2-line} and \ref{geom-smallest-eigenvalue}, graph $\overline{X_{i}}$ is a line graph and its smallest eigenvalue is at least $-2$. Note also that $\overline{X_{i}}$ is co-edge-regular. If the smallest eigenvalue is $>-2$, by Theorem \ref{geom-eig-more2}, $\overline{X_{i}}$ is complete graph or an odd polygon. It is impossible, since $\mathfrak{X}$ has diameter $2$ and at least three non-empty constituents. If the smallest eigenvalue is $-2$, then by Theorem \ref{geom-eig-2}, we get that $\overline{X_{i}}$ is strongly regular, an $m_1\times m_2$-grid or one of the two regular subgraphs of the Clebsh graph with 8 or 12 vertices.

Assume $\overline{X_{i}}$ is a $m_1\times m_2$-grid with $m_1\neq m_2$ and $m_1, m_2>1$. That is, $\overline{X_{i}}$ is the line graph of $K_{m_1, m_2}$. Denote the parts of $K_{m_1, m_2}$ by $U_1$ and $U_2$ with $|U_i| = m_i$. By symmetry we can assume $m_1< m_2$. Compute $n = m_1m_2$, $k_1+k_2 = m_1+m_2 - 2$ and $\mu = 2$. Observe that, two edges of $K_{m_1, m_2}$ that share a vertex in $U_i$ have $m_i - 2$ common neighbors. Since $m_1 \neq m_2$, two pairs of edges in $K_{m_1, m_2}$ that share a vertex in $U_1$ and $U_2$, respectively, cannot be connected by the same color in  $\mathfrak{X}$. Thus, in particular, $\mathfrak{X}$ is not primitive.  

Therefore, $\overline{X_{i}}$ is strongly regular.  
\end{enumerate}
\end{proof}

\begin{proposition}\label{assoc-diam2}
Let $\mathfrak{X}$ be an association scheme of rank 4 on $n$ vertices  with diameter~2 and with constituents ordered by degree. Recall that $q(X_i) = \max\{p_{i,i}^{j}: j\in [3]\}$ is the maximal number of common neighbours of two distinct vertices in $X_i$. Fix $\varepsilon = 10^{-16}$. Then one of the following is true.
\begin{enumerate}
  \item Every pair of distinct vertices is distinguished by at least $\frac{\varepsilon k_3}{4} \geq \frac{\varepsilon(n-1)}{12}$ vertices.
  \item The zero-weight spectral radius $\xi(X_i)$ of $X_i$ satisfies $q(X_i)+\xi(X_i)\leq (1-\varepsilon)k_i$ for $i = 1$ or $i = 2$.  
  \item The graph $X_1$ is either strongly regular with smallest eigenvalue $-2$, or the line graph of a connected regular triangle-free graph. 
  \item The graph $X_2$ is either, strongly regular with smallest eigenvalue $-2$ or the line graph of a connected regular triangle-free graph. Moreover, $k_2\leq \frac{101}{100}k_1$.
  \item If $n\geq 12$, then the graph $X_{1,2}$ is strongly regular with smallest eigenvalue $-2$ and $k_2\leq \frac{101}{100}k_1$.
  \end{enumerate}  
\end{proposition}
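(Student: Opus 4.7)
The plan is to dichotomize on how large $k_3$ is relative to the other degrees. If $k_2 \geq (\varepsilon/2) k_3$, Lemma \ref{assoc-k2-large} with $\gamma = \varepsilon/2$ yields distinguishing number $\geq \varepsilon(n-1)/12$; since $k_3 \geq (n-1)/3$, this is $\geq \varepsilon k_3/4$, giving conclusion~(1). Assume henceforth $\max(k_1, k_2) \leq (\varepsilon/2) k_3$; this places us in the hypothesis of Lemma \ref{assoc-param-ineq}, so all $p^3_{i,j}$ with $i,j \in\{1,2\}$ are $O(\varepsilon \min(k_i,k_j))$, and Corollaries \ref{cor1}--\ref{cor3} constrain the remaining intersection numbers $p^s_{i,j}$ with $i,j,s\in\{1,2\}$.

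With these constraints, Proposition \ref{assoc-spectral-radius} applies to $X_1$ and, by relabelling colors $1 \leftrightarrow 2$, to $X_2$, bounding $\xi(X_i)$ in terms of the $p^s_{i,j}$ with $i,j,s\in\{1,2\}$ up to an $O(\varepsilon^{1/3}) k_i$ error; moreover Lemma \ref{assoc-param-ineq} gives $q(X_i) = \max(p_{i,i}^1, p_{i,i}^2) + O(\varepsilon) k_i$. Using the crude bound $\tfrac{1}{2}(A+B) + \tfrac{1}{2}\sqrt{(A-B)^2 + 4CD} \le \max(A,B) + \sqrt{CD}$ on the spectral expression, I verify that conclusion~(2) holds for $X_i$ unless the extremal parameters $p_{i,i}^i$ or $p^{3-i}_{i,3-i}$ are pushed near $k_i$ or $k_i/2$ respectively; and Corollary \ref{cor3} already pins $2 p^2_{1,2} \le (1+\varepsilon)k_1$, so the feasible extremal regimes are few.

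If conclusion~(2) fails for both $X_1$ and $X_2$, these extremal constraints, combined with Corollaries \ref{cor1}--\ref{cor3}, confine the off-diagonal numbers $p^j_{i,i}$ ($j\neq i$) to narrow ranges. In each such regime the relevant constituent meets one of the sufficient conditions (a) or (b) of Lemma \ref{assoc-clique-geom}: if $X_1$ does, conclusion~(3) follows; if $X_2$ does, conclusion~(4) follows. When neither direct route succeeds but $p_{1,1}^2 \le \varepsilon k_1$ (so Proposition \ref{assoc-x12-approx} applies to $X_{1,2}$), the parameter constraints force $X_{1,2} = \overline{X_3}$ to satisfy Metsch's hypothesis with $m=2$, and part~(2) of Lemma \ref{assoc-clique-geom} (using $n \geq 12$) gives conclusion~(5). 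The sharp ratio $k_2/k_1 \le 101/100$ appearing in (4) and (5) is extracted from the identity $p^1_{2,2}k_1 = p^2_{1,2}k_2$ combined with the near-extremal values of $p^1_{2,2}$ and $p^2_{1,2}$ that are forced by the stronger form~(b) of Lemma \ref{assoc-clique-geom}; the $1 + 1/100$ slack in the condition $\mu(X_i) \le \tfrac{101}{1100} k_i$ of that lemma is precisely what gets transported into the $101/100$ ratio.

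The main obstacle is the extensive parameter bookkeeping required to verify that the chosen value $\varepsilon = 10^{-16}$ simultaneously absorbs all the $O(\varepsilon^{1/3})$ spectral-approximation errors, the $O(\varepsilon)$ slack in Corollaries \ref{cor1}--\ref{cor3}, and the margins in the sufficient conditions of Lemma \ref{assoc-clique-geom}, so that the five alternatives exhaust every combination of parameter ranges. A secondary technical difficulty is extracting the sharp $101/100$ bound in (4) and (5) cleanly rather than just some constant $C>1$, which is what dictates using form~(b) rather than the slightly simpler form~(a) of Lemma \ref{assoc-clique-geom} for the relevant constituents.
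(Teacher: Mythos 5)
Your outline captures the broad architecture — reduce via Lemma~\ref{assoc-k2-large} to the regime $\max(k_1,k_2)\leq\frac{\varepsilon}{2}k_3$, invoke Proposition~\ref{assoc-spectral-radius}, and then pass to Metsch/clique-geometry criteria via Lemma~\ref{assoc-clique-geom} — but it misses the central organizing idea of the paper's argument, and it misidentifies the source of the $101/100$ ratio.

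The paper's proof is a three-way case split on the ratio $k_1/k_2$: (Case~1) $k_1 < \frac{1}{900}k_2$; (Case~2) $\frac{1}{900}k_2 \le k_1 \le (1+\varepsilon_3)^{-1}k_2$ with $\varepsilon_3 = \frac{1}{100}$; (Case~3) $k_2\le(1+\varepsilon_3)k_1 = \frac{101}{100}k_1$. In Cases~1 and~2 only conclusions (1), (2), or (3) can arise — the off-diagonal degrees are sufficiently imbalanced that the spectral tool works for $X_1$ unless $X_1$ has a clique geometry. Conclusions (4) and (5) arise \emph{only} from Case~3, so the bound $k_2\le\frac{101}{100}k_1$ in those conclusions is simply the case hypothesis being carried through, not a quantity ``extracted from the identity $p^1_{2,2}k_1 = p^2_{1,2}k_2$'' as you claim. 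Indeed, without the explicit case split you have no mechanism to establish that bound: the intersection-number identity on its own gives no control of $k_2/k_1$ unless you already know the ratio of $p^1_{2,2}$ to $p^2_{1,2}$ to within $1\%$, which is not available. The slack $\frac{1}{11}(1+\frac{1}{100}) = \frac{101}{1100}$ appearing in form~(b) of Lemma~\ref{assoc-clique-geom} was chosen by the authors precisely to absorb the $\varepsilon_3 = \frac{1}{100}$ margin already assumed in Case~3; you have the causality reversed.

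A secondary point: Proposition~\ref{assoc-x12-approx} is \emph{not} used in the proof of this proposition. Within Case~3 the bound $\mu(X_{1,2})\leq 2\varepsilon(k_1+k_2)$ and the bound $\lambda_i(X_{1,2})\leq\frac{1+2\varepsilon}{2}(k_1+k_2)$ both come from the elementary triangle/inclusion argument (Eq.~\eqref{eq1} applied to a $(i,i,3)$-triangle), and conclusion~(5) then follows by checking Metsch's hypotheses for $X_{1,2}$ directly. Proposition~\ref{assoc-x12-approx} enters only in the later Propositions~\ref{assoc-x2-strongly-reg} and~\ref{assoc-x1-strongly-reg}. Without replacing ``verify conclusion~(2) unless extremal parameters are pushed near $k_i$'' with a concrete and exhaustive enumeration such as the $k_1/k_2$ trichotomy, your sketch does not demonstrate that the five alternatives cover all cases, nor does it establish the $101/100$ bound in (4) and (5).
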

\begin{proof}
We may assume that parameters of $\mathfrak{X}$ satisfy $\max(k_1, k_2)\leq \frac{\varepsilon}{2} k_3$, otherwise, by Lemma~\ref{assoc-k2-large} every pair of distinct vertices is distinguished by at least $\frac{\varepsilon k_3}{4}$ vertices. Therefore, all the inequalities provided by Lemma \ref{assoc-param-ineq} hold. 

Thus, by Proposition \ref{assoc-spectral-radius},
\[\xi(X_1)\leq \frac{p_{1,1}^{1}+p_{1,2}^{2}+\sqrt{(p_{1,1}^{1}-p_{1,2}^{2})^{2}+4p_{1,1}^{2}p_{1,2}^{1}}}{2}+\varepsilon_1 k_1, \text{ so}\]
\begin{equation}\label{x1-eq-bound}
\xi(X_1)\leq \max(p_{1,1}^{1}, p_{1,2}^{2})+\sqrt{p_{1,1}^{2} p_{1,2}^{1}}+\varepsilon_1 k_1,
\end{equation}
where $\varepsilon_1  = 25\varepsilon^{1/3}$. We note, by Eq. \eqref{eq-statement-2}, $k_1 \geq \frac{p_{1,1}^3}{\varepsilon}\geq \frac{1}{\varepsilon}$.
Similarly,
\begin{equation}\label{x2-eq-bound}
\xi(X_2) \leq \max(p_{2,2}^{2}, p_{2,1}^{1})+\sqrt{p_{2,2}^{1}p_{1,2}^{2}}+\varepsilon_1 k_2.
\end{equation}

\noindent \textbf{Case 1.} Assume $\gamma k_2> k_1$, where $\gamma = \frac{1}{900}$.
 
\noindent Then $p_{1,1}^2  = \frac{k_1}{k_2} p_{1,2}^{1}\leq \gamma p_{1,2}^{1}$, so $\mu(X_1)\leq \max(\gamma, \varepsilon) k_1  = \frac{1}{900}k_1$. Note, by Corollary \ref{cor3}, inequality $\max(p_{1,1}^{1}, p_{1,2}^{2})\leq \frac{1+\varepsilon}{2}k_1$ holds. Hence, by Eq. \eqref{x1-eq-bound},
\begin{equation}\label{eq-case-1}
q(X_1)+\xi(X_1)\leq ((\varepsilon+ \gamma)k_1+p_{1,1}^{1})+\left(\frac{1+\varepsilon}{2}k_1+\sqrt{\gamma}k_1+\varepsilon_1 k_1\right).
\end{equation}

If $p_{1,1}^{1}<\frac{2}{5}k_1$, then Eq. \eqref{eq-case-1} implies $q(X_1)+\xi(X_1)\leq (1-\varepsilon)k_1$. Otherwise, if $p_{1,1}^{1}\geq\frac{2}{5}k_1$, by Lemma \ref{assoc-clique-geom}, graph $X_1$ satisfies the statement 3 of this proposition.

\noindent \textbf{Case 2.} Assume $k_1 = \gamma k_2$, where $(1+\varepsilon_3)^{-1} \geq \gamma\geq \frac{1}{900}$, for $\varepsilon_3 = \frac{1}{100}$. 
We consider two subcases.

\noindent\textbf{Case 2.1.}
Suppose $p_{2,3}^{1} = 0$, so by Corollary \ref{cor2},
\begin{equation}
p_{2,2}^{1}\leq \varepsilon k_2+p_{2,3}^{1} = \frac{\varepsilon}{\gamma}k_1, \quad p_{1,2}^{2} = \frac{k_1}{k_2} p_{2,2}^{1} \leq \varepsilon k_1,
\end{equation}
\begin{equation}\label{eq212}
\quad p_{1,2}^{1}\leq \varepsilon k_1 + p_{2,3}^{1} = \varepsilon k_1, \quad p_{1,1}^{2} = \frac{k_1}{k_2} p_{1,2}^{1} \leq \varepsilon k_1.
\end{equation} 
 Then,
\begin{equation}
 \max_{i\in [3]}(p_{1,1}^{i})+\max(p_{1,1}^{1}, p_{1,2}^{2})+\sqrt{p_{1,1}^{2} p_{1,2}^{1}} \leq 3\varepsilon k_1 +2p_{1,1}^{1}.
\end{equation} 
 Thus, by Eq.~\eqref{x1-eq-bound}, $q(X_1)+\xi(X_1)<(1-\varepsilon)k_1$  if $p_{1,1}^{1}<\frac{2}{5}k_1$. Alternatively, if $p_{1,1}^{1}\geq \frac{2}{5}k_1$, by Lemma \ref{assoc-clique-geom}, graph $X_1$ satisfies the statement 3 of this proposition, as $\mu(X_1)\leq \varepsilon k_1$ by Eq.~\eqref{eq212}.

\noindent\textbf{Case 2.2.}
 Suppose $p_{2,3}^{1} \neq 0$. 
\begin{enumerate}
\item[]\textbf{Case 2.2.1.} Assume that $p_{1,1}^{1}\geq p_{1,1}^{2}$. 

By Corollary \ref{cor1},
 $$p_{1,2}^{2}\leq p_{1,3}^{1}+\varepsilon k_2 = p_{1,3}^{1}+\frac{\varepsilon}{\gamma}k_1 \quad \text{and} \quad p_{1,1}^{1}\leq p_{1,3}^{1}+\varepsilon k_1.$$
Then 
\begin{equation}\label{eq-case-2211}
\max_{i\in [3]}(p_{1,1}^{i})+p_{1,2}^{2}+\sqrt{\gamma}p_{1,2}^{1}\leq 
p_{1,1}^{1}+p_{1,3}^{1}+p_{1,2}^{1} - (1-\sqrt{\gamma})p_{1,2}^{1}+\left(\varepsilon+\frac{\varepsilon}{\gamma}\right)k_1,
\end{equation}
\begin{equation}
\max_{i\in [3]}(p_{1,1}^{i})+p_{1,1}^{1}+\sqrt{\gamma}p_{1,2}^{1}\leq 
p_{1,3}^{1}+p_{1,1}^{1}+p_{1,2}^{1} - (1-\sqrt{\gamma})p_{1,2}^{1}+\left(\varepsilon+\frac{\varepsilon}{\gamma}\right)k_1 .
\end{equation}

\item[]\textbf{Case 2.2.2.} Assume that $p_{1,1}^{2}\geq p_{1,1}^{1}$. 

By Corollary \ref{cor2}, 
\[p_{1,1}^{2} = \gamma p_{1,2}^{1}\leq \gamma (p_{1,3}^{1}+\varepsilon k_2) \leq \gamma p_{1,3}^{1}+\varepsilon k_1. \]
This implies
\begin{equation}
 \max_{i\in [3]}(p_{1,1}^{i})+p_{1,1}^{1}+\sqrt{\gamma}p_{1,2}^{1}\leq \gamma p_{1,3}^{1}+p_{1,1}^{1}+\sqrt{\gamma}p_{1,2}^{1}+\varepsilon k_1,
\end{equation}
\begin{equation}\label{eq-case-2222}
 \max_{i\in [3]}(p_{1,1}^{i})+p_{1,2}^{2}+\sqrt{\gamma}p_{1,2}^{1}\leq p^{2}_{1, 1}+p_{1,2}^{2}+p_{1,3}^{2} - (1-\sqrt{\gamma})p_{1,2}^{1} +\left(\varepsilon +\frac{\varepsilon}{\gamma}\right)k_1,
\end{equation}
where in Eq.~\eqref{eq-case-2222} we use the inequality $p_{1,2}^{1}\leq p_{1,3}^{2}+\varepsilon k_2$ given by Corollary \ref{cor1}.
\end{enumerate}
That is, using Eq. \eqref{eq-sum-param}, in both subcases by Eq.~\eqref{eq-case-2211}~-~\eqref{eq-case-2222} we get
\[ \max_{i\in [3]}(p_{1,1}^{i})+\left(\max(p_{1,1}^{1}, p_{1,2}^{2})+\sqrt{p_{1,1}^{2} p_{1,2}^{1}}\right) \leq k_1 - (1-\sqrt{\gamma})p_{1,2}^{1} +\left(\varepsilon +\frac{\varepsilon}{\gamma}\right)k_1,\]
\begin{equation}\label{eq-two-case-bound}
q(X_1)+\xi(X_1)\leq k_1 - (1-\sqrt{\gamma})p_{1,2}^{1} +\left(\varepsilon +\frac{\varepsilon}{\gamma}\right)k_1 +\varepsilon_1 k_1.
\end{equation}

\begin{enumerate}
\item[] \textbf{Case 2.2.a.} Suppose $p_{1,2}^{1}>\varepsilon_2 k_1$ for $\varepsilon_2 = \frac{1}{30}$.
\[\varepsilon_2(1-\sqrt{\gamma}) - \varepsilon\left(2+\frac{1}{\gamma}\right)-\varepsilon_1\geq 10^{-4} - 902\varepsilon - 25\varepsilon^{1/3}>0,\]
 so by Eq.~\eqref{eq-two-case-bound},
\begin{equation}\label{eq-e2-small}
q(X_1)+\xi(X_1) \leq \max_{i\in [3]}(p_{1,1}^{i})+\max(p_{1,1}^{1}, p_{1,2}^{2})+\sqrt{p_{1,1}^{2} p_{1,2}^{1}}+\varepsilon_1 k_1 \leq (1-\varepsilon)k_1.
\end{equation}

\item[]\textbf{Case 2.2.b.} Suppose $p_{1,2}^{1}\leq \varepsilon_2 k_1$. 

This implies $p_{1,1}^{2}\leq \varepsilon_2 k_1$, so $\mu(X_1)\leq \frac{1}{30}k_1$. Recall, that by Corollary~\ref{cor3}, the  inequality $\max(p_{2,1}^{2}, p_{1,1}^{1})\leq \frac{1+\varepsilon}{2} k_1 $ holds. Then we have 
\begin{equation}\label{eq-e2-large}
q(X_1)+\xi(X_1)\leq (\varepsilon+\varepsilon_2)k_1+p_{1,1}^{1}+\frac{1+\varepsilon}{2}k_1+ \varepsilon_2k_1+\varepsilon_1 k_1.
\end{equation} 
 Thus, either Eq.~\eqref{eq-e2-large} implies the inequality $q(X_1)+\xi(X_1)<(1-\varepsilon)k_1$, or $p_{1,1}^{1}\geq \frac{2}{5} k_1$ and, by Lemma \ref{assoc-clique-geom}, the graph $X_1$ satisfies the statement 3 of this proposition.
 \end{enumerate}

\noindent \textbf{Case 3.} Suppose that $k_2\leq (1+\varepsilon_3)k_1$, where $\varepsilon_3 = \frac{1}{100}$. 

\noindent In this case we will work with both $X_1$ and $X_2$ in the same way. Additionally, we will need to consider the graph $X_{1,2}$ with the set of vertices $V(X_{1,2}) = V(X_1) = V(X_2)$ and set of edges $E(X_{1,2}) = E(X_1)\cup E(X_2)$. Observe that $X_{1,2}$ is the complement of the constituent graph $X_3$. Graph $X_{1,2}$ is regular of degree $k_1+k_2$, and any two non-adjacent vertices have 
\begin{equation}\label{eq-mux12-above}
\mu(X_{1,2}) = p_{1,1}^{3}+2 p_{1,2}^{3}+p_{2,2}^{3} \leq 2\varepsilon (k_1+k_2) \leq 4\varepsilon(1+\varepsilon_3)k_1 
\end{equation} 
common neighbors. Any two vertices connected by an edge of color $i$ have $$\lambda_i = p_{1,1}^{i}+2 p_{1,2}^{i}+p_{2,2}^{i}$$ common neighbors, for $i = 1,2$. Applying inclusion from Eq. \eqref{eq1} to the triangle with the side colors $(i,i,3)$, where $i\in \{1,2\}$, we get $2\lambda_i \leq k_1+k_2+\mu(X_{1,2})$, i.e., 
\begin{equation}\label{eq-lambda-above}
\lambda_i = p_{1,1}^{i}+2 p_{1,2}^{i}+p_{2,2}^{i}\leq \frac{1+2\varepsilon}{2}(k_1+k_2) \leq k_1(1+\varepsilon_3+2\varepsilon).
\end{equation} 
Let $\{i, j\} = \{1,2\}$, then by Eq.~\eqref{x1-eq-bound}-\eqref{x2-eq-bound},
\begin{equation}\label{eq-case-3}
\begin{multlined}
q(X_i)+\xi(X_i) \leq q(X_i)+\max( p_{i,i}^{i}, p_{i,j}^{j})+\sqrt{p_{i,j}^{i}p_{i,i}^{j}} +\varepsilon_1 k_i \leq  \\ \leq \max( p_{i,i}^{i}, p_{i,i}^{j})+\varepsilon k_i+ \max( p_{i,i}^{i}, p_{i,j}^{j})+p_{i,j}^{i}(1+\varepsilon_3)+\varepsilon_1 k_i.
\end{multlined}
\end{equation}
Consider all possible choices of opening the maximums in \eqref{eq-case-3} (we write terms without epsilons).
\begin{enumerate}
\item $2p_{i,i}^{i}+p_{i,j}^{i}$,
\item $p_{i,i}^{i}+p_{i,i}^{j}+p_{i,j}^{i}\leq (1+\varepsilon_3)(p_{i,i}^{i}+2p_{i,j}^{i}) = (1+\varepsilon_3)(\lambda_i - p_{j,j}^{i})\leq (1+\varepsilon_3)\lambda_i,$
\item $p_{i,j}^{j}+p_{i,i}^{i}+p_{i,j}^{i}\leq (1+\varepsilon_3)(p_{j,j}^{i}+p_{i,i}^{i}+p_{i,j}^{i}) = (1+\varepsilon_3)(\lambda_i - p_{i,j}^{i})\leq (1+\varepsilon_3)\lambda_i,$
\item $p_{i,j}^{j}+p_{i,i}^{j}+p_{i,j}^{i}\leq (1+\varepsilon_3)(p_{j,j}^{i}+2p_{i,j}^{i}) = (1+\varepsilon_3)(\lambda_i - p_{i,i}^{i})\leq (1+\varepsilon_3)\lambda_i.$
\end{enumerate}
Hence, by Corollary \ref{cor3}, Eq.~\eqref{eq-case-3} implies
\begin{equation}\label{eq-max-open}
q(X_i)+\xi(X_i)\leq \max(2p_{i,i}^{i}+p_{i,j}^{i}, (1+\varepsilon_3)\lambda_i)+k_i(\varepsilon_1+\frac{2}{3}\varepsilon_3+\varepsilon).
\end{equation}

If $\lambda_t\geq (\frac{2}{3}+\frac{1}{300})k_t$ for both $t =1,2$, then in notations of Theorem \ref{Metsch} 
\[\lambda^{(1)} \geq \left(\frac{2}{3}+\frac{1}{300}\right)k_1,\quad \text{and by Eq. \eqref{eq-lambda-above},}\quad \lambda^{(2)} \leq \frac{11}{10}k_1.\]
Check,
\[2\lambda^{(1)} - \lambda^{(2)}\geq 20\varepsilon(1+\varepsilon_3)k_1\geq 5\mu, \quad \text{and} \quad 3\lambda^{(1)} - 3\mu \geq 2k_1+\frac{1}{100}k_1\geq k_1+k_2.\] Thus, $X_{1,2}$ satisfies conditions of Theorem \ref{Metsch}, so  the statement 5 of this proposition holds by Lemma \ref{assoc-clique-geom}.

 Suppose that $\lambda_i\leq (\frac{2}{3}+\frac{1}{300})k_i$. If $2p_{i,i}^{i}+p_{i,j}^{i}\leq k_i - k_i(\varepsilon_3 +2\varepsilon+\varepsilon_1)$, then Eq.~\eqref{eq-max-open} implies
 \[q(X_i)+\xi(X_i)\leq (1-\varepsilon)k_i.\]
Hence, we can assume $2p_{i,i}^{i}+p_{i,j}^{i}\geq k_i - k_i(\varepsilon_3 +2\varepsilon+\varepsilon_1)$. Recall, that 
\[ 2p_{i,i}^{i}+p_{i,j}^{i} = \lambda_i+(p_{i,i}^{i} - p_{i,j}^{i}- p_{j,j}^{i})\leq \lambda_i+\frac{1+\varepsilon}{2}k_i - (p_{i,j}^{i}+ p_{j,j}^{i}) .\]
Thus,
\[ p_{i,j}^{i}+ p_{j,j}^{i}\leq \lambda_i+\frac{1+\varepsilon}{2}k_i-k_i(1-\varepsilon_3 - 2\varepsilon - \varepsilon_1)\leq k_i \frac{51}{300} + k_i (\varepsilon_3+3\varepsilon+\varepsilon_1)\leq \frac{2}{11}k_i.\]
It implies, 
\begin{equation}
\min(p_{j,j}^{i}, p_{i,i}^{j}) \leq (1+\varepsilon_3)\min(p_{i,j}^{i}, p_{j,j}^{i})\leq \frac{1+\varepsilon_3}{11}k_1.
\end{equation}

Take $\{s,t\} = \{1,2\}$, so that $p_{s,s}^{t}\leq \frac{1+\varepsilon_3}{11}k_1$. Then $\mu(X_s)\leq \max(\varepsilon k_s, \frac{1+\varepsilon_3}{11}k_1)=\frac{1+\varepsilon_3}{11}k_1$. We consider two possibilities. 

First, assume that $p_{s,s}^{s}\geq (\frac{1}{2} - \frac{1}{20})k_s$. 
Then, by Lemma \ref{assoc-clique-geom} graph $X_s$ satisfies the statement 3 or 4 of this proposition.

Assume now that $p_{s,s}^{s}\leq (\frac{1}{2}-\frac{1}{20})k_s$, then
\begin{equation}\label{eq31}
2p_{s,s}^{s}+p_{s,t}^{s}\leq k_s - \frac{1}{10}k_s +\frac{(1+\varepsilon_3)^2}{11}k_s \leq (1-2\varepsilon-\frac{2}{3}\varepsilon_3-\varepsilon_1)k_s,
\end{equation}
\begin{equation}\label{eq32}
\begin{multlined}
(1+\varepsilon_3)\lambda_s\leq p_{s,s}^{s}+2p_{s,t}^{s}+2p_{t,t}^{s}+2\varepsilon_3k_s \leq \\
\\ \leq \left(\frac{1}{2}-\frac{1}{20}+\frac{4+4\varepsilon_3}{11}+2\varepsilon_3\right)k_s<(1-2\varepsilon-\frac{2}{3}\varepsilon_3-\varepsilon_1)k_s.
\end{multlined}
\end{equation}

Thus, by Eq. \eqref{eq-max-open}, equations \eqref{eq31} and \eqref{eq32} imply
\[q(X_s)+\xi(X_s)\leq  \max(2p_{s,s}^{s}+p_{s,t}^{s}, (1+\varepsilon_3)\lambda_s)+(\varepsilon+\frac{2}{3}\varepsilon_3+\varepsilon_1)k_s\leq (1-\varepsilon)k_s.\]

\end{proof}

\subsection{Case of a constituent with a clique geometry}\label{sec-subsec-str-reg}

In the previous subsection Proposition \ref{assoc-diam2} reduces the diameter 2 case of Theorem \ref{main-coherent} to the case when one of the constituents is a strongly regular graph with smallest eigenvalue $-2$, or is the line graph of a triangle-free regular graph. In this subsection we resolve the remaining cases.
 
In the case when the dominant constituent $X_3$ is strongly regular we will introduce an additional tool (Prop. \ref{sun-wilmes-tool}), which allows us to bound the order of the group and its minimal degree, when vertices inside a clique are well-distinguished.

In the cases when constituent $X_1$ or $X_2$ is strongly regular, we will prove upper bounds on the quantity $q(X_J)+\xi(X_J)$ for $J\in \{1, 2, \{1, 2\}\}$ with the consequence that the spectral tool (Lemma \ref{mixing-lemma-tool}) can be applied effectively. 

The hardest case to analyze is the case when the constituent  of the smallest degree, $X_1\,$, is strongly regular. This case is settled in Proposition \ref{assoc-x1-strongly-reg} (Sec. \ref{sec-x1}) and it requires considerable preparatory work to establish a constant upper bound on the quotient $k_2/k_1$ in certain range of parameters.

\subsubsection{Triangular graphs with well-distinguished cliques}

To treat the case of strongly regular constituent we will need an additional tool: the argument similar in flavor to the one used by Sun and Wilmes (see Lemma 3.5 in \cite{Sun-Wilmes}).

\begin{proposition}\label{sun-wilmes-tool}
Let $\mathfrak{X}$ be a homogeneous coherent configuration on $n$ vertices. Let $I$ be a set of colors, such that if $i\in I$, then $i^{*}\in I$. Suppose that graph $X_I$ is the triangular graph $T(s)$ for some $s$. Denote a Delsarte clique geometry in $X_I$ by $\mathcal{C}$. Additionally, assume that there exist a constant $0<\alpha<\frac{1}{2}$, such that for every clique $C\in \mathcal{C}$ and every two distinct vertices $x, y \in C$ there exist at least $\alpha |C|$ elements $z\in C$ which distinguish $x$ and $y$, i.e., $c(z,x) \neq c(z,y)$. Then
 \begin{enumerate}
 \item There exists a set of vertices of size $O(\log(n))$ that completely splits $\mathfrak{X}$. Hence, $|\Aut(\mathfrak{X})| = n^{O(\log(n))}$,
 \item $\motion(\mathfrak{X})\geq \frac{\alpha}{2} n$.
 \end{enumerate}

\end{proposition}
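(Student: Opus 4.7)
The proof exploits the structure of $T(s)$: after identifying $V$ with $\binom{\Omega}{2}$ for an $s$-set $\Omega$, the Delsarte cliques are the stars $C_r = \{v \in V : r \in v\}$ (one for each $r \in \Omega$), and each $v = \{p,q\}$ is the unique intersection $C_p \cap C_q$. In particular, any non-trivial permutation of $V$ that fixes every clique setwise must be the identity, because it fixes every intersection of two cliques.

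For Part 1, I work inside a single clique $C \in \mathcal{C}$. The hypothesis that every pair in $C$ is distinguished by at least $\alpha|C|$ members of $C$ allows me to apply Babai's Lemma~\ref{Babai-disting-order-group} to the induced configuration on $C$: this produces a set $S_0 \subseteq C$ with $|S_0| \leq 2\log|C|/\alpha + 1 = O(\log n)$ that distinguishes every pair of vertices in $C$. I claim $S_0$ splits $\mathfrak{X}$ completely. After individualization of $S_0$ and Weisfeiler--Leman refinement, each $x \in C$ obtains a unique color, because its profile captures the tuple $(c(u,x))_{u \in S_0}$ (the elements of $S_0$ already have distinct colors) and $S_0$ distinguishes pairs within $C$. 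With $C$ now entirely uniquely colored, $C$ itself serves as a distinguishing set for all of $V$: a vertex $v \in V \setminus C$ with $v = \{p,q\}$ (writing $C = C_r$) is fully determined by its two $X_I$-neighbors $\{r,p\},\{r,q\}$ in $C$, read off from the positions of $I$-colored edges in the tuple $(c(u,v))_{u\in C}$; moreover vertices of $C$ have a different in/out-of-$I$ pattern over $C$ than vertices outside $C$. A further WL round therefore separates all vertices, yielding $|\Aut(\mathfrak{X})| \leq n^{|S_0|} = n^{O(\log n)}$.

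For Part 2, let $\sigma \in \Aut(\mathfrak{X}) \setminus \{\mathrm{id}\}$ and let $\bar\sigma$ be the induced permutation of $\mathcal{C}$, identified with a permutation of $\Omega$. By the opening observation $\bar\sigma \neq \mathrm{id}$. A vertex $\{p,q\}$ is fixed by $\sigma$ iff $\bar\sigma$ fixes the unordered pair $\{p,q\}$; so if $\bar\sigma$ has $f$ fixed points and $t$ transpositions in its cycle decomposition, the count of fixed vertices is $\binom{f}{2}+t$ and the count of moved vertices is $\binom{s}{2}-\binom{f}{2}-t$, with $t \leq (s-f)/2$. The distinguishing hypothesis now provides the crucial lower bound on $s-f$ when $f \geq 1$: any fixed clique $C^* = C_r$ (with $r$ a fixed point of $\bar\sigma$) contains exactly $s-f \geq 1$ moved vertices, namely the $\{r,p\}$ with $p$ non-fixed; picking such a moved $x$, at least $\alpha(s-1)$ elements of $C^*$ distinguish $x$ from $\sigma(x)$, and since a fixed point $z$ cannot distinguish (as $c(z,x) = c(\sigma(z),\sigma(x)) = c(z,\sigma(x))$), all these distinguishers are moved, giving $s - f \geq \alpha(s-1)$. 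The moved-vertex bound then reads
\[
\binom{s}{2}-\binom{f}{2}-t \;\geq\; \frac{(s-f)(s+f-2)}{2} \;\geq\; \frac{\alpha(s-1)^2}{2} \;\geq\; \frac{\alpha n}{2},
\]
using $f \geq 1$ and $n = \binom{s}{2}$. When $f = 0$ the weaker bound $\binom{s}{2} - s/2 = s(s-2)/2 \geq \alpha n/2$ follows from $\alpha < 1/2$ (for $s \geq 3$). I expect the main point requiring careful verification to be the Part~1 claim that WL refinement propagates the splitting from $S_0 \subset C$ to all of $V$ in the two stages described; the motion bound in Part~2 then reduces to the clean local-to-global inequality $s - f \geq \alpha(s-1)$ and the routine count above.
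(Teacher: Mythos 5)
Your proof is correct and takes a genuinely different route from the paper on both parts. In Part~1, the paper's argument individualizes a small splitting set inside \emph{two} cliques $C_1,C_2$ covering $\{x\}\cup N_I(x)$ for a fixed vertex $x$, and then observes that every other clique in $\mathcal{C}$ is determined by its intersections with $C_1$ and $C_2$; you instead work inside a \emph{single} clique $C=C_r$ and use the fact that any vertex $v\notin C$ is pinned down by its two $X_I$-neighbors $\{r,p\},\{r,q\}$ in $C$. Both give $O(\log n)$, yours with a smaller constant (roughly $2\log|C|/\alpha$ versus $4\log|C|/\alpha$). In Part~2, the paper's argument is local: a pigeonhole step locates a vertex $x$ whose neighborhood $N_I(x)\cup\{x\}=C_1\cup C_2$ has more than a $(1-\alpha)$-fraction of each $C_i$ fixed, from which one deduces that $C_1,C_2$ are fixed pointwise. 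You instead argue globally, passing to the induced permutation $\bar\sigma$ of $\Omega$, extracting the key inequality $s-f\geq\alpha(s-1)$ from a single fixed clique (using that fixed vertices cannot distinguish $x$ from $\sigma(x)$), and then counting moved pairs in $\binom{\Omega}{2}$ directly via $\binom{s}{2}-\binom{f}{2}-t\geq\frac{(s-f)(s+f-2)}{2}$. This global count is cleaner and makes the dependence on $\alpha$ transparent, though the $f=0$ case needs the hypothesis $\alpha<\tfrac12$ as you note. One small point worth making explicit (which the paper also glosses over): both arguments implicitly use that $\sigma\in\Aut(\mathfrak{X})$ preserves $\mathcal{C}$ and hence acts on $\Omega$; this is automatic for $s\geq 5$, where the Delsarte cliques are precisely the maximum cliques of $T(s)$.
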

\begin{proof} Consider any clique $C\in \mathcal{C}$. Since any two distinct vertices $x,y\in C$ are distinguished by at least $\alpha |C|$ vertices of $C$, by Lemma \ref{Babai-disting-order-group}, there is a set of size at most $\frac{2}{\alpha}\log(|C|)+1$ that splits $C$ completely.

Take any vertex $x\in \mathfrak{X}$. By the assumptions of this proposition $\{x\}\cup N_I(x) = C_1\cup C_2$ for some $C_1, C_2\in \mathcal{C}$. Then there exists a set $S$ of size $\frac{4}{\alpha}\log(|C|)+2\leq \frac{4}{\alpha}\log(n)+2$, that splits both $C_1$ and $C_2$ completely. Note that every clique $C\in \mathcal{C}$, distinct from $C_1$ and $C_2$, intersects each of them in exactly one vertex, and is uniquely determined by $C\cap C_1$ and $C\cap C_2$. Therefore, pointwise stabilizer $\Aut(\mathfrak{X})_{(S)}$ fixes every clique $C\in \mathcal{C}$ as a set. At the same time, every vertex $v$ is uniquely defined by the collection of cliques in $\mathcal{C}$ that contain~$v$. Therefore, $S$ splits $X$ completely. 

Suppose $\sigma \in \Aut(\mathfrak{X})$ and $|\supp(\sigma)|< \frac{\alpha}{2} n$. Then, by pigeonhole principle there exists a vertex $x$, such that $\sigma$ fixes at least $\lceil(1 - \frac{\alpha}{2})(|N_{I}(x)|+1)\rceil$ vertices in $N_I(x)\cup \{x\}$. Since $X_I = T(s)$, we have $\{x\}\cup N_I(x) = C_1\cup C_2$ for some $C_1, C_2\in \mathcal{C}$ and $|C_1| = |C_2| = 1+\frac{|N_I|}{2}$. Thus $\sigma$ fixes more than $(1-\alpha)|C_i|$ vertices in $C_i$ for every $i\in \{1,2\}$. This means that any pair of vertices  $x, y\in C_i$ is distinguished by at least one vertex fixed by $\sigma$. Hence, $\sigma(x)\neq y$. At the same time, since $(1-\alpha)|C_i|>1$, $\sigma(x)\in C_i$ for every $x\in C_i$. Therefore, $\sigma$ fixes poinwise both $C_1$ and $C_2$. Finally, by argument in the previous paragraph, we get that $\sigma$ fixes every vertex, so $\sigma$ is identity.

\end{proof}

\subsubsection{Constituent $X_3$ is strongly regular} 

In the following proposition we consider the case when constituent $X_3$ is a strongly regular graph (case of statement 5 in Proposition \ref{assoc-diam2}).
 
\begin{proposition}\label{assoc-x3-strongly-reg}
Let $\mathfrak{X}$ be an association scheme of rank 4 and diameter~2 on $n\geq29$ vertices. Assume additionally, that the constituents of $\mathfrak{X}$ are ordered by degree and $k_2\leq \frac{\varepsilon}{2}k_3$ holds for $\varepsilon < \frac{1}{100} $. Suppose that $k_2\leq \frac{11}{10}k_1$ and $X_{1,2}$ is strongly regular with smallest eigenvalue~$ -2$. Then neither $X_1$, nor $X_2$ are not strongly regular with smallest eigenvalue $-2$, and one of the following is true.
\begin{enumerate}
\item The association scheme $\mathfrak{X}$ satisfies the assumptions of Proposition \ref{sun-wilmes-tool} for $I = \{1, 2\}$ and  $\alpha  = \frac{1}{16}$.
\item $X_1$ or $X_2$ is the line graph of a regular triangle-free graph.
\end{enumerate} 
\end{proposition}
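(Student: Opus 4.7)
The proof starts by pinning down the global shape of $X_{1,2}$. Because $X_{1,2}$ is strongly regular with smallest eigenvalue $-2$ and $n\ge 29$, Theorem~\ref{geom-eig-2}(i) forces $X_{1,2}\cong T(s)$ or $X_{1,2}\cong L_2(s)$. I would dispose of the lattice case first by observing that the only way to split the edges of $L_2(s)$ into two regular graphs compatible with a rank-$4$ coherent structure is the row/column partition; but this makes each of $X_1,X_2$ a disjoint union of cliques, contradicting the diameter-$2$ assumption on $\mathfrak{X}$. Hence $X_{1,2}\cong T(s)$, and I fix its Delsarte clique geometry $\mathcal{C}=\{C_1,\dots,C_s\}$, where $C_i=\{\{i,j\}:j\in[s]\setminus\{i\}\}$ has size $s-1$ and every vertex lies in exactly two cliques.

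Next I verify that neither $X_1$ nor $X_2$ is strongly regular with smallest eigenvalue~$-2$. Since every $X_1$-edge is an $X_{1,2}$-edge, the Delsarte cliques of such an $X_1$ would live inside the $C_i$'s and in fact partition each $C_i$ (once one rules out that both $X_1$-cliques through a vertex lie in the same $C_i$, a quick degree count). In the case $X_1\cong T(s_1)$ vertex counting gives $s_1=s$, hence $k_1\ge 2(s-2)=k_{1,2}$ forcing $k_2=0$, absurd. In the case $X_1\cong L_2(s_1)$ the partition condition forces $s_1\mid(s-1)$, which together with $s_1^2=\binom{s}{2}$ has no integer solution $s_1\ge 2$. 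The argument for $X_2$ is symmetric.

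For the main dichotomy, inside every $C_i$ the two colors induce a $2$-edge-coloring of $K_{s-1}$; by coherence and homogeneity of $\mathfrak{X}$ the color-$1$ subgraph $H_i\subseteq K_{s-1}$ is regular of a degree independent of $i$, and a vertex $z\in C_i$ fails to distinguish $x,y\in C_i$ precisely when $z\notin N_{H_i}(x)\bigtriangleup N_{H_i}(y)$. If for every $i$ and every pair $x\neq y\in C_i$ at least $|C_i|/16$ vertices of $C_i$ distinguish $x,y$, then (using that every color is self-paired in an association scheme, so $I=\{1,2\}$ is automatically closed under $*$) $\mathfrak{X}$ satisfies the hypotheses of Proposition~\ref{sun-wilmes-tool} for $I=\{1,2\}$ with $\alpha=\tfrac{1}{16}$, and we get conclusion~1.

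Otherwise some $C_i$ contains a near-twin pair $x,y\in H_i$ with $|N_{H_i}(x)\bigtriangleup N_{H_i}(y)|<(s-1)/16$. The plan is to bootstrap this into a rigid global structure on $X_1$: the homogeneity of $\mathfrak{X}$ propagates the near-twin phenomenon uniformly through $\mathcal{C}$, which combined with the regularity of $H_i$ forces each $H_i$ to decompose exactly as a disjoint union of equal-sized cliques. Consequently the $X_1$-edges within each $C_i$ form a clique partition, and each vertex $\{i,j\}$ sits in exactly two such $X_1$-cliques (one in $C_i$, one in $C_j$). Verifying Metsch's hypotheses (Theorem~\ref{Metsch}) with $m=2$ and invoking Remark~\ref{m2-line} identifies $X_1$ as a line graph $L(Y_1)$; Theorem~\ref{geom-eig-2}(ii) together with the second paragraph then forces $Y_1$ to be a regular triangle-free graph, yielding conclusion~2. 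The hardest step will be this last one: turning a single local near-twin event into the global rigid cluster decomposition of every $H_i$, and checking that Metsch's parameter inequalities actually hold in the range of $s$ and $k_1$ allowed by $k_2\le\tfrac{11}{10}k_1$.
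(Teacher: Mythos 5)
Your proposal matches the paper's architecture (eliminate the lattice case, rule out $X_1,X_2$ being strongly regular with smallest eigenvalue $-2$, then prove the dichotomy) but diverges in all three sub-steps, and the critical one contains a real gap. First, the lattice elimination: you assert that the only way to split the edges of $L_2(s)$ into two regular classes compatible with a rank-$4$ coherent structure is the row/column partition. This is an unjustified structural claim, and you do not need it; the paper simply notes $k_1 \le k_1+k_2 = 2(s-1)$ while the diameter-$2$ hypothesis on $X_1$ forces $k_1^2 \ge n-1 = s^2-1$, a contradiction. Second, your argument that $X_1,X_2$ cannot be strongly regular with smallest eigenvalue $-2$ rests on showing the Delsarte cliques of $X_1$ partition each $C_i$, which in turn needs that no Delsarte clique of $X_1$ sits inside a size-$3$ triangle-clique of $T(s)$ and that the two cliques through a vertex land in distinct $C_i$'s; these are provable but you only gesture at them. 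The paper instead compares parameters directly: since $|V(X_i)|=|V(T(s))|$, the only candidate for $X_i$ is $L_2(s_2)$ with $s_2^2=\binom{s}{2}$, and then $k_1+k_2 = 2(s-2)$ combined with $s_2 > (s-1)/\sqrt{2}$ yields $k_j \le (\sqrt{2}-1)k_i + 1$, contradicting $k_1\le k_2\le \tfrac{11}{10}k_1$.

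The crucial gap is in the main dichotomy. You propose to split on whether some $C_i$ contains a near-twin pair, and in the bad case ``bootstrap'' a single near-twin pair into a global rigid cluster decomposition of every $H_i$. You yourself flag this as ``the hardest step,'' and indeed no argument is given; it is not at all clear why one near-twin pair would force each $H_i$ to be a disjoint union of equal cliques. The paper instead dichotomizes on the intersection numbers: either $p_{1,1}^2 \ge k_1/30$ and $p_{2,2}^1 \ge k_2/30$, or $p_{i,i}^{j} < k_i/30$ for some $\{i,j\}=\{1,2\}$. In the first case, a vertex $z\in C$ fails to distinguish $x,y\in C$ with $c(x,y)=i$ only if $c(z,x)=c(z,y)$, so at least $|C|-p_{1,1}^i-p_{2,2}^i = 2p_{1,2}^i+1 \ge \tfrac{1}{15}k_1+1 \ge \tfrac{1}{16}|C|$ vertices of $C$ distinguish them (using $p_{1,2}^1 = \tfrac{k_2}{k_1}p_{1,1}^2$ and $p_{1,2}^2=\tfrac{k_1}{k_2}p_{2,2}^1$), giving the Proposition~\ref{sun-wilmes-tool} hypothesis directly. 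In the second case, the identity $\tfrac{k_1+k_2}{2} = \lambda_i(X_{1,2}) = p_{i,i}^i + 2p_{i,j}^i + p_{j,j}^i$ combined with Corollary~\ref{cor3} and $p_{i,j}^i = \tfrac{k_j}{k_i}p_{i,i}^j < \tfrac{11}{300}k_i$ forces $p_{i,i}^i \ge \tfrac{2}{5}k_i$; Lemma~\ref{assoc-clique-geom}(1a) then puts $X_i$ in the Metsch/Seidel regime. This is the key move you are missing: by working with coherent intersection numbers rather than a single near-twin pair, coherence automatically gives uniformity over all cliques, and the ``bad'' case lands you in a hypothesis where Metsch's criterion applies without any rigidity bootstrap.
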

\begin{proof}
Note that by assumptions of the proposition all the inequalities from Lemma \ref{assoc-param-ineq} hold. 

Since $X_{1,2}$ is strongly regular with smallest eigenvalue $-2$, by Seidel's classification (see Theorem \ref{geom-eig-2}), $X_{1,2} = T(s)$ or $X_{1,2} = L_{2}(s)$ for some $s$.  Suppose that $X_{1,2}$ is $L_{2}(s)$, then $n = s^2$, $k_1+k_2 = 2(s-1)$, so $k_1\leq (s-1)$. At the same time, since $X_1$ has diameter $2$, degree $k_1$ should satisfy $k_1^{2}\geq n-1$, which gives us a contradiction. Therefore, $X_{1,2} = T(s)$. Consider 2 cases.

\noindent \textbf{Case 1.} Assume $p_{1,1}^{2}\geq \frac{k_1}{30}$ and $p_{2,2}^{1}\geq \frac{k_2}{30}$.

We can rewrite the assumptions of this case in the form $p_{1,2}^{1} = p_{1,1}^{2}\frac{k_2}{k_1}\geq \frac{k_2}{30}$ and $p_{1,2}^{2}\geq \frac{k_1}{30}$. We know that $X_{1,2} = T(s)$ for some $s$. Let $\mathcal{C}$ be a collection of its Delsarte cliques. Then every clique $C\in \mathcal{C}$ has size $1+p_{1,1}^{i}+p_{2,2}^{i}+2p_{1,2}^{i} = 1+\lambda_i(X_{1,2})= \frac{k_1+k_2}{2}+1\leq \frac{21}{20}k_1+1$ for any $i\in \{1, 2\}$. Every two distinct vertices $x, y\in C$ with $c(x,y) = i\in \{1, 2\}$ are distinguished by at least $|C| - p_{1,1}^{i} - p_{2,2}^{i} = 2p_{1,2}^{i}+1\geq \frac{1}{15}k_1+1\geq \frac{1}{16}|C|$ vertices in $C$.

\noindent \textbf{Case 2.} Assume $p_{i,i}^{j}<\frac{k_i}{30}$ for $\{i,j\} = \{1,2\}$.

\noindent Using Corollary \ref{cor3} and the inequality $k_1\leq k_2 \leq \frac{11}{10}k_1$, we get 
\[\frac{k_i+k_j}{2} = \lambda_i(X_{1,2}) = p_{i,i}^{i}+2p_{i,j}^{i}+p_{j,j}^{i}\leq p_{i,i}^{i} +2\frac{11}{10}\frac{k_i}{30}+\frac{(1+\varepsilon)}{2}k_j.\]
Thus, 
\[\frac{2}{5}k_i\leq \left(\frac{1}{2} - \frac{11}{150} - \frac{11\varepsilon}{20}\right)k_i \leq p_{i,i}^{i}.\]

Therefore, by Lemma \ref{assoc-clique-geom}, graph $X_i$ is strongly regular with smallest eigenvalue $-2$ or is the line graph of a regular triangle-free graph. 

Assume that for some $i\in \{1,2\}$ graph $X_i$ is strongly regular with smallest eigenvalue~$-2$, then $X_i$, as well as $X_{1,2}$, is either $T(s)$ or $L_{2}(s)$. Since $X_i$ and $X_{1,2}$ have the same number of vertices, the only possibility is $X_{1,2} = T(s_1)$ and $X_{i} = L_2(s_2)$. Then $\frac{s_1(s_1 - 1)}{2} = s_2^{2}$, so $s_2> \frac{1}{\sqrt{2}}(s_1-1)$. Which implies $k_i+k_j = 2(s_1 - 2) \leq 2\sqrt{2}(s_2 - 1)+1  = \sqrt{2}k_i+1$, so $k_j \leq (\sqrt{2} - 1)k_i+1$ and we get contradiction with $k_i\leq \frac{11}{10}k_j$ and $k_i\geq \sqrt{n-1}>3$.

\begin{remark}\label{remark-x1-x12}
Observe that argument in the last paragraph of the proof shows that $X_1$ and $X_{1,2}$ could not be simultaneously strongly regular with smallest eigenvalue $-2$ even if assumption that $k_2\leq \frac{11}{10}k_1$ does not hold (we assume that all other assumptions of the proposition are satisfied).
\end{remark}
 
\end{proof}

\subsubsection{Constituent $X_2$ is strongly regular}
Next we consider the case when $X_2$ is strongly regular, i.e., assume that we are in assumptions of statement 4 of Proposition \ref{assoc-diam2}.

\begin{proposition}\label{assoc-x2-strongly-reg}
Let $\mathfrak{X}$ be an association scheme  of rank 4 and diameter~2 on $n\geq 29$ vertices. Assume additionally, that the constituents of $\mathfrak{X}$ are ordered by degree and the inequality $ k_2\leq \frac{\varepsilon}{2} k_3$ for some $\varepsilon < 10^{-11}$ holds. Suppose that $k_2\leq \frac{101}{100}k_1$ and $X_2$ is strongly regular with smallest eigenvalue $-2$. Then 
 \[q(X_{1,2})+\xi(X_{1,2})\leq \frac{99}{100}(k_1+k_2).\]
\end{proposition}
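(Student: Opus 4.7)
Since $X_2$ is strongly regular with smallest eigenvalue $-2$ on $n \geq 29$ vertices, Seidel's classification (Theorem \ref{geom-eig-2}) forces $X_2 \in \{T(s), L_2(s)\}$, so $p_{2,2}^2 = s-2 \leq k_2/2$ and $p_{2,2}^1 = p_{2,2}^3 = \mu(X_2) \in \{2, 4\}$. Because $k_2 \leq (\varepsilon/2)k_3$, all of Lemma \ref{assoc-param-ineq} is available; combining $p_{2,2}^3 \leq (\varepsilon/2)k_2$ with $p_{2,2}^3 \in \{2,4\}$ forces $k_2 \geq 8/\varepsilon$. Hence $p_{2,2}^1 \leq \varepsilon k_2$ and, via Eq.~\eqref{eq-sum-param}, $p_{1,2}^2 = (k_1/k_2)\,p_{2,2}^1 \leq \varepsilon k_1$.

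The proof of Proposition \ref{assoc-x12-approx} uses its hypothesis only through the two smallness conclusions $p_{1,1}^2 \leq \varepsilon k_1$ and $p_{1,2}^1 = (k_2/k_1)p_{1,1}^2 \leq \varepsilon k_2$. Interchanging the roles of colors $1$ and $2$ throughout that proof and substituting instead the newly available bounds $p_{2,2}^1 \leq \varepsilon k_2$ and $p_{1,2}^2 \leq \varepsilon k_1$, the identical computation (via the cubic approximation from Theorem~\ref{poly-approx}) yields
\[
\xi(X_{1,2}) \leq \max\!\bigl(p_{1,1}^1 + p_{1,2}^1,\ p_{2,2}^2\bigr) + \sqrt{p_{1,2}^1\, p_{1,1}^2} + 25\varepsilon^{1/3}(k_1+k_2).
\]
Using $p_{1,1}^2 = (k_2/k_1)p_{1,2}^1 \leq \tfrac{101}{100}\,p_{1,2}^1$ and $p_{2,2}^2 \leq k_2/2$, this reduces to a bound in the single parameter $p_{1,2}^1$. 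For the diagonal quantity $q(X_{1,2}) = \max(\lambda_1, \lambda_2, \mu_{12})$ I apply the triangle inclusion of Eq.~\eqref{eq1} to a triangle of side-colors $(i,i,3)$ inside $X_{1,2}$, exactly as in Case 3 of the proof of Proposition \ref{assoc-diam2}, obtaining $\lambda_i \leq \tfrac{1+2\varepsilon}{2}(k_1+k_2)$ for $i\in\{1,2\}$, while Lemma \ref{assoc-param-ineq} yields $\mu_{12} \leq 2\varepsilon(k_1+k_2)$.

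The main obstacle is that, in the worst case, both individual estimates can simultaneously approach $\tfrac{1}{2}(k_1+k_2)$, so a naive summation barely exceeds the target. I would close the gap by a short case analysis on $p_{1,2}^1$. When $p_{1,2}^1 \leq k_2/4$, the $\max$ in the spectral estimate is realized by $p_{2,2}^2 \leq k_2/2 \leq \tfrac{101}{202}(k_1+k_2)$ and the square-root term is at most $1.01\,p_{1,2}^1 \leq \tfrac{1}{4}(k_1+k_2)$; added to $q(X_{1,2}) \leq \tfrac{1+2\varepsilon}{2}(k_1+k_2)$, the total stays strictly below $\tfrac{99}{100}(k_1+k_2)$ thanks to the $1\%$ slack supplied by $k_2 \leq (101/100)k_1$. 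When $p_{1,2}^1 > k_2/4$, the triangle bound $\lambda_1 = p_{1,1}^1 + 2p_{1,2}^1 + p_{2,2}^1 \leq \tfrac{1+2\varepsilon}{2}(k_1+k_2)$ forces
\[
p_{1,1}^1 + p_{1,2}^1 \leq \tfrac{1+2\varepsilon}{2}(k_1+k_2) - p_{1,2}^1 - p_{2,2}^1,
\]
so the $\max$ inside the spectral estimate tightens in step with any increase in $p_{1,2}^1$, preserving the inequality. Absorbing the $25\varepsilon^{1/3}(k_1+k_2)$ error term using $\varepsilon < 10^{-11}$ delivers $q(X_{1,2})+\xi(X_{1,2})\leq \tfrac{99}{100}(k_1+k_2)$.
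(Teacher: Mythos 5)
Your setup is correct and matches the paper's: since $X_2$ is strongly regular with smallest eigenvalue $-2$, Seidel's classification forces $p_{2,2}^2 \in [k_2/2-1,\,k_2/2]$ and $p_{2,2}^1 = p_{2,2}^3 \leq \varepsilon k_2$ (the key observation), which after interchanging the roles of colors $1$ and $2$ makes Proposition~\ref{assoc-x12-approx} applicable to $X_{1,2}$. (Minor slip: $p_{1,1}^2 = (k_1/k_2)p_{1,2}^1$, not $(k_2/k_1)p_{1,2}^1$; the conclusion $p_{1,1}^2 \leq p_{1,2}^1$ still holds.)

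The closing argument, however, does not go through. You replace the exact quadratic bound by the lossy estimate $\xi(X_{1,2}) \leq \max(p_{1,1}^1+p_{1,2}^1,\,p_{2,2}^2) + \sqrt{p_{1,2}^1 p_{1,1}^2} + \text{err}$, and this is too weak in the intermediate regime. Concretely, with $p_{1,2}^1 = \alpha k_1$ and $\alpha$ around $1/4$ to $2/5$, the triangle bound gives $p_{1,1}^1+p_{1,2}^1 \leq \tfrac{1+2\varepsilon}{2}(k_1+k_2) - p_{1,2}^1$, so the max term is about $(1-\alpha)k_1$ and the square-root term is about $\alpha k_1$; these cancel exactly to give $\xi \lesssim k_1 \approx \tfrac12(k_1+k_2)$, and adding $q(X_{1,2}) \leq \tfrac{1+2\varepsilon}{2}(k_1+k_2)$ yields $\approx (k_1+k_2)$, with no slack whatsoever. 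Moreover, your claim that in Case~a ($p_{1,2}^1 \leq k_2/4$) the max is realized by $p_{2,2}^2$ is unjustified: $p_{1,1}^1$ may be close to $k_1/2$, so $p_{1,1}^1+p_{1,2}^1$ can exceed $p_{2,2}^2 \approx k_2/2$. The paper closes the gap with two tools you are missing: (i) for intermediate $p_{1,2}^1$ (Case~2.b), it keeps the full expression $\tfrac12\bigl(p_{2,2}^2 + p_{2,1}^1 + p_{1,1}^1 + \sqrt{(p_{2,1}^1+p_{1,1}^1-p_{2,2}^2)^2 + 4p_{1,1}^2 p_{2,1}^1}\bigr)$ from Eq.~\eqref{eq-x2-xi} and exploits the fact that $p_{1,1}^1+p_{1,2}^1$ and $p_{2,2}^2$ are provably close, so the discriminant is small and a few percent is gained over the max bound; (ii) for large $p_{1,2}^1$ (Case~1, $p_{1,2}^1 > \tfrac25 k_1$), the spectral bound is abandoned entirely, and instead $\lambda_1(X_{1,2}), \lambda_2(X_{1,2}) \geq \tfrac4{11}(k_1+k_2)$ is used with Metsch's criterion and Lemma~\ref{assoc-clique-geom} to force $X_{1,2}$ to be strongly regular with smallest eigenvalue $-2$, contradicting Proposition~\ref{assoc-x3-strongly-reg} (via Remark~\ref{remark-x1-x12}). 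Only for genuinely small $p_{1,2}^1$ ($< \tfrac18 k_1$, a tighter threshold than your $k_2/4$) does the max-based bound suffice.
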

\begin{proof} The assumptions of the proposition allow us to assume the inequalities from Lemma~\ref{assoc-param-ineq}.

Since $X_{2}$ is strongly regular with smallest eigenvalue $-2$ and $n\geq 29$, by Seidel's classification (Theorem \ref{geom-eig-2}) and Lemma~\ref{assoc-param-ineq}, we get that $\frac{k_2}{2}\geq p_{2,2}^{2} \geq \frac{k_2}{2} - 1$ and $p_{2,2}^{1} = p_{2,2}^{3} \leq \varepsilon k_2$.
By Proposition \ref{assoc-x12-approx} for $\varepsilon_1 = 25\varepsilon^{1/3}$, we have 
\begin{equation}\label{eq-x2-xi}
\xi(X_{1,2}) \leq \frac{p_{2,2}^{2}+p_{2,1}^{1}+p_{1,1}^{1}+\sqrt{(p_{2,1}^{1}+p_{1,1}^{1} - p_{2,2}^{2})^2+4p_{1,1}^{2}p_{2,1}^{1}}}{2}+\varepsilon_{1}(k_1+k_2), \, \text{so}
\end{equation}
\begin{equation}\label{eq-x2-xi-2}
\begin{multlined}
\xi(X_{1,2}) \leq \max(p_{2,2}^{2}, p_{2,1}^{1}+p_{1,1}^{1})+\sqrt{p_{1,1}^{2}p_{1,2}^{1}}+\varepsilon_{1}(k_1+k_2)\leq \\
\leq \max(p_{2,2}^{2}+p_{2,1}^{1}, 2p_{2,1}^{1}+p_{1,1}^{1})+\varepsilon_{1}(k_1+k_2)\leq \\
\leq \max(p_{2,2}^{2}+p_{2,1}^{1}, \lambda_1(X_{1,2}))+\varepsilon_{1}(k_1+k_2).
\end{multlined} 
\end{equation}
Recall also, that as in Eq. \eqref{eq-mux12-above} and Eq. \eqref{eq-lambda-above} we have
\begin{equation}\label{eq-x12-lambda-mu-resrt}
\mu(X_{1,2})\leq 2\varepsilon(k_1+k_2)\quad  \text{and} \quad \max(\lambda_1(X_{1, 2}), \lambda_2(X_{1,2}))\leq \frac{1+2\varepsilon}{2}(k_1+k_2).
\end{equation}

\noindent \textbf{Case 1.} Assume $p_{1,2}^{1}>\frac{2}{5} k_1$.

\noindent Then, using that $k_2\leq \frac{101}{100}k_1\leq \frac{11}{10}k_1$, 
\[\lambda_1(X_{1,2}) = p_{1,1}^{1}+2p_{1,2}^{1}+p_{2,2}^{1}\geq \frac{4}{5}k_1\geq \frac{4}{11}(k_1+k_2),\text{ and}\]  
\[\lambda_2(X_{1,2}) = p_{2,2}^{2}+2p_{1,2}^{2}+p_{1,1}^{2}\geq \frac{k_2}{2}+\frac{2}{5}\frac{10}{11} k_1\geq \frac{4}{11}(k_1+k_2).\] 
Since Eq. \eqref{eq-x12-lambda-mu-resrt} holds, graph $X_{1,2}$ satisfies the assumptions of Theorem \ref{Metsch} for $m=2$, so by Lemma \ref{assoc-clique-geom} it is strongly regular with smallest eigenvalue $-2$. However, by Proposition~\ref{assoc-x3-strongly-reg}, it is impossible that $X_{1,2}$ and $X_2$ are simultaneously strongly regular with smallest eigenvalue $-2$ under the assumptions of this proposition.

\noindent \textbf{Case 2.} Assume $\frac{1}{8}k_1\leq p_{1,2}^{1}\leq\frac{2}{5} k_1$.

\noindent \textbf{Case 2.a} Suppose $\lambda_2(X_{1,2}) \geq \lambda_1(X_{1,2})$.

\noindent Then as $k_2\leq \frac{11}{10} k_1$, inequality $q(X_{1,2}) \leq \lambda_2(X_{1,2}) \leq 2\varepsilon k_1 + \frac{k_2}{2}+\frac{2}{5}k_1 \leq \frac{49}{100}(k_1+k_2)$ holds. At the same time, by Eq. \eqref{eq-x2-xi-2}, we get 
\[\xi(X_{1,2}) \leq \max\left(\frac{1}{2}k_2+\frac{2}{5}k_1, \lambda_1(X_{1,2})\right)+\varepsilon_1(k_1+k_2)\leq \]
\[\leq \max\left(\frac{49}{100}(k_1+k_2), \lambda_2(X_{1,2})\right)+\varepsilon_1(k_1+k_2) \leq \frac{1}{2}(k_1+k_2),\] 
so we get $q(X_{1,2})+\xi(X_{1,2})\leq \frac{99}{100}(k_1+k_2)$.

\noindent \textbf{Case 2.b} Suppose $\lambda_1(X_{1,2}) \geq \lambda_2(X_{1,2})$.

\noindent We may assume that $\lambda_1(X_{1,2})\geq \frac{49}{100}(k_1+k_2)$. Otherwise, by Eq. \eqref{eq-x2-xi-2},
 \[q(X_{1,2})+\xi(X_{1,2})\leq \lambda_1(X_{1,2})+\max\left(\frac{1}{2}k_2+\frac{2}{5}k_1, \lambda_1(X_{1,2})\right)+\varepsilon_1(k_1+k_2)\leq \frac{99}{100}(k_1+k_2).\]
 
 Let $p_{1,2}^{1} = \alpha k_1$, then  $\frac{1}{8}\leq \alpha\leq \frac{2}{5}$. The condition $p_{1,1}^{1}+p_{2,2}^{1}+2p_{1,2}^{1} = \lambda_1(X_{1,2})\geq \frac{49}{100}(k_1+k_2)$ implies that 
 \begin{equation}\label{eq-x2-p111-bel}
  p_{1,1}^{1}+ p_{1,2}^{1} \geq \frac{49}{100}(k_1+k_2)-\varepsilon k_2 - \alpha k_1\geq \left(\frac{49}{50} - 2\varepsilon-\alpha\right) k_1\geq \frac{28}{50}k_1\geq \frac{28}{55}k_2>p_{2,2}^{2}.
 \end{equation}
On the other hand, Eq. \eqref{eq-x12-lambda-mu-resrt} implies
\begin{equation}\label{eq-x2-p111-ab}
 p_{1,1}^{1}+p_{1,2}^{1} \leq \lambda_1(X_{1,2}) - p_{1,2}^{1}\leq  \left(\frac{1}{2}+\varepsilon\right)(k_1+k_2) - \alpha k_1\leq \left(\frac{101}{100} - \alpha \right)k_1 . 
\end{equation}
Hence, as $p_{2,2}^{2}\geq (\frac{1}{2}-\varepsilon)k_2\geq (\frac{1}{2}-\varepsilon)k_1$,  Eq. \eqref{eq-x2-p111-bel} and \eqref{eq-x2-p111-ab} imply that
\[|p_{1,1}^{1} +p_{1,2}^{1}-p_{2,2}^{2}|\leq \left(\frac{101}{100} - \alpha \right)k_1 - \left(\frac{1}{2}-\varepsilon\right)k_1 \leq \left(\frac{52}{100} - \alpha\right)k_1.\]
Therefore, using Eq. \eqref{eq-x2-xi}, Eq. \eqref{eq-x2-p111-ab} and $p_{2,2}^{2}\leq \frac{k_2}{2}\leq \frac{101}{200}k_1$, we get $\text{for } \frac{1}{8}\leq \alpha \leq \frac{2}{5}$
\begin{equation}
\xi(X_{1,2}) \leq \frac{\frac{101}{200}+(\frac{101}{100} - \alpha)+\sqrt{(\frac{52}{100} - \alpha)^{2} +4\alpha^{2}}}{2}k_1+\frac{201}{100}\varepsilon_1 k_1\leq \frac{195}{200}k_1, 
\end{equation}
Thus, $q(X_{1,2})+\xi(X_{1,2})\leq \frac{1+2\varepsilon}{2}(k_1+k_2)+\frac{195}{200}k_1\leq \frac{99}{100}(k_1+k_2)$.

\noindent \textbf{Case 3.} Assume $p_{1,2}^{1}< \frac{1}{8}k_1$.

\noindent Then, using Eq. \eqref{eq-x2-xi-2}, Corollary \ref{cor3} and inequality $k_2\leq \frac{101}{100}k_1$,  
\[\xi(X_{1,2})\leq \max(p_{2,2}^{2}+p_{2,1}^{1}, 2p_{2,1}^{1}+p_{1,1}^{1})+\varepsilon_{1}(k_1+k_2)\leq \]
\[ \leq \max\left(\frac{1}{2}k_2+\frac{1}{8}k_1, \frac{1}{4}k_1+\frac{1+\varepsilon}{2}k_1 \right)+\varepsilon_1(k_1+k_2) \leq \frac{2}{5}(k_1+k_2).\]
Combining this with Eq. \eqref{eq-x12-lambda-mu-resrt} we get
$q(X_{1,2})+\xi(X_{1,2})\leq \frac{99}{100}(k_1+k_2)$.

\end{proof}

\subsubsection{Constituent $X_1$ is strongly regular}\label{sec-x1}

The common strategy of our proofs is to get good spectral gap for some union of the constituents, or to apply Metsch's criteria (Theorem \ref{Metsch}) to this graph. The next lemma covers the range of the parameters for which spectral gap is hard to achieve, and the conditions of Metsch's criteria are not always satisfied for $X_{2}$ and $X_{1,2}$. However, we are still able to use the idea of Metsch's proof to show that $k_2$ does not differ much from $k_1$, that will suffice for our purposes.  
\begin{definition}
For a homogeneous configuration $\mathfrak{X}$ and disjoint non-empty sets of edge colors $I$ and $J$ we say that vertices $x, y_1, y_2, ..., y_t$ form a \textit{$t$-claw} (claw of size $t$) in colors $(I, J)$ if $c(x,y_i)\in I$ and $c(y_i, y_j)\in J$ for all distinct $1\leq i,  j\leq t$.  
\end{definition}
\begin{lemma}\label{assoc-claw-proof-bound}
Let $\mathfrak{X}$ be an association scheme of rank 4 and diameter~2 with constituents ordered by degree. Assume additionally, that the inequality $k_2\leq \frac{\varepsilon}{2} k_3$ holds for some $0< \varepsilon \leq \frac{1}{100}$. Suppose for some $0<\delta\leq \frac{1}{100}$ we have
\[p_{2,2}^{2}\geq \frac{1 - \delta}{2}k_2\quad  \text{and} \quad \frac{1}{8}k_2 \leq p_{2,2}^{1} \leq \frac{1}{3} k_2.\]
Then $k_2\leq 20 k_1$.
\end{lemma}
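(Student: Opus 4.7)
I plan to adapt the counting argument from Metsch's proof of Theorem~\ref{Metsch} to the color-$2$ constituent restricted to a single color-$2$ neighbourhood. Fix any vertex $x$ and let $Y := N_2(x)$, so $|Y| = k_2$. By Eq.~\eqref{eq-sum-param} and the symmetry of intersection numbers in association schemes, each $y \in Y$ has exactly $p_{2,2}^2 \geq (1-\delta)\, k_2 / 2$ color-$2$ neighbours and exactly $p_{1,2}^2 = (k_1/k_2)\, p_{2,2}^1$ color-$1$ neighbours inside $Y$; the hypothesis $p_{2,2}^1 \in [k_2/8,\, k_2/3]$ therefore translates to a color-$1$ degree inside $Y$ lying in $[k_1/8,\, k_1/3]$, so that the number of color-$1$ edges in $Y$ is $E_1(Y) = \tfrac{1}{2} k_1\, p_{2,2}^1 \in [k_1 k_2/16,\, k_1 k_2/6]$.

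The main step will be to double-count ordered triples $(y_0; y_1, y_2) \in Y^3$ of pairwise distinct vertices with $c(y_0, y_1) = c(y_0, y_2) = 2$ and $c(y_1, y_2) = 1$. Fixing the color-$1$ edge $\{y_1, y_2\} \subset Y$ first, at most $p_{2,2}^1 \leq k_2/3$ vertices of $V$ (let alone $Y$) are color-$2$ adjacent to both endpoints, giving the upper bound $2 E_1(Y) \cdot p_{2,2}^1$. Fixing the centre $y_0 \in Y$ first, the same count equals twice the number of color-$1$ edges with both endpoints in $N_2(y_0) \cap Y$; the hard part, imitating Metsch, is to lower bound this quantity. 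A naive two-set inclusion--exclusion only gives $|N_2(y_0) \cap N_2(y_0') \cap Y| \geq 2 p_{2,2}^2 - k_2 \geq -\delta k_2$ and is useless, so one must instead use a convexity (or Cauchy--Schwarz) bound on the statistics $|N_1(y) \cap N_2(y_0) \cap Y|$ as $y$ ranges over $Y$, exploiting the fact that $p_{2,2}^2$ is very close to $k_2/2$.

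Matching the two counts and cancelling a factor of $k_1 k_2$ should yield an inequality of the form $k_2 \leq C\, k_1$ for an absolute constant $C$; the slack afforded by $\delta, \varepsilon \leq 1/100$ should then be enough to push $C$ down to the required value $20$. The main obstacle is precisely this lower bound in the second count: it is the technical heart of the argument, and the place where the specific interplay between $p_{2,2}^2 \approx k_2/2$ and $p_{2,2}^1 \lesssim k_2/3$ plays the decisive role; the upper bound and all intersection-number manipulations are routine by comparison.
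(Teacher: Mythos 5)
There is a genuine gap, and it sits exactly where you placed it. The two sides of your double count are not independent: $\sum_{y_0\in Y} e_1\bigl(N_2(y_0)\cap Y\bigr)$ \emph{is} $\sum_{\{y_1,y_2\}} |N_2(y_1)\cap N_2(y_2)\cap Y|$ (sum over color-$1$ edges in $Y$), i.e.\ the triple count viewed two ways, so no inequality falls out unless you inject independent structural information into the ``lower bound'' side. The tools you propose (convexity, Cauchy--Schwarz on $|N_1(y)\cap N_2(y_0)\cap Y|$) do not supply it: the relevant first moment is the unknown quantity itself, and the crude counts cancel at leading order. Indeed $|N_2(y_0)\cap Y|=p_{2,2}^2\approx k_2/2$ while each vertex of that set may have up to $p_{2,2}^2\approx k_2/2$ color-$2$ neighbours and up to $k_2/2-p_{1,2}^2$ color-$3$ neighbours inside $Y$, so a priori $N_2(y_0)\cap Y$ could carry \emph{no} color-$1$ edges at all; nothing in the hypotheses, as you have deployed them, rules this out. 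The one hypothesis you have not really used is $p_{2,2}^3\le\varepsilon k_2$ (from Lemma~\ref{assoc-param-ineq}), and that is precisely the lever needed to kill the color-$3$ edges.

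The paper's proof is structural rather than counting-based. From $p_{2,2}^2\ge\frac{1-\delta}{2}k_2$ and $p_{2,2}^3\le\varepsilon k_2$ it first excludes $3$-claws in colors $(2,3)$ (three sets of size $\ge\frac{1-\delta}{2}k_2$ with pairwise intersections $\le\varepsilon k_2$ cannot fit in $N_2(x)$). This shows that every color-$2$ edge $\{x,y\}$ lies in a clique of $X_{1,2}$ of size $\ge p_{2,2}^2-p_{2,2}^3-p_{2,1}^3\approx k_2/2$, namely the common color-$2$ neighbours of $x,y$ that are color-$3$ from a witness $z$. If some such clique contains a color-$1$ edge, then $p_{1,1}^1+2p_{1,2}^1+p_{2,2}^1\ge|C|-2$, and since the left side is at most $3k_1+\frac{1}{3}k_2$ this gives $k_2\le 20k_1$ at once. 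If not, all these cliques live in $X_2$; the bound $p_{2,2}^1\le\frac{1}{3}k_2$ is used only to show two such cliques meet in at most $\frac{1}{3}k_2$ vertices, and then either two cliques share an edge (again forcing $k_2\le 20k_1$) or one gets a clique geometry on $X_2$ and Lemma~\ref{clique-mu-bound} yields $p_{2,2}^1\le 4$, contradicting $p_{2,2}^1\ge\frac{1}{8}k_2$. To salvage your plan you would have to reproduce the no-$3$-claw step in some form; without it the lower bound you need is simply false in the generality in which you are trying to prove it.
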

\begin{proof}
Note that the assumptions of the lemma force the inequalities from Lemma~\ref{assoc-param-ineq} to be true.

First we show that under the assumptions of the lemma there are no 3-claws in colors $(2,3)$ in $\mathfrak{X}$. That is, for $x, y_1, y_2, y_3\in \mathfrak{X}$ it is not possible that $c(x,y_i) = 2$ and $c(y_i,y_j) = 3$ for all distinct $i,j\in [3]$. Indeed, suppose such $x, y_i$ exist. Let $U_i = N_2(x)\cap N_2(y_i)$. Then
\[
|U_{i}| = p_{2,2}^{2}\geq \frac{1 - \delta}{2}k_2,\quad  |U_{i}\cap U_{j}| \leq |N_2(y_i)\cap N_2(y_j)|= p_{2,2}^{3}\leq \varepsilon k_2 \quad \text{ and } \]
\[ 
|U_1 \cup U_2 \cup U_3|\leq |N_2(x)| = k_2.
\]
Therefore, we should have $k_2 \geq 3\frac{(1 - \delta)}{2}k_2-3\varepsilon k_2$, a contradiction.
Hence the size of a maximal claw in colors $(2,3)$ is 2. 

We claim now that any edge of color 2 lies inside a clique of size at least $p_{2,2}^{2} - p_{2,2}^{3} - p_{2,1}^{3}$ in $X_{1,2}$. Consider any edge $\{x,y\}$ of color 2. Let $z$ be a vertex which satisfies $c(x,z) = 2$ and $c(y,z) = 3$. Define $$C(x,y) = \{x, y\}\cup\{ w: c(z,w) = 3\, \text{ and } \, c(x,w) = 2,\, c(y,w) = 2\}.$$ 
Observe that $|C(x,y)|\geq 2+ p_{2,2}^{2} - p_{2,2}^{3} - p_{2,1}^{3}$. At the same time, if $z_1, z_2 \in C(x,y)$ satisfy $c(z_1, z_2) = 3$, then $x, z, z_1, z_2$ form a 3-claw in colors $(2,3)$, which contradicts our claim above. Hence, $C(x,y)$ is a clique in $X_{1,2}$.

Assume that there is an edge $\{y_1, y_2\}$ in $C(x,y)$ of color $1$ for some $x,y$. Then \[2k_1+\frac{1}{3}k_2\geq 2\sum\limits_{i = 1}^{3} p_{1,i}^{1} +p_{2,2}^{1}\geq p_{1,1}^{1}+2p_{1,2}^{1}+p_{2,2}^{1}\geq |C(x,y)|-2\geq \frac{1 - \delta - 2\varepsilon}{2}k_2 - k_1.\]
Therefore, $k_2\leq 20 k_1$.

Assume now that all the edges in $C(x,y)$ are of color $2$ for all $x,y$, that is, $C(x,y)$ is a clique in $X_{2}$. Let $\mathcal{C}$ be the set of all maximal cliques in $X_2$ of size at least $p_{2,2}^{2} - p_{2,2}^{3} - p_{2,1}^{3}$. Then we have proved that every edge of color $2$ is covered by at least one clique in $\mathcal{C}$. Consider, two distinct cliques $C_1, C_2 \in \mathcal{C}$. Then there is a pair of vertices $v\in C_1\setminus C_2$ and $u\in C_2\setminus C_1$ with $c(v, u) \neq 2$. Thus $|C_1\cap C_2|\leq \max(p_{2,2}^{1}, p_{2,2}^{3})\leq \frac{1}{3}k_2$.

Suppose first that some pair of distinct cliques $C_1, C_2\in \mathcal{C}$ satisfies $|C_1\cap C_2|\geq 2$ and let $\{x,y\}\in C_1\cap C_2$ be an edge. Then $c(x,y) = 2$ and every vertex in $C_1\cup C_2$ is connected to both $x$ and $y$ by edge of color $2$. Thus, 
\[p_{2,2}^{2}\geq |C_1\cup C_2| - 2 \geq 2(p_{2,2}^{2} - p_{2,2}^{3} - k_1) - \frac{1}{3}k_2,\, \text{ so}\]  
\[\left(\frac{1}{3}+2\varepsilon\right)k_2+2k_1\geq \frac{1}{3}k_2+2\varepsilon k_2 +2k_1 \geq p_{2,2}^{2}\geq \frac{1 - \delta}{2}k_2. \]
Hence, $k_2\leq 20k_1$.

Finally, if for any two distinct cliques $C_1, C_2\in \mathcal{C}$ we have $|C_1\cap C_2|\leq 1$, then every edge of color $2$ lies in at most one clique of $\mathcal{C}$. Above we proved that any edge of color $2$ lies in at least one clique of $\mathcal{C}$, so it lies in exactly one. Therefore, as $p_{2,2}^{2}\geq \frac{1 - \delta}{2}k_2$, we get that either $k_2\leq 20k_1$, or $|C|> \frac{1}{3} k_2$ for any $C\in \mathcal{C}$ and so every vertex lies in at most $2$ cliques from~$\mathcal{C}$. In the latter case, by Lemma \ref{clique-mu-bound}, we get that $p_{2,2}^{1}\leq 4$, which contradicts $p_{2,2}^{1}\geq \frac{1}{8}k_2$, as by Lemma \ref{assoc-param-ineq} we have $k_2\geq \frac{p_{2,2}^{3}}{\varepsilon}\geq \frac{1}{\varepsilon}$.
\end{proof}

Alternatively, we can get a linear inequality between $k_1$ and $k_2$ if we know that $X_{1,2}$ has clique geometry.
\begin{lemma}\label{assoc-k1-k2-bound}
Let $\mathfrak{X}$ be an association scheme of rank 4 on $n\geq 29$ vertices, with diameter~2, constituents ordered by degree and assume inequality $k_2\leq \frac{\varepsilon}{2}k_3$ holds for some $\varepsilon<\frac{1}{10}$. Assume $X_{1,2}$ has a clique geometry such that every vertex belongs to at most $m$ cliques. Then
\[ p_{2,3}^{1} \leq \frac{m^2 -2}{2}k_1\quad \text{ and } \quad k_2 \leq \frac{3}{2-4\varepsilon}(m^{2} - 2) k_1. \]
If additionally,  $X_1$ is strongly regular graph with smallest eigenvalue $-2$, then
\[ p_{2,3}^{1} \leq \frac{m^2 -4}{8}k_1\quad \text{ and } \quad k_2 \leq \frac{3}{8(1-2\varepsilon)}(m^{2} - 4) k_1. \]
\end{lemma}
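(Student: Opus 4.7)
Plan. The main idea is to obtain a constant upper bound on $p_{1,2}^{3}$ using the clique geometry of $X_{1,2}$, and then convert this into a linear bound on $p_{2,3}^{1}$ via the identity $k_1 p_{2,3}^{1}=k_3 p_{1,2}^{3}$ from Eq.~\eqref{eq-sum-param} together with the diameter-$2$ hypothesis on $X_1$. The upper bound on $k_2$ is then derived from the resulting bound on $p_{2,3}^{1}$ by applying Corollary~\ref{cor2} to $p_{2,1}^{1}$ and $p_{2,2}^{1}$.

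First I would observe that since $\mathfrak{X}$ has diameter $2$, every non-diagonal constituent is connected of diameter at most $2$. In particular, any pair $x,y$ with $c(x,y)=3$ is non-adjacent in $X_1$ but at $X_1$-distance exactly $2$, so $x$ and $y$ share at least one common $X_1$-neighbour; hence $p_{1,1}^{3}\ge 1$, and symmetrically $p_{2,2}^{3}\ge 1$. Applying Lemma~\ref{clique-mu-bound} to $X_{1,2}$ with its given clique geometry yields $\mu(X_{1,2})=p_{1,1}^{3}+2p_{1,2}^{3}+p_{2,2}^{3}\le m^{2}$, and combined with the two lower bounds above this gives $p_{1,2}^{3}\le(m^{2}-2)/2$. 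Next, the diameter-$2$ condition on $X_1$ gives the standard estimate $n-1\le k_1+k_1(k_1-1)=k_1^{2}$, so $k_3\le n-1\le k_1^{2}$. Substituting into the identity $k_1 p_{2,3}^{1}=k_3 p_{1,2}^{3}$ produces
\[p_{2,3}^{1}=\frac{k_3\, p_{1,2}^{3}}{k_1}\le\frac{k_1^{2}\cdot(m^{2}-2)/2}{k_1}=\frac{(m^{2}-2)k_1}{2}.\]

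For the bound on $k_2$, I would invoke Corollary~\ref{cor2} (its hypothesis $\max(k_1,k_2)\le\frac{\varepsilon}{2}k_3$ is satisfied since $k_1\le k_2$) twice, to get $p_{2,1}^{1}\le p_{2,3}^{1}+\varepsilon k_1$ and $p_{2,2}^{1}\le p_{2,3}^{1}+\varepsilon k_2$. Using $p_{2,0}^{1}=0$ so that $k_2=p_{2,1}^{1}+p_{2,2}^{1}+p_{2,3}^{1}$, adding these gives $k_2\le 3p_{2,3}^{1}+\varepsilon(k_1+k_2)$; substituting the bound on $p_{2,3}^{1}$ and rearranging,
\[k_2(1-\varepsilon)\le\frac{3(m^{2}-2)}{2}k_1+\varepsilon k_1,\]
which after a short calculation gives $k_2\le\frac{3(m^{2}-2)}{2-4\varepsilon}k_1$; the needed inequality reduces to $3(m^{2}-2)\ge 2-4\varepsilon$, which is immediate for $m\ge 2$.

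For the strongly-regular refinement, Seidel's classification (Theorem~\ref{geom-eig-2}) forces $X_1\in\{T(s),L_2(s)\}$ (using $n\ge 29$), and in both cases $\mu(X_1)\ge 2$. Since every color-$3$ pair lies at $X_1$-distance exactly $2$, this gives $p_{1,1}^{3}=\mu(X_1)\ge 2$; together with $p_{2,2}^{3}\ge 1$ this improves the bound from Lemma~\ref{clique-mu-bound} to $p_{1,2}^{3}\le(m^{2}-3)/2$. To obtain the target factor $\tfrac{m^2-4}{8}$, I would then sharpen the diameter-$2$ count by exploiting the specific SR structure of $X_1$: the Delsarte clique bound gives cliques of size $1+k_1/2$ in $X_1$, and an explicit edge-count inside $T(s)$ or $L_2(s)$ improves the crude estimate $n-1\le k_1^{2}$ to a bound of the form $k_3\le k_1^{2}/4$ (with the missing factor absorbed into the $p_{1,1}^{3}\ge 2$ contribution). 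Combining these gives $p_{2,3}^{1}\le\frac{(m^{2}-4)k_1}{8}$, and the bound on $k_2$ follows by the same Corollary~\ref{cor2} argument as before. The main obstacle I expect is obtaining a uniform constant $1/4$ in the SR step: the ratio $n/k_1^{2}$ is approximately $1/8$ for $T(s)$ but approaches $1/4$ from below for $L_2(s)$ (and is worse for small $s$), so the tight constant in the SR refinement must be recovered by a case split on the two SR families rather than a single uniform inequality.
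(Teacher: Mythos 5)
Your proof of the first pair of inequalities matches the paper's: you apply Lemma~\ref{clique-mu-bound} to $X_{1,2}$ to get $\mu(X_{1,2})\le m^2$, use $p_{1,1}^3, p_{2,2}^3 \ge 1$ from primitivity/diameter, use $k_3\le k_1(k_1-1)$, and then drive the $k_2$ bound through Corollary~\ref{cor2}. (The paper symmetrizes slightly to $\max(p_{2,2}^1,p_{2,1}^1)\le p_{2,3}^1+\varepsilon k_2$, getting $\frac{1-2\varepsilon}{3}k_2\le p_{2,3}^1$ directly, but your asymmetric version gives the same constant after the algebra you carry out.) That part is fine.

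The strongly regular refinement has a genuine gap. You correctly extract $p_{1,1}^3=\mu(X_1)\ge 2$ from $X_1\in\{T(s),L_2(s)\}$, but you only obtain $p_{2,2}^3\ge 1$, which yields $p_{1,2}^3\le (m^2-3)/2$, not the $(m^2-4)/2$ required for the stated bound $\frac{m^2-4}{8}k_1$. Your proposal to ``absorb the missing factor into the $p_{1,1}^3\ge 2$ contribution'' does not do anything --- you have already spent that improvement. The missing idea is a second lower bound $p_{2,2}^3\ge 2$: apply Corollary~\ref{cor2} with $i=j=s=2$ to get $p_{2,3}^2\ge \frac{1-2\varepsilon}{3}k_2$, hence by Eq.~\eqref{eq-sum-param}
\[
p_{2,2}^3 = \frac{k_2}{k_3}\,p_{2,3}^2 \;\ge\; \frac{(1-2\varepsilon)k_2^2}{3 k_3} \;\ge\; \frac{4(1-2\varepsilon)}{3} \;>\; 1,
\]
using $k_2\ge k_1$ and $4k_3\le k_1^2$; since $p_{2,2}^3$ is an integer, $p_{2,2}^3\ge 2$. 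This gives $p_{1,2}^3\le (m^2-4)/2$, and the rest follows. Also, your worry that $n/k_1^2$ forces a case split between $T(s)$ and $L_2(s)$ is a red herring: the relevant quantity is $k_3/k_1^2$, and $k_3 = n-1-k_1-k_2 \le n-1-2k_1$ gives $4k_3\le k_1^2$ uniformly in both families ($k_3\le (s-1)(s-3)$ vs.\ $k_1^2=4(s-1)^2$ for $L_2(s)$; $k_3\le \tfrac{(s-2)(s+1)}{2}\cdot 2 - 2k_1$ style bound for $T(s)$, checked using $n\ge 29$), so no case split is needed beyond what Seidel's theorem already supplies.
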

\begin{proof}
By Lemma \ref{clique-mu-bound} applied to $X_{1,2}$, we know
\begin{equation}
p_{1,1}^{3}+2p_{1,2}^{3}+p_{2,2}^{3} = \mu(X_{1,2}) \leq m^2.
\end{equation}
Since $\mathfrak{X}$ is of diameter $2$, we get $p_{1,1}^{3}$ and $p_{2,2}^{3}$ are at least 1, and $k_1(k_1 -1)\geq k_3$. Thus
\begin{equation}\label{eq-k1-k2-p123}
p_{1,2}^{3}\leq \frac{m^{2} - 2}{2}, \, \text{ so }\, p_{2,3}^{1}\leq \frac{m^2 - 2}{2}\frac{k_3}{k_1} \leq \frac{m^2 - 2}{2}k_1.
\end{equation} 
By Eq.~\eqref{eq-sum-param}, $p_{2,1}^{1}+p_{2,2}^{1}+p_{2,3}^{1} = k_2$, and Corollary \ref{cor2} implies that
$ p_{2,3}^{1}+\varepsilon k_2 \geq \max(p_{2,2}^{1}, p_{2,1}^{1})$. Thus, combining with Eq \eqref{eq-k1-k2-p123}, we get
 $$ \frac{1-2\varepsilon}{3}k_2 \leq p_{2,3}^{1} \leq \frac{m^2 - 2}{2}k_1.
 $$
If $X_1$ is strongly regular with smallest eigenvalue $-2$, we can get better estimates. By Seidel's classification, $X_1$ is either $T(s)$ or $L_{2}(s)$ for some $s$. Thus, either $n = \frac{s(s-1)}{2}$ and $k_1 = 2(s-2)$, or $n = s^2$ and $k_1 = 2(s-1)$. In any case $4k_3\leq k_1^{2}$.  Observe that Corollary~\ref{cor2} implies 
$ p_{2,3}^{2}+\varepsilon k_2 \geq \max(p_{2,2}^{2}, p_{2,1}^{2}) $. Hence, $p_{2,3}^{2} \geq \frac{1-2\varepsilon}{3}k_2$, so $p_{2,2}^{3}\geq \frac{(1-2\varepsilon)(k_2)^2}{3k_3}\geq \frac{4(1-2\varepsilon)}{3}$. At the same time $p_{1,1}^{3}\geq 2$ as $\mu(X_1)\geq 2$ for $X_1 = T(s)$, or $X_1 = L_2(s)$. Thus $p_{i,i}^{3}\geq 2$ for $i=1$ and $i=2$. Therefore, as in Eq. \eqref{eq-k1-k2-p123},
\[ p_{2,3}^{1}\leq \frac{m^2 - 4}{2}\frac{k_3}{k_1} \leq \frac{m^2 - 4}{8}k_1. \]
Again, $p_{2,3}^{1}\geq  \frac{1-2\varepsilon}{3}k_2$ implies the inequality on $k_2$. 
\end{proof} 

Now, we are ready to consider the case when the constituent $X_1$ is strongly regular (case of statement 3 of Proposition \ref{assoc-diam2}). 

\begin{proposition}\label{assoc-x1-strongly-reg}
Let $\mathfrak{X}$ be an association scheme of rank 4 on $n$ vertices  with diameter~2 and constituents ordered by degree. Assume additionally, that the parameters of $\mathfrak{X}$ satisfy $k_2\leq \frac{\varepsilon}{2}k_3$ for $\varepsilon = 10^{-26}$. Suppose that $X_1$ is a strongly regular graph with smallest eigenvalue $-2$. Then
\begin{equation}\label{eq-x1-goal}
 q(Y)+\xi(Y)\leq (1-\varepsilon)k_Y,
 \end{equation}
where either $Y = X_{2}$ and $k_Y = k_2$, or $Y = X_{1,2}$ and $k_Y = k_1+k_2$.

\end{proposition}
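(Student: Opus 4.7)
The plan is to leverage Seidel's classification of strongly regular graphs with smallest eigenvalue $-2$ to pin down the ``$X_1$-parameters,'' reduce the spectral bounds for $X_2$ and $X_{1,2}$ to inequalities in three normalized variables, and close the hard parameter ranges via the auxiliary Lemmas \ref{assoc-claw-proof-bound} and \ref{assoc-k1-k2-bound}.

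By Seidel's theorem (Theorem \ref{geom-eig-2}), assuming $n$ exceeds an absolute constant, $X_1$ is isomorphic to $T(s)$ or $L_2(s)$. In either case $p_{1,1}^1 = s-2$ is essentially $k_1/2$ and $\mu(X_1) = p_{1,1}^2 = p_{1,1}^3 \in \{2,4\}$. Since the diameter is $2$, we have $k_1^2 \geq n-1$, so $k_1$ is large. The identity $p_{i,j}^s k_s = p_{s,j}^i k_i$ of Eq.~\eqref{eq-sum-param} then gives $p_{1,2}^1 = p_{1,1}^2 k_2/k_1 = o(k_2)$ and $p_{1,2}^2 = p_{2,2}^1 k_1/k_2$. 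Plugging these into Propositions \ref{assoc-spectral-radius} and \ref{assoc-x12-approx} expresses the bounds on $\xi(X_2)$ and $\xi(X_{1,2})$ as explicit functions of $\alpha := p_{2,2}^1/k_2$, $\beta := p_{2,2}^2/k_2$, and $\rho := k_1/k_2$, with an $O(\varepsilon^{1/3})$ additive error.

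The heart of the argument is a case split on $(\alpha,\beta)$; Corollary \ref{cor3} gives $\alpha,\beta \leq 1/2 + O(\varepsilon)$. When $\beta$ is noticeably below $1/2$ and the cross-term $\alpha\sqrt{\rho}$ in the bound for $\xi(X_2)$ is small, the inequality $q(X_2) + \xi(X_2) \leq (1-\varepsilon)k_2$ follows directly. The delicate range is $\beta$ near $1/2$; there I subdivide on $\alpha$. For $\alpha \in [1/8,\,1/3]$, Lemma \ref{assoc-claw-proof-bound} forces $k_2 \leq 20 k_1$. For $\alpha > 1/3$, both values $\lambda_i(X_{1,2})$ are at least $k_2/3 + O(\varepsilon)(k_1+k_2)$, so Metsch's criterion (Theorem \ref{Metsch}) yields a clique geometry on $X_{1,2}$ with $m = 3$, and Lemma \ref{assoc-k1-k2-bound} then bounds $k_2/k_1$ by an absolute constant. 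For $\alpha < 1/8$, the edge-regular graph $X_2$ itself satisfies Metsch's criterion with $m = 2$ (its $\lambda = p_{2,2}^2$ is close to $k_2/2$, its $\mu$ is at most $k_2/8$), so $X_2$ is a line graph by Remark \ref{m2-line}; by Theorem \ref{geom-eig-2} combined with a direct vertex/degree comparison between $X_1$ and $X_2$ on the same set of $n$ vertices, this again forces $k_2 = O(k_1)$.

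In every sub-case we have reduced to $k_2 \leq C k_1$ for an absolute constant $C$, and the remaining work is numerical: apply Proposition \ref{assoc-x12-approx} to $X_{1,2}$ and verify $q(X_{1,2}) + \xi(X_{1,2}) \leq (1-\varepsilon)(k_1+k_2)$ throughout the now-bounded range of $(\alpha,\beta,\rho)$. This step parallels the case analysis in Proposition \ref{assoc-diam2} (Case 3) and in Proposition \ref{assoc-x2-strongly-reg}, and requires a further subdivision by whether $\lambda_1(X_{1,2})$ or $\lambda_2(X_{1,2})$ dominates, together with the structural input $p_{1,1}^1 \approx k_1/2$ to beat the square-root term. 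The main obstacle is precisely this final numerical step in the regime where both $\alpha$ and $\beta$ approach $1/2$ simultaneously: the direct estimate from Proposition \ref{assoc-x12-approx} is marginal there, and closing it requires combining the structural information that $X_1$ is triangular or lattice with the quantitative upper bound on $k_2/k_1$ supplied by Lemma \ref{assoc-claw-proof-bound} or Lemma \ref{assoc-k1-k2-bound}.
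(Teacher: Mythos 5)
The overall plan (pin down $X_1$ via Seidel's classification, normalize to $\alpha = p_{2,2}^1/k_2$, $\beta = p_{2,2}^2/k_2$, $\rho = k_1/k_2$, split cases on $(\alpha,\beta)$, and invoke Lemmas \ref{assoc-claw-proof-bound} and \ref{assoc-k1-k2-bound} to bound $k_2/k_1$) is reasonable and roughly parallels the paper's Cases A and B. Where your proposal breaks down is exactly where you flag the ``main obstacle,'' namely the regime $\alpha \approx \beta \approx 1/2$, and the resolution you sketch cannot work.

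In that regime, even after establishing $k_2 \leq C k_1$ for an absolute constant $C$, the conclusion $q(X_{1,2}) + \xi(X_{1,2}) \leq (1-\varepsilon)(k_1+k_2)$ is simply false as a numerical inequality. Take $k_1 = k_2 = k$, $p_{1,1}^1 \approx k/2$, $p_{2,2}^1 \approx p_{2,2}^2 \approx k/2$, so $p_{1,2}^2 = p_{2,2}^1 k_1/k_2 \approx k/2$; plugging into Proposition \ref{assoc-x12-approx} gives the upper bound
\[
\xi(X_{1,2}) \lesssim \frac{\tfrac{3k}{2} + \sqrt{(\tfrac{k}{2})^2 + k^2}}{2} = \frac{3+\sqrt 5}{4}k \approx 1.31\,k,
\]
while $q(X_{1,2}) \geq \lambda_1(X_{1,2}) \approx p_{1,1}^1 + p_{2,2}^1 \approx k$. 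Then $q+\xi \approx 2.3\,k > 2k = k_1+k_2$. No amount of numerical refinement inside Proposition \ref{assoc-x12-approx} closes this, and knowing that $X_1$ is $T(s)$ or $L_2(s)$ together with $k_2 \leq C k_1$ gives no extra gain here.

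The paper's resolution of this case (Case B.1.a) is structurally different: it is a proof by contradiction, not a spectral estimate. When $\alpha,\beta \geq 1/3$ simultaneously, one first applies Metsch's criterion (Theorem \ref{Metsch}) with $m=3$ to $X_{1,2}$, then uses Lemma \ref{assoc-k1-k2-bound} to get $k_2 \leq 2k_1$, then re-verifies Metsch's conditions with the sharper bound to get $m=2$, hence by Lemma \ref{assoc-clique-geom} the graph $X_{1,2}$ is strongly regular with smallest eigenvalue $-2$. But Proposition \ref{assoc-x3-strongly-reg} together with Remark \ref{remark-x1-x12} shows $X_1$ and $X_{1,2}$ cannot simultaneously be strongly regular with smallest eigenvalue $-2$ on the same vertex set, so this parameter regime is impossible. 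This ``derive impossibility rather than a spectral gap'' step is the missing idea in your proposal. A secondary gap: your ``direct'' case ($\beta$ noticeably below $1/2$ and $\alpha\sqrt\rho$ small) also does not cover everything outside the delicate regime, e.g.\ $\alpha$ near $1/2$ with $\beta$ moderate and $\rho$ moderate; the paper handles this by a more careful estimate of the square-root term (gaining a $(k_1)^2/(13(k_1+k_2))$ term) within its Case A.
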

\begin{proof}
Note that the assumptions of the proposition force the inequalities from Lemma~\ref{assoc-param-ineq} to be true.

Since $p_{1,1}^2 =p_{1,1}^3\leq \varepsilon k_1$, by Lemma \ref{assoc-x12-approx}, for $\varepsilon_1 = 25\varepsilon^{1/3}\leq \frac{2}{3}10^{-7}$ we have 
\begin{equation}\label{eq-x12-spectral-gap-in-x1}
\xi(X_{1,2}) \leq \frac{p_{1,1}^{1}+p_{1,2}^{2}+p_{2,2}^{2}+\sqrt{(p_{1,2}^{2}+p_{2,2}^{2}-p_{1,1}^{1})^{2}+4p_{2,2}^{1}p_{1,2}^{2}}}{2}+\varepsilon_1(k_1+k_2).
\end{equation} 
Since $\sqrt{x}$ is a concave function this implies
\begin{equation}
\xi(X_{1,2})\leq \max(p_{1,1}^{1}, p_{2,2}^{2}+p_{2,1}^{2})+\sqrt{p_{2,2}^{1}p_{2,1}^{2}}+\varepsilon_1 (k_1+k_2).
\end{equation}
Using that $\lambda_{2}(X_{1,2}) \geq p_{2,2}^{2}+p_{2,1}^{2}$ and $p_{2,1}^{2} = \frac{k_1}{k_2}p_{2,2}^{1}\leq p_{2,2}^{1}$ we can simplify it even more
\begin{equation}
\xi(X_{1,2})\leq \max\left(\frac{k_1}{2}, \lambda_2(X_{1,2})\right)+p_{2,2}^{1}+\varepsilon_1 (k_1+k_2).
\end{equation}
Recall that as in Eq. \eqref{eq-mux12-above} and Eq. \eqref{eq-lambda-above} we have
\begin{equation}\label{eq-x12-mu-lambda-resrt-in-x1}
\mu(X_{1,2})\leq 2\varepsilon(k_1+k_2)\quad  \text{and} \quad \max(\lambda_1(X_{1, 2}), \lambda_2(X_{1,2}))\leq \frac{1+2\varepsilon}{2}(k_1+k_2).
\end{equation}

\noindent \textbf{Case A.} Suppose $p_{2,2}^{2}\geq (2-2\delta) p_{2,2}^{1}$ for $\delta = 10^{-7}$.

\noindent Using Corollary \ref{cor3} for $p_{2,2}^{2}$ we get 
\begin{equation}\label{eq-p212-u}
p_{2,2}^{1}\leq \frac{1+\varepsilon}{4(1-\delta)}k_2 \quad \text{and} \quad  p_{2,1}^{2} =\frac{k_1}{k_2} p_{2,2}^{1} \leq \frac{1+\varepsilon}{4(1-\delta)}k_1.
\end{equation}
Note, by Corollary \ref{cor2}, $\varepsilon k_1+p_{1,3}^1\geq p_{1,1}^1\geq \left(\frac{1}{2}-\varepsilon\right)k_1$, so Eq.~\eqref{eq-sum-param} implies $p_{1,2}^{1}\leq 3\varepsilon k_1$. Therefore, 
\[\lambda_{1}(X_{1,2})\leq \frac{1}{2}k_1+p_{2,2}^1+12\varepsilon k_1.\]
Assume that Eq. \eqref{eq-x1-goal} is not satisfied, then
\begin{equation}\label{eq-x1-caseA}
\begin{multlined}
 (1-\varepsilon)(k_1+k_2)\leq q(X_{1,2})+\xi(X_{1,2})\leq \\
 \leq \max(\lambda_2(X_{1,2}), \frac{1}{2}k_1+p_{2,2}^{1}+6\varepsilon k_1)+\max(\lambda_2(X_{1,2}), \frac{k_1}{2})+p_{2,2}^{1}+\varepsilon_1(k_1+k_2)
 \end{multlined}
 \end{equation}
Observe, that if $\lambda_2(X_{1,2})\leq \frac{1}{2}k_1+p_{2,2}^{1}+6\varepsilon k_1$, then using Eq. \eqref{eq-p212-u} we get a contradiction
$$(1-\varepsilon)(k_1+k_2)\leq k_1+3\frac{1+\varepsilon}{4(1-\delta)}k_2+\varepsilon_1(k_1+k_2)+6\varepsilon k_1.$$ 
Otherwise, Eq. \eqref{eq-x1-caseA} implies
 $$(1-\varepsilon-\varepsilon_1)(k_1+k_2)\leq 2\lambda_2(X_{1,2})+\frac{1+\varepsilon}{4(1-\delta)}k_2, \text{ so }\quad \lambda_2(X_{1,2}) \geq \frac{5}{6}k_1.$$
We estimate the expression under the root sign, using $(\frac{1}{2}-\varepsilon)k_1 \leq p_{1,1}^{1} \leq \frac{k_1}{2}$, $\lambda_2(X_{1,2})\geq \frac{5}{6}k_1$, Eq. \eqref{eq-p212-u}, and inequality $\frac{1+\varepsilon}{4(1-\delta)^2}\leq \frac{1+\varepsilon}{4}+\delta$ for $0\leq\varepsilon,  \delta \leq \frac{1}{2}$.

\[(p_{1,2}^{2}+p_{2,2}^{2}-p_{1,1}^{1})^{2}+4p_{2,2}^{1}p_{1,2}^{2} = (p_{1,2}^{2}+p_{2,2}^{2})^{2} - 2p_{1,1}^{1}(p_{1,2}^{2}+p_{2,2}^{2})+(p_{1,1}^{1})^{2}+4p_{2,2}^{1}p_{1,2}^{2} \leq \]
\[\leq (p_{1,2}^{2}+p_{2,2}^{2})^{2} -2p_{1,1}^{1}p_{1,2}^{2} - (1 - 2\varepsilon)k_1 p_{2,2}^{2} +\frac{(k_1)^{2}}{4} +\frac{1+\varepsilon}{2(1-\delta)^{2}}k_1 p_{2,2}^{2} \leq \]
\[ \leq  (p_{1,2}^{2}+p_{2,2}^{2})^{2} -k_1p_{1,2}^{2} +\left(\frac{(k_1)^2}{4} - \frac{1}{2}k_1p_{2,2}^{2}\right)+(3\varepsilon+2\delta)k_1k_2 \leq \]
\[ \leq  (p_{1,2}^{2}+p_{2,2}^{2})^{2} -\frac{k_1}{2}\left(\lambda_2(X_{1,2}) - \varepsilon k_1 -\frac{k_1}{2}\right)+(3\varepsilon+2\delta)k_1k_2 \leq\]
\[\leq (p_{1,2}^{2}+p_{2,2}^{2})^{2} - \frac{1}{6}(k_1)^{2} + (4\varepsilon+2\delta)k_1k_2.\]
Thus, using that $\sqrt{x}$ is concave, inequality $\sqrt{y^{2} - x^{2}} \leq y-\frac{x^{2}}{2y}$ and Eq. \eqref{eq-x12-mu-lambda-resrt-in-x1}, 
\begin{equation}\label{eq-x12-root-est}
\sqrt{(p_{1,2}^{2}+p_{2,2}^{2}-p_{1,1}^{1})^{2}+4p_{2,2}^{1}p_{1,2}^{2}} \leq p_{1,2}^{2}+p_{2,2}^{2}- \frac{2(k_1)^{2}}{13(k_1+k_2)}+\sqrt{(4\varepsilon+2\delta)k_1k_2}.
\end{equation}
Denote $\varepsilon_4 = \sqrt{4\varepsilon+2\delta}<2^{-1}\cdot 10^{-3}$. Hence, by Eq. \eqref{eq-x12-spectral-gap-in-x1} and Eq. \eqref{eq-x12-root-est},  
\[
\xi(X_{1,2}) \leq \frac{k_1}{4}+(p_{1,2}^{2}+p_{2,2}^{2}) - \frac{(k_1)^{2}}{13(k_1+k_2)}+\left(\frac{1}{2}\varepsilon_4\sqrt{k_1k_2}+\varepsilon_1 (k_1+k_2)\right).
\]
Using Corollary \ref{cor3} for $p_{2,2}^{2}$ and Eq. \eqref{eq-p212-u}, we get

\begin{equation}\label{eq-xi-bound-k1-k2}
\begin{multlined}
\xi(X_{1,2}) \leq \frac{k_1}{4}+\left(\frac{(1+\varepsilon)}{4(1-\delta)}k_1+\frac{(1+\varepsilon)}{2}k_2\right) - \frac{(k_1)^{2}}{13(k_1+k_2)}+\left(\frac{\varepsilon_4}{4}+\varepsilon_1\right) (k_1+k_2)\leq \\
\leq \frac{1}{2}(k_1+k_2) - \frac{(k_1)^{2}}{13(k_1+k_2)^{2}}(k_1+k_2)+ \varepsilon_5(k_1+k_2),
\end{multlined}
\end{equation}
where $\varepsilon_5 = \varepsilon_1+\frac{1}{4}\varepsilon_4+\delta+\varepsilon<6^{-1}\cdot 10^{-3}$. Thus, we want either $q(X_{1,2})$ to be bounded away from $\frac{k_1+k_2}{2}$, or to have $k_1\leq c k_2$ for some absolute constant $c$.

Observe, by Eq. \eqref{eq-p212-u} and Eq. \eqref{eq-xi-bound-k1-k2},
\begin{equation}\label{eq-xi-lambda-1-k1-k2}
\begin{multlined}
\lambda_1 (X_{1,2}) +\xi(X_{1,2}) \leq \left(\frac{k_1}{2}+p_{2,2}^{1}+2\varepsilon k_1\right) + \frac{k_1+k_2}{2}+\varepsilon_5(k_1+k_2)\leq \\
\leq k_1+\frac{3}{4}k_2+(3\varepsilon+\delta)(k_1+k_2)+\varepsilon_5k_2 
\leq (1-\varepsilon)(k_1+k_2),
\end{multlined}
\end{equation}
\begin{equation}
\lambda_{2}(X_{1,2})+\xi(X_{1,2}) \leq \lambda_{2}(X_{1,2})+\frac{k_1+k_2}{2} +\varepsilon_5(k_1+k_2).
\end{equation}
Clearly, 
$$\mu(X_{1,2})+\xi(X_{1,2}) \leq 2\varepsilon(k_1+k_2)+\xi(X_{1,2})\leq (1-\varepsilon)(k_1+k_2).$$
Thus, either we have $q(X_{1,2})+\xi(X_{1,2})\leq (1-\varepsilon)(k_1+k_2)$, or $\lambda_2(X_{1,2})\geq (\frac{1}{2}-\varepsilon_5 - \varepsilon)(k_1+k_2)$.

\noindent Suppose that $\lambda_2(X_{1,2})\geq (\frac{1}{2}-\varepsilon_5 - \varepsilon)(k_1+k_2)$. Recall, by assumption of Case A, 
$$\lambda_2(X_{1,2})\leq p_{2,2}^{2}+\frac{1}{1-\delta}\frac{k_1}{k_2}p_{2,2}^{2}+\varepsilon k_1\leq \left(\frac{1}{1-\delta}\frac{p_{2,2}^{2}}{k_2}+\varepsilon\right)(k_1+k_2).$$
Hence, in this case \[p_{2,2}^{2}\geq \left(\frac{1}{2} - \varepsilon_5 - 2\varepsilon\right)(1-\delta)k_2 .\]

\begin{enumerate}
\item[]\textbf{Case A.1} Assume $p_{2,2}^{1}< \frac{1}{8}k_2$.

\noindent Then $\mu(X_{2})\leq \frac{1}{8}k_2$ and $\lambda(X_2) = p_{2,2}^{2}$. Therefore, $X_2$ satisfies assumptions of Theorem~\ref{Metsch} for $m=2$. Thus, by Lemma \ref{clique-mu-bound} for graph $X_2$ we get $p_{2,2}^{3}\leq m^2 = 4$. At the same time, by Crorllary \ref{cor2} and Eq. \eqref{eq-sum-param} we have $p_{2,3}^{2}\geq \frac{1 - 2\varepsilon}{3}k_2$. Therefore, \[4k_3\geq p_{2, 2}^{3}k_3 = p_{2,3}^{2}k_2\geq k_2\frac{1 - 2\varepsilon}{3}k_2\geq \frac{1}{4}(k_2)^{2}.\]
At the same time, $(k_1)^{2}\geq k_3$, so $k_2\leq 4k_1$.
\item[]\textbf{Case A.2} Assume $p_{2,2}^{1}\geq \frac{1}{8}k_2$.

\noindent Then, as Eq.~\eqref{eq-p212-u} holds, by Lemma~\ref{assoc-claw-proof-bound},   we get that $k_2\leq 20k_1$. 
\end{enumerate}

\noindent Hence, Eq. \eqref{eq-xi-bound-k1-k2} and Eq. \eqref{eq-x12-mu-lambda-resrt-in-x1} imply
\[\lambda_{2}(X_{1,2})+\xi(X_{1,2})\leq (1-\varepsilon)(k_1+k_2).\]
Therefore, using bound on $\mu(X_{1,2})$ and Eq. \eqref{eq-xi-lambda-1-k1-k2}, we get $q(X_{1,2})+\xi(X_{1,2})\leq (1-\varepsilon)(k_1+k_2)$.

\noindent \textbf{Case B.} Suppose $p_{2,2}^{2}\leq (2-2\delta) p_{2,2}^{1}$.

\noindent Recall that by Corollary \ref{cor3}, $2p_{2,1}^{2}\leq (1+\varepsilon)k_1$. Assume $p_{2,2}^{1}\geq \frac{1}{5}k_2$ and $m\leq 5$, then
\begin{equation}\label{eq-x1-l1bound}
2\lambda_1(X_{1,2})-\lambda_{2}(X_{1,2})\geq k_1 +2p_{2,2}^{1} - p_{2,2}^{2} - 2p_{21}^{2} - 3\varepsilon k_1 \geq 
2\delta p_{2,2}^{1} - 4\varepsilon k_1\geq (2m-1)\mu(X_{1,2}).
\end{equation}
Suppose that $\lambda_2(X_{1,2})\geq (\frac{1}{4}+2m\varepsilon)(k_1+k_2)$, then Eq. \eqref{eq-x12-mu-lambda-resrt-in-x1} implies
\begin{equation}\label{eq-x1-l2bound}
2\lambda_2(X_{1,2}) - \lambda_1(X_{1,2})\geq (2m-1)\mu(X_{1,2}).
\end{equation}

\noindent \textbf{Case B.1.} Assume $p_{2,2}^{1}\geq \frac{k_2}{3}$.

\noindent Then
\begin{equation}\label{eq-x1-l1-13}
\lambda_1(X_{1,2})\geq \left(\frac{1}{2} - \varepsilon\right)k_1+\frac{1}{3}k_2\geq \frac{1}{3}(k_1+k_2).
\end{equation}
\begin{enumerate}
\item[]\textbf{Case B.1.a.} Suppose $p_{2,2}^{2}\geq \frac{1}{3}k_2$.

Then $\lambda_2(X_{1,2})\geq \frac{1}{3}(k_1+k_2)$. Thus, in notations of Theorem \ref{Metsch} we get for $X_{1,2}$ that $4\lambda^{(1)} - 6\mu(X_{1,2})\geq k_1+k_2$, and by Eq. \eqref{eq-x1-l1bound}-\eqref{eq-x1-l2bound}, inequality
$2\lambda^{(1)} - \lambda^{(2)}\geq 5\mu(X_{1,2})$ holds.
 Hence, by Theorem \ref{Metsch}, graph $X_{1,2}$ has clique geometry with $m=3$. Thus, by Lemma \ref{assoc-k1-k2-bound}, we have  $k_2 \leq \frac{15}{8(1-2\varepsilon)}k_1\leq 2k_1$. Therefore,
\[\lambda_{1}(X_{1,2})\geq p_{1,1}^{1}+p_{2,2}^{1}\geq \left(\frac{1}{2} - \varepsilon\right)k_1+\frac{1}{3}k_2 >\left(\frac{1}{3}+4\varepsilon\right)(k_1+k_2),\]
\[\lambda_{2}(X_{1,2})\geq p_{2,2}^{2}+2p_{1,2}^{2}\geq \frac{k_2}{4}+\frac{k_1}{3}+\frac{1}{3}\frac{k_2}{2}> \left(\frac{1}{3}+4\varepsilon\right)(k_1+k_2).\]

Therefore, $X_{1,2}$ satisfies Theorem \ref{Metsch} for $m = 2$, and so by Lemma \ref{assoc-clique-geom}, it is strongly regular with smallest eigenvalue $-2$. However,  by Proposition \ref{assoc-x3-strongly-reg} and Remark~\ref{remark-x1-x12}, graphs $X_1$ and $X_{1,2}$ could not be simultaneously strongly regular with smallest eigenvalue $-2$ under the assumptions of this proposition.

\item[]\textbf{Case B.1.b.} Suppose $p_{2,2}^{2}< \frac{1}{3}k_2$.

Then, in particular, $p^{1}_{2,2}\geq p_{2,2}^{2}$, so $q(X_2) = p_{2,2}^{1}$. Take $0\leq \alpha  \leq \frac{1+\varepsilon}{2}\leq \frac{51}{100}$, $0\leq \gamma \leq 1$, so that $p_{2,2}^{1} = \alpha k_2$ and $k_1 = \gamma k_2$. Using Eq. \eqref{x2-eq-bound} compute
\begin{equation}
q(X_2)+\xi(X_2) \leq p_{2,2}^{1}+p_{2,2}^{2}+\varepsilon k_2+ \sqrt{p_{2,2}^{1}p_{1,2}^{2}} +\varepsilon_1 k_2 = p_{2,2}^{2}+\alpha(1+\sqrt{\gamma}+\varepsilon+\varepsilon_1)k_2
\end{equation}

If $p_{2,2}^{2} \leq (1-\alpha(1+\sqrt{\gamma})-\varepsilon_1 - 2\varepsilon) k_2$, then $q(X_2)+\xi(X_2)\leq (1-\varepsilon)k_2$ and we reach our goal. So, assume that $p_{2,2}^{2} \geq (1-\alpha(1+\sqrt{\gamma})-\varepsilon_1 - 2\varepsilon) k_2$. Compute
\begin{equation}
\begin{multlined}
\lambda_2(X_{1,2}) = p_{2,2}^{2}+2p_{1,2}^{2}+p_{1,1}^{2} \geq (1-\alpha(1+\sqrt{\gamma})-\varepsilon_1 - 2\varepsilon) k_2+2\alpha\gamma k_2 \geq \\
\geq \left(\frac{1 - \alpha(1+\sqrt{\gamma}-2\gamma) - \varepsilon_1 - 2\varepsilon}{1+\gamma}\right)(k_1+k_2) \geq \frac{3}{10}(k_1+k_2),
\end{multlined}
\end{equation}
here we use that $1+\sqrt{\gamma} - 2\gamma\geq 0$ for $0\leq \gamma \leq 1$, so expression is minimized for $\alpha = \frac{1+\varepsilon}{2}$ and after that we compute the minimum of the expression for $0\leq\gamma \leq 1$. Thus, by Eq. \eqref{eq-x1-l1bound}-\eqref{eq-x1-l1-13}, graph $X_{1,2}$ has clique geometry for $m = 3$. Hence, by Lemma~\ref{assoc-k1-k2-bound} we have $k_2\leq 2k_1$. This implies that $\frac{1}{2}\leq \gamma \leq 1$. Compute,
\[\min_{1/2\leq \gamma\leq 1}\min_{0\leq\alpha\leq \frac{51}{100}}\left(\frac{1 - \alpha(1+\sqrt{\gamma}-2\gamma) - \varepsilon_1 - 2\varepsilon}{1+\gamma}\right)= \]
\[= \min_{1/2\leq \gamma\leq 1}\left(\frac{1 - \frac{51}{100}(1+\sqrt{\gamma}-2\gamma) - \varepsilon_1 - 2\varepsilon}{1+\gamma}\right)\geq \frac{9}{25}>\frac{1}{3}+2\varepsilon.\]

Therefore, using also Eq.~\eqref{eq-x1-l1bound}-\eqref{eq-x1-l1-13}, we get that $X_{1,2}$ satisfies conditions of Theorem~\ref{Metsch} for $m=2$, so by Lemma \ref{assoc-clique-geom}, graph $X_{1,2}$ is strongly regular with smallest eigenvalue $-2$. However, by Proposition \ref{assoc-x3-strongly-reg} and Remark~\ref{remark-x1-x12}, it is impossible, as $X_1$ is also strongly regular with smallest eigenvalue $-2$.
\end{enumerate}

\noindent \textbf{Case B.2.} Assume $\frac{k_2}{5}\leq p_{2,2}^{1}\leq \frac{k_2}{3} $.

\noindent Then 
\begin{equation}\label{eq-x1-l1-b2}
\lambda_{1}(X_{1,2})\geq p_{1,1}^{1}+p_{2,2}^{1}\geq \left(\frac{1}{2}-\varepsilon\right)k_1 +\frac{1}{5}k_2.
\end{equation}
If $p_{2,2}^{2}\leq (\frac{1}{3} - \varepsilon -\varepsilon_1) k_2$, then
\[q(X_2)+\xi(X_2)\leq \max(p_{2,2}^{2}, p_{2,2}^{1})+p_{2,2}^{2}+\sqrt{p_{2,2}^{1}p_{1,2}^{2}}+\varepsilon_1 k_2\leq \]
\[ \leq \frac{k_2}{3}+(\frac{1}{3} - \varepsilon -\varepsilon_1) k_2 + \frac{k_2}{3} +\varepsilon_1 k_2 \leq (1-\varepsilon)k_2.\]
Else $p_{2,2}^{2}\geq (\frac{1}{3} - \varepsilon -\varepsilon_1) k_2\geq (\frac{1}{4}+10\varepsilon) k_2$, so
\begin{equation}\label{eq-x1-l2-b2}
\lambda_2(X_{1,2})\geq p_{2,2}^{2}+2p_{1,2}^{2}\geq \left(\frac{1}{4}+10\varepsilon\right)k_2+\frac{2}{5}k_1.
\end{equation}
Thus, Eq. \eqref{eq-x1-l1bound} - \eqref{eq-x1-l2bound} and Eq. \eqref{eq-x1-l1-b2}-\eqref{eq-x1-l2-b2} imply, using Theorem \ref{Metsch}, that graph $X_{1,2}$ has clique geometry with $m = 5$. Therefore, by Lemma \ref{assoc-k1-k2-bound}, 
$$\left(\frac{2}{3} - \varepsilon\right)k_2\leq (1 - \varepsilon)k_2 - p_{2,2}^{1}\leq p_{2,3}^{1}\leq \frac{m^2 - 4}{8}k_1,\, \text{ so }\, k_2\leq 4 k_1.$$
Hence, in fact, Eq. \eqref{eq-x1-l1-b2} implies  
$\lambda_{1}(X_{1,2})\geq \frac{1}{5}k_2+(\frac{1}{2}-\varepsilon)k_1 \geq (\frac{1}{4}+6\varepsilon)(k_1+k_2)$.
Thus, using Eq. \eqref{eq-x1-l2-b2} and Eq. \eqref{eq-x1-l1bound} - \eqref{eq-x1-l2bound}, by Theorem \ref{Metsch}, we get that $X_{1,2}$ has clique geometry for $m = 3$. Thus, we can get better estimate

$$\left(\frac{2}{3} - \varepsilon\right)k_2\leq \frac{m^2 - 4}{8}k_1,\, \text{ so }\, k_2\leq \frac{15}{16(1-2\varepsilon)} k_1<k_1.$$

However this contradicts our assumption that $k_2\geq k_1$, so $p_{2,2}^{2}\geq (\frac{1}{3} - \varepsilon -\varepsilon_1) k_2$ is impossible in this case.

\noindent \textbf{Case B.3.} Assume $p_{2,2}^{1}\leq \frac{k_2}{5}$.

\noindent Then, by the assumption of Case B, $p_{2,2}^{2}\leq (2-2\delta) p_{2,2}^{1}\leq (2-2\delta)\frac{k_2}{5}$, so 
\begin{equation}
\begin{multlined}
q(X_2)+\xi(X_2) \leq \max(p_{2,2}^{2}, p_{2,2}^{1})+p_{2,2}^{2}+\sqrt{p_{2,2}^{1}p_{1,2}^{2}}+\varepsilon_1 k_2\leq 
\\ \leq 2(2-2\delta)\frac{k_2}{5}+ \frac{k_2}{5} +\varepsilon_1 k_2  
\leq \left(1 - \frac{4}{5}\delta+\varepsilon_1\right)k_2\leq (1-\varepsilon)k_2.
\end{multlined}
\end{equation}

\end{proof}

\subsubsection{Constituent that is the line graph of a triangle-free regular graph}

Finally, we will consider the case of the last possible outcome provided by Proposition~\ref{assoc-diam2}, the case when one of the constituents is the line graph of a regular triangle-free graph and is not strongly regular. 

First recall the following classical result due to Whitney.
\begin{theorem}[Corollary to Whitney's Theorem, \cite{Whitney}]\label{Whitney} Let $X$ be a connected graph on $n\geq 5$ vertices. Then the natural homomorphism $\phi :\Aut(X) \rightarrow \Aut(L(X))$ is an isomorphism $\Aut(L(X)) \cong \Aut(X)$. 
\end{theorem}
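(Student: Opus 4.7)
The plan is to exhibit an explicit inverse to $\phi$ by reconstructing the vertices of $X$ intrinsically from the structure of $L(X)$. The homomorphism itself is defined by $\phi(\sigma)(\{u,v\}) = \{\sigma(u),\sigma(v)\}$, which preserves the ``shares an endpoint" relation defining adjacency in $L(X)$, so $\phi$ is a well-defined group homomorphism.

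Injectivity is quick. Suppose $\phi(\sigma) = \mathrm{id}$, so every edge of $X$ is fixed setwise. A vertex $v$ with $\deg(v) \geq 2$ is the unique common endpoint of any two of its incident edges, so $\sigma(v) = v$. A leaf $u$ attached to such a vertex $v$ satisfies $\sigma(u) \in \{u,v\} \setminus \{v\} = \{u\}$. Connectedness together with $n \geq 5$ guarantees that every vertex falls into one of these two cases.

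For surjectivity, to each $v \in V(X)$ associate the \emph{star} $S_v = \{e \in E(X) : v \in e\}$, a clique of size $\deg(v)$ in $L(X)$. The key claim is that for $X$ connected with $n \geq 5$, the collection $\{S_v\}_{v \in V(X)}$ is intrinsically recognizable inside $L(X)$. More precisely, every maximal clique in $L(X)$ is either some $S_v$ or a ``triangle clique" $T$ arising from a $3$-cycle in $X$; one must distinguish the two types. Stars of size $\geq 4$ are visibly not triangle cliques, while triangle cliques have the distinctive property that each of their three elements lies in exactly one other maximal clique of $L(X)$ --- a property that fails for stars except in the sole exceptional graph $X = K_3$ (for which $L(K_3) = L(K_{1,3}) = K_3$), which is ruled out by $n \geq 5$. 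Given this recognition, any $\tau \in \Aut(L(X))$ must permute the stars, yielding a bijection $\sigma : V(X) \to V(X)$ defined by $\tau(S_v) = S_{\sigma(v)}$. Since $\{u,v\} \in E(X)$ if and only if $|S_u \cap S_v| = 1$ (the edge itself), a relation preserved by $\tau$, the map $\sigma$ is a graph automorphism; and tracking each edge $e = \{u,v\}$ through the unique pair of stars containing it yields $\phi(\sigma) = \tau$.

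The main obstacle is the clique-recognition step: ruling out coincidences between stars and triangle cliques. This is Whitney's original dichotomy, and the only genuine obstruction is the pair $(K_3,K_{1,3})$, which the hypothesis $n \geq 5$ eliminates together with the handful of small-graph edge cases (short paths, the $4$-cycle, etc.) that would otherwise require separate treatment.
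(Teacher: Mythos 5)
The paper does not prove this theorem; it cites Whitney's 1932 paper, so I am evaluating your proof on its own merits rather than against an argument in the paper.

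Your overall strategy is the standard one for Whitney's theorem (reconstruct the stars of $X$ from the clique structure of $L(X)$), and the injectivity step is correct. However, the recognition criterion you use to separate triangle cliques from size-$3$ stars is false. You claim that a triangle clique has the property that ``each of its three elements lies in exactly one other maximal clique of $L(X)$.'' Consider the bowtie graph ($n=5$): vertices $v,a,b,c,d$ with edges $va,vb,ab,vc,vd,cd$. In $L(X)$ the maximal cliques are $S_v=\{va,vb,vc,vd\}$ and the two triangle cliques $T_1=\{va,vb,ab\}$, $T_2=\{vc,vd,cd\}$. The element $ab\in T_1$ lies in no other maximal clique, so your criterion misclassifies $T_1$. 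Conversely, in $K_5$ the edge $ab$ of the triangle clique $\{ab,bc,ca\}$ lies in at least four other maximal cliques ($S_a$, $S_b$, and the triangles through $c$ replaced by $d$ or $e$), so the criterion fails in the other direction as well. Counting incident maximal cliques is simply not the right invariant.

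The standard correct invariant is the odd/even dichotomy for triangles in $L(X)$: call a $3$-clique $T$ \emph{odd} if some vertex of $L(X)\setminus T$ is adjacent to exactly $1$ or $3$ members of $T$, and \emph{even} otherwise. A triangle clique $\{ab,bc,ca\}$ coming from a triangle $abc$ in $X$ is always even, since every other edge of $X$ shares $0$ or exactly $1$ vertex with $\{a,b,c\}$ and hence is adjacent to $0$ or $2$ members of $T$. A size-$3$ star clique $S_v$ with $\deg(v)=3$ is odd once $X$ is connected with $n\geq 5$: some neighbor $a$ of $v$ has a neighbor $d\notin\{v\}\cup N(v)$, and the edge $ad$ is adjacent to exactly one member of $S_v$. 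Substituting this criterion for yours repairs the surjectivity argument; the rest of your proof (recovering $\sigma$ from the induced permutation of stars and checking $\phi(\sigma)=\tau$) then goes through. You should also be careful in stating the exceptional cases: $K_4$ and $K_4$ minus an edge also violate the conclusion, not only the $K_3/K_{1,3}$ pair, though all of these are excluded by $n\geq 5$.
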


Observe, that the restriction on the diameter of the line graph gives quite strong bound on the degree of a base graph, as summarized in the following lemma.

\begin{lemma}\label{line-so-k-big}
Let $X$ be a $k$-regular graph on $n$ vertices. If the line graph $L(X)$ has diameter $2$, then $k\geq \frac{n}{8}$. 
\end{lemma}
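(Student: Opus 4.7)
The plan is to fix an arbitrary edge $e = \{a,b\} \in E(X)$ and exploit the diameter-$2$ hypothesis of $L(X)$ to force every edge of $X$ to have an endpoint close to $e$, and then count edges.

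First I would unwind what $\dist_{L(X)}(e_1,e_2) \leq 2$ says combinatorially: either $e_1,e_2$ share an endpoint, or there is a third edge $e_3$ sharing an endpoint with each. In either case, $e_2$ has at least one endpoint lying on some edge incident to $e_1 = \{a,b\}$; equivalently, $e_2$ has an endpoint in $T := N(a) \cup N(b)$. Since $X$ is $k$-regular, $|T| \leq 2k$.

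Now apply this with $e_1 = e$ ranging over any fixed edge: the diameter-$2$ hypothesis says every edge of $X$ must have at least one endpoint in $T$. Counting edges incident to $T$ and using $k$-regularity gives
\[
|E(X)| \;\leq\; \sum_{v \in T} \deg(v) \;=\; k|T| \;\leq\; 2k^2.
\]
On the other hand $|E(X)| = nk/2$, so $nk/2 \leq 2k^2$, which yields $k \geq n/4 \geq n/8$.

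There is no real obstacle here; the only point to be careful about is the translation between distances in $L(X)$ and the incidence structure in $X$, and the fact that in listing vertices on edges incident to $e$ we recover exactly $N(a) \cup N(b)$ (possibly containing $a,b$ themselves, which does not affect the bound $|T| \leq 2k$).
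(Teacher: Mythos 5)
Your proof is correct, and in fact sharper: you obtain $k \geq n/4$, whereas the paper only claims $k \geq n/8$. The paper's proof applies the generic Moore bound to $L(X)$: since $L(X)$ has $kn/2$ vertices, degree $2(k-1)$, and diameter $2$, it has at most $1 + D + D^2 \leq 4k^2$ vertices with $D = 2(k-1)$, giving $kn/2 \leq 4k^2$. Your argument instead exploits the incidence structure of $L(X)$ directly in $X$: fixing $e=\{a,b\}$, the diameter-$2$ condition forces every edge to meet $T = N(a)\cup N(b)$, and counting edge--endpoint incidences with $|T| \leq 2k$ gives $nk/2 \leq 2k^2$. This is morally the same ``fix a center, count the ball of radius $2$'' idea, but doing the count in $X$ rather than abstractly in $L(X)$ avoids the overcounting inherent in the Moore bound for line graphs (two neighbors of $e$ through the same endpoint $a$ have many common neighbors, which the Moore bound counts separately), hence the factor-of-$2$ improvement. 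Both bounds suffice for the paper's downstream use.
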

\begin{proof}
Recall that $L(X)$ has $kn/2$ vertices and degree $2(k-1)$. Since $L(X)$ has diameter $2$, parameters satisfy $4k^2 \geq 4(k-1)^2+2(k-1)+1\geq kn/2$, i.e., $k\geq \frac{n}{8}$.
\end{proof}

\begin{proposition}\label{assoc-line-triangle}
Let $X$ be a connected $k$-regular triangle-free graph on $n\geq 5$ vertices, where $k\geq 3$. Suppose $\mathfrak{X}$ is an association scheme of rank 4 and diameter 2 on $V(L(X)) = E(X)$, such that one of the constituents is equal to $L(X)$ and is not strongly regular. Then any two vertices $u,v\in X$ are distinguished by at least $n/8$ vertices. Therefore,   $\Aut(L(X))$ is of order $n^{O(\log(n))}$ and the motion of $L(X)$ is at least $\frac{|V(L(X))|}{16}$. 
\end{proposition}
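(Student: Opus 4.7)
The plan is to transfer the problem from $L(X)$ to the base graph $X$ via Whitney's Theorem, then prove a distinguishing bound on $V(X)$, and finally unfold back to $L(X)$. Since $X$ is connected on $n\geq 5$ vertices, Theorem \ref{Whitney} gives $\Aut(L(X))\cong\Aut(X)$, so $\Aut(\mathfrak{X})$ embeds in $\Aut(X)$ through the natural action on $E(X)=V(L(X))$. Moreover, since $L(X)$ is a non-diagonal constituent of a diameter-$2$ association scheme, $L(X)$ itself has diameter at most $2$, so Lemma \ref{line-so-k-big} forces $k\geq n/8$.

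The central claim is that any two distinct $u,v\in V(X)$ are distinguished by at least $n/8$ vertices of $V(X)$ in the $\Aut(\mathfrak{X})$-invariant coloring of $V(X)\times V(X)$ induced by $\mathfrak{X}$ (vertices of $X$ correspond bijectively to the maximum cliques $S_v=\{vw:w\in N_X(v)\}$ of $L(X)$ since $X$ is triangle-free with $k\geq 3$, so each edge of $X$ lies in exactly two such stars). Adjacency in $X$ is encoded in this coloring, so for adjacent $u,v$ the triangle-free hypothesis gives $|N_X(u)\cap N_X(v)|=0$, and hence at least $2k-2\geq n/8$ vertices $w\neq u,v$ distinguish $u,v$ by being adjacent to exactly one of them. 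For non-adjacent $u,v$, I would show that $|N_X(u)\cap N_X(v)|\leq k-n/16$ by exploiting the hypothesis that $L(X)$ is not strongly regular together with the rank-$4$ structure of $\mathfrak{X}$: the two non-$L(X)$ constituents partition the disjoint-edge pairs into two classes with distinct intersection-number $\mu$ in $L(X)$ (otherwise $L(X)$ would be strongly regular), and an application of Lemma \ref{clique-mu-bound} to the clique geometry of $L(X)$ (each edge in exactly two stars, so $m=2$) together with the bipartite edge count between $S_u$ and $S_v$ in $L(X)$ (which equals precisely $|N_X(u)\cap N_X(v)|$) controls the overlap. Near-twin pairs are handled by noting that a star-swap symmetry between $S_u$ and $S_v$ is incompatible with the genuine splitting of disjoint-edge pairs into two different $\mathfrak{X}$-colors.

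Granted the distinguishing bound, Lemma \ref{Babai-disting-order-group} applied to the induced coloring on $V(X)$ of size $n$ gives $|\Aut(\mathfrak{X})|\leq|\Aut(X)|\leq n^{1+2n\log(n)/(n/8)}=n^{O(\log n)}$. For the motion bound, take non-identity $\sigma\in\Aut(\mathfrak{X})$ with corresponding $\bar\sigma\in\Aut(X)$; by Observation \ref{obs1} applied to $V(X)$, the support of $\bar\sigma$ in $V(X)$ is at least $n/8$. At each moved vertex $u$, the only edge of $X$ at $u$ that $\sigma$ can fix is $u\bar\sigma(u)$ (and only when $\bar\sigma^{2}(u)=u$ and $u\sim\bar\sigma(u)$ in $X$), so at least $k-1$ edges at $u$ are moved by $\sigma$. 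Summing over the $\geq n/8$ moved vertices and dividing by $2$ (each moved edge being counted at most twice) yields motion on $L(X)$ at least $\frac{n(k-1)}{16}\geq\frac{nk}{32}=\frac{|V(L(X))|}{16}$, using $k\geq 3$.

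The main obstacle is the non-adjacent case of the distinguishing claim: converting the qualitative hypothesis ``$L(X)$ is not strongly regular'' into a quantitative linear-in-$n$ bound on common neighborhoods of non-adjacent pairs in $X$. This requires a careful case analysis of how the two non-$L(X)$ constituents interact with the star clique geometry of $L(X)$, and is the step that genuinely uses the rank-$4$ and non-strongly-regular hypotheses beyond mere adjacency.
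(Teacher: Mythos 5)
Your high-level plan (Whitney's theorem, $k\geq n/8$ via Lemma \ref{line-so-k-big}, the adjacent case via triangle-freeness, the motion count at the end) matches the paper, and the final motion accounting ($\geq |W|(k-1)/2$) is the same. The genuine gap, which you flag yourself, is the distance-2 case, and the sketch you offer for filling it does not come close to the paper's actual argument. You propose bounding $|N(u)\cap N(v)|$ via Lemma \ref{clique-mu-bound} applied to the star clique geometry and a ``bipartite edge count between $S_u$ and $S_v$''; but Lemma \ref{clique-mu-bound} only says $\mu(L(X))\leq 4$, a statement about $L(X)$ of no evident use here, and nothing in your sketch converts ``$L(X)$ not strongly regular'' into a quantitative linear bound. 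The ``near-twin/star-swap'' remark does not engage with the possibility that $|N(u)\cap N(v)|$ is large but strictly less than $k$.

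The paper closes the gap with a concrete structural dichotomy about cycle lengths in $X$. Since $L(X)$ has diameter~2, every induced cycle of $X$ has length $4$ or $5$. If $X$ has no $5$-cycle it is bipartite with all induced cycles $4$-cycles, which by regularity forces $X$ complete bipartite and hence $L(X)$ strongly regular --- contradiction. If $X$ has a $5$-cycle $v_1\dots v_5$, the key observation is that the ``long diagonals'' of a $5$-cycle (e.g.\ $v_1v_2$ and $v_3v_4$) have exactly one common neighbor in $L(X)$ while the diagonals of a $4$-cycle have exactly two, so in the rank-$4$ scheme they must receive distinct colors; and the diameter-$2$ constraint forces every neighbor $u$ of $v_1$ (with $u\neq v_2,v_5$) to be adjacent to exactly one of $v_3,v_4$. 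This yields $|N(v_i)\cap N(v_{i+2})|+|N(v_i)\cap N(v_{i+3})|=k$ for each $i$ mod $5$, hence $|N(v_i)\cap N(v_{i+2})|=k/2$ exactly. A further argument (using $p^1_{1,3}\neq 0$ to rule out twins) shows every distance-$2$ pair in $X$ sits on a $5$-cycle, so all distance-$2$ pairs have exactly $k/2$ common neighbors and are therefore distinguished by $k\geq n/8$ vertices. This exact computation, specific to the $5$-cycle structure, is the content your proposal is missing and is not recoverable from the vaguer ``the two colors give different $\mu$'' observation alone.
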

\begin{proof}
Denote the constituents of $\mathfrak{X}$ by $Y_i$, $0\leq i\leq 3$, where $Y_0$ is the diagonal constituent and $Y_1 = L(X)$.

Since $Y_1$ has diameter $2$, any induced cycle of $X$ has length at most 5. Graph $X$ is triangle-free, so every induced cycle in $X$ has length 4 or 5, and every cycle of length 4 or 5 is induced.

\noindent \textbf{Case 1.} Suppose that there are no cycle of length 5 in $X$, i.e., it is bipartite. Then for $v\in X$ there are no edges between vertices in $N_2(v)$. The graph $X$ is regular, and every induced cycle has length 4, so for every vertex $w\in N_2(v)$ the neighborhoods $N(w)$ and $N(v)$ coincide. Hence, as $X$ is connected, $X$ is a complete regular bipartite graph. However, in this case, $L(X)$ is strongly regular.

\noindent \textbf{Case 2.} Suppose there is a cycle of length 5.  

Let $v_1v_2v_3v_4v_5$ be any cycle of length 5. Take $u$ different from  $v_2, v_5$ and adjacent to $v_1$. Since constituent $Y_1$ has diameter $2$, edges $v_1u$ and $v_3v_4$ are at distance 2 in $L(X)$, thus there is one of the edges $uv_3$ or $uv_4$ in $X$. Again, $X$ is triangle free, so exactly one of them is in $X$, and without lost of generality assume that $uv_3$ is in $X$. In particular, we get that there is a cycle of length $4$. Denote by $r_{i,j}$ the number of common neighbors of $v_i$ and $v_j$. Then, we have shown that $r_{i, i+2}+r_{i,i+3} = k$ for every $i$, where indices are taken modulo 5. Thus, $r_{i, i+2} = \frac{k}{2}$ for every $1\leq i\leq 5$. 

  Observe, that $v_1v_2$ and $v_3v_4$ have exactly one common neighbor in $L(X)$. At the same time, for any cycle $u_1u_2u_3u_4$ edges $u_1u_2$ and $u_3u_4$ have exactly two common neighbors in $L(X)$. Thus, pairs $(u_1u_2, u_3u_4)$ and $(v_1v_2, v_3v_4)$ belong to different constituents of the association scheme, say $Y_2$ and $Y_3$, respectively. Note, that triple of edges $v_1v_2, v_2v_3, v_3v_4$ shows that $p^{1}_{1,3}$ is non-zero.

Take any $v\in X$ and $u\in N_2(v)$. Suppose that there is no $w\in N_2(v)$ adjacent to $u$. Then by regularity of $X$ we get $N(v) = N(u)$. For any $x,y \in N(v)$ triple $vx, xu, uy$ form a triangle with side colors $(1,1,2)$ and we get a contradiction with $p^{1}_{1,3}\neq 0$. 

Hence, for any $u\in N_2(v)$ there exists $w\in N_2(v)$ adjacent to $u$. Take $x\in N(v)\cap N(u)$ and $y\in N(v)\cap N(w)$. Consider the cycle $vxuwy$, then as shown above, vertices $v$ and $u$ have exactly $\frac{k}{2}$ common neighbors. Thus, they are distinguished by at least $|N(u)\triangle N(v)| = 2(k-\frac{k}{2}) = k$ vertices.

Any adjacent vertices have no common neighbors, so they are distinguished by at least $2k$ vertices. Thus, any two distinct vertices are distinguished by at least $k$ vertices. Therefore, by Lemma \ref{line-so-k-big} any two distinct vertices are distinguished by at least $\frac{n}{8}$ vertices.

 By Lemma \ref{Whitney}, $\Aut(X) \cong \Aut(L(X))$ via natural inclusion $\phi$. Thus, bound on the order $|Aut(L(X))|$ follows from Lemma \ref{Babai-disting-order-group}. Let $W$ be the support of $\sigma \in Aut(X) \cong Aut(L(X))$. We show that every vertex in $W$ is incident to at most one edge fixed by $\sigma$. Consider edge $e$ with ends $w_1, w_2$, where $w_1\in W$. Since $\sigma(w_1)\neq w_1$ the only possibility for $e$ to be fixed is $\sigma(w_1) = w_2$ and $\sigma(w_2) = w_1$. This, in particular implies that $w_2\in W$ as well. Every edge incident with $w_1$ and different from $e$ is sent by $\sigma$ to an edge incident with $w_2$, so is not fixed. Therefore, the support of $\phi(\sigma)\in \Aut(L(X))$ is at least \[\frac{|W|(k-1)}{2}\geq \frac{n/8(k -1)}{2}\geq \frac{nk}{32} = \frac{|V(L(X))|}{16}.\] 
\end{proof}

\subsection{Putting it all together}\label{sec-coherent-thm-subsec}

Finally, we are ready to combine the preceding results into the following theorem.

\begin{theorem}\label{main-coherent}
Let $\mathfrak{X}$ be a primitive coherent configuration of rank $4$ on $n$ vertices . Then one of the following is true.
\begin{enumerate}
\item We have $\motion(\mathfrak{X})\geq \gamma n$, where $\gamma>0$ is an absolute constant.
\item The configuration $\mathfrak{X}$ is a Hamming scheme or a Johnson scheme.
\end{enumerate}
\end{theorem}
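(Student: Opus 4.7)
The plan is to partition primitive coherent configurations $\mathfrak{X}$ of rank $4$ into three structural families and dispatch each with the machinery built up in earlier sections. Every undirected edge color contributes one constituent, and every pair of mutually reverse oriented colors contributes two; since $\mathfrak{X}$ has rank $4$, after excluding the diagonal there are exactly three nondiagonal colors, which split as (a) all three undirected with some constituent of diameter $3$, i.e.\ $\mathfrak{X}$ comes from a primitive distance-regular graph of diameter $3$; (b) all three undirected with every constituent of diameter $2$, i.e.\ $\mathfrak{X}$ is a primitive association scheme of diameter $2$; or (c) one undirected color and one pair of mutually reverse oriented colors. Case (a) is immediate from Theorem \ref{main-thm}: the exceptional graphs there are $J(s,3)$, $H(3,s)$, and the cocktail-party graph, and the latter is antipodal and hence not primitive as a coherent configuration, so only the Johnson and Hamming schemes survive. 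Case (c) is handled by noting that the undirected constituent $Y$ is a strongly regular graph, so Theorem \ref{babai-str-reg-thm} (Babai) gives $\motion(Y) \geq n/8$ unless $Y$ or $\overline{Y}$ is $T(s)$, $L_2(s)$, or a union of cliques; the union-of-cliques and $L_2(s)$ subcases contradict primitivity of $\mathfrak{X}$ together with the presence of oriented colors in a straightforward check, and the $T(s)$ subcase is concluded by applying Proposition \ref{sun-wilmes-tool} to $I=\{i\}$ where $i$ is the undirected color, using that a nondiagonal constituent distinguishes the two vertices of each edge in a Delsarte clique with a constant fraction.

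The substantive work is case (b). Order the nondiagonal constituents by degree so that $k_1 \leq k_2 \leq k_3$, fix $\varepsilon = 10^{-16}$, and invoke Proposition \ref{assoc-diam2}. Its five alternatives dispose of the problem as follows. Outcome (1) gives that every pair of vertices is distinguished by $\Omega(n)$ vertices, so Observation \ref{obs1} yields $\motion(\mathfrak{X}) = \Omega(n)$. Outcome (2) gives $q(X_i)+\xi(X_i) \leq (1-\varepsilon)k_i$ for some $i\in\{1,2\}$, so Lemma \ref{mixing-lemma-tool} applied to $X_i$ yields $\motion(\mathfrak{X}) \geq \varepsilon n$. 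Outcomes (3)--(5) force one of $X_1$, $X_2$, $X_{1,2}$ to be either strongly regular with smallest eigenvalue $-2$ (hence $T(s)$ or $L_2(s)$ up to small $n$) or the line graph of a connected regular triangle-free graph. The triangle-free line-graph subcases are killed by Proposition \ref{assoc-line-triangle}, which produces both a subquasipolynomial order bound and a linear lower bound on $\motion$ via Whitney's theorem and a direct count of distinguishing vertices. The strongly regular subcases for $X_2$ and $X_{1,2}$ are handled respectively by Propositions \ref{assoc-x2-strongly-reg} and \ref{assoc-x3-strongly-reg}: the former produces a spectral-gap bound $q(X_{1,2})+\xi(X_{1,2}) \leq \tfrac{99}{100}(k_1+k_2)$ so Lemma \ref{mixing-lemma-tool} applies to $X_{1,2}$; the latter reduces $X_{1,2}=T(s)$ (having excluded $L_2(s)$ by the diameter-$2$ constraint on $X_1$) to the setting of Proposition \ref{sun-wilmes-tool} with $\alpha=\tfrac{1}{16}$, giving $\motion(\mathfrak{X}) \geq n/32$.

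The main obstacle will be the strongly regular case for $X_1$, i.e.\ Proposition \ref{assoc-x1-strongly-reg}, because here the eigenvalue approximations of Proposition \ref{assoc-x12-approx} alone do not separate $q(X_{1,2})+\xi(X_{1,2})$ from $k_1+k_2$ over the entire parameter range. The key is to combine those approximations with two supplementary structural inputs: the Metsch-style claw-counting of Lemma \ref{assoc-claw-proof-bound} (which, in the regime where $p^{2}_{2,2}$ is large and $p^{1}_{2,2}$ lies in a middle range, forces $k_2 \leq 20k_1$ by ruling out $3$-claws in colors $(2,3)$ and analyzing the induced clique cover of $X_2$) and the line-geometry bound of Lemma \ref{assoc-k1-k2-bound} (which, once Theorem \ref{Metsch} gives $X_{1,2}$ a clique geometry with bounded $m$, upgrades this to $k_2 = O(k_1)$). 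Once $k_2 = O(k_1)$ is in hand, the sharper eigenvalue expression $\xi(X_{1,2}) \leq \tfrac12(k_1+k_2) - c\,k_1^2/(k_1+k_2)^2\cdot(k_1+k_2)+o(k_1+k_2)$ from Proposition \ref{assoc-x12-approx} beats $\lambda_1(X_{1,2})$ by a constant fraction, and Lemma \ref{mixing-lemma-tool} finishes the bound. Combining the case analyses, one sets $\gamma$ to be the minimum of the constants extracted in the finitely many subcases.
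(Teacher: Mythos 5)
Your handling of cases (a) and (b) follows the paper's own proof closely and is correct. The problem is case (c), where you misapply Proposition \ref{sun-wilmes-tool}. Suppose the undirected constituent $X_t$ equals $T(s)$ and you take $I=\{t\}$. A Delsarte clique $C$ of $T(s)$ is a clique in $X_t$, so for any three distinct $z,x,y\in C$ we have $c(z,x)=c(z,y)=t$: no vertex of $C$ ever distinguishes two other vertices of $C$, and the hypothesis of Proposition \ref{sun-wilmes-tool} (at least $\alpha|C|$ distinguishing vertices inside every Delsarte clique) fails for every $\alpha>0$. The same obstruction kills the analogous attempt for $X_t = L_2(s)$, and your claim that $X_t = L_2(s)$ or the union-of-cliques alternative ``contradicts primitivity'' is not substantiated: the union-of-cliques case is indeed out by connectivity, but $L_2(s)$ is not ruled out by primitivity alone. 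You also omit the complement subcases $\overline{T(s)}$ and $\overline{L_2(s)}$ entirely, even though Theorem \ref{babai-str-reg-thm} allows either $X_t$ or $\overline{X_t}$ to be one of the listed graphs.

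What the paper actually does in case (c): first it excludes $X_t=\overline{L_2(s)}$ by a degree count (the oriented constituent would need degree $s-1$ but its oriented diameter must be $2$, forcing $k_i^2\ge n-1=s^2-1$). For $X_t\in\{T(s),L_2(s)\}$ it uses a direct distinguishing-number argument: from $p_{i,i^*}^i = p_{i^*,i}^i = p_{i,i}^i$ and the sum relation \eqref{eq-sum-param} one gets $p_{i,i}^i + p_{i^*,i^*}^i \geq \frac{2k_i-k_t-1}{3}$, and since $k_i > n/3 > k_t$ here, every pair joined by color $i$ is distinguished by at least $k_i/3 \geq n/9$ vertices, giving $\motion(\mathfrak{X})\geq n/18$ via Lemma \ref{babai-dist}. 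Only for $X_t = \overline{T(s)}$ does the paper invoke Proposition \ref{sun-wilmes-tool}, and then with $I=\{i,i^*\}$ (so that $X_I = T(s)$); inside those Delsarte cliques the colors $i$ and $i^*$ both occur, and the same inequality $p_{i,i}^i + p_{i^*,i^*}^i\geq k_i/3$ supplies the required distinguishing fraction. You should replace your $T(s)$ argument and fill the missing complement subcases along these lines.
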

\begin{proof}
Suppose first that there is an oriented color. Then, since rank is 4, the only possibility is to have two oriented colors $i, j= i^*$ and one undirected color $t$. It is easy to see that $X_t$ is strongly regular.
Then for $n\geq 29$, by Babai's theorem (Theorem \ref{babai-str-reg-thm}), $\motion(X_t)\geq \frac{1}{8} n$, or $X_t$ is  a $T(s)$, a $L_{2}(s)$ for some $s$ or the complement of one of them. 

The constituent $X_t$ could not be the complement of $L_{2}(s)$ since oriented diameter of $X_{i}$ should be $2$, which contradicts $k_i^{2}\geq n-1$. Indeed, in this case, $2k_i =k_i+k_i^{*} = 2(s-1)$, while $n = s^2$.   

Now, observe that $p_{i,i^{*}}^{i} = p_{i^{*},i}^{i} = p_{i,i}^{i}$. Moreover, by Eq. \eqref{eq-sum-param},
$$k_i+k_{i^*} = p_{i,i}^{i}+p_{i,i^{*}}^{i} + p_{i^{*},i}^{i} + p_{i^*, i^*}^{i}+p^{i}_{i, t}+p^{i}_{i^*, t}+p_{i, 0}^{i}.$$ 
Thus, using Eq. \eqref{eq-sum-param} again, $p_{i, i}^{i}+p_{i^*,i^*}^i\geq \frac{2k_i-k_t-1}{3}$. If $X_t$ is either $T(s)$ or $L_{2}(s)$, then $k_i = k_{i^*}> \frac{n}{3}$ and $k_t< \frac{n}{3}$. Thus every pair of vertices connected by color $i$ is distinguished by at least $\frac{k_i}{3}\geq \frac{n}{9}$ vertices. Hence, by primitivity of $\mathfrak{X}$ and Lemma~\ref{babai-dist} the motion of $\mathfrak{X}$ is at least $\frac{n}{18}$. In the last case, when $X_{t}$ is a complement of $T(s)$, result follows from Lemma~\ref{sun-wilmes-tool} and the inequality $p_{i, i}^{i}+p_{i^*,i^*}^i\geq \frac{k_i}{3}$.

 Now assume that all the colors in $\mathfrak{X}$ are undirected, i.e., $\mathfrak{X}$ is an association scheme. Every constituent of $\mathfrak{X}$ has diameter at most $3$ as rank of $\mathfrak{X}$ is 4. Moreover, as discussed in Section \ref{sec-prelim}, if there is a constituent of diameter $3$, then $\mathfrak{X}$ is induced by a distance-regular graph. In this case the statement follows from Theorem \ref{main-thm}. None of the components can have diameter $1$ as rank is 4. Finally, if $\mathfrak{X}$ is an association scheme of rank 4 and diameter 2, then the statement of the theorem follows from Lemma \ref{assoc-k2-large}, Propositions \ref{assoc-diam2}, \ref{assoc-x3-strongly-reg}, \ref{assoc-x2-strongly-reg}, \ref{assoc-x1-strongly-reg}    and Proposition~\ref{assoc-line-triangle}.
\end{proof}

\section{Summary and open questions}\label{sec-summary}
Recall that the motion of a structure is the minimal degree of its automorphism group. In this paper we studied the problem of classifying primitive coherent configurations with sublinear motion in two special cases: the case of primitive configurations of rank 4 and the case of distance-regular graphs  of bounded diameter (metric schemes of bounded diameter). In the case of rank 4 we achieved a full classification confirming Conjecture~\ref{conj-3} for this case. The case of rank $\leq 3$ was settled by Babai in \cite{Babai-str-reg}.

In the case of primitive distance-regular graphs of bounded diameter, we proved that graphs with sublinear motion possess a rather restrictive combinatorial structure. More specifically, we proved that all such graphs are geometric with bounded smallest eigenvalue.

It is not hard to see that for geometric distance-regular graphs neither the spectral, nor the distinguishing number tool work.

 One way to get a full classification of exceptions would be to prove a full classification of all geometric distance-regular graphs with diameter $d\geq 3$, and smallest eigenvalue $-m$, for a fixed positive integer $m$. Such a classification with the additional restriction $\mu\geq 2$ is conjectured by Bang and Koolen in \cite{Bang-Koolen-conj} (see Conjecture~\ref{conj-bang-k}). However, for $\mu=1$ no conjecture has been stated. It is possible, however, that it will be considerably easier to prove the following conjecture directly.

\begin{conjecture}
Let $X$ be a primitive geometric distance-regular graph of diameter $d\geq 3$. Then there exists a constant $\gamma_d>0$ s.t. one of the following is true.
\begin{enumerate}
\item $\motion(X)\geq \gamma_d n$.
\item $X$ is a Johnson graph $J(s, d)$ or a Hamming graph $H(d, s)$.
\end{enumerate}
\end{conjecture}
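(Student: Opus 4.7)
The plan is to split the problem by the value of $\mu = c_2$. Both the Johnson graph $J(s,d)$ and the Hamming graph $H(d,s)$ satisfy $\mu \geq 2$ (namely $\mu = 4$ and $\mu = 2$ respectively), so in the regime $\mu = 1$ the target is the pure lower bound $\motion(X) \geq \gamma_d n$, while for $\mu \geq 2$ I would reduce to a short list of families via the Bang--Koolen conjecture and then treat the non-exceptional ones by the distinguishing and spectral tools already developed in the paper.

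For the case $\mu \geq 2$, grant Conjecture \ref{conj-bang-k}. Since $X$ is geometric with smallest eigenvalue $-m$ for some $m \leq m_d$ (as delivered by Theorem \ref{main-general-case}), Bang--Koolen reduces $X$ to a Johnson graph, a Hamming graph, a Grassmann graph $J_q(s,d)$, a bilinear forms graph $H_q(d,s)$, or a graph on a bounded number of vertices. The Johnson and Hamming cases are the listed exceptions, and the bounded case is absorbed into the choice of $\gamma_d$. For Grassmann and bilinear forms graphs I would verify linearity of motion directly: their intersection arrays satisfy $b_j/c_{j+1} \to \infty$ with $q$ for every $0 \leq j \leq d-1$, so Proposition \ref{primitive-distinguish} yields $D_{\min}(\mathfrak{X}) = \Omega_d(n)$ and Observation \ref{obs1} then gives $\motion(X) = \Omega_d(n)$.

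For the case $\mu = 1$, any two distinct Delsarte cliques in $\mathcal{C}$ intersect in at most one vertex, so $(V,\mathcal{C})$ is a partial linear space with at most $m_d$ lines through each point, and $\Aut(X)$ embeds into $\Aut(V,\mathcal{C})$. The strategy is to generalize Proposition \ref{sun-wilmes-tool} from the triangular case to arbitrary Delsarte clique geometries: if inside every $C \in \mathcal{C}$ each pair of points is distinguished by a constant fraction of the vertices of $C$ (using colors coming from the distance configuration on the whole of $X$), then an $O(\log n)$-size subset of a single clique splits that clique completely, and propagation through the one-point line intersections splits $X$ completely, which by the usual pigeonhole yields $\motion(X) = \Omega(n)$. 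The remaining alternative is that some clique $C$ contains a pair $x, y$ not distinguished within $C$ by more than $o(|C|)$ points; I would argue, using distance-regularity with $\mu = 1$ together with Lemma \ref{max-min} and an analog of the tradeoff in Lemma \ref{b-c-ineq}, that such a pair must in fact have nearly equal distance vectors to all of $V$, producing a pair of twin-like vertices that contradicts primitivity.

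The main obstacle is the $\mu = 1$ case, for which no classification (even conjectural) of primitive geometric distance-regular graphs with $d \geq 3$ and bounded smallest eigenvalue is available. Purely spectral attacks via Lemma \ref{mixing-lemma-tool} are blocked because the eigenvalue approximation of Lemma \ref{eigenvalues-approximation} requires the simultaneous smallness of $b_i$ and $c_i$ at a matching index, a condition that need not hold here; and the tridiagonal intersection matrix with $c_2 = \ldots = c_s = 1$ for small $s$ behaves qualitatively differently from the distance-regular matrices handled in Section \ref{sec-general}. Consequently the genuinely new technical step is extending the Sun--Wilmes-style distinguishing argument inside cliques (Proposition \ref{sun-wilmes-tool}) to arbitrary partial linear spaces coming from Delsarte clique geometries, and ruling out a bad pair inside some clique without appealing to any prior list of possibilities for $X$.
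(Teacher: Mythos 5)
The statement you are trying to prove is explicitly labeled a \emph{conjecture} in the paper (Section~\ref{sec-summary}), and the paper does not prove it; the authors themselves observe that under the Bang--Koolen conjecture only the $\mu=1$ case remains open, and that for $\mu=1$ no classification or even a plausible conjectural list exists. So there is no ``paper proof'' to compare against, and your proposal --- which you correctly describe as a plan with a main obstacle --- is not a proof either. Your $\mu\geq 2$ branch merely restates the paper's own conditional reduction (Conjecture~\ref{conj-bang-k} plus the observation that Grassmann and bilinear forms graphs have linear motion), so there is nothing new there, and it is still conditional.

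For the $\mu=1$ branch there are two genuine gaps that would have to be filled before this is a proof. First, the propagation step in Proposition~\ref{sun-wilmes-tool} relies critically on the special incidence structure of $T(s)$: every Delsarte clique $C\neq C_1,C_2$ meets \emph{both} $C_1$ and $C_2$ and is determined by those two intersection points, so fixing $C_1\cup C_2$ pointwise fixes every line setwise. In a general geometric distance-regular graph with $m\geq 3$ lines per point, most lines are disjoint from any fixed pair of lines; you would need a multi-round individualization/refinement argument (or a diameter bound on the dual incidence graph together with iterated application of the distinguishing hypothesis) to push the splitting across the whole geometry, and you have not supplied such an argument. Second, the final contradiction step --- deducing from a poorly distinguished pair $x,y\in C$ that $x$ and $y$ are ``twin-like'' and hence violate primitivity --- does not follow from the tools you cite: Lemma~\ref{max-min} and Lemma~\ref{b-c-ineq} control intersection numbers, not the distance vectors of a \emph{particular} pair of vertices, and primitivity rules out pairs with \emph{identical} distance vectors, not pairs with ``nearly equal'' ones. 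You would need a quantitative statement linking small intra-clique distinguishing number to small global distinguishing number $D(x,y)$ and then a separate argument (e.g.\ via Lemma~\ref{babai-dist} or via bounded-rank structure of the metric scheme) showing a primitive metric scheme cannot have $D_{\min}$ that small; neither step is present.

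On the positive side, your observation that with $\mu=1$ the Delsarte clique geometry $(V,\mathcal{C})$ is a partial linear space and $\Aut(X)\hookrightarrow\Aut(V,\mathcal{C})$ is correct and is a sensible starting point, and the general strategy of generalizing Proposition~\ref{sun-wilmes-tool} to geometries with $m$ lines per point is the natural one. But as it stands this is a research programme, not a proof, and the paper itself leaves exactly this programme open.
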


\noindent A possible step toward this conjecture is to deal with the following problem first.

\begin{problem}
Improve significantly the constant $m_d$ in the statement of Theorem~\ref{main-general-case}. Ideally, show $m_d = d$.
\end{problem}

Switching from the distance-regular case to arbitrary primitive coherent configurations of rank $r\geq 5$ leads to serious technical issues in trying to implement our approach.  A significant obstacle is the difficulty of a spectral analysis of the constituents of the coherent configuration. Namely, for configurations of rank 4 we analyzed the spectral gap ``by hand'' through Propositions \ref{assoc-spectral-radius} and \ref{assoc-x12-approx}. For coherent configurations of higher rank we need more general techniques.

\begin{problem}
Do there exist $\varepsilon, \delta>0$ such that the following statement holds? If the minimal distinguishing number  $D_{\min}(\mathfrak{X})$ of a primitive coherent configuration $\mathfrak{X}$ satisfies $D_{\min}(\mathfrak{X})<\varepsilon n$, then the spectral gap for the symmetrization of one of the constituents $X_i$ of $\mathfrak{X}$ is $\geq \delta k_i$. What $\delta$ can we achieve? 
\end{problem}

We would like to point out, that even $\delta k_i$ spectral gap for one of the constituents is not sufficient for an application of the spectral tool (Lemma \ref{mixing-lemma-tool}). However, we expect that a result of this flavor would introduce important techniques to the analysis.

We also would like to mention that there should be a reasonable hope to prove Conjecture~\ref{conj-3} for the case when no color is overwhelmingly dominant. The following result easily follows from distinguishing number analysis. In particular, in the case of bounded rank it gives an $\Omega(n)$ bound on the motion.

\begin{proposition}\label{mindeg-bounded-degree}
Fix $0<\delta <1$ and integer $r\geq 3$. Let $\mathfrak{X}$ be a primitive coherent configuration of rank $r$ on $n$ vertices. Assume that each constituent $X_i$ has degree $k_i\leq \delta n$.
Then \[\motion(\mathfrak{X})\geq D_{\min}(\mathfrak{X})\geq\frac{\min(\delta, 1-\delta)}{6(r-1)}n.\]  
\end{proposition}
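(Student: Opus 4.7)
The motion bound $\motion(\mathfrak{X}) \geq D_{\min}(\mathfrak{X})$ is immediate from Observation~\ref{obs1}, so my plan is to lower-bound $D_{\min}$. Applying Lemma~\ref{babai-dist} together with the diameter bound $\dist_i(j) \leq r-1$ from Observation~\ref{obs-color-dist}, one has $D(j) \leq (r-1) D(i)$ for every pair of edge colors. Consequently, it suffices to exhibit a single edge color $i_0$ with $D(i_0) \geq (1-\delta)n$: then $D_{\min}(\mathfrak{X}) \geq (1-\delta)n/(r-1)$, and a direct case check ($\delta \leq 1/2$ vs.\ $\delta > 1/2$) yields $(1-\delta)/(r-1) \geq \min(\delta,1-\delta)/(6(r-1))$, finishing the proposition.

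To produce such an $i_0$, the plan is a weighted averaging. For a pair $u,v$ with $c(u,v) = i \neq 0$, a vertex $w$ distinguishes $u$ from $v$ unless $c(u,w) = c(v,w)$; counting $w$'s by the common color $l \neq 0$ gives
\[
D(i) \;=\; n - \sum_{l\neq 0} p_{l, l^{*}}^{i}.
\]
The central identity I will use is $\sum_{i \neq 0} k_i\, p_{l,l^{*}}^{i} = k_l^{2} - k_l$, which I would derive from the relation $k_i\,p_{l,l^{*}}^{i} = k_l\,p_{i,l^{*}}^{l}$ in Eq.~\eqref{eq-sum-param}: summing over all $i$ gives $k_l\cdot \sum_i p_{i,l^{*}}^{l} = k_l \cdot k_l$, and then I subtract the $i=0$ term, which equals $k_l$ because $p_{l,l^{*}}^{0} = k_l$. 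Summing $D(i)$ weighted by $k_i$ over edge colors yields the clean identity
\[
\sum_{i\neq 0} k_i\,D(i) \;=\; n(n-1) - \sum_{l\neq 0}\bigl(k_l^{2} - k_l\bigr) \;=\; (n-1)(n+1) - \sum_{l\neq 0} k_l^{2}.
\]

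Now I would use the hypothesis $k_l \leq \delta n$ in the crudest way: $\sum_{l \neq 0} k_l^{2} \leq (\max_l k_l)\sum_{l \neq 0} k_l \leq \delta n (n-1)$. Substituting gives
\[
\sum_{i \neq 0} k_i\,D(i) \;\geq\; (n-1)\bigl((1-\delta) n + 1\bigr) \;\geq\; (1-\delta)\, n\, (n-1),
\]
and since $\sum_{i \neq 0} k_i = n-1$, an averaging argument on this distribution produces an edge color $i_0$ with $D(i_0) \geq (1-\delta) n$, which is exactly what was needed in the first paragraph.

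There is no substantial obstacle here; the one step requiring care is the intersection-number identity above. In fact, this strategy delivers the strictly stronger bound $D_{\min}(\mathfrak{X}) \geq (1-\delta)n/(r-1)$, of which the stated $\min(\delta,1-\delta)/(6(r-1))$ is only a loose consequence designed to remain nontrivial for every $\delta \in (0,1)$.
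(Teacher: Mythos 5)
Your proof is correct and takes a genuinely different route from the paper. The paper fixes one vertex $u$, picks a set $I$ of colors whose degree sum $\alpha n$ lands in $[\min(\delta,1-\delta)n/2,\, n/2]$, and then double-counts pairs $(v,z)$ with $z\in N_I(u)$ and $c(v,z)\in I$ to exhibit a single pair $(u,v)$ with $D(u,v)\geq \alpha n/3$; Lemma~\ref{babai-dist} then propagates this to $D_{\min}\geq \alpha n/(3(r-1))$. You instead establish the closed-form formula $D(i)=n-\sum_{l\neq 0}p_{l,l^*}^{i}$, derive the exact identity $\sum_{i\neq 0}k_i D(i)=(n-1)(n+1)-\sum_{l\neq 0}k_l^2$ from Eq.~\eqref{eq-sum-param}, and then apply the trivial estimate $\sum k_l^2 \leq \delta n\sum k_l$ followed by averaging to find a color $i_0$ with $D(i_0)\geq (1-\delta)n$; the same Lemma~\ref{babai-dist} then finishes. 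Both arguments rely on primitivity only through Lemma~\ref{babai-dist} / Observation~\ref{obs-color-dist}, and both are short, but your global weighted-averaging argument replaces the paper's ad hoc choice of a near-balanced color set $I$ (and the resulting $\min(\delta,1-\delta)/2$ loss) with a clean intersection-number computation. The payoff is a strictly sharper conclusion $D_{\min}(\mathfrak{X})\geq (1-\delta)n/(r-1)$, which dominates the paper's $\min(\delta,1-\delta)n/(6(r-1))$ for every $\delta\in(0,1)$ and is dramatically better when $\delta$ is small; the stated bound then follows by the elementary case check you give.
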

\begin{proof}
The condition $k_i\leq \delta n$, for all $i$, implies that there exists a set $I$ of colors  such that $\sum\limits_{i\in I}k_i = \alpha n$ for $\min(\delta, 1-\delta)/2 \leq \alpha \leq 1/2$. Fix any vertex $u$ of $\mathfrak{X}$. 
We want to show that for some vertex $v$ the inequality $D(u,v)\geq \alpha n/3$ holds. 

Suppose this is not true. Denote $N_I(u) = \{z|\, c(u,z)\in I\}$. Let us count the number of pairs $(v,z)$ with $z\in N_{I}(u)$ and $c(v,z)\in I$ in two different ways. Since $\sum\limits_{i^*\in I}k_i = \alpha n$ and $z\in N_I(u)$, there are $\alpha^2 n^2$ such pairs. On the other hand, for every $v$ we have $D(u,v) \leq \alpha n/3$, so at least $2\alpha n/3$ vertices $z\in N_{I}(u)$ are paired with $v$. Therefore, the number of pairs in question is at least $n\cdot \frac{2\alpha}{3} n$. This contradicts the condition $0<\alpha\leq\frac{1}{2}$.  

Therefore, there exists a pair of vertices with $D(u,v)\geq \alpha n/3$. Finally, the configuration $\mathfrak{X}$ is primitive, so by Lemma~\ref{babai-dist} we get $\motion(\mathfrak{X})\geq D_{\min}(\mathfrak{X})\geq \frac{\alpha}{3(r-1)}n$.
\end{proof}

However, when the rank is unbounded, this seemingly simple case of Conjecture \ref{conj-3} (every constituent has degree $\leq \delta n$) is still open. To avoid exceptions we relax the conjectured lower bound to $\Omega(n/\log(n))$.

\begin{conjecture}\label{conj-bound-degree}
Fix $0<\delta<1$. Let $\mathfrak{X}$ be a primitive coherent configuration on $n$ vertices. Assume that every constituent has degree $\leq \delta n$. Then $\motion(\mathfrak{X}) = \Omega(n/\log(n))$.
\end{conjecture}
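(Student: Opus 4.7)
My plan is a rank dichotomy combined with the averaging behind Proposition~\ref{mindeg-bounded-degree} and the distinguishing-number triangle inequality of Lemma~\ref{babai-dist}. Fix a constant $C = C(\delta)$ to be chosen. If $r \le C\log n$, Proposition~\ref{mindeg-bounded-degree} directly gives $\motion(\mathfrak{X}) \ge D_{\min}(\mathfrak{X}) \ge \frac{\min(\delta,1-\delta)}{6(r-1)}\, n = \Omega(n/\log n)$, so henceforth assume $r > C\log n$.

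In the large-rank regime I would first exhibit a color with very large distinguishing number. Double-counting triples $(u,v,z)$ with $c(u,z) \ne c(v,z)$ gives
\[
\sum_{i \ge 1} k_i D(i) \;=\; n^2 - \sum_{i\ge 0} k_i^2 \;\ge\; (1-\delta)\, n^2 - O(1),
\]
using $k_i \le \delta n$ for $i \ge 1$ and $k_0 = 1$. Since $\sum_{i \ge 1} k_i = n-1$, some color $j^*$ satisfies $D(j^*) \ge (1-\delta)n - o(n)$. By Lemma~\ref{babai-dist}, $D(i) \ge D(j^*)/\dist_i(j^*)$ for every color $i$, so it suffices to prove a uniform bound $\dist_i(j^*) = O(\log n)$ with implicit constant depending only on $\delta$. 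Once this is in place, Observation~\ref{obs1} yields $\motion(\mathfrak{X}) \ge D_{\min}(\mathfrak{X}) = \Omega(n/\log n)$.

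The central difficulty is exactly this uniform diameter-type bound $\dist_i(j^*) = O(\log n)$. Primitivity alone only gives $\dist_i(j^*) \le r-1$, which is $\omega(\log n)$ in our regime, while a constituent $X_i$ of small degree $k_i$ can a priori have combinatorial diameter close to $n$. A natural route is to show that for every constituent $X_i$ the ball $B_\ell(u)$ of radius $\ell = O_\delta(\log n)$ around any vertex $u$ already contains a vertex $v$ with $c(u,v) = j^*$; by homogeneity this forces $\dist_i(j^*) \le \ell$. Such a statement is essentially a quantitative expansion claim for every constituent of $\mathfrak{X}$, and I would try to extract it from the identities $A_iA_{i^*} = \sum_t p^t_{i,i^*}A_t$ combined with primitivity and the degree constraint $k_i \le \delta n$, possibly routed through a spectral input analogous to Lemma~\ref{mixing-lemma-tool} or a Sun--Wilmes-style structural reduction as in~\cite{Sun-Wilmes}. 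A complementary angle is to argue directly that a very small $D(i^*)$ produces an approximate twinning that contradicts primitivity at scale $n/\log n$; this seems to be the hardest step and the main place where ideas beyond the tools present in this paper will be required.
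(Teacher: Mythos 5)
First, a point of order: the statement you were asked to prove is Conjecture~\ref{conj-bound-degree}. The paper does not prove it — it states it explicitly as open (``when the rank is unbounded, this seemingly simple case of Conjecture~\ref{conj-3} \dots is still open'') and establishes only the bounded-rank analogue, Proposition~\ref{mindeg-bounded-degree}, together with the verification that Cameron schemes and Hamming schemes are consistent with it. So there is no proof in the paper to compare yours against; your proposal must be judged as an attempt at an open problem.

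Judged that way, your reduction is sound as far as it goes, but it has a genuine gap that you yourself identify, and that gap is the whole problem. The rank dichotomy is fine: for $r=O(\log n)$, Proposition~\ref{mindeg-bounded-degree} gives the claim. Your double count is correct: $\sum_{i\ge 1}k_iD(i)=n^2-\sum_{i\ge 0}k_i^2\ge(1-\delta)n^2-1$, and since $\sum_{i\ge 1}k_i=n-1$, some color $j^*$ has $D(j^*)\ge(1-\delta)n$. Lemma~\ref{babai-dist} then reduces everything to showing $\dist_i(j^*)=O_\delta(\log n)$ uniformly over colors $i$. But that bound is not a technical step to be filled in later; it is an open question of essentially the same depth as the conjecture itself. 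The only general bound is $\dist_i(j)\le r-1$ from Observation~\ref{obs-color-dist}, which is useless when $r\gg\log n$, and none of the tools in the paper (nor, as far as I know, in the literature) yields a logarithmic color-diameter bound for all constituents of a primitive coherent configuration. Of your two suggested routes, the first (ball expansion in every $X_i$ extracted from the intersection identities) is not carried out and there is no indication the identities alone suffice; the second (a small $D(i^*)$ forces an ``approximate twinning'' contradicting primitivity at scale $n/\log n$) is close to a restatement of the conjecture rather than an attack on it. So what you have is a clean reduction of one open problem to another, not a proof.
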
  

Next we observe that Cameron schemes satisfy Conjecture \ref{conj-bound-degree}.

\begin{proposition}\label{prop-camer-nlogn}
Fix $0<\delta<1$.  Consider a Cameron group $(A_m^{(k)})^d \leq G\leq S_m\wr S_d$ acting on $n = \binom{m}{k}^d$ points and let $\mathfrak{X} = \mathfrak{X}(G)$ be the corresponding Cameron scheme. Assume that every constituent of $\mathfrak{X}$ has degree $\leq \delta n$. Then $\motion(\mathfrak{X}) = \Omega(n/\log(n))$. 
\end{proposition}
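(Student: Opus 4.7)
The plan is to combine an explicit lower bound on $\motion(\mathfrak{X})$ with the hypothesis applied to one carefully chosen constituent. Arguing as in the proof of Lemma~\ref{cameron-min-deg} --- the minimum degree of any element of the wreath product $S_{m}^{(k)}\wr S_{d}$ (and hence of $\Aut(\mathfrak{X})$, which by $2$-closedness of the wreath product lies inside it) is attained by a $2$-cycle of $S_{m}$ acting on a single coordinate --- one obtains the explicit lower bound
\[\motion(\mathfrak{X})\;\geq\;\frac{2k(m-k)}{m(m-1)}\,n.\]

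On the constituent side, I would single out the constituent $C_{0}$ of $\mathfrak{X}$ indexed by the all-zero intersection sequence: pairs $(\bar A,\bar B)\in \binom{[m]}{k}^{d}\times \binom{[m]}{k}^{d}$ with $A_{i}\cap B_{i}=\emptyset$ for every $i\in[d]$. Since $(0,\ldots,0)$ is fixed by every coordinate permutation, $C_{0}$ is a single orbital of $G$ for every intermediate group $G$, and its degree equals $\binom{m-k}{k}^{d}$. If $k\geq m/4$, the displayed bound already yields $\motion(\mathfrak{X})\geq (3/8)n$, so I may assume $k<m/4$. The hypothesis applied to $C_{0}$ then reads $\binom{m-k}{k}^{d}\leq \delta\binom{m}{k}^{d}$; combined with the lower bound
\[\frac{\binom{m-k}{k}}{\binom{m}{k}} \;=\; \prod_{i=0}^{k-1}\left(1-\frac{k}{m-i}\right) \;\geq\; \exp\!\left(-\frac{4k^{2}}{m}\right)\]
(which follows from $k/(m-i)\leq 4k/(3m)\leq 1/3$ and $-\log(1-x)\leq 2x$ on $[0,1/2]$), this yields the key inequality $dk^{2}\geq \tfrac{1}{4}m\log(1/\delta)$.

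To conclude, combine the motion bound $\motion(\mathfrak{X})\geq (3k/(2m))\,n$ (valid for $k\leq m/4$) with the elementary estimate $\binom{m}{k}\geq (m/k)^{k}$, which gives $\log n = d\log\binom{m}{k}\geq dk\log(m/k)$. Then
\[\motion(\mathfrak{X})\cdot \frac{\log n}{n} \;\geq\; \frac{3dk^{2}}{2m}\,\log(m/k) \;\geq\; \frac{3\log(1/\delta)}{8}\,\log 4,\]
which is a positive constant depending only on $\delta$. Hence $\motion(\mathfrak{X})=\Omega(n/\log n)$. The only point requiring some care is verifying that $C_{0}$ is a single constituent for every intermediate $G$, which is immediate from the $S_{d}$-invariance of the all-zero sequence; the remaining estimates are routine hypergeometric.
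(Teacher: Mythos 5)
Your proof is correct, and it takes a genuinely different route from the paper. The paper splits at $k \leq m/3$: in that range it observes that the rank is $kd+1 \leq \log_2 n + 1$ and invokes the general Proposition~\ref{mindeg-bounded-degree} (a distinguishing-number bound valid for any primitive coherent configuration with all constituent degrees at most $\delta n$), while for $m/3 < k \leq m/2$ it appeals to Lemma~\ref{cameron-min-deg} with an asymptotic ``only finitely many exceptions'' argument. You instead work entirely with the explicit transposition bound $\motion(\mathfrak{X}) \geq \frac{2k(m-k)}{m(m-1)}\,n$, dispense with the small-$k$ case by isolating the single constituent $C_0$ of disjoint tuples and extracting the quantitative constraint $dk^2 \geq \tfrac14 m\log(1/\delta)$, and then close the gap with $\log n \geq dk\log(m/k)$. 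What your route buys is an explicit constant $\tfrac{3}{8}\log 4\cdot\log(1/\delta)$ in the $\Omega(n/\log n)$ bound and a tighter, more structural use of the hypothesis (applied to one canonical constituent rather than filtered through a rank bound); what the paper's route buys is reusability of the soft Proposition~\ref{mindeg-bounded-degree}, which serves them elsewhere. Both proofs share the implicit ingredient that $\Aut(\mathfrak{X})\leq S_m^{(k)}\wr S_d$ (equivalently, that the wreath product in product action is $2$-closed for $m \geq 2k+1$); you state this explicitly while the paper uses it tacitly in the large-$k$ case, so it is worth flagging as a cited fact rather than as a throwaway clause, but it is not a gap specific to your argument. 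Your verification that $C_0$ is a single orbital for every intermediate $G$ (it is a $(A_m^{(k)})^d$-orbit and also an $S_m^{(k)}\wr S_d$-orbit, hence an orbit of anything in between) is the key observation that makes the hypothesis bite, and it holds as stated.
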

\begin{proof}
We can assume $k\leq m/2$. Note that then the rank of $\mathfrak{X}$ is equal to $kd+1$. 

Case 1. Suppose that $k\leq m/3$. Then
\[n = \binom{m}{k}^d\geq \left(\frac{m-k}{k}\right)^{kd}\geq \left(\frac{2k}{k}\right)^{kd} = 2^{kd}\]
Thus $kd \leq \log(n)$ in this case, and statement follows from Proposition~\ref{mindeg-bounded-degree}.

Case 2. Suppose that $m/3<k\leq m/2$. By Lemma~\ref{cameron-min-deg}, we have that as $m\rightarrow \infty$ the inequality $\motion(\mathfrak{X}) \geq \alpha n$ holds for some $\alpha>0$. At the same time, by the proof of Lemma~\ref{cameron-min-deg} we know that the motion of $\mathfrak{X}$ does not depend on $d$. Thus as $\motion(X) \geq \alpha n$ is violated just by finite number of pairs $(m,k)$, we still have $\motion(\mathfrak{X}) = \Omega(n)$ in this case.
\end{proof}

We observe that the bound in Conjecture \ref{conj-bound-degree}, if true, is nearly tight, as the example of Hamming schemes $H(tm,m)$ with $t = -\lfloor \log(\delta)\rfloor$ shows. We note that for $m\geq 3$ the Hamming schemes $H(k,m)$ are primitive.

\begin{proposition} Consider Hamming scheme $H(tm, m)$ with $t = -\lfloor \log(\delta)\rfloor$ on $n = m^{tm}$ points. Then its maximum constituent degree satisfies $k_{\max} \leq \delta n$ and the motion satisfies \[\motion(H(tm, m))= O\left(\frac{n\log\log(n)}{\log(n)}\right).\]
\end{proposition}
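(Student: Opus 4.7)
The plan is to verify the two assertions — the bound on $k_{\max}$ and the upper bound on the motion — separately, as each reduces to a direct computation on $H(tm, m)$ and its automorphism group $S_m \wr S_{tm}$.

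For the constituent-degree bound, I would observe that the $j$-th constituent of $H(tm, m)$ has degree $k_j = \binom{tm}{j}(m-1)^j$, and hence the ratios $k_j/n$ form the probability mass function of a binomial random variable $B(tm,\, 1-1/m)$, centered at the mean $t(m-1)$ with variance $t(m-1)/m$. The local central limit theorem (or, equivalently, a direct Stirling estimate on the dominant binomial coefficient) then yields
\[
\max_j \frac{k_j}{n}\;=\;O\!\left(\frac{1}{\sqrt{t(m-1)/m}}\right)\;=\;O\!\left(\frac{1}{\sqrt{t}}\right)
\]
uniformly in $m \geq 2$. For $t=-\lfloor\log\delta\rfloor$ (taken with a sufficiently large implicit constant, if needed), this gives the required $k_{\max}\leq\delta n$.

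For the motion, the automorphism group $S_m\wr S_{tm}$ acts on $\Omega^{tm}$ with $|\Omega|=m$. For $m\geq 3$, the minimal degree is realized by a single transposition $\tau\in S_m$ applied in one coordinate: such an element moves precisely those tuples whose value in that coordinate is one of the two transposed letters, i.e., exactly $2m^{tm-1}$ tuples. Hence
\[
\motion(H(tm,m))\;\leq\;2m^{tm-1}\;=\;\frac{2n}{m}.
\]

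To convert this into the claimed asymptotic, I would compute $\log n = tm\log m$, and note that since $t$ depends only on $\delta$ and is therefore constant in the family (as $m\to\infty$), $\log\log n = (1+o(1))\log m$, and consequently $\log\log n/\log n = (1+o(1))/(tm)$. Combining with the motion bound,
\[
\frac{\motion(H(tm,m))}{n}\;\leq\;\frac{2}{m}\;=\;O\!\left(\frac{\log\log n}{\log n}\right),
\]
with an implicit constant depending only on $\delta$. The motion estimate is essentially immediate from the structure of the wreath product; the only delicate point is the Stirling bookkeeping required to match the explicit $t = -\lfloor\log\delta\rfloor$ to the exact inequality $k_{\max}\leq\delta n$, but any sufficiently large $t$ of order $\log(1/\delta)$ suffices for the asymptotic conclusion, which is what Proposition~\ref{conj-bound-degree} is compared against.
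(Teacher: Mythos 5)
Your local-CLT computation of $k_{\max}$ is correct, and in fact it reveals an error in the paper's own proof. The paper asserts $k_{\max} = (m-1)^{tm}$, i.e., that the constituent at maximum Hamming distance $j = tm$ has the largest degree. This is false: since $k_{j} = \binom{tm}{j}(m-1)^j$ satisfies $k_{tm-1}/k_{tm} = tm/(m-1) > 1$, the degrees increase up to $j \approx t(m-1)$ (the binomial mode) before decreasing, and the true maximum is $k_{\max} = \Theta(n/\sqrt{t})$ as you compute, not $(m-1)^{tm} = \left(\frac{m-1}{m}\right)^{tm}n \approx \eee^{-t}n$. A concrete instance: $m = 3$, $t = 2$ gives $n = 729$ and $k_4 = \binom{6}{4}\cdot 2^4 = 240 \approx 0.33\,n$, whereas $(m-1)^{tm} = 64$ and the corresponding $\delta$ is at most about $\eee^{-2}\approx 0.14$.

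That said, neither your argument nor the paper's actually establishes $k_{\max}\leq\delta n$ with $t = -\lfloor\log\delta\rfloor$. Your (correct) bound $k_{\max}/n = O(1/\sqrt{t})$ yields only $O\bigl(1/\sqrt{\log(1/\delta)}\bigr)$, which is far larger than $\delta$ when $\delta$ is small, so your closing hedge that ``any sufficiently large $t$ of order $\log(1/\delta)$ suffices'' is not right either: one would need $t = \Omega(1/\delta^2)$. The proposition as stated is therefore false, though the repair is harmless for the point it is meant to make. Your motion estimate is computed exactly as the paper does ($\motion \leq 2n/m$, $m\log m = \log n/t$, hence $m\geq \log n/(t\log\log n)$) and uses only that $t$ is a constant depending on $\delta$, so $\motion(H(tm,m)) = O_\delta\!\left(n\log\log n/\log n\right)$ survives after replacing $t$ by $\Theta(1/\delta^2)$, and the Hamming schemes still witness near-tightness of Conjecture~\ref{conj-bound-degree}.
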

\begin{proof}
Note that the maximum degree is $k_{\max} = (m-1)^{tm}$. Then 
\[k_{\max} = \left(\frac{m-1}{m}\right)^{mt}n \leq \eee^{-t}n\leq \delta n.\]

The motion of $H(tm, m)$ is realized by a 2-cycle in the first coordinate, and is equal to $2n/m$. The number of vertices is $n = m^{mt} = \eee^{tm\log(m)}$, so $m\log(m) = \log(n)/t$. Thus $m>\log(n)/(t\log\log(n))$. Hence, the motion satisfies
\[\motion(H(tm, m))\leq \frac{2n\log\log(n)t}{\log(n)} = O\left(\frac{n\log\log(n)}{\log(n)}\right).\]
\end{proof}

\end{document}